\documentclass[11pt,reqno]{amsart}

\usepackage{amsmath,amssymb,amsthm,mathtools,mathrsfs}
\usepackage{enumitem,array,booktabs,tabularx,microtype,aliascnt}
\usepackage{placeins,needspace}
\usepackage[hidelinks]{hyperref}
\usepackage[capitalise,nameinlink]{cleveref}
\allowdisplaybreaks

\newtheorem{theorem}{Theorem}[section]
\newaliascnt{proposition}{theorem}
\newtheorem{proposition}[proposition]{Proposition}
\aliascntresetthe{proposition}

\newaliascnt{lemma}{theorem}
\newtheorem{lemma}[lemma]{Lemma}
\aliascntresetthe{lemma}

\newaliascnt{corollary}{theorem}
\newtheorem{corollary}[corollary]{Corollary}
\aliascntresetthe{corollary}

\theoremstyle{definition}
\newaliascnt{definition}{theorem}
\newtheorem{definition}[definition]{Definition}
\aliascntresetthe{definition}

\newaliascnt{example}{theorem}
\newtheorem{example}[example]{Example}
\aliascntresetthe{example}

\newaliascnt{question}{theorem}
\newtheorem{question}[question]{Question}
\aliascntresetthe{question}

\theoremstyle{remark}
\newaliascnt{remark}{theorem}
\newtheorem{remark}[remark]{Remark}
\aliascntresetthe{remark}
\theoremstyle{plain}

\crefname{theorem}{Theorem}{Theorems}
\crefname{proposition}{Proposition}{Propositions}
\crefname{lemma}{Lemma}{Lemmas}
\crefname{corollary}{Corollary}{Corollaries}
\crefname{definition}{Definition}{Definitions}
\crefname{example}{Example}{Examples}
\crefname{question}{Question}{Questions}
\crefname{remark}{Remark}{Remarks}
\crefname{section}{Section}{Sections}
\crefname{subsection}{Section}{Sections}

\numberwithin{equation}{section}

\newcommand{\ch}{\operatorname{ch}}
\newcommand{\CF}{\operatorname{CF}}
\newcommand{\rad}{\operatorname{rad}}
\newcommand{\maj}{\operatorname{maj}}
\newcommand{\conv}{\operatorname{conv}}
\newcommand{\vol}{\operatorname{vol}}
\newcommand{\reg}{\operatorname{reg}}
\newcommand{\Lie}{\operatorname{Lie}}
\newcommand{\D}{\mathcal D}
\newcommand{\Rspace}{\mathscr R}
\newcommand{\Ccone}{\mathscr C}
\newcommand{\Kcone}{\mathscr K}
\newcommand{\Vlat}{\mathscr V}
\newcommand{\Flattice}{\mathscr L^{\mathrm F}}
\newcommand{\Bpart}{\mathscr B}
\newcommand{\Ldim}{L^{\mathrm{dim}}}
\newcommand{\one}{\mathbf 1}
\newcommand{\eps}{\varepsilon}
\newcommand{\sq}{\mathrm{sq}}

\newcommand{\set}[1]{\left\{#1\right\}}

\newcommand{\inner}[2]{\left\langle #1,#2\right\rangle}
\title[The Cyclic-Induction Schur Cone]{The Cyclic-Induction Schur Cone:
Boolean Sums, Ramanujan-Square Positivity, and Integral Structure}

\author[Y.-T. Oh]{Young-Tak Oh}
\address{Department of Mathematics and Institute for Mathematical and Data Sciences\\
Sogang University, Seoul 04107, Republic of Korea}
\email{ytoh@sogang.ac.kr}

\subjclass[2020]{Primary 05E10; Secondary 20C30, 52B20, 05E05, 11A25}
\keywords{Schur positivity, cyclic induction, Foulkes characters,
Ramanujan sums, divisor lattices, lattice polytopes}
\date{}

\hypersetup{
  pdftitle={The Cyclic-Induction Schur Cone: Boolean Sums, Ramanujan-Square Positivity, and Integral Structure},
  pdfauthor={Young-Tak Oh},
  pdfsubject={Boolean Q-sums, Ramanujan-square functions, normalized Schur-positive regions, and integral structure},
  pdfkeywords={Schur positivity, cyclic induction, Foulkes characters,
  Ramanujan sums, divisor lattices, lattice polytopes}
}

\begin{document}
\begin{abstract}
We study the Schur-positive cone in
$\Rspace_{n,\mathbb R}\coloneqq
\operatorname{span}_{\mathbb R}\{p_d^{n/d}:d\mid n\}$ through its basis
$Q_{n,d}\coloneqq\ell_{n/d}^{(1)}[p_d]$, where $\ell_m^{(1)}$ is the
Frobenius characteristic of the representation induced to $S_m$ from a
faithful linear character of the subgroup generated by an $m$-cycle;
brackets denote plethysm.
A Boolean $Q$-sum is a sum of distinct elements of this basis.

We give a unified proof of four conjectures of Sundaram on Schur positivity
by classifying all Schur-positive Boolean $Q$-sums; the case of sums over
divisors up to a prescribed bound recovers Hou's theorem.
Specifically, for a nonempty set
$J$ of divisors of $n$, the sum $\sum_{d\in J}Q_{n,d}$ is Schur-positive
exactly when $1\in J$ and, for even $n$, $n\in J$ implies $n/2\in J$.
The same character estimates prove the Ramanujan-square conjecture of
Shareshian and Sundaram: the function
$\sum_{d\mid n}c_d(n/d)^2p_d^{n/d}$, where $c_d(r)$ is the Ramanujan sum,
has a positive coefficient of $s_\lambda$ for every $n\ge1$ and
$\lambda\vdash n$, except when $n\equiv2\pmod4$ and $\lambda=(1^n)$,
in which case the coefficient is zero.

We prove that an element of this space has integral Schur coefficients
if and only if its $Q$-coordinates are integral.
The Boolean classification also determines the convex hull of the
Schur-positive Boolean points with $Q_{n,1}$-coordinate $1$.
We compute its Ehrhart polynomial and volume, prove its integer
decomposition property, and determine the Hilbert basis of its cone.
For $n\ge18$, we prove that setting the coefficient of $s_{(n)}$
or $s_{(1^n)}$ equal to $0$ or $1$ defines a facet of the section
of the Schur-positive cone with $Q_{n,1}$-coordinate $1$.
The positivity results and coordinate formulas also yield inequalities
for major-index residue multiplicities.
\end{abstract}
\maketitle

\section{Introduction}\label{sec:introduction}
Characters induced from a subgroup generated by an $n$-cycle are
supported on the cycle types $(d^{n/d})$, $d\mid n$.  Their Frobenius
characteristics span the corresponding power-sum space, but their
nonnegative span need not exhaust its Schur-positive cone.
We give a unified proof of four conjectures of Sundaram by classifying
the Schur-positive Boolean sums defined below; the case of sums over divisors up to a
prescribed bound recovers Hou's theorem.  The same character estimates
prove the Ramanujan-square conjecture of Shareshian and Sundaram.
We also identify the integral character lattice of this space and derive
geometric and representation-theoretic consequences of the positivity results.

Let $\Lambda_{\mathbb Q}$ denote the ring of symmetric functions over
$\mathbb Q$, with homogeneous component $\Lambda_{\mathbb Q}^n$.
We write $p_\lambda$ and $s_\lambda$ for the power-sum and Schur functions,
and $[p_\lambda]F$ and $[s_\lambda]F$ for their coefficients in $F$.
A symmetric function is \emph{Schur-positive} if all its Schur
coefficients are nonnegative real numbers; zero coefficients are allowed.
Define
\begin{equation}\label{eq:intro-cone}
\begin{aligned}
 \Rspace_n&\coloneqq
   \operatorname{span}_{\mathbb Q}\{p_d^{n/d}:d\mid n\},\\
 \Rspace_{n,\mathbb R}&\coloneqq\Rspace_n\otimes_{\mathbb Q}\mathbb R,\\
 \Ccone_n&\coloneqq
   \{F\in\Rspace_{n,\mathbb R}:[s_\lambda]F\ge0
                     \text{ for all }\lambda\vdash n\}.
\end{aligned}
\end{equation}
We call $\Ccone_n$ the \emph{cyclic-induction Schur cone}.
Here and below, the coefficient of $s_{(n)}$ is the \emph{trivial Schur
coefficient}, and the coefficient of $s_{(1^n)}$ is the \emph{sign Schur
coefficient}.  For the Frobenius characteristic of an $S_n$-module,
these are the
multiplicities of the trivial and sign representations, respectively.

Put $C_n\coloneqq\langle\mathsf c\rangle$, where
$\mathsf c\coloneqq(1\,2\,\cdots\,n)$, and let $V_r$ be the
one-dimensional $C_n$-module on which $\mathsf c$ acts by
$\zeta_n^r$, with $\zeta_n\coloneqq\exp(2\pi i/n)$.
Thus $V_{r+n}=V_r$, and $V_n=V_0$.  Write $\ch$ for the
Frobenius characteristic map, and set
\[
 \Lie_n^{(r)}\coloneqq\operatorname{Ind}_{C_n}^{S_n}V_r,
 \qquad \ell_n^{(r)}\coloneqq\ch\Lie_n^{(r)}.
\]
For a finite-dimensional module $M$ with character $\chi_M$, we use $\ch M$ to mean
$\ch(\chi_M)$.  Klyachko identified $\Lie_n^{(1)}$ with the
multilinear component of the free Lie algebra on $n$ generators
\cite{Klyachko}.
The functions $\ell_n^{(r)}$ are the \emph{Foulkes characteristics}.
For $r\mid n$, they form a basis of $\Rspace_n$
\cite[Proposition~17]{ShareshianSundaram}; see
\Cref{key:proposition-cyclic-induction-space} for the corresponding
description in terms of class functions.

Let $\D(n)\coloneqq\{d\ge1:d\mid n\}$, ordered by divisibility.
To study Sundaram's functions, we use the basis
\[
 Q_{n,d}\coloneqq\ell_{n/d}^{(1)}[p_d]\qquad(d\mid n),
\]
where brackets denote plethysm.  Sundaram's identities
\cite[Theorem~6.4 and Corollary~6.12]{SundaramVariations} give
\[
 \ell_n^{(r)}=\sum_{d\mid r}Q_{n,d}\quad(r\mid n),
 \qquad
 f_n^T\coloneqq\sum_{d\in T\cap\D(n)}Q_{n,d}
       \quad(T\subseteq\mathbb Z_{\ge1}).
\]
The functions $f_n^T$ were introduced in
\cite[Definitions~6.1--6.2]{SundaramVariations}.
M\"obius inversion shows that the $Q_{n,d}$ also form a basis of
$\Rspace_n$ and have integral Schur coefficients.
For $F=\sum_{d\mid n}x_d(F)Q_{n,d}$, we call the $x_d(F)$ its
\emph{$Q$-coordinates}.  A vector is \emph{Boolean} if every entry is
$0$ or $1$.  Accordingly, a \emph{Boolean $Q$-sum} is
\[
 Q_{n,J}\coloneqq\sum_{d\in J}Q_{n,d}\qquad(J\subseteq\D(n));
\]
we call $J$ its \emph{$Q$-support}.

Sundaram conjectured Schur positivity for the families $f_n^T$ with
\[
 T=\{1,k\},\qquad T=\{k^j:j\ge0\},\qquad T=\{1,2,\ldots,k\}.
\]
The first conjecture concerns $k=2$ and odd $k\ge3$, the second odd
$k\ge3$, and the third all $k\ge1$.
She also conjectured Schur positivity of
$\prod_{j\ge0}(1-p_{k^j})^{-1}$ for odd $k\ge3$.
These conjectures first appeared in
\cite[Conjectures~4--7]{SundaramVariations} and were subsequently stated
in \cite[Conjectures~1--4]{SundaramPrimePower}.
The product is understood degree by degree in symmetric functions.

In \Cref{thm:boolean-classification}, we prove that, for every
$J\subseteq\D(n)$, the sum $Q_{n,J}$ is Schur-positive if and only if
$J=\varnothing$, or
\begin{equation}\label{eq:intro-boolean-condition}
 1\in J,
 \qquad
 2\mid n\ \Longrightarrow\
 (n\in J\Longrightarrow n/2\in J).
\end{equation}
If $1\in J$ but the second condition fails, the coefficient of
$s_{(1^n)}$ is $-1$ and all other Schur coefficients are nonnegative.
Consequently, $f_n^T$ is Schur-positive for every $n$ precisely when
$T=\varnothing$, or $1\in T$ and $2m\in T$ implies $m\in T$
for every $m\ge1$ (\Cref{cor:global-boolean}).
This condition proves Sundaram's conjectures for
$T=\{1,k\}$ and $T=\{k^j:j\ge0\}$ and gives the complete classifications,
including the exceptional even values of $k$
(\Cref{cor:one-k-classification,cor:powers-classification}).
It also recovers Schur positivity for $T=\{1,2,\ldots,k\}$ for all
$n,k\ge1$, as proved by Hou \cite[Theorem~1.1]{Hou}.
Sundaram's plethystic identity for
$\prod_{d\in T}(1-p_d)^{-1}$ then gives the product conjecture
(\Cref{cor:product-implication}).

The second positivity problem lies in the same power-sum space but is
not restricted to Boolean $Q$-sums.  It is defined in terms of Ramanujan sums.
For $d\ge1$ and $r\in\mathbb Z$, let
\[
 c_d(r)\coloneqq
 \sum_{\substack{1\le j\le d\\\gcd(j,d)=1}}\exp(2\pi i jr/d).
\]
Foulkes' formula expresses $\ell_n^{(r)}$ in terms of these integers
\cite[Theorem~1]{Foulkes}.  Shareshian and Sundaram considered
$\sum_{d\mid n}c_d(n/d)^u p_d^{n/d}$ for nonnegative integers $u$,
with every weight understood to be $1$ when $u=0$.
They proved Schur positivity for $u=0,1$ by expressing the functions
as nonnegative integral combinations of Foulkes characteristics
\cite[Theorem~36(1)]{ShareshianSundaram}, and conjectured
Schur positivity for $u=2$ \cite[Conjecture~38]{ShareshianSundaram}.
We prove this conjecture in \Cref{thm:ramanujan-square}.  In fact,
\[
 R_n^{\sq}\coloneqq\sum_{d\mid n}c_d(n/d)^2p_d^{n/d}
\]
satisfies, for every $n\ge1$ and $\lambda\vdash n$,
\[
 [s_\lambda]R_n^{\sq}\ge0,
 \qquad
 [s_\lambda]R_n^{\sq}=0
 \ \Longleftrightarrow\
 \bigl(n\equiv2\pmod4\ \text{and}\ \lambda=(1^n)\bigr).
\]

Both positivity arguments use the expression
\[
 F_n(w)\coloneqq\frac1n\sum_{d\mid n}w(d)p_d^{n/d},
 \qquad w(1)=1,
\]
and the coefficient formula
\[
 n[s_\lambda]F_n(w)
 =f^\lambda+
   \sum_{\substack{d\mid n\\d>1}}w(d)\chi^\lambda_{(d^{n/d})}.
\]
Here $\chi^\lambda$ is the irreducible character indexed by $\lambda$,
$\chi^\lambda_{(d^{n/d})}$ is its value on that cycle type, and
$f^\lambda\coloneqq\chi^\lambda(1)$.
This applies to every Boolean sum containing $Q_{n,1}$ and to
$R_n^{\sq}/n$.  For $n\ge18$, the sum over $d>1$ has absolute value less
than $f^\lambda$, except at the four partitions
$(n),(n-1,1),(2,1^{n-2})$, and $(1^n)$, whose coefficients are explicit.
Following Hou \cite[Sections~5--6]{Hou}, we use the Bernstein identity
and a hook-length comparison for partitions with a first row or column
longer than $n/2$, and Swanson's dimension estimate and the
Fomin--Lulov character bound for the remaining partitions.
The extension to arbitrary Boolean supports and to Ramanujan-square
weights requires estimates beyond a uniform bound $|w(d)|\le\kappa d$:
neither family admits such a constant $\kappa$
(\Cref{rem:boolean-not-linear,rem:ramanujan-not-linear}).
\Cref{thm:criterion} gives a common criterion; the weight estimates in
\Cref{sec:boolean,sec:ramanujan} verify its hypotheses, and
\Cref{app:computations} treats the cases $n<18$ using individual character
estimates, known nonnegative Foulkes expansions, and explicit character
calculations.

To relate Schur positivity to representations, we determine the integral
lattice in $\Rspace_{n,\mathbb R}$.
With
$\Lambda_{\mathbb Z}^n\coloneqq
\bigoplus_{\lambda\vdash n}\mathbb Zs_\lambda$,
\Cref{thm:Q-lattice-saturation} proves
\begin{equation}\label{eq:intro-lattice-saturation}
 \Rspace_{n,\mathbb R}\cap\Lambda_{\mathbb Z}^n
 =\bigoplus_{d\mid n}\mathbb ZQ_{n,d}
 =\bigoplus_{d\mid n}\mathbb Z\ell_n^{(d)}.
\end{equation}
The proof recovers the coordinates successively from selected two-row
Schur coefficients and, in even degree, the coefficient of $s_{(1^n)}$.
Nonnegative integral Foulkes coordinates describe modules induced
from $C_n$, whereas integral Schur-positive functions can have negative
Foulkes coordinates.  For a Boolean sum, nonnegative Foulkes coordinates
occur exactly when $J=\varnothing$ or
$J=\D(r)\coloneqq\{d\in\D(n):d\mid r\}$ for some $r\mid n$
(\Cref{prop:fixed-foulkes}).

The distinction between Schur positivity and nonnegative Foulkes
coordinates has a geometric form.
Normalize by $x_1=1$ and put
\[
 \begin{aligned}
 P_n^Q&\coloneqq
 \{x\in\mathbb R^{\D(n)}:x_1=1,\
        \sum_{d\mid n}x_dQ_{n,d}\in\Ccone_n\},\\
 \Delta_n&\coloneqq\conv\{\one_{\D(r)}:r\mid n\},
 \end{aligned}
\]
where $\conv$ denotes convex hull and $\one_A$ denotes the indicator
vector of $A$.  Then $\Delta_n$ consists of the $Q$-coordinate vectors of
normalized nonnegative combinations of Foulkes characteristics.
If $H_n$ is the convex hull of
the Boolean points of $P_n^Q$, the classification gives
\begin{equation}\label{eq:intro-nested-regions}
 \Delta_n\subseteq H_n\subseteq P_n^Q,
 \qquad H_n=P_n^Q\cap[0,1]^{\D(n)}.
\end{equation}
The polytope $H_n$ is a cube in odd degree, an interval for $n=2$,
and a product of a cube and a triangle in even degree at least four
(\Cref{thm:boolean-core}).  This description gives its Ehrhart polynomial,
volume, integer decomposition property, and the Hilbert basis of its cone.
For these terms and the volume convention, see
\Cref{subsec:foulkes-simplex,subsec:boolean-polytope} and
\cite{BeckRobins,BrunsGubeladze}.
\Cref{ex:degree-nine-regions} determines these regions in degree $9$:
the $Q$-coordinate vector of $f_9^{\{1,9\}}$ lies in
$H_9\setminus\Delta_9$, whereas that of $R_9^{\sq}/9$
lies in $P_9^Q\setminus H_9$.

The trivial and sign Schur coefficients that enter both positivity
arguments also define boundary facets.  Define
\begin{equation}\label{eq:intro-endpoint-map}
 \pi_n(F)\coloneqq([s_{(n)}]F,[s_{(1^n)}]F),
 \qquad
 \pi_n^Q(x)\coloneqq\pi_n\left(\sum_{d\mid n}x_dQ_{n,d}\right).
\end{equation}
For $n\ge18$, \Cref{thm:endpoint-facets} determines the images of
$\Delta_n,H_n,P_n^Q$ under this map and the facets of $P_n^Q$ obtained by setting
either coefficient equal to $0$ or $1$.  Thus the same coefficients
govern the Boolean classification, the exceptional zeros of
$R_n^{\sq}$, and these boundary faces.

For $n\ge2$, an integral Schur-positive $F$ with $x_1(F)=1$ is the
Frobenius characteristic of an $S_n$-module of dimension $(n-1)!$
whose restriction to $S_{n-1}$ is regular (\Cref{prop:branching}).
The Kra\'skiewicz--Weyman theorem \cite[Theorem~1]{KW} identifies the
Schur coefficients of $\ell_n^{(r)}$ with major-index residue
multiplicities of standard Young tableaux.  Applying this identification
to the $Q$-expansions gives the inequalities in
\Cref{sec:lifts-residue}, including the equality cases for
$Q$-supports $\{1,k\}$ with composite $k\mid n$ and $n>k$.

The results are organized as follows.
\Cref{sec:coordinates} establishes the coordinate changes and integral
lattice, and \Cref{sec:criterion} proves the common character criterion.
\Cref{sec:boolean} classifies the Schur-positive Boolean sums and deduces
Sundaram's four conjectures.
\Cref{sec:boolean-geometry} determines the geometry and integral
decompositions of $H_n$.
\Cref{sec:ramanujan} proves the Ramanujan-square conjecture.
\Cref{sec:boundary} then compares the boundary faces of the normalized
regions, and \Cref{sec:lifts-residue} develops the module and tableau
consequences.
\Cref{sec:further-questions} lists three open problems.
The appendices supply the estimates and finite calculations required
by the positivity proofs.

\section{Coordinate systems, cyclic lifts, and the integral character lattice}
\label{sec:coordinates}

Power-sum coordinates determine class-function values, while Foulkes
coordinates specify a linear combination of characters induced from $C_n$.
The Boolean sums in Sundaram's conjectures have $Q$-coordinates equal
to $0$ or $1$.  We use these coordinate descriptions to distinguish
Schur positivity from nonnegative Foulkes coordinates, identify the
integral character lattice, and express Schur coefficients in terms of
major-index residue multiplicities.

\subsection{Foulkes characteristics, the \texorpdfstring{$Q$}{Q}-basis, and major index}
\label{subsec:foulkes-bases}

All representations are over $\mathbb C$.  We use the conventions of
\cite[Chapter~I]{Macdonald} for symmetric functions, the Hall inner
product, the Frobenius characteristic, and plethysm.
Put $\mathbb N\coloneqq\set{1,2,\ldots}$,
$[m]\coloneqq\set{1,\ldots,m}$ for $m\ge1$, $[0]\coloneqq\varnothing$,
and $\zeta_d\coloneqq\exp(2\pi i/d)$ for $d\ge1$.
The symbols $\mu$ and $\phi$ denote the number-theoretic M\"obius and
Euler totient functions, and $\rad(m)\coloneqq\prod_{p\mid m}p$ is the
squarefree radical of $m\ge1$.
For a statement $P$, let $\one_P$ be $1$ if $P$ holds and $0$ otherwise;
for $A\subseteq\D(n)$, put $\one_A\coloneqq(\one_{d\in A})_{d\mid n}$.

We retain the notation $\Lambda_{\mathbb Q}$, $\Lambda_{\mathbb Q}^n$,
$p_\lambda$, and $s_\lambda$ from the introduction, and write
$\Lambda_{\mathbb Z}^n\coloneqq\bigoplus_{\lambda\vdash n}\mathbb Zs_\lambda$
for the degree-$n$ integral Schur lattice.
Write $h_r,e_r$ for the complete homogeneous and elementary symmetric
functions.  For $\lambda\vdash n$, let $\lambda'$ be its conjugate,
$S^\lambda$ the Specht module, $\chi^\lambda$ its character, and
$f^\lambda\coloneqq\dim S^\lambda$; $\chi^\lambda_\rho$ denotes the value
on permutations of cycle type $\rho$.
With $\CF(S_n)$ denoting the complex vector space of class functions
on $S_n$, write
\[
 \ch:\CF(S_n)\longrightarrow
 \Lambda_{\mathbb Q}^n\otimes_{\mathbb Q}\mathbb C.
\]
For a finite-dimensional $S_n$-module $M$, put $\ch M\coloneqq\ch(\chi_M)$ and extend
this notation $\mathbb Z$-linearly to virtual modules.
For $F,G$ of the same degree, write $F\ge_s G$ if $F-G$ is Schur-positive;
for formal series, interpret this coefficientwise.

Recall that $\Lie_n^{(r)}=\operatorname{Ind}_{C_n}^{S_n}V_r$ and
$\ell_n^{(r)}=\ch\Lie_n^{(r)}$, with $V_r$ defined in the introduction.
The character of $\Lie_n^{(r)}$ is a Foulkes character, whereas
$\ell_n^{(r)}$ denotes its Frobenius characteristic.
For the Ramanujan sums $c_d(r)$ defined in the introduction, we use
the standard formulas
\begin{equation}\label{eq:ramanujan-standard}
 c_d(r)=\sum_{e\mid\gcd(d,r)}e\mu(d/e)
 =\frac{\phi(d)}{\phi(d/\gcd(d,r))}
  \mu\left(\frac d{\gcd(d,r)}\right).
\end{equation}
The divisor-sum formula follows by M\"obius inversion; the formula on the
right is due to Kluyver \cite[p.~410]{Kluyver}.  See also
\cite[Section~2.1]{ShareshianSundaram}.
Foulkes' formula \cite[Theorem~1]{Foulkes} is
\begin{equation}\label{eq:foulkes}
 \ell_n^{(r)}=\frac1n\sum_{d\mid n}c_d(r)p_d^{n/d}.
\end{equation}
It gives the classification
\[
 \Lie_n^{(r)}\cong\Lie_n^{(r')}
 \quad\Longleftrightarrow\quad
 \gcd(n,r)=\gcd(n,r')
 \qquad(1\le r,r'\le n).
\]
The forward implication follows by comparing coefficients and using
\[
 \sum_{e\mid d}c_e(r)=d\one_{d\mid r};
\]
the converse follows from
\eqref{eq:ramanujan-standard}.

For $T\subseteq\mathbb N$, the functions $f_n^T$ introduced above have
the power-sum expansion
\begin{equation}\label{eq:psi-fT}
 \psi^T(d)\coloneqq\sum_{\substack{e\mid d\\e\in T}}e\mu(d/e),
 \qquad
 f_n^T=\frac1n\sum_{d\mid n}\psi^T(d)p_d^{n/d}.
\end{equation}
For a nonempty set $T$, the functions $f_n^T$ were introduced in
\cite[Definitions~6.1--6.2]{SundaramVariations}; see also
\cite[Definitions~4.1--4.2]{SundaramPrimePower}.
For the empty set, $f_n^\varnothing=0$.
Recall that $Q_{n,d}=\ell_{n/d}^{(1)}[p_d]$ for $d\mid n$.
Since the $\ell_n^{(e)}$, $e\mid n$, form a basis of $\Rspace_n$
\cite[Proposition~17]{ShareshianSundaram}, Sundaram's identities below
give the $Q$-basis by M\"obius inversion.

\begin{proposition}[{\cite[Theorem~6.4 and Corollary~6.12]{SundaramVariations}}]
\label{prop:Q-basis}
The functions $Q_{n,d}$, where $d\mid n$, form a basis of $\Rspace_n$.
The change-of-basis formulas are
\begin{align}
 \ell_n^{(r)}&=\sum_{d\mid r}Q_{n,d} &&(r\mid n),
 \label{eq:zeta-Q-Foulkes}\\
 Q_{n,d}&=\sum_{e\mid d}\mu(d/e)\ell_n^{(e)} &&(d\mid n).
 \label{eq:mobius-Q-Foulkes}
\end{align}
Moreover, for every $T\subseteq\mathbb N$,
\begin{equation}\label{eq:fT-Q-sum}
 f_n^T=\sum_{d\in T\cap\D(n)}Q_{n,d}.
\end{equation}
Consequently, every $f_n^T$ has integral Schur coefficients.
\end{proposition}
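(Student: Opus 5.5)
The plan is to prove \eqref{eq:zeta-Q-Foulkes} directly in power sums and deduce everything else formally. For the power-sum expansion of $Q_{n,d}$, apply Foulkes' formula \eqref{eq:foulkes} in degree $m=n/d$ with $r=1$, giving $\ell_m^{(1)}=\frac1m\sum_{e\mid m}\mu(e)p_e^{m/e}$, since $c_e(1)=\mu(e)$. Plethysm with $p_d$ sends $p_e$ to $p_{de}$, so
\[
 Q_{n,d}=\frac dn\sum_{e\mid n/d}\mu(e)\,p_{de}^{n/(de)}
 =\frac1n\sum_{\substack{f\mid n\\ d\mid f}}d\,\mu(f/d)\,p_f^{n/f}.
\]
Thus $Q_{n,d}\in\Rspace_n$ for every $d\mid n$.

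To obtain \eqref{eq:zeta-Q-Foulkes}, fix $r\mid n$ and sum over $d\mid r$. The coefficient of $\frac1np_f^{n/f}$ in $\sum_{d\mid r}Q_{n,d}$ is $\sum_{d\mid\gcd(f,r)}d\,\mu(f/d)$. By the divisor-sum form of \eqref{eq:ramanujan-standard}, this equals $c_f(r)$, so the sum agrees with $\ell_n^{(r)}$ by \eqref{eq:foulkes}. The relation \eqref{eq:mobius-Q-Foulkes} then follows by Möbius inversion on $\D(n)$, viewing \eqref{eq:zeta-Q-Foulkes} as $\ell_n^{(\cdot)}=\sum_{d\mid\cdot}Q_{n,d}$ for functions on the divisor lattice.

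For the basis claim, the $\ell_n^{(r)}$ with $r\mid n$ form a basis of $\Rspace_n$ by \cite[Proposition~17]{ShareshianSundaram}. The transition matrix in \eqref{eq:zeta-Q-Foulkes} is the zeta matrix of $\D(n)$, which is unitriangular and hence invertible over $\mathbb Z$. So the $Q_{n,d}$ form a basis as well. If one wants independence from the cited result, the power-sum expansion above is itself triangular in divisibility order: the coefficient of $p_d^{n/d}$ in $Q_{n,d}$ is $d/n\neq0$, and only $p_f^{n/f}$ with $d\mid f$ occur. The $p_f^{n/f}$ are linearly independent, so this gives independence of the $Q_{n,d}$ directly.

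For \eqref{eq:fT-Q-sum}, compare with \eqref{eq:psi-fT}. The coefficient of $\frac1np_f^{n/f}$ in $\sum_{d\in T\cap\D(n)}Q_{n,d}$ is $\sum_{d\mid f,\ d\in T}d\,\mu(f/d)=\psi^T(f)$, which is exactly the coefficient in $f_n^T$.

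Finally, for integrality, each $\ell_n^{(e)}$ is the Frobenius characteristic of an honest module and so lies in $\Lambda^n_{\mathbb Z}$. By \eqref{eq:mobius-Q-Foulkes}, each $Q_{n,d}$ is an integral combination of these, and hence so is every $f_n^T$. The only point needing care is the reindexing $f=de$ in the plethysm computation, together with the check that $c_f(r)$ depends only on $\gcd(f,r)$ in the divisor-sum formula. Both are routine, so I expect no step to be a genuine obstacle.
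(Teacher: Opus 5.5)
Your proof is correct, and it follows a more self-contained route than the paper. The paper computes nothing at this point. It takes \eqref{eq:fT-Q-sum} from Sundaram \cite[Theorem~6.4, equation~(6.2)]{SundaramVariations} and gets \eqref{eq:zeta-Q-Foulkes} by specializing to $T=\D(r)$, where $\psi^{\D(r)}(f)=c_f(r)$ and hence $f_n^{\D(r)}=\ell_n^{(r)}$. It then obtains \eqref{eq:mobius-Q-Foulkes} by M\"obius inversion. The basis property comes from \cite[Proposition~17]{ShareshianSundaram} together with the unitriangular zeta matrix.

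You instead verify both identities directly by comparing power-sum coefficients. For \eqref{eq:zeta-Q-Foulkes} you use the expansion \eqref{eq:Q-rectangular-expansion}, which the paper records only after the proposition, together with the divisor-sum form of \eqref{eq:ramanujan-standard}. The same comparison against \eqref{eq:psi-fT} gives \eqref{eq:fT-Q-sum} independently, without passing through the specialization.

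Your triangularity argument (coefficient $d/n$ at $p_d^{n/d}$, support only on $p_f^{n/f}$ with $d\mid f$) proves the basis claim without Proposition~17. It in fact recovers that proposition, since the $\ell_n^{(r)}$ are the zeta transform of a basis.

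Your route is independent of both citations and uses only Foulkes' formula and the Ramanujan-sum identity. The paper's route is shorter and ties the identities to Sundaram's framework; the same theorem is cited later for the plethystic product identity in \Cref{cor:product-implication}. The integrality step is identical in both.
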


Formula~\eqref{eq:fT-Q-sum} is recorded in
\cite[equation~(6.2)]{SundaramVariations}, and
\eqref{eq:zeta-Q-Foulkes} is its specialization in
\cite[Corollary~6.12]{SundaramVariations}.  The inverse formula
\eqref{eq:mobius-Q-Foulkes} follows by M\"obius inversion.

\begin{proposition}\label{key:proposition-cyclic-induction-space}
The Frobenius characteristic map identifies each of the following subspaces
of $\CF(S_n)$ with
$\Rspace_n\otimes_{\mathbb Q}\mathbb C$:
\begin{enumerate}[label=\textup{(\roman*)}]
\item the image of
$\operatorname{Ind}_{C_n}^{S_n}:\CF(C_n)\to\CF(S_n)$, and
\item the space of class functions on $S_n$ supported on conjugacy classes
that meet $C_n$.
\end{enumerate}
\end{proposition}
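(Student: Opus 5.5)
We identify the three spaces directly by comparing supports and dimensions. Let $W\subseteq\CF(S_n)$ be the space in (ii). An element $\sigma\in C_n$ of the form $\mathsf c^k$ has cycle type $(d^{n/d})$ with $d=n/\gcd(n,k)$. Since $\mathsf c^{n/d}$ has order $d$, every $d\mid n$ occurs. So the conjugacy classes meeting $C_n$ are exactly the classes of cycle type $(d^{n/d})$ for $d\mid n$. Hence $W$ has a basis consisting of the indicator functions of these $\tau(n)=|\D(n)|$ classes.

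Under the Frobenius characteristic, a class function $\chi$ maps to $\sum_\rho z_\rho^{-1}\chi(\rho)p_\rho$. The indicator of the class $(d^{n/d})$ therefore maps to $z_{(d^{n/d})}^{-1}p_d^{n/d}$. This gives $\ch(W)=\Rspace_n\otimes_{\mathbb Q}\mathbb C$, and proves (ii).

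For (i), let $I$ denote the image of $\operatorname{Ind}_{C_n}^{S_n}$. First we show $I\subseteq W$. The Frobenius formula for induced class functions,
\[
\operatorname{Ind}\psi(g)=\frac1{|C_n|}\sum_{x\in S_n,\ x^{-1}gx\in C_n}\psi(x^{-1}gx),
\]
vanishes at any $g$ whose class does not meet $C_n$, so $I\subseteq W$.

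Next we show $I\supseteq W$, using that $\CF(C_n)$ is spanned by the characters of the modules $V_r$. By Foulkes' formula \eqref{eq:foulkes}, $\ch\operatorname{Ind}V_r=\ell_n^{(r)}$. By \cite[Proposition~17]{ShareshianSundaram} (equivalently \cref{prop:Q-basis}), the functions $\ell_n^{(r)}$ for $r\mid n$ span $\Rspace_n$. Thus $\ch(I)\supseteq\Rspace_n\otimes\mathbb C=\ch(W)$. Combined with $I\subseteq W$ and the injectivity of $\ch$, this gives $I=W$.

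If we want to avoid citing the basis result, there is a self-contained route. Consider the matrix $\bigl(c_d(r)\bigr)_{d,r\mid n}$. Since $\sum_{e\mid d}c_e(r)=d\,\one_{d\mid r}$, this matrix is the Möbius transform of the matrix $\bigl(d\,\one_{d\mid r}\bigr)_{d,r\mid n}$, which is triangular with nonzero diagonal. Hence the matrix of Ramanujan sums is invertible, so the $\ell_n^{(r)}$ with $r\mid n$ are linearly independent, hence span the $\tau(n)$-dimensional space.

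The only point needing care is the surjectivity in (i), that is, the nondegeneracy of the Ramanujan-sum matrix. The triangularity argument above handles it.
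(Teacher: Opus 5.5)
Your proof is correct and follows the paper's argument. Induction lands in the class functions supported on the types $(d^{n/d})$; that space has dimension $|\D(n)|$ and Frobenius image $\Rspace_n\otimes_{\mathbb Q}\mathbb C$; and the linear independence of the $\ell_n^{(e)}$, $e\mid n$, forces equality. Your optional Ramanujan-matrix argument, via $\sum_{e\mid d}c_e(r)=d\one_{d\mid r}$ and Foulkes' formula, validly replaces the paper's citation of Shareshian--Sundaram's Proposition~17 with a self-contained proof.
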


\begin{proof}
Induction gives the inclusion \textup{(i)}$\subseteq$\textup{(ii)}.
The classes meeting $C_n$ have types $(d^{n/d})$, so \textup{(ii)} has
dimension $|\D(n)|$.  The induced characters from $V_e$, $e\mid n$,
have linearly independent characteristics spanning $\Rspace_n$
\cite[Proposition~17]{ShareshianSundaram}.  Hence both spaces have the
stated Frobenius image.
\end{proof}

Foulkes' formula and $c_u(1)=\mu(u)$ give
\begin{equation}\label{eq:Q-rectangular-expansion}
 Q_{n,d}=\frac dn\sum_{u\mid n/d}\mu(u)p_{du}^{n/(du)}.
\end{equation}

We use the English convention for Young diagrams.  For $\lambda\vdash n$,
let $\operatorname{SYT}(\lambda)$ be the set of standard Young tableaux of
shape $\lambda$.  For $U\in\operatorname{SYT}(\lambda)$, set
\[
 \begin{aligned}
  \operatorname{Des}(U)
    &\coloneqq\set{i\in[n-1]:i+1\text{ lies in a lower row than }i},\\
  \maj(U)&\coloneqq\sum_{i\in\operatorname{Des}(U)}i.
 \end{aligned}
\]
For $r\in\mathbb Z$, define
\[
 \begin{aligned}
 \operatorname{SYT}_r(\lambda)
 &\coloneqq\set{U\in\operatorname{SYT}(\lambda):
                  \maj(U)\equiv r\pmod n},\\
 a_{\lambda,r}&\coloneqq\#\operatorname{SYT}_r(\lambda).
 \end{aligned}
\]
The Kra\'skiewicz--Weyman theorem \cite[Theorem~1]{KW} gives
\begin{equation}\label{eq:KW}
 [s_\lambda]\ell_n^{(r)}=a_{\lambda,r}.
\end{equation}
The second subscript of $a_{\lambda,r}$ is always read modulo $n$.  In
particular, $a_{\lambda,n}=a_{\lambda,0}$.

For $d\mid n$, put
\begin{equation}\label{eq:gamma-major-index-definition}
 \gamma_{\lambda,d}\coloneqq[s_\lambda]Q_{n,d}
 =\sum_{e\mid d}\mu(d/e)a_{\lambda,e}.
\end{equation}

For $x=(x_d)_{d\mid n}\in\mathbb R^{\D(n)}$, define
\begin{equation}\label{eq:weighted-Q-and-beta}
 \mathcal Q_n(x)\coloneqq\sum_{d\mid n}x_dQ_{n,d},
 \qquad
 \beta_x(e)\coloneqq\sum_{\substack{d\mid n\\e\mid d}}\mu(d/e)x_d
 \quad(e\mid n).
\end{equation}
Also define
\begin{equation}\label{eq:weighted-beta-parts}
 \beta_x^+(e)\coloneqq\max\{\beta_x(e),0\},
 \qquad
 \beta_x^-(e)\coloneqq\max\{-\beta_x(e),0\}.
\end{equation}
M\"obius inversion gives the following change-of-basis formulas.

\begin{proposition}
\label{prop:weighted-major-index-dictionary}
For every $x\in\mathbb R^{\D(n)}$,
\begin{align}
 \mathcal Q_n(x)&=\sum_{e\mid n}\beta_x(e)\ell_n^{(e)},
 \label{eq:weighted-Foulkes-expansion}\\
 x_d&=\sum_{\substack{e\mid n\\d\mid e}}\beta_x(e)
 \qquad(d\mid n),
 \label{eq:weighted-beta-inverse}\\
 [s_\lambda]\mathcal Q_n(x)
 &=\sum_{e\mid n}\beta_x(e)a_{\lambda,e}
  =\sum_{d\mid n}x_d\gamma_{\lambda,d}
 \qquad(\lambda\vdash n).
 \label{eq:weighted-major-index-coefficient}
\end{align}
Moreover, $x\in\mathbb Z^{\D(n)}$ if and only if
$\beta_x(e)\in\mathbb Z$ for every $e\mid n$.  The function
$\mathcal Q_n(x)$ is Schur-positive if and only if
\begin{equation}\label{eq:weighted-residue-inequality}
 \sum_{e\mid n}\beta_x^-(e)a_{\lambda,e}
 \le \sum_{e\mid n}\beta_x^+(e)a_{\lambda,e}
 \qquad(\lambda\vdash n).
\end{equation}
\end{proposition}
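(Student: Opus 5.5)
The plan is to use nothing beyond \Cref{prop:Q-basis} and \eqref{eq:KW}, together with M\"obius inversion on the divisor lattice $\D(n)$.

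First, substitute \eqref{eq:mobius-Q-Foulkes} into the definition of $\mathcal Q_n(x)$ and interchange the sums:
\[
 \mathcal Q_n(x)=\sum_{d\mid n}x_d\sum_{e\mid d}\mu(d/e)\ell_n^{(e)}
 =\sum_{e\mid n}\Bigl(\sum_{\substack{d\mid n\\e\mid d}}\mu(d/e)x_d\Bigr)\ell_n^{(e)} .
\]
This is \eqref{eq:weighted-Foulkes-expansion}. For \eqref{eq:weighted-beta-inverse}, we can either substitute \eqref{eq:zeta-Q-Foulkes} back in and compare coefficients in the $Q$-basis, which is legitimate because the $Q_{n,d}$ form a basis, or apply dual M\"obius inversion on $\D(n)$ directly. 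In the direct route the key identity is
\[
 \sum_{\substack{e\mid n\\ d\mid e}}\beta_x(e)=\sum_{\substack{f\mid n\\ d\mid f}}x_f\sum_{d\mid e\mid f}\mu(f/e)=x_d ,
\]
since the inner sum equals $\one_{f=d}$.

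Second, take $[s_\lambda]$ of \eqref{eq:weighted-Foulkes-expansion} and use \eqref{eq:KW}. This gives the first expression in \eqref{eq:weighted-major-index-coefficient}. The second expression follows by linearity from $\gamma_{\lambda,d}=[s_\lambda]Q_{n,d}$, as defined in \eqref{eq:gamma-major-index-definition}.

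The integrality claim follows from the triangular formulas with integer coefficients $\mu(\cdot)$ and $1$. If $x$ is integral, then each $\beta_x(e)$ is an integer combination of the $x_d$. Conversely, by \eqref{eq:weighted-beta-inverse}, each $x_d$ is a sum of the $\beta_x(e)$.

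Finally, Schur positivity means $[s_\lambda]\mathcal Q_n(x)\ge0$ for all $\lambda\vdash n$. We write $\beta_x(e)=\beta_x^+(e)-\beta_x^-(e)$ in the first expression of \eqref{eq:weighted-major-index-coefficient} and rearrange to obtain \eqref{eq:weighted-residue-inequality}.

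No step is a genuine obstacle. The only point requiring care is the interchange of divisor sums in the inversion step: the index sets must stay within $\D(n)$, so all $e,d,f$ range over divisors of $n$, and the M\"obius identity is applied on the interval $[d,f]$ of that lattice.
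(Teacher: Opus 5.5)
Your proposal is correct and follows essentially the same route as the paper: expand each $Q_{n,d}$ by \eqref{eq:mobius-Q-Foulkes}, invert by M\"obius inversion on $\D(n)$, take Schur coefficients using \eqref{eq:KW}, deduce integrality from the integral triangular change of coordinates, and split $\beta_x$ into its positive and negative parts. Your check that $\sum_{d\mid e\mid f}\mu(f/e)=\one_{f=d}$ simply makes explicit the inversion step that the paper leaves implicit.
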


\begin{proof}
Expand each $Q_{n,d}$ by \eqref{eq:mobius-Q-Foulkes}, apply M\"obius
inversion, and then take Schur coefficients using \eqref{eq:KW}.
Both coordinate transformations have integral entries, proving the
integrality assertion.  Separating the positive and negative parts of
$\beta_x$ gives \eqref{eq:weighted-residue-inequality}.
\end{proof}

For the Boolean sum $Q_{n,J}=\sum_{d\in J}Q_{n,d}$, taking $x=\one_J$ in
\eqref{eq:weighted-major-index-coefficient} gives
\begin{equation}\label{eq:boolean-major-index}
 [s_\lambda]Q_{n,J}
 =\sum_{d\in J}\sum_{e\mid d}\mu(d/e)a_{\lambda,e}.
\end{equation}
\subsection{Schur coefficients and the normalized slice}\label{subsec:normalized-cone}

A \emph{divisor weight} is a function $w:\D(n)\to\mathbb C$, with associated
symmetric function
\begin{equation}\label{eq:Fnw}
 F_n(w)\coloneqq\frac1n\sum_{d\mid n}w(d)p_d^{n/d}.
\end{equation}
For every partition $\lambda\vdash n$,
\begin{equation}\label{eq:general-schur-coefficient}
 n[s_\lambda]F_n(w)
 =w(1)f^\lambda+
 \sum_{\substack{d\mid n\\d>1}}w(d)\chi^\lambda_{(d^{n/d})}.
\end{equation}
When $w(1)=1$, the identity-class term is $f^\lambda$.
To prove positivity, we estimate the sum of the terms with $d>1$.

For a real divisor weight $w:\D(n)\to\mathbb R$, define
\begin{equation}\label{eq:tau-epsilon}
 \tau_n(w)\coloneqq\frac1n\sum_{d\mid n}w(d),
 \qquad
 \eps_n(w)\coloneqq\frac1n\sum_{d\mid n}(-1)^{n-n/d}w(d).
\end{equation}
Via the linear isomorphism $w\mapsto F_n(w)$, we also regard $\tau_n$ and
$\eps_n$ as linear functionals on $\Rspace_{n,\mathbb R}$.  Explicitly,
\[
 \tau_n(w)=[s_{(n)}]F_n(w),
 \qquad
 \eps_n(w)=[s_{(1^n)}]F_n(w).
\]
For $F_n(w)=\ch M$, these are the multiplicities of the trivial and
sign representations in $M$.

For $n\ge4$, we call
$(n),(n-1,1),(2,1^{n-2})$, and $(1^n)$ the four \emph{distinguished
partitions}.  They index, respectively, the trivial, standard,
sign-twisted standard, and sign representations.

\begin{lemma}
\label{lem:endpoint-coefficients}\label{eq:endpoint-explicit}
Let $n\ge4$, and let $w:\D(n)\to\mathbb R$ satisfy $w(1)=1$.  Then
\begin{align*}
 [s_{(n)}]F_n(w)&=\tau_n(w),&
 [s_{(n-1,1)}]F_n(w)&=1-\tau_n(w),\\
 [s_{(2,1^{n-2})}]F_n(w)&=1-\eps_n(w),&
 [s_{(1^n)}]F_n(w)&=\eps_n(w).
\end{align*}
\end{lemma}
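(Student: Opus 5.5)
We compute all four coefficients from the general formula \eqref{eq:general-schur-coefficient}, using the character values of the four distinguished representations on the classes $(d^{n/d})$.

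For $\lambda=(n)$ we have $\chi^{(n)}\equiv1$, so $n[s_{(n)}]F_n(w)=\sum_{d\mid n}w(d)$. This gives $\tau_n(w)$.

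For $\lambda=(1^n)$, $\chi^{(1^n)}$ is the sign character. A permutation of type $(d^{n/d})$ has sign $(-1)^{(d-1)n/d}=(-1)^{n-n/d}$. Hence the coefficient is $\eps_n(w)$, matching the definition \eqref{eq:tau-epsilon}. Neither computation uses $w(1)=1$.

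For $\lambda=(n-1,1)$ we use $\chi^{(n-1,1)}=\chi_{\mathrm{perm}}-1$, where $\chi_{\mathrm{perm}}$ counts fixed points. A permutation of type $(d^{n/d})$ has $n$ fixed points if $d=1$ and none if $d>1$. So
\[
 \chi^{(n-1,1)}_{(d^{n/d})}=(n-1)\one_{d=1}-\one_{d>1}.
\]
Substituting into \eqref{eq:general-schur-coefficient} gives
\[
 n[s_{(n-1,1)}]F_n(w)=(n-1)w(1)-\sum_{d>1}w(d)=n\,w(1)-\sum_{d\mid n}w(d).
\]
With $w(1)=1$ this becomes $n-n\tau_n(w)$, so the coefficient is $1-\tau_n(w)$.

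For $\lambda=(2,1^{n-2})=(n-1,1)'$ we use $\chi^{\lambda'}=\operatorname{sgn}\cdot\chi^\lambda$. This multiplies the previous values by $(-1)^{n-n/d}$, which equals $1$ at $d=1$. Therefore
\[
 n[s_{(2,1^{n-2})}]F_n(w)=n\,w(1)-\sum_{d\mid n}(-1)^{n-n/d}w(d)=n-n\eps_n(w),
\]
and the coefficient is $1-\eps_n(w)$.

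The only step needing care is the sign computation $(-1)^{(d-1)n/d}=(-1)^{n-n/d}$ together with the standard character identities above. The hypothesis $n\ge4$ only ensures that the four partitions are distinct; the formulas themselves hold for any $n\ge2$.
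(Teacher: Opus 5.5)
Your proposal is correct and follows the paper's proof: the trivial and sign coefficients are read off from \eqref{eq:general-schur-coefficient} and \eqref{eq:tau-epsilon}. The standard character takes the value $n-1$ at the identity and $-1$ on the classes $(d^{n/d})$ with $d>1$, which gives $1-\tau_n(w)$, and tensoring with the sign character gives $1-\eps_n(w)$. You additionally verify the sign $(-1)^{n-n/d}$ and the fixed-point description, and your remark that the formulas persist for $n\ge2$ is accurate.
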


\begin{proof}
The trivial and sign coefficients follow from \eqref{eq:tau-epsilon}.
On a class of type $(d^{n/d})$ with $d>1$, the standard character is
$-1$, whereas its value at the identity is $n-1$.  Thus
\eqref{eq:general-schur-coefficient} gives the coefficient
$1-\tau_n(w)$.  Tensoring with the sign character gives
$1-\eps_n(w)$ for the sign-twisted standard representation.
\end{proof}

For a set $J\subseteq\D(n)$, we write $\psi^J$ for the weight in
\eqref{eq:psi-fT}, viewing $J$ as a subset of $\mathbb N$.  Then
$Q_{n,J}=F_n(\psi^J)$.

\begin{lemma}
\label{lem:boolean-endpoints}
Suppose that $1\in J\subseteq\D(n)$.  Then
\begin{equation}\label{eq:boolean-endpoints}
 \tau_n(\psi^J)=\one_{n\in J},
 \qquad
 \eps_n(\psi^J)=(-1)^{n-1}\one_{n\in J}
 +\one_{2\mid n,\,n/2\in J}.
\end{equation}
\end{lemma}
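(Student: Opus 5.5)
We compute both functionals directly from the definition \eqref{eq:tau-epsilon}, with $w=\psi^J$, by exchanging the order of summation in \eqref{eq:psi-fT}. Since $\psi^J(d)=\sum_{e\mid d,\,e\in J}e\mu(d/e)$, we get
\[
 n\tau_n(\psi^J)=\sum_{e\in J}e\sum_{m\mid n/e}\mu(m),
\]
where we write $d=em$ with $m\mid n/e$. The inner sum equals $\one_{n/e=1}$, so only $e=n$ survives. This gives $n\tau_n(\psi^J)=n\one_{n\in J}$, which is the first formula. The hypothesis $1\in J$ is not needed here; it only guarantees that $w(1)=1$, so that the functionals agree with the Schur coefficients of \Cref{lem:endpoint-coefficients}.

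For $\eps_n$ we proceed the same way, but now the inner sum carries the sign $(-1)^{n-n/d}$ with $d=em$. Put $N\coloneqq n/e$, so that $n/d=N/m$. Then
\[
 n\eps_n(\psi^J)=\sum_{e\in J}e\,\sigma(e),
 \qquad
 \sigma(e)\coloneqq\sum_{m\mid N}\mu(m)(-1)^{n-N/m}.
\]
The key step is to evaluate $\sigma(e)$ by cases.

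If $n$ is odd, every $N/m$ is odd, so each sign $(-1)^{n-N/m}$ equals $+1$. Hence $\sigma(e)=\one_{N=1}$, and this case matches the formula, since $(-1)^{n-1}=1$ and the term $\one_{2\mid n,\,n/2\in J}$ is absent.

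If $n$ is even, the sign is $(-1)^{N/m}$, so we must evaluate $\sum_{m\mid N}\mu(m)(-1)^{N/m}$. Write $(-1)^{k}=1-2\cdot\one_{k\text{ odd}}$. The first part contributes $\one_{N=1}$. The odd part contributes $-2\sum_{m\mid N,\ N/m\text{ odd}}\mu(m)$. Write $N=2^aN'$ with $N'$ odd. The condition that $N/m$ is odd forces $m=2^am'$ with $m'\mid N'$, and squarefreeness forces $a\le1$. When $a=0$ the sum is $\sum_{m'\mid N'}\mu(m')=\one_{N=1}$. When $a=1$ it is $-\one_{N'=1}=-\one_{N=2}$.

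Collecting these pieces gives
\[
 \sigma(e)=\one_{N=1}-2\one_{N=1}+2\one_{N=2}=-\one_{N=1}+2\one_{N=2}.
\]
Substituting back,
\[
 n\eps_n(\psi^J)=-n\one_{n\in J}+2\cdot\tfrac n2\,\one_{n/2\in J},
\]
and dividing by $n$ gives $-\one_{n\in J}+\one_{n/2\in J}=(-1)^{n-1}\one_{n\in J}+\one_{n/2\in J}$, as claimed.

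The only delicate point is this parity bookkeeping in the even case, in particular recognising that squarefree $m$ allows at most one factor of $2$. Everything else is Möbius orthogonality.
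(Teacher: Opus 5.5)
Your proof is correct and is essentially the paper's argument: exchanging the order of summation in $\psi^J$ amounts to computing the trivial and sign coefficients of each $Q_{n,e}$ from its rectangular expansion \eqref{eq:Q-rectangular-expansion} and summing over $e\in J$, and your parity bookkeeping is the same M\"obius evaluation $\frac1N\sum_{m\mid N}\mu(m)(-1)^{N/m}=-\one_{N=1}+\one_{N=2}$ that the paper obtains from $(-1)^v=2\one_{2\mid v}-1$. One small correction to your side remark: $\tau_n(w)=[s_{(n)}]F_n(w)$ and $\eps_n(w)=[s_{(1^n)}]F_n(w)$ hold for every weight $w$; the normalization $w(1)=1$ is needed only for the standard and sign-twisted standard coefficients in \Cref{lem:endpoint-coefficients}.
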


\begin{proof}
By \eqref{eq:Q-rectangular-expansion} and
$\sum_{u\mid m}\mu(u)=\one_{m=1}$, the trivial Schur coefficient of
$Q_{n,d}$ is $\one_{d=n}$.  If $n$ is odd, every class
$(a^{n/a})$ has sign $1$, so the sign coefficient is the same.
If $n$ is even, put $m=n/d$.  The sign coefficient of $Q_{n,d}$ is
\[
 \frac1m\sum_{u\mid m}\mu(u)(-1)^{m/u}
 =-\one_{m=1}+\one_{m=2},
\]
where the equality follows from $(-1)^v=2\one_{2\mid v}-1$.
Summing these coefficients over $d\in J$ proves the formulas.
\end{proof}

For a cone $K\subseteq\mathbb R^{\D(n)}$, define its dual using the
standard inner product $\inner{\cdot}{\cdot}$ by
\[
 K^\vee\coloneqq\set{y\in\mathbb R^{\D(n)}:
 \inner{y}{w}\ge0\text{ for every }w\in K}.
\]
For $\lambda\vdash n$, put
$v_\lambda\coloneqq(\chi^\lambda_{(d^{n/d})})_{d\mid n}\in\mathbb Z^{\D(n)}$.
The isomorphism $w\mapsto F_n(w)$ then gives
$n[s_\lambda]F_n(w)=\inner{v_\lambda}{w}$.

A cone $K\subseteq\mathbb R^N$ is \emph{full-dimensional} if
$\operatorname{span}_{\mathbb R}K=\mathbb R^N$, \emph{pointed} if
$K\cap(-K)=\{0\}$, and \emph{rational polyhedral} if it is the
intersection of finitely many half-spaces
$\{x:\langle a_i,x\rangle\ge0\}$ with $a_i\in\mathbb Q^N$.
Write $\operatorname{cone}(S)$ for the nonnegative real span of $S$.

For a nonempty convex set $C\subseteq\mathbb R^N$, let $\operatorname{aff}(C)$ be the
smallest affine subspace containing $C$; its interior there is the
\emph{relative interior} $\operatorname{relint}(C)$.
If a nonzero linear functional $\varphi$ satisfies $\varphi\ge\alpha$ on
$C$, then $H=\{x:\varphi(x)=\alpha\}$ is a \emph{supporting hyperplane}
whenever $C\cap H\ne\varnothing$, and $C\cap H$ is an \emph{exposed face}.
A \emph{facet} is an exposed face of codimension one in
$\operatorname{aff}(C)$.  A \emph{rational polytope} is the convex hull
of finitely many rational points.

A \emph{base} of a pointed cone $K$ is a convex subset $B\subseteq K$
such that every $0\ne x\in K$ has a unique multiple $tx\in B$ with $t>0$.
For a linear functional $\varphi$ positive on $K\setminus\{0\}$,
$B_\varphi\coloneqq\{x\in K:\varphi(x)=1\}$ is such a base, called the
\emph{normalized slice with respect to $\varphi$}.

Define the cone of divisor weights corresponding to $\Ccone_n$ by
\begin{equation}\label{eq:weight-cone-definition}
 \Kcone_n\coloneqq
 \set{w\in\mathbb R^{\D(n)}:F_n(w)\in\Ccone_n},
\end{equation}
and define its normalized slice in symmetric-function coordinates by
\begin{equation}\label{eq:Pn-definition}
 P_n\coloneqq\set{F_n(w):w\in\Kcone_n,\ w(1)=1}.
\end{equation}

\begin{proposition}\label{prop:weight-cone}
Equation~\eqref{eq:general-schur-coefficient} identifies the weight cone as
\begin{equation}\label{eq:weight-cone}
 \Kcone_n
 =\set{w\in\mathbb R^{\D(n)}:\inner{v_\lambda}{w}\ge0
 \text{ for every }\lambda\vdash n}.
\end{equation}
The cone $\Kcone_n$ is full-dimensional, pointed, and rational polyhedral,
and
\[
\Kcone_n^\vee=\operatorname{cone}\set{v_\lambda:\lambda\vdash n}.
\]
Moreover,
\begin{equation}\label{eq:normalized-base}
 P_n=\Ccone_n\cap\set{F:[p_1^n]F=1/n},
\end{equation}
and $P_n$ is the normalized slice of $\Ccone_n$ with respect to the functional
$F\mapsto n[p_1^n]F$.  It is a compact rational polytope of dimension
$|\D(n)|-1$.
\end{proposition}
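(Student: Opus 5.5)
The identification \eqref{eq:weight-cone} is immediate. By \eqref{eq:general-schur-coefficient}, we have $n[s_\lambda]F_n(w)=\inner{v_\lambda}{w}$. Since $w\mapsto F_n(w)$ is a linear isomorphism $\mathbb R^{\D(n)}\to\Rspace_{n,\mathbb R}$, the condition $F_n(w)\in\Ccone_n$ is exactly $\inner{v_\lambda}{w}\ge0$ for all $\lambda\vdash n$. Each $v_\lambda$ is integral, so $\Kcone_n$ is a finite intersection of rational half-spaces, hence rational polyhedral. The dual description $\Kcone_n^\vee=\operatorname{cone}\{v_\lambda\}$ then follows from Farkas' lemma (bipolar duality for finitely generated cones), with no further work once $\Kcone_n$ is written as $\{v_\lambda\}^{\vee}$.

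\emph{Full-dimensionality.} I will exhibit an interior point. The weight $w=\delta_1$ (that is, $w(1)=1$ and $w(d)=0$ for $d>1$) gives $F_n(w)=p_1^n/n$, and $\inner{v_\lambda}{\delta_1}=f^\lambda>0$ for every $\lambda$. Since there are finitely many strict inequalities, a neighbourhood of $\delta_1$ lies in $\Kcone_n$.

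\emph{Pointedness.} I will show that the $v_\lambda$ span $\mathbb R^{\D(n)}$, so that $\inner{v_\lambda}{w}=0$ for all $\lambda$ forces $w=0$. Indeed, if $\inner{v_\lambda}{w}=0$ for all $\lambda$, then every Schur coefficient of $F_n(w)$ vanishes, so $F_n(w)=0$. The power sums $p_d^{n/d}$ are linearly independent, so $w=0$. Thus $\Kcone_n\cap(-\Kcone_n)=\{0\}$.

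For the slice, \eqref{eq:normalized-base} is a restatement: $[p_1^n]F_n(w)=w(1)/n$. The functional $\varphi(F)=n[p_1^n]F$ corresponds to $w\mapsto w(1)=\inner{\delta_1}{w}$. The key point is that $\varphi$ is positive on $\Kcone_n\setminus\{0\}$. Here $\delta_1=\frac1{n!}\sum_\lambda f^\lambda v_\lambda$ by column orthogonality, since $\sum_\lambda f^\lambda\chi^\lambda_\rho=n!\,\one_{\rho=(1^n)}$. Hence $w(1)=\frac1{n!}\sum_\lambda f^\lambda\inner{v_\lambda}{w}\ge0$. If equality holds, then each $\inner{v_\lambda}{w}=0$, so $w=0$ by the pointedness argument. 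Thus $\delta_1$ lies in the interior of $\Kcone_n^\vee$, and $P_n$ is the normalized slice.

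\emph{Compactness.} Compactness of a slice by an interior dual functional is standard. Concretely, each $\lambda$ gives $0\le\inner{v_\lambda}{w}\le\sum_\mu f^\mu\inner{v_\mu}{w}=n!$. These values determine $w$ linearly, because the $v_\lambda$ span, so $P_n$ is bounded. It is closed as an intersection of closed half-spaces with a hyperplane, hence compact. The polytope $P_n$ is defined by finitely many rational inequalities and one rational equation, and it is bounded, so its vertices are rational and it is a rational polytope. It has dimension $|\D(n)|-1$ because the hyperplane $w(1)=1$ passes through the interior point $\delta_1$ of the full-dimensional cone. The only step needing any idea is the positivity of $\varphi$ via column orthogonality; the rest is routine convex geometry.
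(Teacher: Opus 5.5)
Your proof is correct and follows essentially the same route as the paper's. You use the interior point $\delta_1$ for full-dimensionality and the vanishing of all Schur coefficients for pointedness. For positivity of the normalizing functional and boundedness of the slice, you use the identity $(n-1)!\,w(1)=\sum_\lambda f^\lambda[s_\lambda]F_n(w)$. You obtain this identity from column orthogonality, whereas the paper evaluates the corresponding class function at the identity; the two derivations are equivalent.
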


\begin{proof}
Equation~\eqref{eq:general-schur-coefficient} gives
\eqref{eq:weight-cone}, proving rational polyhedrality; the dual formula
follows from polyhedral duality.  The weight supported at $1$, with
value $1$, gives
\[
 F_n(w)=\frac1n p_1^n
 =\frac1n\sum_{\lambda\vdash n}f^\lambda s_\lambda,
\]
whose Schur coefficients $f^\lambda/n$ are all positive.  This interior
point proves full dimensionality.  If $w,-w\in\Kcone_n$, every Schur
coefficient vanishes, so $w=0$; hence the cone is pointed.

For $0\ne F_n(w)=\sum_\lambda m_\lambda s_\lambda\in\Ccone_n$,
evaluating the corresponding class function at the identity gives
$(n-1)!w(1)=\sum_\lambda m_\lambda f^\lambda>0$.
Thus $w(1)=1$ defines a base.  On this slice,
$0\le m_\lambda\le(n-1)!/f^\lambda$, so it is bounded.
The defining rational inequalities make it a compact rational polytope,
and full dimensionality gives its dimension $|\D(n)|-1$.
\end{proof}

The normalization records the value at the identity.  If
$F_n(w)=\ch(\chi)$, then
\begin{equation}\label{eq:weight-dimension}
 w(1)=\frac{\chi(1)}{(n-1)!}.
\end{equation}
The normalized slice also has a branching interpretation.  We write
$\reg_G$ for the regular character of a finite group $G$.

\begin{proposition}\label{prop:branching}
Let $n\ge2$, let $w:\D(n)\to\mathbb R$, and write
\[
 F_n(w)=\sum_{\lambda\vdash n}m_\lambda s_\lambda.
\]
The class function $\chi_w\coloneqq\ch^{-1}(F_n(w))$ satisfies
\[
 \operatorname{Res}_{S_{n-1}}^{S_n}\chi_w
 =w(1)\reg_{S_{n-1}},
\]
where $S_{n-1}$ is the subgroup fixing $n$.  Equivalently, for every
$\mu\vdash n-1$,
\begin{equation}\label{eq:branching-equations}
 \sum_{\substack{\lambda\vdash n\\\lambda/\mu\text{ is one box}}}
 m_\lambda=w(1)f^\mu.
\end{equation}
\end{proposition}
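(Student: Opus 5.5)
The plan is to compute the restriction directly from the power-sum expansion. Write
\[
 F_n(w)=\frac1n\sum_{d\mid n}w(d)p_d^{n/d}.
\]
Then $\chi_w=\ch^{-1}(F_n(w))$ is the class function obtained by reading off the power-sum coefficients. Recall the convention that $\ch^{-1}(p_\rho/z_\rho)$ is the indicator of the class of type $\rho$. With $z_{(d^{n/d})}=d^{n/d}(n/d)!$, we therefore get
\[
 \chi_w(\sigma)=\frac{w(d)\,z_{(d^{n/d})}}{n}
\]
when $\sigma$ has type $(d^{n/d})$, and $\chi_w(\sigma)=0$ on every other class. In particular, $\chi_w$ is supported on the classes $(d^{n/d})$. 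This is consistent with \Cref{key:proposition-cyclic-induction-space}(ii).

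Now restrict to $S_{n-1}$, the stabilizer of $n$. An element of $S_{n-1}$ fixes the point $n$, so it has cycle type $(d^{n/d})$ only when $d=1$, that is, only for the identity. Hence $\operatorname{Res}\chi_w$ vanishes off the identity. At the identity its value is
\[
 \frac{w(1)\,n!}{n}=w(1)(n-1)!.
\]
This is exactly the character $w(1)\reg_{S_{n-1}}$, since the regular character has value $(n-1)!$ at $1$ and $0$ elsewhere. Note that this step uses $n\ge2$ only to make sense of $S_{n-1}$. For $d>1$, no element with a fixed point has all cycles of length $d$.

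For the equivalent form \eqref{eq:branching-equations}, apply the branching rule to $\chi_w=\sum_\lambda m_\lambda\chi^\lambda$. It gives
\[
 \operatorname{Res}\chi_w=\sum_{\mu\vdash n-1}\Bigl(\sum_{\lambda/\mu\text{ one box}}m_\lambda\Bigr)\chi^\mu.
\]
On the other side, $\reg_{S_{n-1}}=\sum_\mu f^\mu\chi^\mu$. Comparing coefficients in the basis of irreducible characters, which is linearly independent over $\mathbb C$ and so legitimate even for real or virtual $m_\lambda$, gives the equations. Conversely, the equations give back the restriction identity.

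Nothing here is a serious obstacle. The only care needed is the normalization of $\ch^{-1}$ on power sums, since the $z_\rho$ factor is what produces the value $(n-1)!\,w(1)$ at the identity. An alternative that avoids the $z_\rho$ bookkeeping is to use $\operatorname{Res}^{S_n}_{S_{n-1}}\leftrightarrow\partial/\partial p_1$ on characteristics. Then
\[
 \partial_{p_1}F_n(w)=w(1)\,p_1^{n-1}=w(1)\ch\reg_{S_{n-1}},
\]
because only the $d=1$ term involves $p_1$. I would present this one-line version and mention the class-function check as confirmation.
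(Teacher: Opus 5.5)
Your proposal is correct and is essentially the paper's proof: the one-line version you say you would present ($\partial/\partial p_1$ kills every $p_d^{n/d}$ with $d>1$, leaving $w(1)p_1^{n-1}=w(1)\ch\reg_{S_{n-1}}$, then comparing coefficients via the Young branching rule) is exactly the argument given there. Your class-function check is the same fact read on the character side, and it is also valid: every element of $S_{n-1}$ has a fixed point, so only the identity class survives, with value $w(1)\,n!/n=w(1)(n-1)!$.
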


\begin{proof}
Under the Frobenius characteristic map, restriction corresponds to
$p_1^\perp=\partial/\partial p_1$, the adjoint of multiplication by $p_1$
for the Hall inner product.  Equation~\eqref{eq:Fnw} gives
\[
 p_1^\perp F_n(w)=w(1)p_1^{n-1}
 =w(1)\ch\bigl(\reg_{S_{n-1}}\bigr),
\]
which proves the restriction formula.\footnote{The author is grateful to
Sheila Sundaram for communicating the proof given here.  The proposition
may alternatively be proved by applying Mackey's formula.}  Comparing
Schur coefficients by
the Young branching rule \cite[Theorem~2.8.3]{Sagan} gives
\eqref{eq:branching-equations}.
The analogous restriction identity for each Foulkes character is used
in the proof of
\cite[Corollary~19]{ShareshianSundaram}.
\end{proof}

For $n\ge2$ and $F\in P_n$, the class function $\ch^{-1}(F)$ therefore
restricts to $\reg_{S_{n-1}}$.  If $F$ has integral Schur coefficients,
it is the Frobenius characteristic of an $S_n$-module of dimension $(n-1)!$.
We now express the normalization in $Q$-coordinates.

\Needspace{6\baselineskip}
Suppose that $F=F_n(w)$ and write its $Q$-coordinate expansion as
\begin{equation}\label{eq:Q-coordinate-general}
 F=\sum_{d\mid n}x_dQ_{n,d}.
\end{equation}
M\"obius inversion gives
\begin{equation}\label{eq:weight-Q-transform}
 w(d)=\sum_{e\mid d}e\mu(d/e)x_e,
 \qquad
 x_d=\frac1d\sum_{e\mid d}w(e).
\end{equation}
Thus $w(1)=x_1$.  With
$\gamma_\lambda\coloneqq(\gamma_{\lambda,d})_{d\mid n}$ denoting the
normal vector of the Schur inequality indexed by $\lambda$, the slice is
\begin{equation}\label{eq:PnQ}
 P_n^Q
 \coloneqq\set{x\in\mathbb R^{\D(n)}:x_1=1,\ \sum_{d\mid n}\gamma_{\lambda,d}x_d\ge0
 \text{ for every }\lambda\vdash n}.
\end{equation}
For $F=F_n(w)$ as in \eqref{eq:Q-coordinate-general}, the two functionals in
\eqref{eq:tau-epsilon} are
\begin{equation}\label{eq:endpoint-Q}
 \tau_n(w)=x_n,
 \qquad
 \eps_n(w)=
 \begin{cases}
  x_n,&n\text{ odd},\\
  x_{n/2}-x_n,&n\text{ even}.
 \end{cases}
\end{equation}
Consequently, if $F\in P_n$ has $Q$-coordinate vector $x$ and $n\ge4$,
then its four distinguished coefficients are, when $n$ is odd,
\begin{equation}\label{eq:endpoint-odd-Q}
 \begin{aligned}
 [s_{(n)}]F&=x_n,&
 [s_{(n-1,1)}]F&=1-x_n,\\
 [s_{(2,1^{n-2})}]F&=1-x_n,&
 [s_{(1^n)}]F&=x_n,
 \end{aligned}
\end{equation}
and, when $n$ is even,
\begin{equation}\label{eq:endpoint-even-Q}
 \begin{aligned}
 [s_{(n)}]F&=x_n,&
 [s_{(n-1,1)}]F&=1-x_n,\\
 [s_{(2,1^{n-2})}]F&=1-x_{n/2}+x_n,&
 [s_{(1^n)}]F&=x_{n/2}-x_n.
 \end{aligned}
\end{equation}
For Boolean coordinates in even degree, the coefficient of $s_{(1^n)}$ in
\eqref{eq:endpoint-even-Q} is nonnegative exactly when $x_n\le x_{n/2}$.
These formulas also give the supporting hyperplanes considered in
\Cref{sec:boundary}.

\subsection{The Foulkes simplex and the lift map}
\label{subsec:foulkes-simplex}

Order $\D(n)$ by divisibility.  A subset $J\subseteq\D(n)$ is an
\emph{order ideal} if $d\in J$ and $e\mid d$ imply $e\in J$.
For $r\mid n$, the \emph{principal order ideal} generated by $r$ is
$\D(r)\coloneqq\set{d\in\D(n):d\mid r}$.
Its indicator vector is the $Q$-coordinate vector of $\ell_n^{(r)}$
by \eqref{eq:zeta-Q-Foulkes}.  Define
\begin{equation}\label{eq:Delta}
 \Delta_n\coloneqq\conv\set{\one_{\D(r)}:r\mid n}.
\end{equation}

A \emph{lattice polytope} $P\subseteq\mathbb R^N$ is the convex hull of
finitely many integer points.  Its \emph{translation lattice} is
$\Lambda_P\coloneqq\set{u-v:u,v\in\operatorname{aff}(P)}\cap\mathbb Z^N$.
A lattice simplex is \emph{unimodular} if its edge vectors from one
vertex form a $\mathbb Z$-basis of this lattice.

\begin{samepage}
Throughout, $\vol$ is Lebesgue measure on $x_1=1$ in the coordinates
$x_d$, $d>1$.
In dimension zero, a point has volume $1$.

\end{samepage}

\begin{proposition}\label{prop:Delta-unimodular}
The polytope $\Delta_n$ is a simplex of dimension $|\D(n)|-1$.  Its
translation lattice is
$\set{x\in\mathbb Z^{\D(n)}:x_1=0}$, and it is unimodular.  Moreover,
\[
 \vol(\Delta_n)=\frac1{(|\D(n)|-1)!}.
\]
\end{proposition}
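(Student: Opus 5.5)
The plan is to work directly with the vertices $\one_{\D(r)}$, $r\mid n$, all of which lie in the hyperplane $x_1=1$ because $1\mid r$. I will fix $r=1$, whose vertex is $\one_{\D(1)}=\one_{\{1\}}$, as the base vertex. The edge vectors from it are
\[
 u_r\coloneqq\one_{\D(r)}-\one_{\{1\}}=\one_{\D(r)\setminus\{1\}}\qquad(r\mid n,\ r>1).
\]
There are $|\D(n)|-1$ of these vectors, and each lies in $L\coloneqq\set{x\in\mathbb Z^{\D(n)}:x_1=0}$. Everything then reduces to showing that $\{u_r\}_{r>1}$ is a $\mathbb Z$-basis of $L$.

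To do this, I will index both $\{u_r\}$ and the standard basis $\{e_d\}_{d>1}$ of $L$ by $\D(n)\setminus\{1\}$, ordered by a linear extension of divisibility. Since $(u_r)_d=\one_{d\mid r}$, the coefficient matrix is the restriction of the zeta matrix of the divisor poset. It is unitriangular, and its inverse is given by M\"obius inversion:
\[
 e_r=\sum_{d\mid r,\ d>1}\mu(r/d)\,u_d .
\]
This identity needs a quick check, because the term with $d=1$ is dropped. That is harmless, since $u_1=0$, so the sum over all $d\mid r$ agrees with the sum over $d>1$ and equals $\one_{r}$ restricted to the coordinates $d>1$. Hence the $u_r$ are linearly independent, and the vertices are affinely independent. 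This shows that $\Delta_n$ is a simplex of dimension $|\D(n)|-1$, that its affine hull is all of $\{x_1=1\}$, and that the $u_r$ form a $\mathbb Z$-basis of $L$.

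For the translation lattice, observe that $\operatorname{aff}(\Delta_n)=\{x_1=1\}$. The set of differences of points in this hyperplane is $\{x_1=0\}$, and intersecting with $\mathbb Z^{\D(n)}$ gives exactly $L$. Combined with the basis statement, this proves unimodularity.

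For the volume, the convention measures volume in the coordinates $x_d$, $d>1$. In these coordinates the simplex is the image of the standard simplex under the linear map whose matrix is the unitriangular zeta matrix, which has determinant $1$. So
\[
 \vol(\Delta_n)=\frac{|\det|}{(|\D(n)|-1)!}=\frac1{(|\D(n)|-1)!}.
\]
The dimension-zero case $n=1$ fits the convention that a point has volume $1$.

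I do not expect a real obstacle here. The one point requiring care is the bookkeeping around the excluded divisor $1$, namely confirming that the truncated zeta matrix is still unitriangular with an integral inverse.
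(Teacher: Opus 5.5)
Your proposal is correct and follows the paper's proof: with $\one_{\D(1)}$ as base vertex, the edge vectors $\one_{\D(r)}-\one_{\D(1)}$, $r>1$, restricted to the coordinates $d>1$ form a unitriangular truncated zeta matrix, so they are a $\mathbb Z$-basis of $\set{x\in\mathbb Z^{\D(n)}:x_1=0}$, which gives the simplex, translation-lattice, unimodularity, and volume assertions at once. Your explicit M\"obius inverse, your identification $\operatorname{aff}(\Delta_n)=\{x_1=1\}$, and your remark on $n=1$ are correct details that the paper leaves implicit.
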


\begin{proof}
Order the divisors compatibly with divisibility.  After deleting the row
indexed by $1$, the edge vectors $\one_{\D(r)}-\one_{\D(1)}$, $r\ne1$,
form a unitriangular matrix.  Hence they form a $\mathbb Z$-basis of
$\set{x\in\mathbb Z^{\D(n)}:x_1=0}$, proving unimodularity and the
volume formula.
\end{proof}

Let $R(C_n)$ be the Grothendieck group of finite-dimensional $C_n$-modules
and $[U]$ the class of $U$.  Extend
$\ch\circ\operatorname{Ind}_{C_n}^{S_n}$ $\mathbb Z$-linearly to $R(C_n)$.
With $V_n=V_0$, define
\begin{equation}\label{eq:lift-lattices}
 \Vlat_n\coloneqq\bigoplus_{e\mid n}\mathbb Z[V_e]\subseteq R(C_n),
 \qquad
 \Flattice_n\coloneqq\bigoplus_{e\mid n}\mathbb Z \ell_n^{(e)}\subseteq\Rspace_n.
\end{equation}
Define
\[
 \mathcal I_n\coloneqq\ch\circ\operatorname{Ind}_{C_n}^{S_n}:
 \Vlat_n\longrightarrow\Flattice_n,
 \qquad [V_e]\longmapsto\ell_n^{(e)}.
\]

This map sends the basis $\{[V_e]:e\mid n\}$ to
$\{\ell_n^{(e)}:e\mid n\}$, so it is an isomorphism of free abelian groups.

\begin{definition}\label{def:canonical-lift}
Put $\sigma_n\coloneqq\mathcal I_n^{-1}:\Flattice_n\to\Vlat_n$.
For $F\in\Flattice_n$, its \emph{lift determined by the representatives
$\{V_e:e\mid n\}$} is
\begin{equation}\label{eq:canonical-lift}
 \sigma_n\left(\sum_{e\mid n}b_e\ell_n^{(e)}\right)
 =\sum_{e\mid n}b_e[V_e]\in\Vlat_n
 \qquad(b_e\in\mathbb Z).
\end{equation}
\end{definition}

The restriction to $\Vlat_n$ is essential: nonisomorphic $V_r,V_s$
with $\gcd(n,r)=\gcd(n,s)$ have isomorphic induced modules.

\begin{definition}\label{def:effective-class}
A class $\xi\in R(C_n)$ is \emph{effective} if $\xi=[U]$ for some
finite-dimensional $C_n$-module $U$, equivalently if
$\xi=\sum_{r=0}^{n-1}m_r[V_r]$ with $m_r\in\mathbb Z_{\ge0}$.
\end{definition}

\begin{proposition}\label{thm:effective-lifts}
Let $F=\sum_{e\mid n}b_e\ell_n^{(e)}\in\Flattice_n$.  The following
conditions are equivalent:
\begin{enumerate}[label=\textup{(\roman*)}]
\item $F=\ch\operatorname{Ind}_{C_n}^{S_n}U$ for some
      finite-dimensional $C_n$-module $U$;
\item $b_e\ge0$ for every $e\mid n$;
\item $\sigma_n(F)$ is effective.
\end{enumerate}
Consequently,
\begin{equation}\label{eq:effective-lift-cone}
\begin{aligned}
 &\operatorname{cone}\set{\ch\operatorname{Ind}_{C_n}^{S_n}U:
 U\text{ a finite-dimensional }C_n\text{-module}}\\
 &\hspace{35mm}=\operatorname{cone}\set{\ell_n^{(e)}:e\mid n}.
\end{aligned}
\end{equation}
In $Q$-coordinates, the section $x_1=1$ of this cone is $\Delta_n$.
Among Frobenius characteristics of modules induced from $C_n$, those
with $x_1=1$ are precisely $\ell_n^{(e)}$, $e\mid n$, whose
$Q$-coordinate vectors are the vertices of $\Delta_n$.
\end{proposition}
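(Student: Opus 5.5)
The plan is to prove the cycle (ii)$\Rightarrow$(iii)$\Rightarrow$(i)$\Rightarrow$(ii), and then read off the cone and slice statements. For (ii)$\Leftrightarrow$(iii): by \eqref{eq:canonical-lift}, $\sigma_n(F)=\sum_{e\mid n}b_e[V_e]$. The classes $[V_r]$, $0\le r\le n-1$, form a $\mathbb Z$-basis of $R(C_n)$, and the $V_e$ with $e\mid n$ are pairwise distinct among them (since $V_n=V_0$). Hence $\sigma_n(F)$ is effective exactly when every $b_e\ge0$. For (iii)$\Rightarrow$(i): take $U=\bigoplus_{e\mid n}V_e^{\oplus b_e}$, which gives $\ch\operatorname{Ind}U=\sum b_e\ell_n^{(e)}=F$.

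The substantive direction is (i)$\Rightarrow$(ii). Decompose $U\cong\bigoplus_{r=0}^{n-1}V_r^{\oplus m_r}$ with $m_r\ge0$. By Foulkes' classification, stated after \eqref{eq:foulkes}, $\operatorname{Ind}V_r\cong\Lie_n^{(\gcd(n,r))}$, where we read $\gcd(n,0)=n$. Therefore
\[
F=\sum_{e\mid n}\Bigl(\sum_{\gcd(n,r)=e}m_r\Bigr)\ell_n^{(e)}.
\]
The $\ell_n^{(e)}$ are linearly independent \cite[Proposition~17]{ShareshianSundaram}, so $b_e=\sum_{\gcd(n,r)=e}m_r\ge0$.

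This same computation shows that every induced characteristic is a nonnegative integral combination of the $\ell_n^{(e)}$. Conversely, each $\ell_n^{(e)}$ is itself an induced characteristic. Taking nonnegative real spans then gives \eqref{eq:effective-lift-cone}.

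For the slice, \eqref{eq:zeta-Q-Foulkes} says the $Q$-coordinate vector of $\sum b_e\ell_n^{(e)}$ is $\sum b_e\one_{\D(e)}$. Its $x_1$-coordinate is $\sum b_e$, because $1\in\D(e)$ for every $e$. So the section $x_1=1$ of the cone is the set of convex combinations of the $\one_{\D(e)}$, which is $\Delta_n$ by \eqref{eq:Delta}.

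Finally, an induced characteristic with $x_1=1$ has nonnegative integers $b_e$ with $\sum b_e=1$. Thus exactly one $b_e$ equals $1$, and $F=\ell_n^{(e)}$. By \Cref{prop:Delta-unimodular}, $\Delta_n$ is a simplex with vertices $\one_{\D(e)}$, so these are exactly its vertices.

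The only delicate point is the bookkeeping in (i)$\Rightarrow$(ii): applying the Foulkes isomorphism classification with the $r=0$ convention $V_0=V_n$, and using independence of the $\ell_n^{(e)}$ so that the coefficients $b_e$ are uniquely determined. I expect no real obstacle beyond that.
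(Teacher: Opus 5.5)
Your proposal is correct and follows essentially the same route as the paper: decompose $U$ into the $V_r$ and use Foulkes' classification (with $\gcd(n,0)=n$) to get $b_e=\sum_{\gcd(n,r)=e}m_r\ge0$, realize nonnegative $b_e$ by $U=\bigoplus_{e\mid n}V_e^{\oplus b_e}$, and read off the slice and its vertices from $x_1=\sum_e b_e$. Your explicit use of the $\mathbb Z$-basis $\{[V_r]\}$ of $R(C_n)$ for (ii)$\Leftrightarrow$(iii), and of the linear independence of the $\ell_n^{(e)}$, only spells out steps that the paper's proof leaves implicit.
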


\begin{proof}
If $U\cong\bigoplus_{r=0}^{n-1}V_r^{\oplus m_r}$, then
$\ch\operatorname{Ind}_{C_n}^{S_n}U$ has Foulkes coefficients
\[
 b_e=\sum_{\substack{0\le r<n\\\gcd(n,r)=e}}m_r\ge0.
\]
Here $\gcd(n,0)=n$.  Conversely, $b_e\ge0$ gives
$U=\bigoplus_{e\mid n}V_e^{\oplus b_e}$ with $[U]=\sigma_n(F)$.
This proves the equivalence and \eqref{eq:effective-lift-cone}.
Since $x_1(F)=\sum_{e\mid n}b_e$, the slice $x_1=1$ has
$Q$-coordinate image $\conv\{\one_{\D(e)}:e\mid n\}=\Delta_n$.
Nonnegative integral coefficients $b_e$ summing to one give exactly
its vertices.
\end{proof}

\subsection{Nonnegative Foulkes coordinates}\label{subsec:global-lifts}
For a set $T\subseteq\mathbb N$, let
\[
 \beta_{n,T}(e)\coloneqq
 \sum_{\substack{a\mid n,\ a\in T\\e\mid a}}\mu(a/e),
 \qquad
 \mathcal F_n^+\coloneqq
 \set{\sum_{e\mid n}b_e\ell_n^{(e)}:b_e\ge0}.
\]
By Proposition~\ref{prop:weighted-major-index-dictionary},
$f_n^T=\sum_{e\mid n}\beta_{n,T}(e)\ell_n^{(e)}$.
\begin{proposition}\label{prop:fixed-foulkes}
Let $n\ge1$ and let $T\subseteq\mathbb N$.  The following are equivalent:
\begin{enumerate}[label=\textup{(\roman*)}]
\item $f_n^T\in\mathcal F_n^+$;
\item $T\cap\D(n)$ is empty or equals $\D(r)$ for some
$r\mid n$;
\item $f_n^T=0$ or $f_n^T=\ell_n^{(r)}$ for some $r\mid n$.
\end{enumerate}
\end{proposition}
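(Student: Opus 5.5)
The plan is to reduce everything to a statement about the Möbius-inverted indicator vector of $J\coloneqq T\cap\D(n)$ on the divisor lattice $\D(n)$. By \Cref{prop:Q-basis} we have $f_n^T=Q_{n,J}=\mathcal Q_n(\one_J)$. Since the $\ell_n^{(e)}$ form a basis, \Cref{prop:weighted-major-index-dictionary} shows that the Foulkes coordinates of $f_n^T$ are exactly $\beta_{n,T}(e)=\beta_{\one_J}(e)$. Condition (i) is therefore equivalent to $\beta_{\one_J}(e)\ge0$ for all $e\mid n$.

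The implications (ii)$\Rightarrow$(iii)$\Rightarrow$(i) are immediate. If $J=\varnothing$ then $f_n^T=0$. If $J=\D(r)$ then \eqref{eq:zeta-Q-Foulkes} gives $f_n^T=\ell_n^{(r)}$, which lies in $\mathcal F_n^+$.

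The substantive direction is (i)$\Rightarrow$(ii). We have the inverse formula \eqref{eq:weighted-beta-inverse},
\[
 \one_{d\in J}=\sum_{\substack{e\mid n\\ d\mid e}}\beta(e)\qquad(d\mid n),
\]
where $\beta=\beta_{\one_J}$ is integer-valued by \Cref{prop:weighted-major-index-dictionary}. Assume $\beta\ge0$ and $J\ne\varnothing$, and argue in three steps.

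\textbf{Step 1: $J$ is an order ideal.} If $e'\mid d$, every $e$ with $d\mid e$ also satisfies $e'\mid e$. Nonnegativity of $\beta$ then gives $\one_{e'\in J}\ge\one_{d\in J}$.

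\textbf{Step 2: $\beta$ has exactly one nonzero value.} Evaluating the inverse formula at $d=1$ gives $\sum_{e\mid n}\beta(e)=\one_{1\in J}=1$, using Step 1 and $J\ne\varnothing$. The $\beta(e)$ are nonnegative integers summing to $1$, so $\beta=\one_{\{r\}}$ for a single $r\mid n$.

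\textbf{Step 3: identify $J$.} The inverse formula now reads $\one_{d\in J}=\one_{d\mid r}$, so $J=\D(r)$.

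One could instead show directly that a non-principal order ideal has some negative $\beta(e)$, by taking $e$ to be a join of two maximal elements. The counting argument in Step 2 avoids this entirely, so I expect no real obstacle. The only point needing care is that integrality of $\beta$ is exactly what forces a single nonzero coordinate in Step 2.
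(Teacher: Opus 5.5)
Your proof is correct and follows essentially the paper's argument. M\"obius inversion writes $\one_{d\in J}$ in terms of the integral Foulkes coefficients, whose sum is $\one_{1\in T}$; nonnegativity then forces at most one of them to be nonzero, and the inversion formula reads off $J=\varnothing$ or $J=\D(r)$. The only difference is your Step~1, which the paper skips: it lets the sum be $0$ (so all coefficients vanish and $J=\varnothing$), instead of first deducing $1\in J$ from the order-ideal property.
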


\begin{proof}
M\"obius inversion gives
\[
 \one_{e\in T}=\sum_{\substack{a\mid n\\e\mid a}}\beta_{n,T}(a)
 \qquad(e\mid n).
\]
In particular, the integral Foulkes coefficients sum to $\one_{1\in T}$.
If they are nonnegative, then either they all vanish or exactly one,
say $\beta_{n,T}(r)$, equals $1$ and all the others vanish.
The displayed formula then makes
$T\cap\D(n)$ empty or equal to $\D(r)$, proving
\textup{(i)}$\Rightarrow$\textup{(ii)}.
Formula~\eqref{eq:zeta-Q-Foulkes} gives
\textup{(ii)}$\Rightarrow$\textup{(iii)}, and
\textup{(iii)}$\Rightarrow$\textup{(i)} is immediate.
\end{proof}

\subsection{The integral character lattice}\label{sec:integral-lattice}

The change of basis identifies $\Flattice_n$ with
$\bigoplus_{d\mid n}\mathbb ZQ_{n,d}$, whose elements have integral Schur
coefficients.  We prove the converse: every $F\in\Rspace_{n,\mathbb R}$
with integral Schur coefficients belongs to $\Flattice_n$, so the lift
$\sigma_n(F)$ of \Cref{def:canonical-lift} is defined.
For $n\ge2$,
the trivial and standard coefficients determine $x_n$ and $x_1$, and the sign
coefficient determines $x_{n/2}$ in even degree.  Two-row partitions
then recover the coordinates with $1<d<n/2$ successively, without division.
For $N\in\mathbb Z_{\ge0}$, we use the convention
$\binom Nt=0$ unless $t$ is an integer with $0\le t\le N$.

\begin{lemma}
\label{lem:triangular-two-row}
Let $n\ge1$ and $d\mid n$ with $1<d<n/2$, and put
$\lambda_d\coloneqq(n-d,d)$.  Then
\begin{equation}\label{eq:triangular-two-row}
 [s_{\lambda_d}]Q_{n,d}=1,
 \qquad
 [s_{\lambda_d}]Q_{n,e}=0
 \quad\text{whenever }e\mid n\text{ and }d<e\le n.
\end{equation}
\end{lemma}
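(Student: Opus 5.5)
We compute the Schur coefficient of the two-row shape $\lambda_d=(n-d,d)$ in $Q_{n,e}=\ell_{n/e}^{(1)}[p_e]$ for every $e\mid n$ with $e\ge d$. The route is the Hall inner product with $h_{n-d}h_d$ together with the expansion \eqref{eq:Q-rectangular-expansion}. Write
\[
 s_{(n-d,d)}=h_{n-d}h_d-h_{n-d+1}h_{d-1},
\]
which is the Jacobi--Trudi identity for two rows, valid since $d\le n-d$. It therefore suffices to compute, for $0\le t\le d$, the quantities
\[
 A_t(e)\coloneqq\inner{Q_{n,e}}{h_{n-t}h_t},
\]
and then $[s_{\lambda_d}]Q_{n,e}=A_d(e)-A_{d-1}(e)$.

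By \eqref{eq:Q-rectangular-expansion}, $Q_{n,e}$ is a combination of $p_{c}^{n/c}$ with $e\mid c\mid n$. Since $\inner{p_\rho}{h_\alpha}$ is the number of ways to distribute the cycles of $\rho$ into boxes of sizes $\alpha$, we get
\[
 \inner{p_c^{n/c}}{h_{n-t}h_t}=\binom{n/c}{t/c},
\]
which vanishes unless $c\mid t$. For $e>d\ge t$ with $t\ge1$, every $c$ occurring satisfies $c\ge e>t$. Hence only $t=0$ contributes, and $A_t(e)=\one_{t=0}\,\inner{Q_{n,e}}{h_n}$. Since $d\ge2$, both $d$ and $d-1$ are positive, so $A_d(e)=A_{d-1}(e)=0$. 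This proves the vanishing for $d<e\le n$. For $e=n$ with $d-1=0$ the argument would fail, but $d>1$ excludes that case; the hypothesis $d<n/2$ is used only to ensure $d\le n-d$ and $\lambda_d\ne(n)$.

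For $e=d$, the same reasoning gives $A_{d-1}(d)=0$, because $c\ge d>d-1\ge1$. It remains to show $A_d(d)=1$. Only $c=d$ contributes, since any $c>d$ fails $c\mid d$, so
\[
 A_d(d)=\frac dn\,\mu(1)\binom{n/d}{1}=1.
\]
Alternatively, $\inner{Q_{n,d}}{h_{n-d}h_d}$ equals the coefficient of $h_{m-1}h_1$ in $\ell_m^{(1)}$ with $m=n/d$. That coefficient is $1$: restricting to $S_{m-1}\times S_1$ and using \Cref{prop:branching}, $\ell_m^{(1)}$ restricts to the regular representation of $S_{m-1}$, which contains the trivial representation once.

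The computation is essentially routine. The only real care needed is the divisibility bookkeeping: every $c$ in the expansion is a multiple of $e$, so $c\ge e$, and $c\mid t$ is required for a nonzero pairing. The endpoint case $t=0$ is the one place the argument could break, and it is removed by $d>1$.
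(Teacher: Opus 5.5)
Your proof is correct and is essentially the paper's argument. The paper also combines the two-row Jacobi--Trudi identity with the values $\one_{a\mid j}\binom{n/a}{j/a}$ of the permutation character $h_{n-j}h_j$ on the class $(a^{n/a})$; these are exactly your pairings $\inner{p_a^{n/a}}{h_{n-j}h_j}$. It then substitutes into \eqref{eq:Q-rectangular-expansion}, using that every cycle length $eu$ there is at least $e$, so only $e=d$, $u=1$ survives and gives $(d/n)(n/d)=1$.
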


\begin{proof}
For $0\le j\le n$, the function $h_{n-j}h_j$ is the Frobenius characteristic of the
permutation representation on the $j$-subsets of $\{1,\ldots,n\}$.
For $a\mid n$, a subset fixed by a permutation of cycle type $(a^{n/a})$ is a union
of its $a$-cycles, so the value of this permutation character is
$\one_{a\mid j}\binom{n/a}{j/a}$.
The Jacobi--Trudi identity
$s_{(n-d,d)}=h_{n-d}h_d-h_{n-d+1}h_{d-1}$ therefore gives
\[
 \chi^{(n-d,d)}_{(a^{n/a})}
 =\one_{a\mid d}\binom{n/a}{d/a}
  -\one_{a\mid d-1}\binom{n/a}{(d-1)/a}.
\]
If $a>d$ both terms vanish; if $a=d>1$ the value is $n/d$.
In \eqref{eq:Q-rectangular-expansion} with $e\ge d$, all terms have
$a=eu\ge d$. For $e=d$, only $u=1$ contributes, and its contribution
is $(d/n)(n/d)=1$. For $e>d$ every term vanishes.
\end{proof}

\begin{theorem}\label{thm:Q-lattice-saturation}
For every $n\ge1$,
\begin{equation}\label{eq:Q-lattice-saturation}
 \Rspace_{n,\mathbb R}\cap\Lambda_{\mathbb Z}^n
 =\bigoplus_{d\mid n}\mathbb ZQ_{n,d}
 =\Flattice_n.
\end{equation}
Equivalently, for
$F=\sum_{d\mid n}x_dQ_{n,d}\in\Rspace_{n,\mathbb R}$, the following are
equivalent:
\begin{enumerate}[label=\textup{(\roman*)}]
\item every Schur coefficient of $F$ is an integer;
\item $x_d\in\mathbb Z$ for every $d\mid n$;
\item every coefficient of $F$ in the Foulkes basis
      $\set{\ell_n^{(e)}:e\mid n}$ is an integer.
\end{enumerate}
\end{theorem}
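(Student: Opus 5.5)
The equivalence (ii)$\Leftrightarrow$(iii) is already contained in \Cref{prop:weighted-major-index-dictionary}: the change of basis between $Q$-coordinates $x_d$ and Foulkes coordinates $\beta_x(e)$ is unitriangular with integer entries in both directions. The implication (ii)$\Rightarrow$(i) follows because each $Q_{n,d}$ has integral Schur coefficients (\Cref{prop:Q-basis}, or directly \eqref{eq:gamma-major-index-definition}). So the whole content is (i)$\Rightarrow$(ii). I will prove it by recovering the $x_d$ one at a time from Schur coefficients, each step being an integer combination of integers, with no division. The case $n=1$ is trivial, since $Q_{1,1}=s_{(1)}$. For $n=2,3$ the recovery is done by hand from the trivial and sign coefficients.

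Assume $n\ge4$ and let $F=\sum_d x_dQ_{n,d}$ have integral Schur coefficients. The formulas \eqref{eq:endpoint-odd-Q} and \eqref{eq:endpoint-even-Q} are stated for $F\in P_n$, i.e.\ $x_1=1$. By homogeneity, for general $x$ they read
\[
[s_{(n)}]F=x_n,\qquad [s_{(n-1,1)}]F=x_1-x_n,\qquad [s_{(1^n)}]F=x_{n/2}-x_n \ \ (n\text{ even}).
\]
These come from \Cref{lem:endpoint-coefficients} with $w(1)=x_1$, the standard coefficient being $x_1-\tau_n$. They give $x_n$, $x_1$, and, for even $n$, $x_{n/2}$ as integers.

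The remaining divisors satisfy $1<d<n/2$. Handle them in decreasing order of $d$, using \Cref{lem:triangular-two-row}. For such $d$, put $\lambda_d=(n-d,d)$. Then
\[
[s_{\lambda_d}]F=x_d+\sum_{e\mid n,\ e<d}x_e\gamma_{\lambda_d,e}.
\]
This expresses $x_d$ in terms of smaller divisors, not larger ones, so the induction must run in increasing order of $d$ instead. We already know $x_1\in\mathbb Z$. Suppose $x_e\in\mathbb Z$ for all divisors $e<d$. Since each $\gamma_{\lambda_d,e}\in\mathbb Z$, we get $x_d=[s_{\lambda_d}]F-\sum_{e<d}x_e\gamma_{\lambda_d,e}\in\mathbb Z$.

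Every divisor of $n$ is now covered:
\begin{itemize}
\item $d=1$ and $d=n$ from the trivial and standard coefficients;
\item $d=n/2$ from the sign coefficient when $n$ is even;
\item every $d$ with $1<d<n/2$ by the induction above.
\end{itemize}
For odd $n$ there is no divisor strictly between $n/2$ and $n$, and for even $n$ the only such divisor would be $n/2$ itself, which is handled. Hence all $x_d\in\mathbb Z$.

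The step needing the most care is checking that the endpoint formulas hold for general $x_1$, not only on the slice $x_1=1$. I would verify this directly from \eqref{eq:general-schur-coefficient}, or from linearity together with the per-basis-element computations in the proof of \Cref{lem:boolean-endpoints}. Those computations give the trivial coefficient of $Q_{n,d}$ as $\one_{d=n}$ and its sign coefficient as $\one_{d=n/2}-\one_{d=n}$ for even $n$. The standard coefficient of $Q_{n,d}$ is then $\one_{d=1}-\one_{d=n}$: by the character value $-1$ computation in \Cref{lem:endpoint-coefficients}, it equals $w(1)$ minus the trivial coefficient.

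The small cases $n=2,3$ do not use the standard coefficient:
\begin{itemize}
\item For $n=2$, $x_2=[s_{(2)}]F$ and $x_1-x_2=[s_{(1,1)}]F$.
\item For $n=3$, $x_3=[s_{(3)}]F$, and $[s_{(2,1)}]F=x_1-x_3$ because $(2,1)$ is the standard representation there as well.
\end{itemize}
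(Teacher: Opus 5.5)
Your proposal is correct and is essentially the paper's proof. The trivial, standard, and (for even $n$) sign coefficients give $x_n$, $x_1$, and $x_{n/2}$ without division. \Cref{lem:triangular-two-row} then recovers each remaining $x_d$ with $1<d<n/2$ by induction in increasing order of $d$, which is the order you correctly settle on after first writing ``decreasing''.

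Your check that the endpoint formulas hold for arbitrary $x_1$ is sound. Linearity gives the standard coefficient $x_1-x_n$, which is exactly the paper's use of the standard character values $n-1$ and $-1$.
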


\begin{proof}
By \Cref{prop:Q-basis} and the integral change of basis in
\eqref{eq:zeta-Q-Foulkes} and \eqref{eq:mobius-Q-Foulkes},
\[
 \bigoplus_{d\mid n}\mathbb ZQ_{n,d}
 =\Flattice_n
 \subseteq\Rspace_{n,\mathbb R}\cap\Lambda_{\mathbb Z}^n.
\]

For the reverse inclusion, suppose that
$F=\sum_{d\mid n}x_dQ_{n,d}$ has integral Schur coefficients.
If $n=1$, then $Q_{1,1}=s_{(1)}$, so $x_1\in\mathbb Z$.
Assume $n\ge2$.  Formula~\eqref{eq:endpoint-Q} gives
$x_n=[s_{(n)}]F\in\mathbb Z$.
The standard character has value $n-1$ at the identity and $-1$ on
every class $(d^{n/d})$ with $d>1$.  This holds for all $n\ge2$ and gives
\[
 x_1=[s_{(n-1,1)}]F+x_n\in\mathbb Z.
\]
If $n$ is even, \eqref{eq:endpoint-Q} also gives
$x_{n/2}=[s_{(1^n)}]F+x_n\in\mathbb Z$.
These formulas recover all coordinates when $n=2$ or $3$.

For $n\ge4$, the remaining divisors satisfy $1<d<n/2$.
Take them in increasing order.  By \Cref{lem:triangular-two-row},
\[
 x_d=[s_{(n-d,d)}]F-
 \sum_{\substack{e\mid n\\e<d}}x_e[s_{(n-d,d)}]Q_{n,e}.
\]
Each $[s_{(n-d,d)}]Q_{n,e}$ is integral by \Cref{prop:Q-basis},
and the coordinates $x_e$ occurring on the right have already been recovered.
Induction therefore gives $x_d\in\mathbb Z$ for every $d\mid n$.
The integral change of basis gives the equivalence of
\textup{(i)}--\textup{(iii)}.
\end{proof}

By \Cref{prop:weighted-major-index-dictionary,thm:Q-lattice-saturation},
the Foulkes coordinates $(\beta_x(e))_{e\mid n}$ identify the lattice in
\eqref{eq:Q-lattice-saturation} with $\mathbb Z^{\D(n)}$.
Under this identification, Schur positivity is equivalent to
\begin{equation}\label{eq:integral-major-index-inequalities}
 \sum_{e\mid n}\beta_x(e)a_{\lambda,e}\ge0
 \qquad\text{for every }\lambda\vdash n.
\end{equation}
Since $x_1=\sum_{e\mid n}\beta_x(e)$ by
\eqref{eq:weighted-beta-inverse}, fixing $x_1=h$ for
$h\in\mathbb Z_{\ge0}$ amounts to imposing
$\sum_{e\mid n}\beta_x(e)=h$.

\begin{remark}\label{rem:saturation-versus-idp}
The lattice identification in \eqref{eq:Q-lattice-saturation} does not
imply that, for every integer $h\ge1$, every point
of $hP_n^Q\cap\mathbb Z^{\D(n)}$ is a sum of $h$ points of
$P_n^Q\cap\mathbb Z^{\D(n)}$.  When $P_n^Q$ is a lattice polytope, this
decomposition assertion is precisely its integer decomposition property
with respect to $\mathbb Z^{\D(n)}$.  We prove the corresponding assertion
for the smaller lattice polytope $H_n$ in \Cref{cor:boolean-Ehrhart}, but
make no such claim for $P_n^Q$.
\end{remark}

\section{Uniform character estimates}\label{sec:criterion}

For $n\ge18$, the Boolean classification and Ramanujan-square positivity
use the common criterion of \Cref{thm:criterion}; smaller degrees are
treated in \Cref{app:computations}.
We follow the character-estimate strategy of
\cite[Sections~5 and~6]{Hou}, using $F_n(w)$ from \eqref{eq:Fnw}.
We apply the Bernstein creation identity when
$\lambda_1>n/2$ and deduce the result for $\lambda'_1>n/2$ by tensoring
with the sign representation.  When
$\lambda_1,\lambda'_1\le n/2$, we combine Swanson's dimension bound with
the Fomin--Lulov character estimate.  After separating the hook term,
we bound the remaining long-row contribution using $A_{n,r}(w)$ in
\eqref{eq:Anr}.  For $\lambda_1,\lambda'_1\le n/2$, $E_n(w)$ in \eqref{eq:Enw}
bounds the absolute nonidentity contribution divided by $f^\lambda$.

\subsection{Long first rows and columns}\label{subsec:long-row}

For $2\le r<n/2$, put
\begin{equation}\label{eq:grn}
 g_r(n)\coloneqq\binom nr-\binom n{r-1}.
\end{equation}

Let $\mu\vdash r$ with $2\le r<n/2$, and put
$\lambda\coloneqq(n-r,\mu)$ and
$\delta_\mu\coloneqq\one_{\mu=(1^r)}$.
For each divisor $d\mid n$ with $2\le d\le r$, define
\begin{equation}\label{eq:eta-definition}
 \eta_d\coloneqq
 \sum_{q=1}^{\lfloor r/d\rfloor}(-1)^{r-dq}\binom{n/d}{q}
 \sum_{\substack{\nu\subseteq\mu\\
       \mu/\nu\text{ a vertical }(r-dq)\text{-strip}}}
 \chi^\nu_{(d^q)}.
\end{equation}

\begin{lemma}[{\cite[Lemma~5.1 and equation~(24)]{Hou}}]
\label{lem:bernstein-rectangular}
With this notation, for every divisor $d\mid n$ with $d\ge2$,
\begin{equation}\label{eq:bernstein-rectangular}
 \chi^\lambda_{(d^{n/d})}=
 \begin{cases}
  \delta_\mu(-1)^r+\eta_d,&2\le d\le r,\\
  \delta_\mu(-1)^r,&d>r,
\end{cases}
\end{equation}
and, for every divisor $d\mid n$ with $2\le d\le r$,
\begin{equation}\label{eq:eta-bound}
 |\eta_d|\le f^\mu
 \sum_{q=1}^{\lfloor r/d\rfloor}\binom{n/d}{q}.
\end{equation}
\end{lemma}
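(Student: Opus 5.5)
The plan is to compute $\chi^\lambda_{(d^{n/d})}$ with the Bernstein creation operator, realizing $s_\lambda$ for $\lambda=(n-r,\mu)$ as $B_{n-r}s_\mu$. Equivalently, we use the Jacobi--Trudi-type expansion along the first row:
\[
 s_{(n-r,\mu)}=\sum_{k\ge0}(-1)^k h_{n-r+k}\,(e_k^\perp s_\mu).
\]
Here $e_k^\perp s_\mu$ is the sum of $s_\nu$ over $\nu\subseteq\mu$ with $\mu/\nu$ a vertical $k$-strip, by the dual Pieri rule. Pairing with $p_d^{n/d}/z$ extracts the character value on the class $(d^{n/d})$. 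The key observation is that $p_d^{\perp}$ is a derivation-like operator, so the value on $(d^{n/d})$ of $h_{n-r+k}\cdot s_\nu$ (with $|\nu|=r-k$) equals $\sum \chi^{(n-r+k)}_{(d^{a})}\chi^\nu_{(d^{q})}$ times a binomial count $\binom{n/d}{q}$. This count is the number of ways to distribute the $n/d$ cycles between the two factors. Here $q$ cycles go to $\nu$, so $dq=r-k$, and the trivial character contributes $1$ provided $d\mid n-r+k$; this is automatic once $d\mid n$ and $d\mid r-k$.

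This gives
\[
 \chi^\lambda_{(d^{n/d})}=\sum_{q\ge0}(-1)^{r-dq}\binom{n/d}{q}\sum_{\mu/\nu\text{ vert. }(r-dq)\text{-strip}}\chi^\nu_{(d^q)}.
\]
The $q=0$ term requires $\nu=\varnothing$, which means $\mu$ itself must be a vertical $r$-strip, i.e.\ $\mu=(1^r)$. It therefore contributes $\delta_\mu(-1)^r$. The terms with $q\ge1$ are exactly $\eta_d$, and they are present only if $dq\le r$, so they vanish when $d>r$. One must check that $n-r+k\ge\mu_1$ is preserved, so that the Jacobi--Trudi expansion applies without straightening. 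This holds since $n-r>r\ge\mu_1$ because $r<n/2$. This part is the one step where care is needed: the expansion is valid precisely because the first row is long.

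For the bound \eqref{eq:eta-bound}, apply the triangle inequality to \eqref{eq:eta-definition}, using $|\chi^\nu_{(d^q)}|\le f^\nu$. It then suffices to show that, for fixed $k=r-dq$,
\[
 \sum_{\mu/\nu\text{ vert. }k\text{-strip}}f^\nu\le f^\mu.
\]
This follows by iterating the branching rule: $f^\mu=\sum_{\nu\,:\,\mu/\nu\text{ one box}}f^\nu$. Each vertical $k$-strip removal corresponds to at least one removal sequence of single boxes, and distinct $\nu$ give distinct endpoints, so the sum over $\nu$ is dominated by the count of all $k$-step removal chains, weighted by $f^\nu$, which equals $f^\mu$. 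Summing over $q$ gives the stated bound. I expect the main obstacle to be only the bookkeeping of the plethystic/skew pairing in the first step; both results are also available as \cite[Lemma~5.1 and (24)]{Hou}, so citing them directly is the fallback.
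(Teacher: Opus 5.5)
Your proposal is correct and follows the same route as the cited source \cite[Lemma~5.1 and equation~(24)]{Hou}, which the paper invokes without reproducing a proof. That route is the Bernstein (first-row Jacobi--Trudi) expansion $s_{(n-r,\mu)}=\sum_{k}(-1)^k h_{n-r+k}\,e_k^\perp s_\mu$, paired with $p_d^{n/d}$ through its coproduct to produce $\binom{n/d}{q}$, together with the bound obtained from $|\chi^\nu_{(d^q)}|\le f^\nu$ and $\sum_{\nu}f^\nu\le f^\mu$ by iterated branching. One small correction: the only condition the expansion needs is $n-r\ge\mu_1$, i.e. that $\lambda$ is a partition, which $r<n/2$ guarantees; the condition ``$n-r+k\ge\mu_1$'' that you wrote is not the relevant one, and no further ``long row'' requirement is used in this lemma.
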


\begin{lemma}[{\cite[Lemma~5.2]{Hou}}]
\label{lem:long-row-dimension}
Let $\mu\vdash r$ with $2\le r<n/2$, and put
$\lambda\coloneqq(n-r,\mu)$.  Then
\begin{equation}\label{eq:long-row-dimension}
 f^\lambda\ge f^\mu g_r(n).
\end{equation}
\end{lemma}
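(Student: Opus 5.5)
We need to prove $f^\lambda\ge f^\mu g_r(n)$ for $\lambda=(n-r,\mu)$, $\mu\vdash r$, $2\le r<n/2$. The plan is to compare hook lengths, using the hook-length formula for both $f^\lambda$ and $f^\mu$.

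\textbf{Setup.} The cells of $\lambda$ in rows $2,3,\ldots$ form a copy of $\mu$. A cell of $\mu$ in column $j$ has the same hook length in $\lambda$ as in $\mu$, except that the arm and leg are unchanged and the cell sits one row lower. So the hooks of the rows-$\ge2$ part of $\lambda$ are exactly the hooks of $\mu$. Hence
\[
 f^\lambda=\frac{n!}{\prod_{c\in\text{row }1}h_\lambda(c)\cdot\prod_{c\in\mu}h_\mu(c)}
 =f^\mu\,\frac{n!}{r!\,\prod_{j=1}^{n-r}h_\lambda(1,j)}.
\]
It therefore suffices to show
\[
 \frac{n!}{r!\prod_{j}h_\lambda(1,j)}\ge\binom nr-\binom n{r-1}.
\]

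\textbf{First-row hooks.} For $j>\mu_1$ we have $h(1,j)=n-r-j+1$, so these cells contribute $(n-r-\mu_1)!$. For $j\le\mu_1$ we have $h(1,j)=n-r-j+1+\mu'_j$.

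\textbf{Comparison with the two-row case.} For $\mu=(r)$ the ratio is exactly $g_r(n)$, which is the standard formula for $f^{(n-r,r)}$. In general, the leftover factor is
\[
 \frac{n!}{r!\,(n-r-\mu_1)!\prod_{j\le\mu_1}(n-r-j+1+\mu'_j)}.
\]
We compare it with the value for $\mu=(r)$, namely
\[
 \frac{n!\,(n-2r+1)}{r!\,(n-r+1)!}.
\]
The column lengths satisfy $\sum_{j\le\mu_1}\mu'_j=r$ and $\mu'_j\ge1$.

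This comparison is the main obstacle. The route is to show that moving a box from a lower row up into row $2$ (increasing $\mu_1$) does not increase the ratio. Equivalently, show that
\[
 \prod_{j\le\mu_1}(n-r-j+1+\mu'_j)\,(n-r-\mu_1)!\le\frac{(n-r+1)!}{n-2r+1}.
\]

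Write $a_j=n-r-j+1$. The left side is $\prod_{j\le\mu_1}(a_j+\mu'_j)\cdot\prod_{j>\mu_1}a_j$. Since $\prod_{j\le r}(a_j+1)/a_j$ corresponds to $\mu=(r)$, one bounds $\prod_{j\le\mu_1}(a_j+\mu'_j)/a_j\le\prod_{j\le\mu_1}(1+\mu'_j/a_j)$. Concavity then gives: spreading a total mass $r$ over the columns with each $\mu'_j\ge1$, and using that $a_j$ is decreasing in $j$, the product is maximized by $\mu'_j=1$ for all $j\le r$, i.e.\ by $\mu=(r)$. Concretely, $(1+k/a)\le(1+1/a)(1+1/(a-1))\cdots(1+1/(a-k+1))$ holds termwise since the right side telescopes to $(a+1)/(a-k+1)\ge1+k/a$. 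One must match the indices carefully using $a_j\ge a_{j'}$ for $j<j'$ and $a_r\ge n-2r+1>0$, which uses $r<n/2$.

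With that inequality, the ratio is at least its value at $\mu=(r)$, namely $g_r(n)$. Multiplying by $f^\mu$ gives $f^\lambda\ge f^\mu g_r(n)$.
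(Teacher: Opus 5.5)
Your argument is correct and follows the approach the paper relies on: the paper states this lemma with a citation to Hou's Lemma~5.2 and no proof of its own, and its introduction describes Hou's long-row treatment as a hook-length comparison. As in your proof, the rows below the first contribute exactly the hooks of $\mu$, and the claim reduces to $\prod_{j\le\mu_1}(1+\mu'_j/a_j)\le\prod_{i=1}^r(1+1/a_i)=(n-r+1)/(n-2r+1)$.

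The one step you left informal is the index matching. Give column $j$ the consecutive block $\{s_{j-1}+1,\ldots,s_j\}$, where $s_j\coloneqq\mu'_1+\cdots+\mu'_j$. Then $s_{j-1}+1\ge j$ and $a_{s_j}\ge a_r=n-2r+1>0$, so telescoping gives $1+\mu'_j/a_j\le(a_{s_{j-1}+1}+1)/a_{s_j}$. These blocks tile $\{1,\ldots,r\}$, which completes the comparison.
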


Let $w:\D(n)\to\mathbb R$ satisfy $w(1)=1$.  Substituting
\eqref{eq:bernstein-rectangular} into
\eqref{eq:general-schur-coefficient} gives, for
$\lambda=(n-r,\mu)$,
\begin{equation}\label{eq:long-row-Fnw}
 n[s_\lambda]F_n(w)
 =f^\lambda+\delta_\mu(-1)^r\bigl(n\tau_n(w)-1\bigr)
 +\sum_{\substack{d\mid n\\2\le d\le r}}w(d)\eta_d.
\end{equation}
For $2\le r<n/2$, define
\begin{equation}\label{eq:Anr}
 A_{n,r}(w)\coloneqq
 \sum_{\substack{d\mid n\\2\le d\le r}}|w(d)|
 \sum_{q=1}^{\lfloor r/d\rfloor}\binom{n/d}{q}.
\end{equation}
By \eqref{eq:eta-bound}, the absolute value of the last sum in
\eqref{eq:long-row-Fnw} is at most $f^\mu A_{n,r}(w)$.

\subsection{Balanced partitions}\label{subsec:balanced}

For the purposes of this paper, a partition $\lambda\vdash n$ is
\emph{balanced} if both its first row and its first column have length at
most $n/2$.  We write
\begin{equation}\label{eq:balanced-partitions}
 \Bpart_n\coloneqq\set{\lambda\vdash n:\lambda_1\le n/2,\ \lambda_1'\le n/2}
\end{equation}
for the set of balanced partitions of $n$.
For $n\ge1$, put
\begin{equation}\label{eq:Lambda-X}
 m_n\coloneqq\left\lfloor\frac{n-1}{2}\right\rfloor,
 \qquad
 \Ldim_n\coloneqq\frac1{m_n+1}\binom n{m_n},
 \qquad
 X_n\coloneqq\frac{\Ldim_n}{\sqrt{2\pi n}}.
\end{equation}

For a box $(i,j)\in\lambda$, the integer $i+j-1$ is its opposite hook
length.  Following Swanson, define
\[
 N(\lambda)\coloneqq n-\max_{(i,j)\in\lambda}(i+j-1)
\]
and
\[
 N^*(\lambda)\coloneqq
 \begin{cases}
  N(\lambda),&2N(\lambda)+1\le n,\\
  \lfloor(n-1)/2\rfloor,&2N(\lambda)+1>n.
 \end{cases}
\]
Swanson's opposite-hook estimate
\cite[Proposition~4.11 and Corollary~4.13]{Swanson} states that
\begin{equation}\label{eq:swanson-opposite-hook}
 f^\lambda\ge\frac1{m+1}\binom nm
 \qquad(0\le m\le N^*(\lambda)).
\end{equation}

\begin{lemma}[{\cite[Lemma~6.1]{Hou}}]
\label{lem:balanced-dimension}
For every $n\ge1$ and every $\lambda\in\Bpart_n$, one has
$f^\lambda\ge\Ldim_n$.
\end{lemma}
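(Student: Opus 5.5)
The plan is to apply Swanson's opposite-hook estimate \eqref{eq:swanson-opposite-hook} with $m=m_n=\lfloor(n-1)/2\rfloor$. Then $f^\lambda\ge\frac1{m_n+1}\binom n{m_n}=\Ldim_n$, provided $m_n\le N^*(\lambda)$. So the whole task is to verify $N^*(\lambda)\ge m_n$ for every $\lambda\in\Bpart_n$.

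First I bound the maximal opposite hook length. For a box $(i,j)\in\lambda$ we have $j\le\lambda_1$ and $i\le\lambda_1'$. Moreover, since $\lambda$ contains both its first row and its first column, $i+j-1\le\lambda_1+\lambda_1'-1$, with equality at the corner $(\lambda_1',\lambda_1)$ only if that box lies in $\lambda$. In general, though, we can say less. The box $(i,j)$ together with the hook $\{(1,1),\dots,(1,j)\}\cup\{(1,1),\dots,(i,1)\}$ shows the rectangle $[i]\times[j]\subseteq\lambda$, so $ij\le n$. Combined with $i\le\lambda_1'\le n/2$ and $j\le\lambda_1\le n/2$, this suggests using a counting bound instead. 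The boxes of the first row and first column, together with the boxes $(i,2),\dots,(i,j)$, are distinct boxes of $\lambda$. Their number is $\lambda_1+\lambda_1'-1+(j-1)\ge j+i-1+(j-1)$ when $i>1$. A cleaner route: the first row, the first column, and the row $i$ give at least $(\lambda_1)+(i-1)+(j-1)$ boxes when $i\ge2$. This yields $n\ge j+i-1+(j-1)$, hence $i+j-1\le n-(j-1)$. Symmetrically, $i+j-1\le n-(i-1)$.

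The step I expect to need care is obtaining $N(\lambda)=n-\max(i+j-1)\ge m_n$. I would handle it as follows. At a box maximizing $i+j-1$, the rows and columns of $\lambda$ outside the hook of $(1,1)$ restricted to $[i]\times[j]$ still contribute. I want to show $i+j-1\le n-m_n=\lceil (n+1)/2\rceil$. Use $i+j-1\le\lambda_1+\lambda_1'-1$ together with the inequality $n\ge\lambda_1+\lambda_1'-1+(i-1)(j-1)$, which comes from the rectangle. If $i=1$ or $j=1$, then $i+j-1\le\max(\lambda_1,\lambda_1')\le n/2\le n-m_n$. Otherwise $(i-1)(j-1)\ge i+j-3$, so $n\ge 2(i+j-1)-3$, giving $i+j-1\le (n+3)/2$. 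The boundary cases $i+j-1\in\{(n+2)/2,(n+3)/2\}$ must then be excluded by a small parity check. This uses the slack from $\lambda_1,\lambda_1'\le n/2$, noting that equality in $(i-1)(j-1)\ge i+j-3$ forces $\min(i,j)=2$. With $N(\lambda)\ge m_n$ in hand, either $2N(\lambda)+1\le n$, and then $N^*=N\ge m_n$, or $N^*=\lfloor(n-1)/2\rfloor=m_n$. In both cases $m=m_n$ is admissible in \eqref{eq:swanson-opposite-hook}. Very small $n$, such as $n\le3$ where $\Bpart_n$ is tiny or empty, can be checked directly.
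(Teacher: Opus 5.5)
Your route is the intended one. The paper does not reprove this lemma; it cites Hou, but it states Swanson's estimate \eqref{eq:swanson-opposite-hook} immediately beforehand for exactly this use. The content is that $N(\lambda)\ge m_n$ for $\lambda\in\Bpart_n$, after which your case split on whether $2N(\lambda)+1\le n$ correctly gives $N^*(\lambda)\ge m_n$.

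The last step contains an arithmetic slip. For $i,j\ge2$ your chain actually gives $n\ge ij\ge 2(i+j)-4=2(i+j-1)-2$, not $2(i+j-1)-3$. Hence $i+j-1\le\lfloor (n+2)/2\rfloor=n-m_n$ directly, and no parity check is needed.

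Your claim that the boundary value $(n+2)/2$ "must be excluded" is false for even $n$. That value equals $n-m_n$ and is attained: the box $(2,2)$ of $(2,2)$ and the box $(2,4)$ of $(4,4)$ are examples. So that exclusion step would fail as stated, but it is also unnecessary.

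Note also that the bound $n\ge ij$ holds for every partition. The balanced hypothesis is therefore needed only for boxes in the first row or first column, which is the case $i=1$ or $j=1$ that you already handled correctly.
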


\begin{lemma}[{\cite[Lemma~4.1]{Swanson}}]
\label{lem:normalized-FL}
Let $d\ge2$, $s\ge1$, and $\lambda\vdash n=ds$.  Then
\begin{equation}\label{eq:normalized-FL}
 \frac{|\chi^\lambda_{(d^s)}|}{f^\lambda}
 \le d^{-1/2}e^{d/(12n)}
 \left(\frac{\sqrt{2\pi n}}{f^\lambda}\right)^{1-1/d}.
\end{equation}
\end{lemma}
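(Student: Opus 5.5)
This lemma is quoted from Swanson, so the plan is to rederive it from the Fomin--Lulov bound combined with Stirling's formula.

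\textbf{Step 1: the Fomin--Lulov bound.} For $n=ds$ and every $\lambda\vdash n$,
\[
 |\chi^\lambda_{(d^s)}|\le\frac{s!\,d^s}{(n!)^{1/d}}\,(f^\lambda)^{1/d}.
\]
Dividing by $f^\lambda$ gives
\[
 \frac{|\chi^\lambda_{(d^s)}|}{f^\lambda}\le\frac{s!\,d^s}{(n!)^{1/d}}(f^\lambda)^{1/d-1}.
\]
Comparing with \eqref{eq:normalized-FL}, the factor $(f^\lambda)^{-(1-1/d)}$ already matches. So it suffices to prove
\[
 \frac{s!\,d^s}{(n!)^{1/d}}\le d^{-1/2}e^{d/(12n)}(2\pi n)^{(1-1/d)/2}.
\]

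\textbf{Step 2: Stirling bounds.} Use the two-sided Robbins estimates
\[
 \sqrt{2\pi m}\,(m/e)^m\le m!\le\sqrt{2\pi m}\,(m/e)^m e^{1/(12m)}.
\]
Apply the upper bound to $s!$ and the lower bound to $n!$.

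\textbf{Step 3: simplify.} With $s=n/d$, the exponential parts cancel exactly:
\[
 (s/e)^s d^s=(n/e)^{n/d}=\bigl((n/e)^n\bigr)^{1/d}.
\]
What remains is
\[
 \sqrt{2\pi s}\,e^{1/(12s)}(2\pi n)^{-1/(2d)}.
\]
Since $\sqrt{2\pi s}=d^{-1/2}\sqrt{2\pi n}$ and $1/(12s)=d/(12n)$, this equals
\[
 d^{-1/2}e^{d/(12n)}(2\pi n)^{1/2-1/(2d)},
\]
which is exactly the required constant.

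The only real input is the Fomin--Lulov inequality itself, which I would cite rather than reprove. The remaining care is in directing the two Stirling bounds correctly: the upper bound goes on $s!$ in the numerator and the lower bound on $n!$ in the denominator. The $e^{1/(12s)}$ correction then produces precisely the factor $e^{d/(12n)}$.
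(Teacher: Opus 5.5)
Your proposal is correct and follows the route the paper indicates: it is the Fomin--Lulov estimate $|\chi^\lambda_{(d^s)}|\le s!\,d^s(n!)^{-1/d}(f^\lambda)^{1/d}$ combined with Robbins' two-sided Stirling inequalities. The paper simply cites Swanson (and Hou) for this. Your bookkeeping also checks out: the upper bound goes on $s!$ and the lower bound on $n!$, the $(n/e)^{n/d}$ factors cancel exactly, and $e^{1/(12s)}=e^{d/(12n)}$.
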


Inequality~\eqref{eq:normalized-FL} is the exponential form of Swanson's
inequality, obtained from the Fomin--Lulov estimate
\cite[Theorem~1.1]{FominLulov} by Stirling's inequalities.  It is also
recorded in \cite[Lemma~4.2]{Hou}.

Fix $n\ge18$, $\lambda\in\Bpart_n$, and a real divisor weight
$w:\D(n)\to\mathbb R$.  The unimodality of the binomial coefficients and
$m_n+1\le(n+1)/2$ give
\[
 \Ldim_n\ge\frac{n(n-1)}{n+1}>\sqrt{2\pi n}.
\]
The partition $\lambda$ is not a hook, so
$\chi^\lambda_{(n)}=0$ by the Murnaghan--Nakayama rule
\cite[Theorem~4.10.2]{Sagan}.  Every proper divisor of $n$ is at most
$n/2$, so $e^{d/(12n)}\le e^{1/24}$ for such divisors.
Since $X_n>1$, we also have
$X_n^{-(1-1/d)}\le X_n^{-2/3}$ when $d\ge3$.
Hence
\Cref{lem:balanced-dimension,lem:normalized-FL} give
\begin{equation}\label{eq:balanced-error}
 \frac1{f^\lambda}
 \left|\sum_{\substack{d\mid n\\d>1}}
 w(d)\chi^\lambda_{(d^{n/d})}\right|
 \le E_n(w),
\end{equation}
where
\begin{equation}\label{eq:Enw}
 E_n(w)\coloneqq e^{1/24}\left(
 X_n^{-1/2}\sum_{\substack{d\mid n\\d=2}}\frac{|w(d)|}{\sqrt d}
 +X_n^{-2/3}
 \sum_{\substack{d\mid n\\3\le d<n}}\frac{|w(d)|}{\sqrt d}
 \right).
\end{equation}
The sum with $d=2$ is empty when $n$ is odd.
\Cref{lem:X-ratio-appendix} gives
\begin{equation}\label{eq:X-ratio}
 \frac{X_{n+1}}{X_n}\ge\frac95\sqrt{\frac n{n+1}}
 \qquad(n\ge18).
\end{equation}

\subsection{The positivity criterion}\label{subsec:positivity-criterion}

Under the following error bounds, Schur positivity is equivalent to
$\tau_n(w),\eps_n(w)\in[0,1]$.  The quantities $A_{n,r}(w)$ and $E_n(w)$
control the long-row or long-column and balanced cases, respectively.

\begin{theorem}\label{thm:criterion}
Let $n\ge18$, and let $w:\D(n)\to\mathbb R$ satisfy $w(1)=1$.  Assume
that
\begin{enumerate}[label=\textup{(\roman*)}]
\item $|\tau_n(w)|\le1$ and $|\eps_n(w)|\le1$;
\item for every $2\le r<n/2$,
\[
 A_{n,r}(w)<\frac35g_r(n);
\]
\item $E_n(w)<1$.
\end{enumerate}
Then $[s_\lambda]F_n(w)>0$ for every $\lambda\vdash n$ other than
$(n),(n-1,1),(2,1^{n-2})$, and $(1^n)$.
The coefficients at these four partitions are
\begin{align*}
 [s_{(n)}]F_n(w)&=\tau_n(w),&
 [s_{(n-1,1)}]F_n(w)&=1-\tau_n(w),\\
 [s_{(2,1^{n-2})}]F_n(w)&=1-\eps_n(w),&
 [s_{(1^n)}]F_n(w)&=\eps_n(w).
\end{align*}
Consequently, $F_n(w)$ is Schur-positive if and only if
$\tau_n(w),\eps_n(w)\in[0,1]$.
\end{theorem}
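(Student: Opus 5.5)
The four distinguished coefficients are already given by \Cref{lem:endpoint-coefficients}, since $n\ge18\ge4$. Once every other coefficient is shown to be positive, the final equivalence is immediate: Schur positivity reduces to $\tau_n(w)\ge0$, $1-\tau_n(w)\ge0$, $1-\eps_n(w)\ge0$ and $\eps_n(w)\ge0$. So the work is to show $n[s_\lambda]F_n(w)>0$ for every other $\lambda\vdash n$, starting from \eqref{eq:general-schur-coefficient}. I will split into three cases: $\lambda_1>n/2$, $\lambda_1'>n/2$, and $\lambda\in\Bpart_n$. These are exhaustive, and they are disjoint because $\lambda_1+\lambda_1'\le n+1$.

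\emph{Balanced case.} By \eqref{eq:balanced-error} and hypothesis (iii), the nonidentity sum in \eqref{eq:general-schur-coefficient} has absolute value at most $E_n(w)f^\lambda<f^\lambda$, so $n[s_\lambda]F_n(w)>0$.

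\emph{Long first row.} Write $\lambda=(n-r,\mu)$ with $\mu\vdash r$ and $r<n/2$. The cases $r=0$ and $r=1$ give $(n)$ and $(n-1,1)$, which are excluded, so $2\le r<n/2$. Then \eqref{eq:long-row-Fnw} together with the bound following \eqref{eq:Anr} gives
\[
 n[s_\lambda]F_n(w)\ge f^\lambda-\delta_\mu\,|n\tau_n(w)-1|-f^\mu A_{n,r}(w).
\]
By \Cref{lem:long-row-dimension} and hypothesis (ii), $f^\mu A_{n,r}(w)<\tfrac35 f^\mu g_r(n)\le\tfrac35 f^\lambda$.

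If $\delta_\mu=0$, the right-hand side is positive. If $\mu=(1^r)$, then $f^\mu=1$, and hypothesis (i) gives $|n\tau_n(w)-1|\le n+1$. It then suffices to show $\tfrac25 f^\lambda>n+1$. Here $\lambda$ is the hook $(n-r,1^r)$, so $f^\lambda=\binom{n-1}{r}\ge\binom{n-1}{2}$, and $\tfrac25\binom{n-1}{2}>n+1$ holds easily for $n\ge18$.

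\emph{Long first column.} Apply the long-row case to $\lambda'$ with the twisted weight $w'(d)\coloneqq(-1)^{n-n/d}w(d)$, which is the sign of the class $(d^{n/d})$. This works because $\chi^{\lambda}_{(d^{n/d})}=(-1)^{n-n/d}\chi^{\lambda'}_{(d^{n/d})}$, so $[s_\lambda]F_n(w)=[s_{\lambda'}]F_n(w')$. The hypotheses transfer to $w'$: $|w'|=|w|$, so $A_{n,r}$ and $E_n$ are unchanged, and $\tau_n$ and $\eps_n$ are interchanged. The excluded partitions $(2,1^{n-2})$ and $(1^n)$ correspond to $(n-1,1)$ and $(n)$ under conjugation.

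The main obstacle I expect is the hook subcase $\mu=(1^r)$. There the exact term $(-1)^r(n\tau_n(w)-1)$ has to be absorbed by the slack $\tfrac25 f^\lambda$ left over after using the $\tfrac35$ bound. This is the one place where hypothesis (i) enters, and the one place where the explicit hook dimension is needed rather than \Cref{lem:long-row-dimension} alone.
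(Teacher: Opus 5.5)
Your proposal is correct and follows the paper's proof essentially step for step. It uses the same split into long first row, long first column, and balanced partitions; the same $\tfrac35$ versus $\tfrac25$ slack argument, with $f^\lambda=\binom{n-1}{r}\ge\binom{n-1}{2}$ absorbing $|n\tau_n(w)-1|\le n+1$ in the hook subcase; and the same sign-twisted weight $(-1)^{n-n/d}w(d)$ for long columns.

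One harmless slip: for odd $n$ the long-row and long-column cases are not disjoint. The hook $\bigl(\tfrac{n+1}2,1^{(n-1)/2}\bigr)$ has $\lambda_1=\lambda_1'=\tfrac{n+1}2>\tfrac n2$. Your argument only needs the three cases to be exhaustive, so nothing breaks.
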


\begin{proof}
The distinguished coefficients are given by
\Cref{lem:endpoint-coefficients}.  Every other partition either has a first
row or first column longer than $n/2$, or belongs to $\Bpart_n$.

First let $\lambda=(n-r,\mu)$ with $2\le r<n/2$.  By \eqref{eq:eta-bound}, hypothesis~\textup{(ii)}, and
\Cref{lem:long-row-dimension},
\[
 \left|\sum_{\substack{d\mid n\\2\le d\le r}}w(d)\eta_d\right|
 \le f^\mu A_{n,r}(w)
 <\frac35 f^\mu g_r(n)
 \le\frac35 f^\lambda.
\]
If $\mu\ne(1^r)$, then
\[
 n[s_\lambda]F_n(w)>\frac25f^\lambda>0.
\]
If $\mu=(1^r)$, then $f^\lambda=\binom{n-1}{r}$ and
$|n\tau_n(w)-1|\le n+1$.  Hence
\[
 \begin{aligned}
 n[s_\lambda]F_n(w)
 &>\frac25\binom{n-1}{r}-(n+1)\\
 &\ge\frac25\binom{n-1}{2}-(n+1)
 =\frac{n^2-8n-3}{5}>0.
 \end{aligned}
\]

Next suppose that $\lambda$ has a long first column, so that $\lambda'$ has
a long first row.  Define
\[
 \widetilde w(d)\coloneqq(-1)^{n-n/d}w(d).
\]
Multiplication by the sign character gives
$[s_\lambda]F_n(w)=[s_{\lambda'}]F_n(\widetilde w)$.  The absolute-value
hypotheses are unchanged, while
\[
 \tau_n(\widetilde w)=\eps_n(w),
 \qquad
 \eps_n(\widetilde w)=\tau_n(w).
\]
Hence $[s_{\lambda'}]F_n(\widetilde w)>0$, and therefore
$[s_\lambda]F_n(w)>0$.

Finally, let $\lambda\in\Bpart_n$.  Then
\eqref{eq:general-schur-coefficient},
\eqref{eq:balanced-error}, and hypothesis~\textup{(iii)} give
\[
 n[s_\lambda]F_n(w)\ge f^\lambda(1-E_n(w))>0.
\]
Thus all coefficients indexed by other partitions are positive.  The four
distinguished coefficients are nonnegative exactly when
$\tau_n(w),\eps_n(w)\in[0,1]$.
\end{proof}

\begin{remark}\label{rem:comparison-linear-criterion}
There is no constant $\kappa$, independent of $n$, $J$, and $d$,
such that $|\psi^J(d)|\le\kappa d$ for all Boolean sums; see
\Cref{rem:boolean-not-linear}.
The weights $c_d(n/d)^2$ also fail such a bound; see
\Cref{rem:ramanujan-not-linear}.
Conditions \textup{(ii)} and \textup{(iii)} of \Cref{thm:criterion}
are verified for these two families in \Cref{sec:boolean,sec:ramanujan}.
No optimality is claimed for the constants $3/5$ and $1$ or for the
threshold $n\ge18$.
\end{remark}

\subsection{A common long-row bound}
We next verify the long-row hypothesis of \Cref{thm:criterion}
for weights satisfying $|w(2)|\le1$ when $2\mid n$, and
$|w(d)|\le d^2$ for $d\mid n$, $d\ge3$.
The resulting bound applies to both families considered below.

For integers $n\ge18$ and $2\le r<n/2$, define
\begin{equation}\label{eq:Asqnr}
 A_{n,r}^{\sq}
 \coloneqq\sum_{q=1}^{\lfloor r/2\rfloor}
   \binom{\lfloor n/2\rfloor}{q}
 +\sum_{d=3}^rd^2
   \sum_{q=1}^{\lfloor r/d\rfloor}
   \binom{\lfloor n/d\rfloor}{q}.
\end{equation}

\begin{proposition}\label{prop:ramanujan-short-cycle}
For $n\ge18$ and $2\le r<n/2$,
\begin{equation}\label{eq:ramanujan-short-cycle}
 A_{n,r}^{\sq}<\frac13g_r(n).
\end{equation}
Consequently, if $|w(2)|\le1$ when $2\mid n$ and
$|w(d)|\le d^2$ for $d\ge3$, then $A_{n,r}(w)<g_r(n)/3$.
\end{proposition}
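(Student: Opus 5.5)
The consequence is immediate once the main inequality holds. In \eqref{eq:Anr} only divisors $d\mid n$ with $2\le d\le r$ contribute, and $n/d=\lfloor n/d\rfloor$ for such $d$. Bounding $|w(2)|\le1$ and $|w(d)|\le d^2$, and then enlarging the sum to all $3\le d\le r$ (all terms are nonnegative), gives $A_{n,r}(w)\le A^{\sq}_{n,r}$. So the whole task is to prove \eqref{eq:ramanujan-short-cycle}.

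\textbf{Comparison with $g_r(n)$.} The plan is to compare each piece of $A^{\sq}_{n,r}$ with $g_r(n)=\binom nr\frac{n-2r+1}{n-r+1}$. For $d\ge2$ and $1\le q\le\lfloor r/d\rfloor$, use $\binom{\lfloor n/d\rfloor}{q}\le (n/d)^q/q!$. The inner sums are dominated by the top term $q=\lfloor r/d\rfloor$: consecutive ratios are $\ge(n/d-q)/(q+1)\ge 2$ in the range, since $r<n/2$ gives $q\le r/d<n/(2d)$. Hence
\[
A^{\sq}_{n,r}\le 2\binom{\lfloor n/2\rfloor}{\lfloor r/2\rfloor}+2\sum_{d=3}^r d^2\binom{\lfloor n/d\rfloor}{\lfloor r/d\rfloor}.
\]

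\textbf{Main term and factor-loss estimate.} The $d=2$ term is the main contribution. We have $\binom{n/2}{r/2}^2\lesssim\binom nr$ roughly, so $\binom{n/2}{r/2}$ is about $\sqrt{\binom nr}$, far smaller than $\binom nr$ once $r\ge2$ and $n\ge18$. To make this rigorous, I would show the ratio $\binom{\lfloor n/d\rfloor}{\lfloor r/d\rfloor}\big/\binom{n}{r}$ is tiny. Write $\binom nr\ge (n/r)^r$ and $\binom{\lfloor n/d\rfloor}{\lfloor r/d\rfloor}\le (en/r)^{r/d}$. Then $d^2\binom{n/d}{r/d}/\binom nr\le d^2 e^{r/d}(r/n)^{r(1-1/d)}$, with $r/n<1/2$, so this decays geometrically in $r$.

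\textbf{Small-$r$ cases.} Small $r$, where such crude bounds are weakest, I would handle directly. For $r=2,3$, $A^{\sq}_{n,2}=n/2$ (roughly) and $A^{\sq}_{n,3}\le n/2+9\cdot n/3$, compared with $g_2(n)=n(n-3)/2$ and $g_3(n)=n(n-1)(n-5)/6$. These are clearly within a factor $1/3$ for $n\ge18$. For $r=4,5$, a similar explicit polynomial check works.

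\textbf{General $r$ and the main obstacle.} For $r\ge6$, I would bound the factor $(n-2r+1)/(n-r+1)\ge 1/(n-r+1)\ge 2/n$ in $g_r(n)$, which is the worst case near $r\approx n/2$. I would then show $n\cdot\bigl(\text{sum over }d\text{ of the ratios}\bigr)<1/3$. The sum over $d\ge3$ of $d^2\binom{n/d}{r/d}$ is controlled by at most $r$ terms, each bounded by the $d=3$ form, so it is at most $r^3\binom{n/3}{r/3}$. The obstacle is exactly this uniform bookkeeping near $r\approx n/2$ with $n$ only moderately large ($n\ge18$), where the factor $2/n$ bites. If the crude Stirling-type bounds are insufficient there, I would fall back on a finite check for $18\le n\le N_0$. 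For $n>N_0$, monotonicity in $n$ should hold: the ratio $A^{\sq}_{n,r}/g_r(n)$ decreases when $n$ is increased by $2$ at fixed $r/n$. In the appendix style of the paper, such a check is presumably how the constant $1/3$ is confirmed.
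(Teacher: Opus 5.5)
Your reduction of the consequence to \eqref{eq:ramanujan-short-cycle} is correct, and your cases $r=2,3$ are fine. The gap is in the main inequality, in exactly the regime you flag, $r$ close to $n/2$. Your estimates do not survive there.

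\textbf{The $d=2$ bound grows.} For $d=2$, your bound $d^2e^{r/d}(r/n)^{r(1-1/d)}$ equals $4(er/n)^{r/2}$. This decays only when $r/n<1/e$; for $r/n\in[1/e,1/2)$ it grows exponentially in $r$. On top of that you must still absorb the factor $(n-2r+1)/(n-r+1)$, which is of order $1/n$ at $n=2r+1$. The loss comes from $\binom nr\ge(n/r)^r\approx2^r$, whereas in fact $\binom{2r+1}{r}\ge4^r/\sqrt r$.

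\textbf{The ratio claim is false.} The claim that consecutive ratios in the inner sums are at least $2$ does not hold. You only get $\lfloor n/d\rfloor\ge2q+2$, hence ratio $\ge(q+2)/(q+1)$. For $n=18$, $r=8$, $d=2$ the inner sum is $9+36+84+126=255>2\binom94=252$.

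\textbf{A minor slip.} $1/(n-r+1)\ge2/n$ is backwards, since $n-r+1>n/2$. Use $n-2r+1\ge2$ instead.

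\textbf{The fallback is not a proof.} A finite check for $18\le n\le N_0$ plus an unproved monotonicity ``at fixed $r/n$'' leaves infinitely many pairs $(n,r)$ with $r$ near $n/2$ and $n>N_0$ uncovered.

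The missing idea is monotonicity in $n$ at fixed $r$. By Hou's Lemma~5.3 (\Cref{lem:short-cycle-monotonicity}), each ratio $\binom{\lfloor n/d\rfloor}{q}/g_r(n)$ is nonincreasing for integers $n\ge2r+1$. Moreover, $A^{\sq}_{n,r}$ is a nonnegative combination of these binomials with coefficients and index ranges independent of $n$. Hence the worst case is $n_0=\max\{18,2r+1\}$, which turns the two-parameter problem into a one-parameter one, namely the near-central regime you were worried about.
\begin{itemize}
\item For $2\le r\le11$ this is a finite exact check, and all ratios in \Cref{tab:quadratic-short} are below $1/10$.
\item For $r\ge12$ one has $n_0=2r+1$. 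Bound the inner sums by $2^r$ for $d=2$ and by $2^{(2r+1)/3}$ for $d\ge3$. Then use $g_r(2r+1)=\frac{2}{r+2}\binom{2r+1}{r}$ and $\binom{2r+1}{r}\ge4^r/\sqrt r$. This gives the majorant $W_r=\frac{(r+2)\sqrt r}{2}\bigl(2^{-r}+\Sigma_r2^{(1-4r)/3}\bigr)$ with $\Sigma_r=\sum_{d=3}^rd^2$. It is decreasing for $r\ge12$, and $W_{12}<10731/32768<1/3$.
\end{itemize}
The central-binomial lower bound supplies the $4^r$ that beats the $2^r$ coming from the $d=2$ term. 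That is precisely what your $(n/r)^r$ estimate lacks.
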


\begin{proof}
By \Cref{lem:short-cycle-monotonicity}, it is enough to take
$n_0\coloneqq\max\set{18,2r+1}$.  For $2\le r\le11$, the exact values in
\Cref{tab:quadratic-short} show that
$A^{\sq}_{n_0,r}/g_r(n_0)<1/10$.

Let $r\ge12$.  Apply \Cref{lem:common-long-row-tail} with
$M(2)=1$ and $M(d)=d^2$ for $d\ge3$.  This gives
\[
 \begin{aligned}
 \frac{A_{2r+1,r}^{\sq}}{g_r(2r+1)}
 &<\frac{(r+2)\sqrt r}{2}
 \left(2^{-r}+\left(\sum_{d=3}^rd^2\right)2^{(1-4r)/3}\right)\\
 &<\frac13.
 \end{aligned}
\]
The second inequality follows from \Cref{lem:ramanujan-tail-decrease};
termwise comparison with the weight bounds proves the final assertion.
\end{proof}

\subsection{Linear weights and a uniform relative-error bound}\label{subsec:linear-weights}
The relative-error estimate obtained here will be used in the
quantitative residue inequality of \Cref{cor:composite-equality}.
The common long-row estimate also applies to every divisor weight satisfying
$|w(2)|\le1$ when $2\mid n$ and
$|w(d)|\le\kappa d$ for $d\mid n$, $d\ge3$, with $0<\kappa\le2$.
For balanced partitions we keep the sharper linear-weight estimate,
which also gives a uniform asymptotic statement.
We shall also use the following divisor-sum estimate.
\begin{lemma}[{\cite[Lemma~4.5]{Hou}}]
\label{key:lemma-square-root-divisor-sum}
For every $n\ge1$,
\[
 \sum_{d\mid n}\sqrt d\le3n^{3/4}.
\]
\end{lemma}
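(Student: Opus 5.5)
The plan is to split the divisor sum according to the size of $d$ relative to $n^{1/4}$, so that each piece is easy to bound. This is the standard elementary approach (it is the route in \cite[Lemma~4.5]{Hou}). The starting point is the pairing of each divisor $d$ with its complementary divisor $n/d$. Since $\sqrt{n/d}=\sqrt n/\sqrt d$, we can write
\[
 \sum_{d\mid n}\sqrt d=\sum_{d\mid n}\sqrt{n/d}
 =\sqrt n\sum_{d\mid n}d^{-1/2}.
\]
Neither form alone gives $n^{3/4}$. We therefore split at a threshold $t=n^{1/4}$, which is where the two regimes balance.

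Divisors $d\le n^{1/4}$ contribute at most $\sum_{d\le n^{1/4}}\sqrt d$. This is crudely at most $n^{1/4}\cdot n^{1/8}$, which is negligible, or at most $\tfrac23(n^{1/4}+1)^{3/2}$ by comparing with an integral. The main piece comes from divisors $d>n^{1/4}$. For these we write $\sqrt d=\sqrt n/\sqrt{n/d}$ with the cofactor $e=n/d<n^{3/4}$. Then
\[
 \sum_{d\mid n,\,d>n^{1/4}}\sqrt d=\sqrt n\sum_{e\mid n,\,e<n^{3/4}}e^{-1/2}
 \le\sqrt n\sum_{e<n^{3/4}}e^{-1/2}\le\sqrt n\cdot 2n^{3/8}.
\]
This gives $n^{7/8}$, which is too big. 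So the naive split is insufficient, and the real work is the correct bookkeeping.

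A better route is to use the fact that divisors come in pairs $(d,n/d)$ with $d\le\sqrt n$. For each such pair,
\[
 \sqrt d+\sqrt{n/d}\le\sqrt{n/d}\,(1+d/\sqrt n)\le2\sqrt{n/d}.
\]
Hence
\[
 \sum_{d\mid n}\sqrt d\le2\sqrt n\sum_{d\mid n,\,d\le\sqrt n}d^{-1/2}.
\]
We still need $\sum_{d\mid n,\,d\le\sqrt n}d^{-1/2}\lesssim n^{1/4}$, which the full range again overshoots. The fix is to exploit sparsity of divisors. For $d\le\sqrt n$, bound $d^{-1/2}$ in two regimes. When $d\le n^{1/2}$ is small, the number of divisors up to $y$ is trivially at most $y$. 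This gives
\[
 \sum_{d\mid n,\,d\le\sqrt n}d^{-1/2}\le\sum_{d\le\sqrt n}d^{-1/2}\le2n^{1/4}.
\]
That yields $\sum\sqrt d\le4n^{3/4}$. Tightening the constant from $4$ to $3$ is the main obstacle. I would first try the sharper integral comparison $\sum_{d\le y}d^{-1/2}\le2\sqrt y-1$ (valid for $y\ge1$, via $1+\int_1^y$). I would combine it with the exact pair bound $\sqrt d+\sqrt{n/d}$ instead of $2\sqrt{n/d}$, together with the observation that $d$ and $n/d$ cannot both be near $\sqrt n$ unless they coincide.

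If the constant is still marginal for small $n$, I would handle the finitely many exceptional $n$ by direct computation. Since the statement is quoted from \cite[Lemma~4.5]{Hou}, the proof may alternatively just cite it.
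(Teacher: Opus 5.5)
Your pairing framework is the right one: it is the argument of \cite[Lemma~4.5]{Hou}, which the paper reproduces for the exponent $5/6$ in \Cref{lem:divisor-sum}. As written, however, your proposal only proves $\sum_{d\mid n}\sqrt d\le 4n^{3/4}$. The loss comes from the step $\sqrt d+\sqrt{n/d}\le 2\sqrt{n/d}$. It replaces $\sum_{d\le\sqrt n}\sqrt d$, which is at most $n^{3/4}$, by a second copy of $\sqrt n\sum_{d\le\sqrt n}d^{-1/2}$, which is of size $2n^{3/4}$. You name the remedy (``the exact pair bound'') but leave it as something to try. Your fallback of checking ``finitely many exceptional $n$'' by computation has no basis: an argument whose leading constant is $4$ gives no reason why the inequality with constant $3$ could fail for only finitely many $n$. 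The constant also matters downstream. With $4$ in place of $3$, the estimate in \Cref{key:proposition-G-numerics} would become $\frac{24}{23}\bigl(\frac1{30}+8\cdot\frac{35}{237}\bigr)>1$, so $\mathcal G_{18}(2)<1$ would no longer follow.

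The missing step is to keep the two halves of each pair separate. Put $N\coloneqq\lfloor\sqrt n\rfloor$. Every divisor of $n$ is $d$ or $n/d$ for some divisor $d\le N$, so
\[
 \sum_{d\mid n}\sqrt d\le\sum_{j=1}^N\sqrt j+\sqrt n\sum_{j=1}^N j^{-1/2}
 \le N^{3/2}+\sqrt n\,\bigl(2\sqrt N-1\bigr)\le 3n^{3/4}-\sqrt n .
\]
Here the first sum is at most $N\sqrt N$, and the second is at most $1+\int_1^N x^{-1/2}\,dx=2\sqrt N-1$. The last step uses $N\le\sqrt n$. This holds for every $n\ge1$, with no exceptional cases and no need for the observation about divisors near $\sqrt n$.
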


Combining \cref{lem:balanced-dimension,lem:normalized-FL,key:lemma-square-root-divisor-sum}
gives an estimate depending only on
$n$ and $\kappa$ for weights satisfying $|w(d)|\le\kappa d$.  The resulting
argument adapts Hou's treatment of partitions satisfying
$\lambda_1,\lambda_1'\le n/2$
\cite[Section~6, especially Lemma~6.3 and Proposition~6.4]{Hou}.

For $\kappa>0$, define
\begin{equation}\label{key:definition-G-bound}
 \mathcal G_n(\kappa)\coloneqq e^{1/24}
 \left(\frac{X_n^{-1/2}}{\sqrt2}
 +3\kappa n^{3/4}X_n^{-2/3}\right).
\end{equation}
For fixed $n$, the function $\mathcal G_n(\kappa)$ is increasing in
$\kappa$.  The constants are not optimized; the applications require only
$\mathcal G_n(\kappa)<1$.

\begin{proposition}
\label{key:proposition-uniform-balanced}
Let $n\ge18$, let $\lambda\in\Bpart_n$, let $w$ be a divisor weight, and
let $\kappa>0$.  Suppose
\[
 |w(2)|\le1\quad(2\mid n),
 \qquad
 |w(d)|\le\kappa d\quad(d\mid n,\ d\ge3).
\]
Then
\begin{equation}\label{key:uniform-balanced-error}
 \frac1{f^\lambda}
 \left|\sum_{\substack{d\mid n\\d>1}}
 w(d)\chi^\lambda_{(d^{n/d})}\right|
 \le\mathcal G_n(\kappa).
\end{equation}
\end{proposition}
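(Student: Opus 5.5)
The plan is to start from the balanced-partition bound \eqref{eq:balanced-error}, which is already proved for any real divisor weight. It gives the estimate $E_n(w)$ from \eqref{eq:Enw}, so the left side of \eqref{key:uniform-balanced-error} is at most $E_n(w)$. It therefore suffices to show $E_n(w)\le\mathcal G_n(\kappa)$ under the stated hypotheses on $w$. The identity term is absent, and since $\lambda\in\Bpart_n$ is not a hook, the $d=n$ term vanishes. Both facts are already built into \eqref{eq:Enw}, whose second sum runs over $3\le d<n$.

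We compare the two sums in $E_n(w)$ term by term. For the $d=2$ term, which is present only when $2\mid n$, the hypothesis $|w(2)|\le1$ gives $|w(2)|/\sqrt2\le 1/\sqrt2$. This matches the first summand $X_n^{-1/2}/\sqrt2$ of $\mathcal G_n(\kappa)$; when $n$ is odd the term is simply absent, and the bound only improves. For $3\le d<n$ we have $|w(d)|/\sqrt d\le\kappa\sqrt d$, so
\[
 \sum_{\substack{d\mid n\\3\le d<n}}\frac{|w(d)|}{\sqrt d}
 \le\kappa\sum_{d\mid n}\sqrt d\le3\kappa n^{3/4}
\]
by \Cref{key:lemma-square-root-divisor-sum}. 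Multiplying by $X_n^{-2/3}$ and by the common factor $e^{1/24}$ reproduces exactly the definition \eqref{key:definition-G-bound}.

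The only point requiring care is that \eqref{eq:balanced-error} was derived using $X_n>1$ and $e^{d/(12n)}\le e^{1/24}$ for proper divisors. Both were established for $n\ge18$ just before \eqref{eq:balanced-error}, so no new estimate is needed. I expect no real obstacle here: the proof is a direct substitution of the weight bounds into $E_n(w)$, followed by the divisor-sum lemma.
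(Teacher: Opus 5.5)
Your proposal is correct and matches the paper's proof: you bound the left side by $E_n(w)$ via \eqref{eq:balanced-error}, insert $|w(2)|\le1$ and $|w(d)|/\sqrt d\le\kappa\sqrt d$, and finish with \Cref{key:lemma-square-root-divisor-sum}. One small remark: \eqref{eq:balanced-error} is stated for real weights, whereas a divisor weight may be complex, but its derivation uses only $|w(d)|$, so it applies without change.
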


\begin{proof}
By \eqref{eq:Enw} and the assumed weight bounds,
\[
 E_n(w)\le e^{1/24}\left(
 \frac{X_n^{-1/2}}{\sqrt2}
 +\kappa X_n^{-2/3}
 \sum_{\substack{d\mid n\\3\le d<n}}\sqrt d
 \right)
 \le\mathcal G_n(\kappa),
\]
where the last inequality follows from
\Cref{key:lemma-square-root-divisor-sum}.
Equation~\eqref{eq:balanced-error} now gives
\eqref{key:uniform-balanced-error}.
\end{proof}

Appendix~\ref{app:estimates} proves that, for fixed $\kappa$,
$\mathcal G_n(\kappa)$ decreases
with $n$ and that $\mathcal G_{18}(2)<1$.  Since
$\mathcal G_n(\kappa)$ is increasing in $\kappa$, it follows that
\[
 \mathcal G_n(\kappa)<1\qquad(n\ge18,\ 0<\kappa\le2).
\]

\Cref{key:proposition-uniform-balanced} also yields the following
relative-error estimate.
\begin{corollary}\label{key:corollary-quantitative-balanced}
Let $n\ge18$, let $\kappa>0$, and let $w$ be a divisor weight with $w(1)=1$
satisfying the weight bounds in
\cref{key:proposition-uniform-balanced}.  Then, for every
$\lambda\in\Bpart_n$,
\begin{equation}\label{key:quantitative-balanced-coefficient}
 \left|
 [s_\lambda]F_n(w)-\frac{f^\lambda}{n}
 \right|
 \le
 \mathcal G_n(\kappa)\frac{f^\lambda}{n}.
\end{equation}
For fixed $\kappa$, the error relative to $f^\lambda/n$ tends to zero
as $n\to\infty$, uniformly over $\lambda\in\Bpart_n$ and all such
weights $w$.
\end{corollary}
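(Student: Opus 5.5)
The corollary follows directly from \Cref{key:proposition-uniform-balanced} and the coefficient formula \eqref{eq:general-schur-coefficient}, together with the monotonicity of $\mathcal G_n(\kappa)$ in $n$ proved in Appendix~\ref{app:estimates}.

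First, since $w(1)=1$, formula \eqref{eq:general-schur-coefficient} gives
\[
 n[s_\lambda]F_n(w)-f^\lambda
 =\sum_{\substack{d\mid n\\d>1}}w(d)\chi^\lambda_{(d^{n/d})}.
\]
Dividing by $n$ and taking absolute values, the left side of \eqref{key:quantitative-balanced-coefficient} equals $\frac1n\bigl|\sum_{d>1}w(d)\chi^\lambda_{(d^{n/d})}\bigr|$. By \eqref{key:uniform-balanced-error}, valid for $\lambda\in\Bpart_n$, $n\ge18$, and weights obeying $|w(2)|\le1$ and $|w(d)|\le\kappa d$ for $d\ge3$, this is at most $\mathcal G_n(\kappa)f^\lambda/n$. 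That proves the displayed inequality.

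For the asymptotic assertion, the bound $\mathcal G_n(\kappa)$ depends only on $n$ and $\kappa$, not on $\lambda$ or $w$. So uniformity is automatic, and it suffices to show $\mathcal G_n(\kappa)\to0$ as $n\to\infty$ for fixed $\kappa$. From \eqref{eq:Lambda-X}, $\Ldim_n=\binom{n}{m_n}/(m_n+1)$ with $m_n\approx n/2$, so $\Ldim_n$ grows like $2^n/n^{3/2}$ up to constants. Hence $X_n=\Ldim_n/\sqrt{2\pi n}$ grows exponentially. Alternatively, \eqref{eq:X-ratio} gives $X_{n+1}/X_n\ge\frac95\sqrt{18/19}>1.7$ for $n\ge18$, so $X_n\ge X_{18}(1.7)^{n-18}$ without Stirling.

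Then $X_n^{-1/2}\to0$ exponentially, and $n^{3/4}X_n^{-2/3}\to0$ because exponential decay beats polynomial growth. From \eqref{key:definition-G-bound}, $\mathcal G_n(\kappa)\to0$.

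None of these steps is a real obstacle. The only point needing care is the growth of $X_n$, and the ratio bound \eqref{eq:X-ratio} already settles it.
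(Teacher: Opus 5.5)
Your proposal is correct and matches the paper's proof. The inequality follows directly from \eqref{eq:general-schur-coefficient} with $w(1)=1$ and \Cref{key:proposition-uniform-balanced}. The limit comes from the geometric growth of $X_n$ given by \eqref{eq:X-ratio}, and uniformity is automatic because $\mathcal G_n(\kappa)$ depends only on $n$ and $\kappa$; your mention of the monotonicity of $\mathcal G_n(\kappa)$ is not actually used, though it does no harm.
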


\begin{proof}
Since $w(1)=1$, equation~\eqref{eq:general-schur-coefficient}
and \Cref{key:proposition-uniform-balanced} give
\eqref{key:quantitative-balanced-coefficient}.
By \eqref{eq:X-ratio},
$X_{n+1}/X_n\ge(9/5)\sqrt{18/19}>1$ for $n\ge18$.
Thus $X_n$ grows at least geometrically, so both
$X_n^{-1/2}$ and $n^{3/4}X_n^{-2/3}$ tend to zero.
It follows that $\mathcal G_n(\kappa)\to0$ for fixed $\kappa$.
The bound is independent of $\lambda$ and $w$, proving uniformity.
\end{proof}

\begin{corollary}
\label{cor:linear-criterion}
Let $n\ge18$, let $0<\kappa\le2$, and let
$w:\D(n)\to\mathbb R$ satisfy $w(1)=1$.
Assume:
\begin{enumerate}[label=\textup{(\roman*)}]
\item $|\tau_n(w)|,|\eps_n(w)|\le1$;
\item
\[
 |w(2)|\le1\quad(2\mid n),
 \qquad |w(d)|\le\kappa d\quad(d\mid n,\ d\ge3).
\]
\end{enumerate}
Then $[s_\lambda]F_n(w)>0$ for every $\lambda\vdash n$ other than
\[
 (n),\quad(n-1,1),\quad(2,1^{n-2}),\quad(1^n),
\]
whose respective coefficients are
\[
 \tau_n(w),\quad1-\tau_n(w),\quad
 1-\eps_n(w),\quad\eps_n(w).
\]
Consequently, the only possible negative Schur coefficients are those indexed
by $(n)$ and $(1^n)$, and $F_n(w)$ is Schur-positive if and only if
$\tau_n(w),\eps_n(w)\in[0,1]$.
\end{corollary}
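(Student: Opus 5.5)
The plan is to deduce the corollary directly from \Cref{thm:criterion}: I will check its three hypotheses and then read off the conclusion. Hypothesis~(i) of the theorem is exactly hypothesis~(i) here. Once all three hypotheses hold, the theorem gives strict positivity at every non-distinguished partition together with the four explicit coefficients. The final sentence of the corollary is then immediate. The coefficients $1-\tau_n(w)$ and $1-\eps_n(w)$ are nonnegative by (i), since $\tau_n(w),\eps_n(w)\le1$. Hence only $(n)$ and $(1^n)$ can carry negative coefficients, and Schur positivity is equivalent to $\tau_n(w),\eps_n(w)\in[0,1]$.

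For hypothesis~(ii), the long-row bound, I compare with the quadratic weights of \Cref{prop:ramanujan-short-cycle}. When $2\mid n$ we have $|w(2)|\le1$. For $d\ge3$, since $\kappa\le2<3\le d$, we get $|w(d)|\le\kappa d\le 2d\le d^2$. So $w$ satisfies the hypotheses of that proposition, which yields $A_{n,r}(w)<g_r(n)/3<\tfrac35 g_r(n)$ for every $2\le r<n/2$. This is exactly where the restriction $\kappa\le2$ enters: it makes the linear bound dominated by the quadratic one.

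For hypothesis~(iii), the balanced bound, I repeat the computation in the proof of \Cref{key:proposition-uniform-balanced}. Using $|w(2)|/\sqrt2\le1/\sqrt2$, the bound $|w(d)|/\sqrt d\le\kappa\sqrt d$ for $d\ge3$, and \Cref{key:lemma-square-root-divisor-sum}, we get $E_n(w)\le\mathcal G_n(\kappa)$. It then remains to show $\mathcal G_n(\kappa)<1$ for $n\ge18$ and $0<\kappa\le2$. For this I use monotonicity of $\mathcal G_n(\kappa)$ in $\kappa$ (clear from its definition) to reduce to $\kappa=2$. I then invoke the appendix facts that $\mathcal G_n(2)$ decreases in $n$ and that $\mathcal G_{18}(2)<1$.

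The only step with real content is this last numerical claim, $\mathcal G_{18}(2)<1$ together with the decrease in $n$. It rests on the growth estimate \eqref{eq:X-ratio} for $X_n$ and on an explicit evaluation at $n=18$, namely $X_{18}=\binom{18}{8}/(9\sqrt{36\pi})$. The aim is to show that the term $3\kappa n^{3/4}X_n^{-2/3}$ is small. I take this from Appendix~\ref{app:estimates} as stated in the text, so everything else is bookkeeping.
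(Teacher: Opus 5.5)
Your proposal is correct and follows the paper's proof: it verifies the hypotheses of \Cref{thm:criterion} using \Cref{prop:ramanujan-short-cycle} (via $\kappa d\le d^2$ for $d\ge3$) and the bound $E_n(w)\le\mathcal G_n(\kappa)<1$ (via \Cref{key:lemma-square-root-divisor-sum} and \Cref{key:proposition-G-numerics}), and then reads off the four distinguished coefficients. One small correction to your commentary: $\kappa d\le d^2$ for $d\ge3$ holds for every $\kappa\le3$, so the long-row comparison does not need $\kappa\le2$; that restriction is needed for the balanced bound $\mathcal G_n(\kappa)<1$, which rests on $\mathcal G_{18}(2)<1$.
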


\begin{proof}
For $d\ge3$, the inequality $\kappa d\le d^2$ and
\Cref{prop:ramanujan-short-cycle} give
$A_{n,r}(w)<g_r(n)/3<3g_r(n)/5$.
By \eqref{eq:Enw} and \Cref{key:lemma-square-root-divisor-sum},
$E_n(w)\le\mathcal G_n(\kappa)$, and
\Cref{key:proposition-G-numerics} gives $\mathcal G_n(\kappa)<1$.
The hypotheses of \Cref{thm:criterion} therefore hold.
Its four explicit coefficients give the remaining assertions.
\end{proof}

\section{Schur-positive Boolean \texorpdfstring{$Q$}{Q}-sums}\label{sec:boolean}

We prove the Boolean classification and deduce
\cite[Conjectures~1--4]{SundaramPrimePower}, using Sundaram's
plethystic identity for the product statement.
For $1\in J\subseteq\D(n)$, we apply \Cref{thm:criterion} to
$Q_{n,J}=F_n(\psi^J)$.  The bounds below verify its error conditions for
$n\ge18$, while \Cref{lem:boolean-endpoints} gives the remaining coefficients.

\subsection{Bounds for the weights \texorpdfstring{$\psi^J$}{psiJ}}\label{subsec:boolean-bounds}

For $d\ge1$, define
\begin{equation}\label{eq:UVd}
 U_+(d)\coloneqq\sum_{\substack{u\mid\rad(d)\\\mu(u)=1}}\frac1u,
 \qquad
 U_-(d)\coloneqq\sum_{\substack{u\mid\rad(d)\\\mu(u)=-1}}\frac1u.
\end{equation}

\begin{lemma}\label{lem:boolean-weight-bound}
Let $1\in J\subseteq\D(n)$.  For every $d\ge3$,
\begin{equation}\label{eq:boolean-weight-bound}
 |\psi^J(d)|\le dU_+(d)\le d^{4/3}.
\end{equation}
Moreover, $|\psi^J(2)|=1$.
\end{lemma}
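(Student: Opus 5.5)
We work directly from the definition $\psi^J(d)=\sum_{e\mid d,\,e\in J}e\mu(d/e)$ in \eqref{eq:psi-fT}. Only divisors $e$ with $d/e$ squarefree contribute. Write $e=d/u$ with $u\mid\rad(d)$; this gives
\[
 \psi^J(d)=\sum_{\substack{u\mid\rad(d)\\ d/u\in J}}\mu(u)\frac du .
\]
Hence $\psi^J(d)$ is a sum of positive terms $d/u$ with $\mu(u)=1$, minus a sum of terms $d/u$ with $\mu(u)=-1$, each over a subset of the allowed $u$. It follows that
\[
 -dU_-(d)\le\psi^J(d)\le dU_+(d).
\]

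To obtain $|\psi^J(d)|\le dU_+(d)$, we compare $U_-$ with $U_+$. Write $\rad(d)=p_1\cdots p_k$ with $k\ge1$. Then
\[
 U_+(d)-U_-(d)=\prod_i(1-1/p_i)>0,
\]
so $U_-(d)<U_+(d)$. This settles the first inequality for every $d\ge2$; note that $u=1$ contributes to $U_+$.

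For $d=2$ we compute directly. Since $1\in J$, the term $u=2$ (i.e.\ $e=1$) gives $-1$. The term $u=1$ gives $+2$ exactly when $2\in J$. Thus $\psi^J(2)\in\{-1,1\}$, so $|\psi^J(2)|=1$. This is automatic whenever $2\mid n$, and the statement is only meaningful in that case.

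The main step is $dU_+(d)\le d^{4/3}$, that is, $U_+(d)\le d^{1/3}$ for $d\ge3$. We have
\[
 U_+(d)=\tfrac12\Bigl(\prod_i(1+1/p_i)+\prod_i(1-1/p_i)\Bigr)\le\prod_i(1+1/p_i)
\]
(and indeed $U_+(d)<\prod_i(1+1/p_i)$), and this depends only on $\rad(d)$. Since $d\ge\rad(d)$, it suffices to prove $U_+(m)\le m^{1/3}$ for squarefree $m\ge3$. The case $m=2$ never arises from $d\ge3$ unless $d$ is a power of $2$ with $d\ge4$. For such $d$ we have $U_+=1$ and $d^{1/3}>1$, so that case is fine.

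For squarefree $m$ we argue prime by prime. The factor $1+1/p$ is at most $p^{1/3}$ for every $p\ge2$: for $p=2$ we get $1.5\le1.26$, which fails, while for $p=3$ we get $1.333\le1.442$, which holds, and for larger $p$ the inequality holds by monotonicity. So the product bound $\prod(1+1/p_i)\le m^{1/3}$ holds whenever $2\nmid m$. When $2\mid m$, we must use the exact expression for $U_+$ rather than the crude product bound, or exploit slack from another prime. If $m=2m'$ with $m'>1$ odd, then $U_+(m)=U_+(m')+\tfrac12U_-(m')$. Using $U_-\le U_+$, this is at most $\tfrac32U_+(m')$. We then need
\[
 \tfrac32U_+(m')\le(2m')^{1/3}\quad\Longleftrightarrow\quad U_+(m')\le0.84\,m'^{1/3}.
\]
For $m'=3$ we have $U_+=1$, which is fine. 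For general odd $m'$ the ratio $(1+1/p)/p^{1/3}$ is at most $0.924$ at $p=3$ and at most about $0.70$ at $p=5$. The product of these ratios therefore gives the required factor $0.84$ once some prime $p\ge5$ divides $m'$, or once $m'=3$ is handled directly as above.

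I expect this last case analysis, handling the factor $2$ with a small odd part, to be the main obstacle. The plan is to settle the small squarefree cases $m\in\{3,5,6,10,15,30\}$ by direct evaluation of $U_+$, and to apply the multiplicative ratio bound to everything else.
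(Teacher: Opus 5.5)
Your proof is correct. The first half matches the paper's argument exactly: the rewriting over $u\mid\rad(d)$, the bounds $-dU_-(d)\le\psi^J(d)\le dU_+(d)$, the identity $U_+(d)-U_-(d)=\prod_{p\mid d}(1-1/p)$, and the evaluation $\psi^J(2)\in\{-1,1\}$.

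You diverge only in proving $U_+(d)\le d^{1/3}$. The paper uses the recursion $U_+(dp)=U_+(d)+U_-(d)/p\le(1+1/p)\,U_+(d)$ for a prime $p\nmid d$. It adjoins the primes of $d$ one at a time, starting from $U_+(p_1)=1$ with $p_1=2$ whenever $2\mid d$. Every factor that then appears is $1+1/p$ with $p\ge3$, which is at most $p^{1/3}$ because $(4/3)^3<3$. This gives $U_+(d)\le\rad(d)^{1/3}\le d^{1/3}$ with no case analysis.

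You instead bound $U_+\le\prod_{p\mid d}(1+1/p)$. This pays the full factor $3/2$ at the prime $2$, so you must recover the loss separately when $\rad(d)=2m'$, either by the direct case $m'=3$ or by the slack from a prime $p\ge5$ dividing $m'$. That route works. But you already had the key identity $U_+(2m')=U_+(m')+\tfrac12U_-(m')$ and applied it at the wrong end. Adjoining $2$ first, so that the base is $U_+(2)=1$ rather than $3/2$, removes all the numerics. Your version does give explicit extra slack, a factor of roughly $0.70$, when a prime $p\ge5$ divides $d$, but the lemma does not need it.

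A few numerical statements should be tightened, though none affects validity:
\begin{itemize}
\item The threshold in your displayed equivalence is $2^{4/3}/3\approx0.83995$, slightly below $0.84$, so the equivalence with $U_+(m')\le0.84\,m'^{1/3}$ is not literally correct.
\item The ratio $(1+1/p)/p^{1/3}$ at $p=3$ is about $0.9245$, not at most $0.924$.
\item Your argument only uses that this ratio is at most $1$ for $p=3$, and about $0.702$ for $p\ge5$, which is well below the true threshold.
\item For $m'=3$ one has directly $U_+(6)=7/6<6^{1/3}$.
\item The sentence claiming $1+1/p\le p^{1/3}$ for every $p\ge2$ should say $p\ge3$.
\item Your closing plan to check $m\in\{3,5,6,10,15,30\}$ directly is more than the preceding argument requires; only $m=6$ needs a separate check.
\end{itemize}
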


\begin{proof}
Writing $e=d/u$ in the definition of $\psi^J(d)$ gives
\[
 \psi^J(d)=d\sum_{u\mid\rad(d)}\frac{\mu(u)}u\one_{d/u\in J}.
\]
The sum of the positive terms is at most $U_+(d)$ and the sum of the absolute
values of the negative terms is at most $U_-(d)$.  Since
\[
 U_+(d)-U_-(d)=\prod_{p\mid d}\left(1-\frac1p\right)>0,
\]
the absolute value of any subsum obtained by selecting terms with
coefficients $0$ or $1$ is at most $U_+(d)$.

If $p$ is prime and $p\nmid d$, then
\[
 U_+(dp)=U_+(d)+\frac{U_-(d)}p
 \le U_+(d)\left(1+\frac1p\right).
\]
Moreover, $U_+(p)=1$ for every prime $p$.
Order the prime divisors of $d$ with $2$ first, when it occurs.
Each subsequent prime satisfies $p\ge3$, so
$1+1/p\le p^{1/3}$.
Since $U_+(d)=U_+(\rad(d))$, iteration gives
\[
 U_+(d)\le\rad(d)^{1/3}\le d^{1/3}.
\]
Finally,
$\psi^J(2)=-1+2\one_{2\in J}\in\set{-1,1}$ because $1\in J$.
\end{proof}

\begin{corollary}\label{prop:boolean-short-cycle}
Let $n\ge18$ and $1\in J\subseteq\D(n)$.  For $2\le r<n/2$,
\begin{equation}\label{eq:boolean-short-cycle}
 A_{n,r}(\psi^J)<\frac13g_r(n).
\end{equation}
\end{corollary}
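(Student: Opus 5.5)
This is a direct reduction to \Cref{prop:ramanujan-short-cycle}. That proposition shows that any weight with $|w(2)|\le1$ (when $2\mid n$) and $|w(d)|\le d^2$ for $d\ge3$ satisfies $A_{n,r}(w)<g_r(n)/3$ for all $n\ge18$ and $2\le r<n/2$. So it suffices to check that $\psi^J$ meets these weight hypotheses, and \Cref{lem:boolean-weight-bound} supplies exactly that.

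The check has two parts. First, if $2\mid n$, then $|\psi^J(2)|=1$ by the last assertion of \Cref{lem:boolean-weight-bound}, so $|\psi^J(2)|\le1$. Second, for every divisor $d\mid n$ with $d\ge3$, \eqref{eq:boolean-weight-bound} gives
\[
 |\psi^J(d)|\le d^{4/3}\le d^2 ,
\]
since $d\ge1$. The weight $w=\psi^J$ therefore satisfies the hypotheses of \Cref{prop:ramanujan-short-cycle}.

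It remains to compare $A_{n,r}(\psi^J)$ with $A^{\sq}_{n,r}$ term by term. In \eqref{eq:Anr}, only divisors $d\mid n$ with $2\le d\le r$ occur. For each such $d$ we have $n/d=\lfloor n/d\rfloor$, and the coefficient $|\psi^J(d)|$ is at most the corresponding coefficient in \eqref{eq:Asqnr}. The sum in \eqref{eq:Asqnr} also contains nonnegative terms for $d$ that do not divide $n$, and these only enlarge it. Hence
\[
 A_{n,r}(\psi^J)\le A^{\sq}_{n,r}<\tfrac13 g_r(n).
\]
This is precisely the termwise comparison already noted in the final sentence of that proposition.

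I expect no real obstacle here. The substantive estimates are all in \Cref{prop:ramanujan-short-cycle} and its appendix lemmas, which we may assume. The only points needing care are that the $d=2$ term uses $|\psi^J(2)|\le1$ rather than the bound $d^{4/3}$, and that the $d=2$ term appears only when $n$ is even; both match the hypotheses of the proposition.
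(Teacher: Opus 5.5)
Your proposal is correct and follows the paper's proof exactly: use \Cref{lem:boolean-weight-bound} to get $|\psi^J(2)|=1$ and $|\psi^J(d)|\le d^{4/3}\le d^2$ for $d\ge3$, then apply \Cref{prop:ramanujan-short-cycle}. Your explicit termwise comparison of $A_{n,r}(\psi^J)$ with $A^{\sq}_{n,r}$ just spells out what that proposition's final sentence already asserts.
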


\begin{proof}
By \Cref{lem:boolean-weight-bound}, $|\psi^J(2)|=1$ and
$|\psi^J(d)|\le d^{4/3}\le d^2$ for $d\ge3$.
Apply \Cref{prop:ramanujan-short-cycle}.
\end{proof}

\begin{lemma}\label{lem:divisor-sum}
For every $n\ge1$,
\begin{equation}\label{eq:divisor-sum}
 \sum_{d\mid n}d^{5/6}\le7n^{11/12}.
\end{equation}
\end{lemma}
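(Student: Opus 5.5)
The plan is to mimic the proof of \Cref{key:lemma-square-root-divisor-sum} (Hou's bound $\sum_{d\mid n}\sqrt d\le3n^{3/4}$). We compare $d^{5/6}$ with $n^{11/12}$ through a multiplicative function. Dividing by $n^{11/12}$ and writing $d=n/e$, we get
\[
 \frac1{n^{11/12}}\sum_{d\mid n}d^{5/6}
 =\sum_{d\mid n}\frac{d^{5/6}}{n^{11/12}}
 \le\sum_{d\mid n}\frac{d^{11/12}}{n^{11/12}}\,d^{-1/12}.
\]
It is cleaner to bound the function
\[
 g(n)\coloneqq n^{-11/12}\sum_{d\mid n}d^{5/6}
 =n^{-1/12}\sum_{e\mid n}e^{-5/6}.
\]
This $g$ is multiplicative, so $g(n)=\prod_{p^a\Vert n}g(p^a)$, and it suffices to bound the local factors.

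For a prime power,
\[
 g(p^a)=p^{-a/12}\sum_{j=0}^a p^{-5j/6}
 \le\frac{p^{-a/12}}{1-p^{-5/6}}.
\]
Once $p$ is large this is at most $1$ for every $a\ge1$. Indeed, already at $a=1$ we need $p^{1/12}\ge1+p^{-5/6}$, which holds once $p^{1/12}-1\ge p^{-5/6}$. A crude check ($p\ge 11$ say: $11^{1/12}\approx1.221$, $1+11^{-5/6}\approx1.135$) shows this holds for all $p\ge11$ or so, with larger $a$ only improving things. So only the primes $p\in\{2,3,5,7\}$ (possibly also the first few primes beyond, depending on the exact threshold) contribute factors exceeding $1$.

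For each small prime we maximize $g(p^a)$ over $a\ge1$. This is a finite computation, since $g(p^a)$ decreases once $p^{-a/12}$ dominates, and the exact finite sum $p^{-a/12}\sum_{j\le a}p^{-5j/6}$ should be used rather than the geometric bound to keep the constant small. Roughly: for $p=2$ the geometric factor is $1/(1-2^{-5/6})\approx2.28$, and the maximum over $a$ of the exact expression is about $2^{-1/12}(1+0.561)\approx1.47$ at $a=1$, and $2^{-2/12}(1+0.561+0.315)\approx1.67$ at $a=2$, continuing to grow slowly before decaying. The product of these four or five maxima must be shown to be at most $7$.

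The main obstacle is purely numerical: making sure the product of the small-prime maxima is indeed below $7$ and not merely below something like $8$. The decay $p^{-a/12}$ is very slow for $p=2$, so the maximum of $g(2^a)$ occurs at fairly large $a$ (around where $2^{-a/12}$ balances the tail $2^{-5a/6}$, which happens essentially when the partial sum has saturated near $2.28$, giving roughly $2.28\cdot2^{-a/12}$ with $a$ small). If the product exceeds $7$, I would sharpen by treating $p=2$ exactly, tabulating $g(2^a)$ for $a\le 10$ together with a monotonicity argument beyond that, and doing the same for $3$ and $5$. For the primes where the factor is at most $1$, I would simply drop them. As a fallback, any slack could be absorbed by noting the statement only needs the constant $7$, and one checks directly that $\prod_p\max_a g(p^a)\le 7$ by explicit evaluation.
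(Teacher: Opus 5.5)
Your route is correct but genuinely different from the paper's.

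The paper adapts Hou's divisor pairing. With $N=\lfloor\sqrt n\rfloor$, the sum is at most $\sum_{j=1}^N\bigl(j^{5/6}+(n/j)^{5/6}\bigr)\le N^{11/6}+n^{5/6}\bigl(1+\int_1^Nx^{-5/6}\,dx\bigr)\le n^{11/12}+6n^{11/12}$. This needs no numerics, and it explains both the exponent $11/12=(1+5/6)/2$ and the constant $7=1+1/(1-5/6)$.

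Your multiplicative reduction $g(n)=n^{-1/12}\sum_{e\mid n}e^{-5/6}=\prod_{p^a\parallel n}g(p^a)$ costs a finite check. In return it works for any exponent above $5/6$ and gives the optimal constant.

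The check you call the main obstacle has a wide margin:
\begin{itemize}
\item The bound $p^{-a/12}/(1-p^{-5/6})$ decreases in both $a$ and $p$, so one evaluation at $a=1$ per prime suffices.
\item At $p=11$ this bound is about $0.947<1$. That justifies your unproved claims that larger $a$ and larger $p$ only help.
\item At $p=2,3,5,7$ it is about $2.151$, $1.522$, $1.184$, $1.060$, with product below $4.2$.
\end{itemize}

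The exact local maxima are $g(8)\approx1.726$, $g(9)\approx1.299$, $g(5)\approx1.103$ and $g(7)\approx1.018$. Hence $\max_n g(n)=g(2520)\approx2.52$.

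Contrary to your guess, the maximum of $g(2^a)$ is at $a=3$, not at large $a$. The ratio $g(p^{a+1})/g(p^a)=p^{-1/12}\bigl(1+p^{-5(a+1)/6}/\sum_{j=0}^ap^{-5j/6}\bigr)$ decreases in $a$, so each $g(p^a)$ is unimodal in $a$.

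Your first displayed inequality is actually an equality. In a written proof, replace the decimals by rational bounds, as in the paper's appendix; the margin makes this easy.
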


\begin{proof}
We apply the divisor-pairing argument of \cite[Lemma~4.5]{Hou} to
the exponent $5/6$.
Let $N\coloneqq\lfloor\sqrt n\rfloor$.  Pair each divisor $d\le\sqrt n$ with
$n/d$, and then enlarge the sum over such $d$ to a sum over all integers
from $1$ to $N$.  This gives
\[
 \begin{aligned}
 \sum_{d\mid n}d^{5/6}
 &\le\sum_{j=1}^N\left(j^{5/6}+\left(\frac nj\right)^{5/6}\right)\\
 &\le N^{11/6}
   +n^{5/6}\left(1+\int_1^N x^{-5/6}\,dx\right)\\
 &\le n^{11/12}+6n^{11/12}
 =7n^{11/12}.
 \end{aligned}
\]
When $n$ is a square, this estimate counts the middle divisor twice, which
only enlarges the upper bound.
\end{proof}

\begin{proposition}\label{prop:boolean-balanced}
For every $n\ge18$ and every $J\subseteq\D(n)$ with $1\in J$,
\begin{equation}\label{eq:boolean-balanced}
 E_n(\psi^J)<1.
\end{equation}
\end{proposition}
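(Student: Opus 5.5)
Plan: bound $E_n(\psi^J)$ from \eqref{eq:Enw} directly, using the weight estimates of \Cref{lem:boolean-weight-bound}, and reduce to a monotone numerical quantity depending only on $n$.

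First, the $d=2$ term. By \Cref{lem:boolean-weight-bound}, $|\psi^J(2)|=1$, so this term contributes at most $X_n^{-1/2}/\sqrt2$. For $3\le d<n$ we have $|\psi^J(d)|\le d^{4/3}$, hence $|\psi^J(d)|/\sqrt d\le d^{5/6}$. By \Cref{lem:divisor-sum},
\[
 \sum_{\substack{d\mid n\\3\le d<n}}\frac{|\psi^J(d)|}{\sqrt d}\le 7n^{11/12}.
\]
This gives
\[
 E_n(\psi^J)\le\mathcal E_n\coloneqq e^{1/24}\Bigl(\frac{X_n^{-1/2}}{\sqrt2}+7n^{11/12}X_n^{-2/3}\Bigr),
\]
which is independent of $J$. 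The linear bound $|w(d)|\le\kappa d$ is not available here (\Cref{rem:boolean-not-linear}), which is why we use the exponent $4/3$ together with \Cref{lem:divisor-sum} instead of \Cref{key:lemma-square-root-divisor-sum}.

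Second, we show $\mathcal E_n<1$ for all $n\ge18$. The plan mirrors the treatment of $\mathcal G_n(\kappa)$ in the appendix.
\begin{enumerate}
\item[(a)] Monotonicity. By \eqref{eq:X-ratio}, $X_{n+1}/X_n\ge\frac95\sqrt{n/(n+1)}$. So the ratio of consecutive second terms is at most
\[
 \Bigl(\frac{n+1}{n}\Bigr)^{11/12}\Bigl(\frac59\Bigr)^{2/3}\Bigl(\frac{n+1}{n}\Bigr)^{1/3},
\]
which is below $1$ for $n\ge18$ because $(5/9)^{2/3}\approx0.676$ and $(19/18)^{5/4}\approx1.07$. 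The first term decreases for the same reason. Hence $\mathcal E_n$ is decreasing in $n$ for $n\ge18$.
\item[(b)] Base case. It then suffices to check $\mathcal E_{18}<1$ numerically.
\end{enumerate}

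The base case is the main obstacle. Here $m_{18}=8$, so $\Ldim_{18}=\binom{18}{8}/9=43758/9=4862$ and $X_{18}=4862/\sqrt{36\pi}\approx457$. Then $X_{18}^{-2/3}\approx0.0168$ and $18^{11/12}\approx14.1$, so the second term is about $7\cdot14.1\cdot0.0168\approx1.66$. This exceeds $1$, so the crude constant $7$ fails at $n=18$.

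First fallback: replace \Cref{lem:divisor-sum} by exact computation for small $n$. For $n=18$, the divisors $3,6,9$ give $\sum d^{5/6}\approx2.5+4.45+6.24=13.2$, which is still too large. So $d^{4/3}$ must be sharpened as well. I would use the bound $|\psi^J(d)|\le dU_+(d)$ directly: $U_+(d)=1$ for prime powers and $U_+(6)=1+1/6$. For $n=18$ this gives
\[
 \sum_{\substack{d\mid n\\3\le d<n}}\frac{dU_+(d)}{\sqrt d}\approx1.73+2.86+3=7.6,
\]
so the second term is about $e^{1/24}\cdot7.6\cdot0.0168\approx0.13$. The first term is about $0.034$, so $E_{18}(\psi^J)<1$ comfortably.

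Final plan: prove $\mathcal E_n<1$ for all $n\ge N_0$ via the monotonicity in (a) plus one numerical check at $N_0$, with $N_0$ found by computation (roughly $N_0\approx30$, where $X_n$ is large enough). For $18\le n<N_0$, verify
\[
 e^{1/24}\Bigl(\frac{X_n^{-1/2}}{\sqrt2}+X_n^{-2/3}\sum_{\substack{d\mid n\\3\le d<n}}\sqrt d\,U_+(d)\Bigr)<1
\]
by finite computation.
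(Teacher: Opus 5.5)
Your plan is correct and is essentially the paper's argument. You use the same $J$-independent majorant from \Cref{lem:boolean-weight-bound} and \Cref{lem:divisor-sum}, monotonicity via \eqref{eq:X-ratio}, and a separate treatment of the smallest degrees, where your bounds $dU_+(d)=3,7,9$ at $d=3,6,9$ are exactly the weight bounds the paper uses for $n=18$.

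No search for $N_0$ is needed: the crude majorant is already below $1$ at $n=20$ (about $0.89$), and $n=19$ is prime, so $E_{19}(\psi^J)=0$. Also, your remark that the exact sum $\sum d^{5/6}\approx13.2$ is ``still too large'' at $n=18$ is an arithmetic slip, since $13.2\,X_{18}^{-2/3}\approx0.22$.
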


\begin{proof}
By \Cref{lem:boolean-weight-bound,lem:divisor-sum},
\begin{equation}\label{eq:boolean-E-bound}
 E_n(\psi^J)\le e^{1/24}\left(
 \frac{X_n^{-1/2}}{\sqrt2}+7n^{11/12}X_n^{-2/3}\right).
\end{equation}
By \Cref{lem:balanced-factor-decrease}, the right-hand side is decreasing
for $n\ge20$.  Its value at $n=20$ is smaller than $1$ by
\Cref{lem:balanced-base-cases}.  The same lemma treats the remaining
cases $n=18$ and $n=19$ directly.
\end{proof}

\begin{remark}
\label{rem:boolean-not-linear}
There is no constant $\kappa$ such that
$|\psi^J(d)|\le\kappa d$ for every $n\ge1$, every
$1\in J\subseteq\D(n)$, and every $d\mid n$.
Let $q_1<q_2<\cdots$ be the odd primes.  For $m\ge1$, let
$d_m\coloneqq q_1\cdots q_{2m}$, take $n=d_m$, and set
\[
 J_m\coloneqq\set{d_m/u:u\mid d_m,\ \mu(u)=1}.
\]
Since $\mu(d_m)=1$, the set $J_m$ contains $1$, and
\[
 \frac{\psi^{J_m}(d_m)}{d_m}
 =\sum_{\substack{u\mid d_m\\\mu(u)=1}}\frac1u
 =\frac12\left(
 \prod_{p\mid d_m}\left(1+\frac1p\right)
 +\prod_{p\mid d_m}\left(1-\frac1p\right)
 \right).
\]
The first product tends to infinity with $m$, by the divergence of the sum
of reciprocals of the primes.  Hence no such constant $\kappa$ exists.
\end{remark}

\subsection{Fixed-degree classification of Boolean \texorpdfstring{$Q$}{Q}-sums}
\label{subsec:boolean-classification}

The coefficient of $p_1^n$ forces $1\in J$ for a nonzero Schur-positive
Boolean sum.  In even degree, \eqref{eq:endpoint-even-Q} also requires
$n\in J\Longrightarrow n/2\in J$.  These conditions are sufficient.

\begin{theorem}
\label{thm:boolean-classification}
Fix $n\ge1$ and let $J\subseteq\D(n)$.  The Boolean $Q$-sum $Q_{n,J}$ is
Schur-positive if and only if either $J=\varnothing$, or
\begin{equation}\label{eq:boolean-condition}
 1\in J
 \quad\text{and, if }2\mid n,\quad
 n\in J\Longrightarrow n/2\in J.
\end{equation}
If $1\in J$, $n$ is even, $n\in J$, and $n/2\notin J$, then
$[s_{(1^n)}]Q_{n,J}=-1$, and every other Schur coefficient is nonnegative.
\end{theorem}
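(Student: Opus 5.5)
The necessity direction is immediate. If $J\ne\varnothing$ and $1\notin J$, then the $p_1^n$-coefficient of $Q_{n,J}$ vanishes, because by \eqref{eq:Q-rectangular-expansion} only $Q_{n,1}$ contributes to $p_1^n$. Evaluating at the identity gives $\sum_\lambda m_\lambda f^\lambda=0$. Since $Q_{n,J}\ne0$ (the $Q_{n,d}$ form a basis), some Schur coefficient must then be negative. If $1\in J$, $n$ is even, $n\in J$ and $n/2\notin J$, then \Cref{lem:boolean-endpoints} (equivalently \eqref{eq:endpoint-even-Q}) gives $[s_{(1^n)}]Q_{n,J}=\eps_n(\psi^J)=-1$. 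The empty set is trivial. So the work is sufficiency, together with the claim that in the exceptional case only $s_{(1^n)}$ is negative.

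For $n\ge18$ we will apply \Cref{thm:criterion} to $w=\psi^J$, using $Q_{n,J}=F_n(\psi^J)$ and $\psi^J(1)=1$. Hypothesis (i) follows from \Cref{lem:boolean-endpoints}: $\tau_n(\psi^J)=\one_{n\in J}\in\{0,1\}$. Also $\eps_n(\psi^J)$ equals $\one_{n\in J}$ for odd $n$, and $\one_{n/2\in J}-\one_{n\in J}\in\{-1,0,1\}$ for even $n$. Hypothesis (ii) is \Cref{prop:boolean-short-cycle}, since $g_r(n)/3<3g_r(n)/5$, and hypothesis (iii) is \Cref{prop:boolean-balanced}.

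The criterion then says every non-distinguished coefficient is positive, and the four distinguished ones are $\tau,1-\tau,1-\eps,\eps$. Under \eqref{eq:boolean-condition} we have $\tau,\eps\in\{0,1\}$, so all four are nonnegative. In the exceptional case $\eps=-1$, so $[s_{(2,1^{n-2})}]Q_{n,J}=2$ and $[s_{(1^n)}]Q_{n,J}=-1$, with everything else nonnegative. This gives both assertions for $n\ge18$.

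The remaining obstacle is $n<18$, where the uniform estimates are unavailable and the paper defers to \Cref{app:computations}. The plan there is a finite check over all $n\le17$ and all $J\ni1$; the number of divisors is small, at most $6$ for $n=12$. For each case we compute $[s_\lambda]Q_{n,J}=\sum_{d\in J}\gamma_{\lambda,d}$ from \eqref{eq:boolean-major-index}, either via the Kra\'skiewicz--Weyman residue counts $a_{\lambda,e}$ or directly from character tables through \eqref{eq:general-schur-coefficient}. Two reductions shorten the check. First, prime $n$ reduces to $J\subseteq\{1,n\}$, and $Q_{n,\{1\}}=\ell_n^{(1)}$, $Q_{n,\{1,n\}}=\ell_n^{(n)}$ are genuine characters. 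Second, whenever $J=\D(r)$, \Cref{prop:fixed-foulkes} gives an honest induced module. For small $n$ with $n\le3$, the distinguished partitions coincide, and the endpoint formulas handle everything directly.

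For the remaining cases, such as $n\in\{4,6,8,9,10,12,14,15,16\}$ with non-ideal $J$, I expect the main effort to be organizing this table. Where possible, I would first try the individual bounds of \eqref{eq:bernstein-rectangular} and \eqref{eq:normalized-FL} for the specific $\lambda$, before resorting to explicit computation. In the exceptional case, the same table must also confirm that $s_{(1^n)}$ is the only negative coefficient.
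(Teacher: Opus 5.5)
Your proposal is correct and matches the paper's proof. Necessity uses the $p_1^n$-coefficient together with the sign coefficient from \Cref{lem:boolean-endpoints}; sufficiency for $n\ge18$ applies \Cref{thm:criterion} with \Cref{prop:boolean-short-cycle,prop:boolean-balanced}; and prime $n<18$ reduces to $Q_{n,\{1\}}=\ell_n^{(1)}$ and $Q_{n,\{1,n\}}=\ell_n^{(n)}$. The composite degrees below $18$, which you leave as an unexecuted finite check, are handled in \Cref{prop:boolean-low} by the mix you anticipate: individual long-row bounds, Fomin--Lulov estimates for balanced partitions (refined in degree $12$ by retaining the dependence on $x_3$), and explicit $\gamma_{\lambda,d}$ tables in degrees $4$ and $6$ and for the balanced partitions of $8$.
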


\begin{proof}
The zero sum is Schur-positive.
Suppose that $J\ne\varnothing$ and $Q_{n,J}$ is Schur-positive.
If $1\notin J$, then $Q_{n,J}\ne0$ because the $Q_{n,d}$ form a basis,
but its $p_1^n$-coefficient is zero.  This is impossible:
$[p_1^n]s_\lambda=f^\lambda/n!>0$ for every $\lambda\vdash n$.
Thus $1\in J$.  When $n$ is even, \Cref{lem:boolean-endpoints}
then shows that the sign coefficient is negative if
$n\in J$ but $n/2\notin J$.  This proves necessity.

For sufficiency and the assertion about exceptional supports,
assume $1\in J$.  For $n\ge18$,
\Cref{prop:boolean-short-cycle,prop:boolean-balanced} give conditions
\textup{(ii)} and \textup{(iii)} of \Cref{thm:criterion}.  Put
$a\coloneqq\one_{n\in J}$.  If $n$ is odd, then
\Cref{lem:boolean-endpoints} gives
$\tau_n(\psi^J)=\eps_n(\psi^J)=a$.  If $n$ is even, put
$b\coloneqq\one_{n/2\in J}$.  Then
\[
 \tau_n(\psi^J)=a,
 \qquad
 \eps_n(\psi^J)=b-a.
\]
In both parities, $|\tau_n(\psi^J)|,|\eps_n(\psi^J)|\le1$, so
\Cref{thm:criterion} applies.  The conditions $\tau_n(\psi^J),\eps_n(\psi^J)\in[0,1]$
hold automatically in odd degree and are equivalent to $a\le b$
in even degree.
In the exceptional even case $(a,b)=(1,0)$, the four distinguished coefficients are $1,0,2,-1$, while
every other coefficient is positive.

For $n=1$ the assertion is direct, and for prime $n<18$ it follows from
$\D(n)=\set{1,n}$ and \eqref{eq:zeta-Q-Foulkes}.  For composite $n<18$,
\Cref{prop:boolean-low} proves nonnegativity outside the four
distinguished partitions.  Their coefficients are given by
\Cref{lem:endpoint-coefficients,lem:boolean-endpoints}, so the same
condition is necessary and sufficient.

The preceding arguments also show that, when the even-degree condition
fails and $1\in J$, the sign coefficient is the unique negative coefficient.
\end{proof}

A subset $J\subseteq\D(n)$ is called an \emph{admissible $Q$-support} if
$J=\varnothing$ or if it satisfies \eqref{eq:boolean-condition}.

\begin{corollary}\label{cor:boolean-strictness}
Let $n\ge18$, and let $1\in J\subseteq\D(n)$.
Every Schur coefficient of $Q_{n,J}$ outside
\[
 (n),\quad(n-1,1),\quad(2,1^{n-2}),\quad(1^n)
\]
is positive.  Put $a\coloneqq\one_{n\in J}$.  If $n$ is odd, then
\begin{align*}
 [s_{(n)}]Q_{n,J}&=a,&
 [s_{(n-1,1)}]Q_{n,J}&=1-a,\\
 [s_{(2,1^{n-2})}]Q_{n,J}&=1-a,&
 [s_{(1^n)}]Q_{n,J}&=a.
\end{align*}
If $n$ is even, put $b\coloneqq\one_{n/2\in J}$.  Then
\begin{align*}
 [s_{(n)}]Q_{n,J}&=a,&
 [s_{(n-1,1)}]Q_{n,J}&=1-a,\\
 [s_{(2,1^{n-2})}]Q_{n,J}&=1-b+a,&
 [s_{(1^n)}]Q_{n,J}&=b-a.
\end{align*}
\end{corollary}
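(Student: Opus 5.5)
The corollary follows by running the sufficiency half of the proof of \Cref{thm:boolean-classification} for $n\ge18$, now reading off strict positivity together with the explicit distinguished coefficients. We take the weight $w=\psi^J$, so $Q_{n,J}=F_n(\psi^J)$ and $w(1)=\psi^J(1)=1$ because $1\in J$. We then check the three hypotheses of \Cref{thm:criterion} in turn.

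\emph{Hypothesis~(i).} Put $a=\one_{n\in J}$ and, when $n$ is even, $b=\one_{n/2\in J}$. By \Cref{lem:boolean-endpoints}, $\tau_n(\psi^J)=a$. For odd $n$ the same lemma gives $\eps_n(\psi^J)=a$, since $(-1)^{n-1}=1$. For even $n$ it gives $\eps_n(\psi^J)=b-a$. In every case both values lie in $\{-1,0,1\}$, so their absolute values are at most $1$.

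\emph{Hypothesis~(ii).} \Cref{prop:boolean-short-cycle} gives $A_{n,r}(\psi^J)<g_r(n)/3$ for every $2\le r<n/2$. This is stronger than the required bound $\tfrac35g_r(n)$.

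\emph{Hypothesis~(iii).} \Cref{prop:boolean-balanced} gives $E_n(\psi^J)<1$.

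With all three hypotheses in place, \Cref{thm:criterion} shows that $[s_\lambda]Q_{n,J}>0$ for every $\lambda$ other than the four distinguished partitions. It also gives their coefficients as $\tau,\,1-\tau,\,1-\eps,\,\eps$. Substituting the values above produces the two displayed tables: $(a,1-a,1-a,a)$ for odd $n$, and $(a,1-a,1-b+a,b-a)$ for even $n$. As a consistency check, the even-degree table agrees with \eqref{eq:endpoint-even-Q} at $x=\one_J$, where $x_1=1$.

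None of these steps is a real obstacle, because the analytic work was already done in \Cref{prop:boolean-short-cycle,prop:boolean-balanced}. The one point needing care is that the corollary makes no Schur-positivity assumption: the exceptional supports with $a=1$, $b=0$ are included, and \Cref{thm:criterion} only needs $|\tau|,|\eps|\le1$, which these supports satisfy. So the strict-positivity statement holds for every $J$ containing $1$, including the exceptional ones.
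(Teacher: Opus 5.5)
Your proposal is correct and follows the paper's own proof. You apply \Cref{thm:criterion} to $w=\psi^J$, verify its hypotheses via \Cref{lem:boolean-endpoints}, \Cref{prop:boolean-short-cycle}, and \Cref{prop:boolean-balanced}, and read off the four distinguished coefficients from $\tau_n(\psi^J)=a$ and $\eps_n(\psi^J)\in\{a,\,b-a\}$. Your observation that the exceptional supports with $(a,b)=(1,0)$ are covered, since only $|\tau_n|,|\eps_n|\le1$ is needed, matches how the paper uses the result in the proof of \Cref{thm:boolean-classification}.
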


\begin{proof}
The positivity outside the four distinguished partitions follows from
\Cref{thm:criterion,prop:boolean-short-cycle,prop:boolean-balanced},
as in the proof of \Cref{thm:boolean-classification}.
The displayed values follow from
\Cref{lem:endpoint-coefficients,lem:boolean-endpoints}.
\end{proof}

\begin{remark}
When $n\ge18$ is even, the respective sets of partitions with zero Schur
coefficient for $(a,b)=(0,0),(0,1),(1,1)$ are
\[
 \set{(n),(1^n)},\qquad
 \set{(n),(2,1^{n-2})},\qquad
 \set{(n-1,1),(1^n)}.
\]
\end{remark}

\subsection{All-degree classifications and Sundaram's conjectures}
\label{subsec:special-families}
\begin{corollary}\label{cor:global-boolean}
Let $T\subseteq\mathbb Z_{\ge1}$, and let $f_n^T$ be defined by
\eqref{eq:psi-fT}.  Then $f_n^T$ is Schur-positive for every $n\ge1$
if and only if $T=\varnothing$, or
\begin{equation}\label{eq:global-halving}
 1\in T
 \qquad\text{and}\qquad
 2m\in T\Longrightarrow m\in T\quad(m\ge1).
\end{equation}
\end{corollary}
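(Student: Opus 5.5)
The plan is to reduce the all-degree statement to the fixed-degree classification in \Cref{thm:boolean-classification}, applied with $J\coloneqq T\cap\D(n)$ for each $n$. By \eqref{eq:fT-Q-sum}, $f_n^T=Q_{n,J}$ with this choice of $J$. Hence $f_n^T$ is Schur-positive for a given $n$ if and only if either $T\cap\D(n)=\varnothing$, or
\[
 1\in T\quad\text{and, when }2\mid n,\quad n\in T\Longrightarrow n/2\in T .
\]
Note that $n/2\in\D(n)$, so membership in $T\cap\D(n)$ is the same as membership in $T$ for the divisors $n$ and $n/2$.

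For sufficiency, the case $T=\varnothing$ is immediate, since then every $f_n^T=0$. Suppose instead that $T$ satisfies \eqref{eq:global-halving}. Then $1\in T\cap\D(n)$ for every $n$. If $n=2m$ is even and $n\in T$, the halving condition gives $m=n/2\in T$. So the condition of \Cref{thm:boolean-classification} holds for every $n$.

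For necessity, suppose $f_n^T$ is Schur-positive for every $n$ and $T\ne\varnothing$.
\begin{enumerate}[label=\textup{(\arabic*)}]
\item Pick any $t\in T$ and take $n=t$. Then $T\cap\D(t)$ is nonempty because it contains $t$. The theorem therefore forces $1\in T\cap\D(t)$, so $1\in T$.
\item Given $2m\in T$, take $n=2m$. Then $n\in T\cap\D(n)$, and the even-degree clause forces $m=n/2\in T$.
\end{enumerate}
Together these give \eqref{eq:global-halving}.

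There is no real obstacle here. The only point needing care is the choice of degree $n=t$ in step (1). It guarantees that the support $T\cap\D(n)$ is nonempty, so the theorem's alternative $J=\varnothing$ cannot be the one that applies. All the substantive content is already carried by \Cref{thm:boolean-classification}.
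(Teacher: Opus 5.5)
Your proposal is correct and follows the paper's proof essentially step for step. You apply \Cref{thm:boolean-classification} to $J_n=T\cap\D(n)$, work in degree $n=t$ for some $t\in T$ to force $1\in T$ and in degree $2m$ to force halving, and check sufficiency degree by degree.
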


\begin{proof}
Put $J_n\coloneqq T\cap\D(n)$, so $f_n^T=Q_{n,J_n}$ by
\eqref{eq:fT-Q-sum}.  If $T=\varnothing$, all these functions vanish.
Suppose that $T\ne\varnothing$ and every $f_n^T$ is Schur-positive.
Choose $k\in T$.  Then $J_k\ne\varnothing$, and
\Cref{thm:boolean-classification} gives $1\in J_k\subseteq T$.
If $2m\in T$, the same theorem applied in degree $2m$ gives $m\in T$.
Conversely, if \eqref{eq:global-halving} holds, every $J_n$ contains $1$,
and $n/2\in J_n$ whenever $n$ is even and $n\in J_n$.
The same classification therefore proves Schur positivity in every degree.
\end{proof}

Let
\[
 \widehat\Lambda_{\mathbb Q}\coloneqq
 \prod_{m\ge0}\Lambda_{\mathbb Q}^m,
 \qquad
 \mathsf H\coloneqq\sum_{m\ge0}h_m\in\widehat\Lambda_{\mathbb Q}.
\]
We use plethystic substitution in this degree completion.
Here $t$ is a formal variable, with the plethystic convention
$p_r[f t^m]=p_r[f]t^{rm}$ for every symmetric function $f$,
$r\ge1$, and $m\ge0$.

\begin{corollary}\label{cor:product-implication}
If $T=\varnothing$, or $1\in T$ and $2m\in T$ implies $m\in T$, then
$\prod_{d\in T}(1-p_dt^d)^{-1}$ is Schur-positive in every degree.
\end{corollary}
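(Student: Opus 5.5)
We plan to reduce the product to a plethystic composition of $\mathsf H$ with a Schur-positive series built from the $f_n^T$, namely Sundaram's plethystic identity. The case $T=\varnothing$ is trivial, since the product is $1$. Assume therefore that $1\in T$ and that $T$ is closed under halving.

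\textbf{Step 1: the plethystic identity.} We will show
\[
 \prod_{d\in T}(1-p_dt^d)^{-1}
 =\mathsf H\Bigl[\sum_{n\ge1}f_n^Tt^n\Bigr].
\]
To verify it, take logarithms. On the left,
\[
 \log\prod_{d\in T}(1-p_dt^d)^{-1}=\sum_{d\in T}\sum_{k\ge1}\frac{p_d^kt^{dk}}{k}.
\]
On the right, use $\mathsf H=\exp\bigl(\sum_{j\ge1}p_j/j\bigr)$, so that
\[
 \log\mathsf H[G]=\sum_{j\ge1}\frac{p_j[G]}{j},
 \qquad G\coloneqq\sum_n f_n^Tt^n .
\]
By \eqref{eq:psi-fT},
\[
 p_j[f_n^Tt^n]=\frac1n\sum_{d\mid n}\psi^T(d)p_{jd}^{n/d}t^{jn}.
\]
Collect the coefficient of $p_a^{b}t^{ab}$ and write $a=jd$ with $n=db$. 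It equals
\[
 \sum_{jd=a}\frac{\psi^T(d)}{j\cdot db}=\frac1{ab}\sum_{d\mid a}\psi^T(d).
\]
By M\"obius inversion of $\psi^T(d)=\sum_{e\mid d,\,e\in T}e\mu(d/e)$, we get $\sum_{d\mid a}\psi^T(d)=a\one_{a\in T}$. The coefficient is therefore $\one_{a\in T}/b$, which matches the left side. This is exactly Sundaram's identity from \cite{SundaramVariations}, and we could cite it instead of recomputing.

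\textbf{Step 2: positivity of the inner series.} By \Cref{cor:global-boolean}, the hypothesis makes every $f_n^T$ Schur-positive. By \Cref{prop:Q-basis}, each $f_n^T$ also has integral Schur coefficients. Hence each $f_n^T$ is the Frobenius characteristic of a genuine $S_n$-module, and $G$ is a Schur-positive series with no constant term.

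\textbf{Step 3: plethysm preserves positivity.} We need the standard fact that $s_\lambda[A]$ is Schur-positive whenever $A$ is a Schur-positive sum of characteristics of genuine modules. This holds because plethysm corresponds to composing polynomial functors, i.e.\ to wreath-product induction. The variable $t$ only records degree, and the term of $\mathsf H[G]$ in $t$-degree $N$ is a finite sum of products of the form $h_{m_1}[f_{n_1}^T]\cdots h_{m_k}[f_{n_k}^T]$ with distinct $n_i$. Each such product is Schur-positive, being a product of Schur-positive functions. Therefore every degree of the product is Schur-positive.

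\textbf{Main obstacle.} The main obstacle is the bookkeeping in Step 1 under the paper's convention $p_r[ft^m]=p_r[f]t^{rm}$. The remaining steps are formal consequences of the Boolean classification.
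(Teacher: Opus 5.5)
Your proposal is correct and follows the paper's proof, which likewise combines \Cref{cor:global-boolean} with Sundaram's identity $\mathsf H[\sum_{n\ge1}f_n^Tt^n]=\prod_{d\in T}(1-p_dt^d)^{-1}$; the paper cites that identity, whereas you rederive it by taking logarithms, and your derivation is sound. Your explicit appeal to the integrality of the $f_n^T$ in Step~3 is the right justification, and it spells out a step the paper leaves implicit: plethysm by $h_m$ need not preserve Schur positivity when the inner function has non-integral coefficients.
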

\begin{proof}
Combine Corollary~\ref{cor:global-boolean} with Sundaram's identity
$\mathsf H[\sum_{n\ge1}f_n^Tt^n]=\prod_{d\in T}(1-p_dt^d)^{-1}$
\cite[Theorem~6.4 and Corollary~6.5]{SundaramVariations}.
\end{proof}

\begin{remark}\label{rem:Hou-interval}
Let $K\ge1$ and $T=\{1,2,\ldots,K\}$. Then $1\in T$, and $2m\in T$ implies
$m\in T$. Corollary~\ref{cor:global-boolean} therefore gives
\[
 f_n^T=\sum_{\substack{d\mid n\\d\le K}}Q_{n,d}\ge_s0
 \qquad(n,K\ge1).
\]
This recovers Hou's bounded-interval theorem
\cite[Theorem~1.1]{Hou}, which proves
\cite[Conjecture~4]{SundaramPrimePower}.
By \Cref{cor:product-implication}, every coefficient of
$\prod_{d=1}^K(1-p_dt^d)^{-1}$ is also Schur-positive.
More generally, every nonempty divisor-closed set satisfies
\eqref{eq:global-halving}.
Closure under all divisors is not necessary: $T=\{1,9\}$ contains $1$
and is closed under halving, so every $f_n^T$ is Schur-positive.
\end{remark}

\begin{corollary}
\label{cor:one-k-classification}
Let $k\ge2$. The function $f_n^{\{1,k\}}$ fails to be Schur-positive
exactly when $n=k$ and $k>2$ is even. In that degree the coefficient
of $s_{(1^n)}$ is $-1$ and every other Schur coefficient is nonnegative.
\end{corollary}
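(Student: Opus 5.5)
This is a direct specialization of \Cref{thm:boolean-classification}, applied to the support $J_n\coloneqq\{1,k\}\cap\D(n)$. By \eqref{eq:fT-Q-sum} we have $f_n^{\{1,k\}}=Q_{n,J_n}$, and $1\in J_n$ for every $n\ge1$. The theorem therefore says that $f_n^{\{1,k\}}$ fails to be Schur-positive exactly when $n$ is even, $n\in J_n$, and $n/2\notin J_n$. The plan is to translate these three conditions into conditions on $n$ and $k$.

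First, $n\in J_n$ means $n\in\{1,k\}$. The case $n=1$ is odd, so it is excluded by evenness, and we are left with $n=k$. Next, for $n=k$ even we have $n/2=k/2$, and $k/2\in\{1,k\}$ holds exactly when $k/2=1$, that is, $k=2$. So the failure occurs exactly when $n=k$, $k$ is even, and $k\ne2$. Conversely, in every other case (namely $n\ne k$, or $n=k$ odd, or $n=k=2$) condition \eqref{eq:boolean-condition} holds, and the theorem gives Schur positivity.

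For the coefficient statement, the exceptional clause of \Cref{thm:boolean-classification} applies in the failing case. There $1\in J$, $n$ is even, $n\in J$, and $n/2\notin J$, so $[s_{(1^n)}]Q_{n,J}=-1$ and every other Schur coefficient is nonnegative. As a consistency check, \Cref{lem:boolean-endpoints} gives $\eps_n(\psi^J)=(-1)^{n-1}\cdot1+0=-1$ directly.

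There is no real obstacle here; all the analytic work sits in the theorem. The only care needed is at the edge cases $n=1$ and $k=2$, where $n/2$ coincides with an element of $\{1,k\}$. For $k=2$, $n=2$, the support is $J=\{1,2\}=\D(2)$, which is admissible.
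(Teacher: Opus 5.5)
Your proposal is correct and follows the paper's proof: you specialize \Cref{thm:boolean-classification} to $J=\{1,k\}\cap\D(n)$, note $1\in J$, show that the even-degree condition fails precisely when $n=k>2$ is even, and use the theorem's exceptional clause for the coefficient $-1$ at $s_{(1^n)}$. Your explicit treatment of the edge cases $n=1$ and $k=2$, and the consistency check via \Cref{lem:boolean-endpoints}, only spell out what the paper leaves implicit.
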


\begin{proof}
The set $J=\{1,k\}\cap\D(n)$ contains $1$.  The even-degree condition in
\Cref{thm:boolean-classification} fails precisely when $n=k>2$ is even:
then $n\in J$ but $n/2\notin J$.  The same theorem gives the asserted
Schur coefficients in the exceptional degree.
\end{proof}

For $k\ge2$, set $T_k\coloneqq\{k^j:j\ge0\}$.

\begin{corollary}
\label{cor:powers-classification}
For $k\ge2$, the function $f_n^{T_k}$ fails
to be Schur-positive exactly when $k>2$ is even and $n=k^a$ for
some $a\ge1$. In every exceptional degree the coefficient of $s_{(1^n)}$ is
$-1$ and all remaining coefficients are nonnegative.
\end{corollary}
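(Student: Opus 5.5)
The plan is to follow the template of \Cref{cor:one-k-classification}: reduce to \Cref{thm:boolean-classification} applied to $J_n\coloneqq T_k\cap\D(n)$, using $f_n^{T_k}=Q_{n,J_n}$ from \eqref{eq:fT-Q-sum}. Since $k^0=1\in T_k$, every $J_n$ contains $1$. The theorem then says $f_n^{T_k}$ is Schur-positive unless $n$ is even, $n\in J_n$, and $n/2\notin J_n$. In that exceptional case it also gives $[s_{(1^n)}]f_n^{T_k}=-1$, with all other Schur coefficients nonnegative. So it suffices to determine exactly when this exceptional configuration occurs.

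The condition $n\in J_n$ means $n\in T_k$, that is, $n=k^a$ for some $a\ge0$. I will split into cases.
\begin{itemize}
\item If $a=0$, then $n=1$ is odd, so nothing fails.
\item If $n=k^a$ with $a\ge1$ and $k$ odd, then $n$ is odd and the even-degree condition is vacuous.
\item If $k$ is even and $a\ge1$, then $n$ is even and $n/2=k^a/2$. We must decide whether $k^a/2\in T_k$, that is, whether $k^a/2=k^j$ for some $j\ge0$. This forces $k^{a-j}=2$. Since $k\ge2$, that happens only when $k=2$ and $j=a-1$. So for $k=2$ the halving condition always holds, consistent with $T_2$ being closed under halving. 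For even $k>2$, $k^a/2$ is never a power of $k$ (for instance, its $k$-adic size lies strictly between $k^{a-1}$ and $k^a$), so the condition fails.
\end{itemize}
Hence failure occurs exactly when $k>2$ is even and $n=k^a$ with $a\ge1$.

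The only step requiring any care is the claim that $k^a/2$ is not a power of $k$ when $k>2$ is even. I will prove it by the strict inequalities $k^{a-1}<k^a/2<k^a$, which hold because $2<k$. Everything else is direct bookkeeping from \Cref{thm:boolean-classification}. Its final assertion gives the statement about the sign coefficient $-1$ and the nonnegativity of the remaining coefficients in each exceptional degree.
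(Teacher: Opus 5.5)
Your proposal is correct and follows the paper's proof: apply \Cref{thm:boolean-classification} to $J=T_k\cap\D(n)$, which always contains $1$; observe that an exceptional degree must be even with $n=k^a$, $a\ge1$; and note that $n/2\in T_k$ there exactly when $k=2$. Your inequality $k^{a-1}<k^a/2<k^a$ for even $k>2$ just makes that last step explicit.
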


\begin{proof}
Apply \Cref{thm:boolean-classification} to $J=T_k\cap\D(n)$.
An exceptional degree must be even and satisfy $n=k^a$ for some $a\ge1$.
For such $n$, the relation $n/2\in T_k$ holds if and only if $k=2$.
This gives exactly the stated exceptions and their Schur coefficients.
\end{proof}

\begin{corollary}\label{key:corollary-powers-product}
For $k=2$ and every odd $k\ge3$,
\[
 \prod_{j\ge0}(1-p_{k^j}t^{k^j})^{-1}\ge_s0.
\]
Its specialization at $t=1$ is well defined in the degree completion
and is Schur-positive there.
\end{corollary}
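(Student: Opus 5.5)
The plan is to reduce the statement to \Cref{cor:product-implication} with $T=T_k=\{k^j:j\ge0\}$, and then handle the specialization at $t=1$ separately.

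\emph{Step 1: checking the hypothesis.} Since $k^0=1$, we have $1\in T_k$. For the halving condition, suppose $2m\in T_k$, say $2m=k^a$.
\begin{itemize}
\item If $k$ is odd, then $k^a$ is odd, so $2m=k^a$ is impossible and the condition holds vacuously.
\item If $k=2$, then $a\ge1$ and $m=2^{a-1}\in T_2$.
\end{itemize}
So \eqref{eq:global-halving} holds. \Cref{cor:product-implication} then says that $\prod_{d\in T_k}(1-p_dt^d)^{-1}$ is Schur-positive in every degree, which is the first assertion. Equivalently, one can quote \Cref{cor:global-boolean} (or \Cref{cor:powers-classification}, whose exceptional set is empty for these $k$) together with Sundaram's plethystic identity $\mathsf H[\sum_{n\ge1}f_n^{T}t^n]=\prod_{d\in T}(1-p_dt^d)^{-1}$.

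\emph{Step 2: the specialization at $t=1$.} This step needs a little care because $T_k$ is infinite. Write $\prod_{j\ge0}(1-p_{k^j}t^{k^j})^{-1}=\sum_{N\ge0}G_Nt^N$.
\begin{itemize}
\item Each factor $(1-p_{k^j}t^{k^j})^{-1}=\sum_{m\ge0}p_{k^j}^mt^{mk^j}$ has every term of symmetric-function degree equal to its $t$-degree, by the plethystic convention $p_r[ft^m]=p_r[f]t^{rm}$ applied to $f=p_1$. Hence $G_N\in\Lambda^N_{\mathbb Q}$.
\item Only the finitely many factors with $k^j\le N$ contribute to the coefficient of $t^N$, so each $G_N$ is a finite sum.
\end{itemize}
Setting $t=1$ therefore produces $\sum_NG_N$, whose degree-$N$ component is exactly $G_N$. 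This is well defined in $\widehat\Lambda_{\mathbb Q}=\prod_m\Lambda^m_{\mathbb Q}$, and it is Schur-positive there because each $G_N\ge_s0$ by Step 1. In particular this product equals $\prod_{j\ge0}(1-p_{k^j})^{-1}$, which is Sundaram's product conjecture for odd $k\ge3$.

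The only point needing any care is the degree-by-degree bookkeeping in Step 2. Step 1 is the whole substance, and it was already done in \Cref{thm:boolean-classification}.
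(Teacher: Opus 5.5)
Your proposal is correct and follows the paper's proof: you check that $T_k$ contains $1$ and is closed under halving its even elements when $k=2$ or $k$ is odd, then invoke \Cref{cor:product-implication}, and you observe that only finitely many factors contribute in each degree, so the specialization at $t=1$ is well defined in $\widehat\Lambda_{\mathbb Q}$. Your Step~2 just writes out in detail the degree bookkeeping that the paper states in one sentence.
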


\begin{proof}
For the stated values of $k$, the set $T_k$ contains $1$ and is closed
under halving its even elements.  Thus \Cref{cor:product-implication}
applies.  In each degree, only finitely many factors contribute, so the
specialization at $t=1$ is well defined in $\widehat\Lambda_{\mathbb Q}$.
\end{proof}

\Cref{cor:powers-classification,cor:one-k-classification} prove
\cite[Conjectures~1 and~3]{SundaramPrimePower}, and
\Cref{key:corollary-powers-product} proves Conjecture~2.
Conjecture~4, previously proved by Hou, is recovered in
\Cref{rem:Hou-interval}.

For prime $k=q$, the classifications also follow from
$f_n^{\{1,q\}}=\ell_n^{(q)}$ when $q\mid n$ and
$f_n^{T_q}=\ell_n^{(q^a)}$, where $a\in\mathbb Z_{\ge0}$ is maximal
with $q^a\mid n$.
For composite $k\mid n$, neither
$\{1,k\}$ nor $T_k\cap\D(n)$ is a principal order ideal in $\D(n)$, so
each corresponding Foulkes expansion has a negative coordinate even in
degrees in which the function is Schur-positive.

We now compare Schur positivity with the stronger requirement that
$f_n^T$ have nonnegative Foulkes coordinates in every degree.
Recall the cone $\mathcal F_n^+$ from \Cref{subsec:global-lifts}.
For $T\subseteq\mathbb N$ with $1\in T$ and $n\ge1$, put
\begin{equation}\label{eq:rT-definition}
 r_T(n)\coloneqq\operatorname{lcm}(T\cap\D(n)).
\end{equation}

\begin{corollary}\label{key:corollary-global-foulkes-positive}
For $T\subseteq\mathbb N$, the following are equivalent:
\begin{enumerate}[label=\textup{(\roman*)}]
\item $f_n^T\in\mathcal F_n^+$ for every $n\ge1$;
\item either $T=\varnothing$ or $T$ is closed under taking divisors and
least common multiples;
\item either $T=\varnothing$ or there are numbers
$c_p\in\mathbb Z_{\ge0}\cup\{\infty\}$, indexed by the primes $p$, such that
\[
 T=\{m\ge1:\nu_p(m)\le c_p\text{ for every prime }p\},
\]
where $\nu_p(m)$ is the exponent of $p$ in $m$.
\end{enumerate}
If $T$ is nonempty and satisfies these equivalent conditions, then
\[
 r_T(n)=\prod_p p^{\min\{\nu_p(n),c_p\}}
\]
and
\[
 f_n^T=\ell_n^{(r_T(n))}\qquad(n\ge1),
\]
with the convention $\min\{a,\infty\}=a$.
\end{corollary}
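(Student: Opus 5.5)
The equivalence rests on \Cref{prop:fixed-foulkes}. It says that $f_n^T\in\mathcal F_n^+$ holds exactly when $T\cap\D(n)$ is empty or equals a principal order ideal $\D(r)$ of $\D(n)$. So condition (i) is the statement that for every $n\ge1$ the set $T\cap\D(n)$ is empty or principal in $\D(n)$.

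For (i)$\Rightarrow$(ii), assume $T\ne\varnothing$.

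\emph{Divisor-closure.} Take $k\in T$ and $e\mid k$. Apply the criterion with $n=k$. Then $T\cap\D(k)$ contains $k$, so it is nonempty, hence equals $\D(r)$ for some $r\mid k$. Since $k$ lies in it, $k\mid r$, so $r=k$. Therefore $e\in T$.

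\emph{Closure under lcm.} Take $a,b\in T$ and set $n=\operatorname{lcm}(a,b)$. Then $T\cap\D(n)=\D(r)$ with $a,b\mid r\mid n$, which forces $r=n$. Hence $n\in T$.

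For (ii)$\Rightarrow$(i), assume $T$ is nonempty and closed under divisors and lcm. Then $1\in T$, so $T\cap\D(n)$ is a nonempty finite set. Closure under lcm shows that $r_T(n)\coloneqq\operatorname{lcm}(T\cap\D(n))$ lies in $T\cap\D(n)$. Closure under divisors then gives $T\cap\D(n)=\D(r_T(n))$. By \Cref{prop:fixed-foulkes} and \eqref{eq:zeta-Q-Foulkes}, $f_n^T=\ell_n^{(r_T(n))}\in\mathcal F_n^+$. This also proves the displayed identity $f_n^T=\ell_n^{(r_T(n))}$ once the formula for $r_T(n)$ is established.

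For (ii)$\Leftrightarrow$(iii), define $c_p\coloneqq\sup\{\nu_p(m):m\in T\}\in\mathbb Z_{\ge0}\cup\{\infty\}$.

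\emph{Inclusion $T\subseteq\{m:\nu_p(m)\le c_p\ \forall p\}$.} This holds by the definition of $c_p$.

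\emph{Reverse inclusion.} Let $m\ge1$ satisfy $\nu_p(m)\le c_p$ for every prime $p$. For each prime $p\mid m$, choose $t_p\in T$ with $\nu_p(t_p)\ge\nu_p(m)$; this is possible since either $c_p$ is finite and attained, or $c_p=\infty$. Divisor-closure gives $p^{\nu_p(m)}\in T$. Lcm-closure over the finitely many primes dividing $m$ gives $m\in T$; for $m=1$ use $1\in T$.

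\emph{Converse.} A set of the form in (iii) is nonempty, since it contains $1$, and is visibly closed under divisors and lcm, because $\nu_p(\operatorname{lcm}(a,b))=\max\{\nu_p(a),\nu_p(b)\}$.

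Finally, $T\cap\D(n)=\{m\mid n:\nu_p(m)\le c_p\ \forall p\}$. Its largest element is $\prod_p p^{\min\{\nu_p(n),c_p\}}$, and this element is therefore $r_T(n)$.

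No step is a serious obstacle. The only care needed is with $c_p=\infty$ and with the sup being attained, both handled prime by prime as above.
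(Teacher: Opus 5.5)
Your proposal is correct and follows essentially the same route as the paper. Both read condition (i) degree by degree through \Cref{prop:fixed-foulkes}, test it in degrees $k\in T$ and $\operatorname{lcm}(a,b)$ to obtain closure under divisors and least common multiples, and reach (iii) via $c_p=\sup\{\nu_p(m):m\in T\}$ and prime-power factorization; you additionally spell out details the paper leaves implicit, such as why a suitable $t_p$ exists prime by prime and why $\operatorname{lcm}(T\cap\D(n))$ lies in $T$.
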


\begin{proof}
The assertions are immediate for $T=\varnothing$, so assume $T\ne\varnothing$.
Under \textup{(i)}, applying \Cref{prop:fixed-foulkes} in degree $m\in T$
gives $T\cap\D(m)=\D(m)$; hence $T$ is closed under divisors.
For $a,b\in T$, apply the same proposition in degree
$m=\operatorname{lcm}(a,b)$.  Since $T\cap\D(m)=\D(r)$ contains both
$a$ and $b$, one has $m\mid r\mid m$, and therefore $m\in T$.
This proves \textup{(ii)}.

Under \textup{(ii)}, take $c_p=\sup\{\nu_p(m):m\in T\}$.
By closure under divisors, $T$ contains every permitted prime power;
by closure under least common multiples, it contains every finite
product of such prime powers.  This proves \textup{(iii)}, whose converse is direct.
Finally, \textup{(ii)} gives
$T\cap\D(n)=\D(r_T(n))$ for every $n$, proving \textup{(i)} by
\Cref{prop:fixed-foulkes}.  The description in \textup{(iii)} gives the
stated formula for $r_T(n)$, and \eqref{eq:zeta-Q-Foulkes} gives that for
$f_n^T$.
\end{proof}

Thus, for a nonempty set $T$, nonnegative Foulkes coordinates in every degree
require closure under all divisors and least common multiples, whereas
Schur positivity requires only $1\in T$ and closure under halving.

\begin{remark}\label{key:remark-global-foulkes-examples}
The following sets satisfy the equivalent conditions in
\cref{key:corollary-global-foulkes-positive}.
If $T=\langle S\rangle$ is the multiplicative monoid generated by
a set $S$ of primes, then $c_p=\infty$ for $p\in S$ and $c_p=0$ otherwise.
If $T=\D(K)$ for a fixed positive integer $K$,
then
\[
 r_T(n)=\gcd(K,n),\qquad f_n^T=\ell_n^{(\gcd(K,n))}.
\]
If $T$ is the set of squarefree positive integers, then
\[
 r_T(n)=\rad(n),\qquad
 f_n^T=\ell_n^{(\rad(n))}.
\]
More generally, fix an integer $h\ge2$, and let $T$ be the set of positive
integers not divisible by $p^h$ for any prime $p$.  Then
\[
 r_T(n)=\prod_p p^{\min\{\nu_p(n),h-1\}}.
\]
\end{remark}

\section{The Boolean region and its integral decompositions}\label{sec:boolean-geometry}\label{subsec:boolean-polytope}

By \Cref{thm:boolean-classification}, every nonzero Schur-positive Boolean
sum has first $Q$-coordinate $1$.  Let $H_n$ be the convex hull of their
$Q$-coordinate vectors:
\begin{equation}\label{eq:Hn-definition}
 H_n\coloneqq\conv\set{\one_J:\varnothing\ne J\subseteq\D(n),\
 Q_{n,J}\text{ is Schur-positive}}.
\end{equation}
Its conical hull is
\begin{equation}\label{eq:cone-Hn}
 \operatorname{cone}(H_n)
 =\set{t x:t\in\mathbb R_{\ge0},\ x\in H_n}
 \subseteq\mathbb R^{\D(n)}.
\end{equation}
This cone is pointed.  By \Cref{thm:Q-lattice-saturation}, the map
$\mathcal Q_n$ identifies $\mathbb Z^{\D(n)}$ with the integral character
lattice $\Rspace_{n,\mathbb R}\cap\Lambda_{\mathbb Z}^n$.
The \emph{Hilbert basis} of a pointed rational
polyhedral cone $C\subseteq\mathbb R^{\D(n)}$, with respect to
$\mathbb Z^{\D(n)}$, is the unique minimal subset
\[
 B\subseteq\bigl(C\cap\mathbb Z^{\D(n)}\bigr)\setminus\set{0}
\]
whose nonnegative integral span is $C\cap\mathbb Z^{\D(n)}$.  Equivalently,
its elements are the nonzero lattice points of $C$ that cannot be written
as sums of two nonzero lattice points of $C$.

\begin{theorem}\label{thm:boolean-core}
For every $n\ge1$,
\begin{equation}\label{eq:boolean-cube-intersection}
 P_n^Q\cap\set{x:0\le x_d\le1\text{ for all }d>1}=H_n.
\end{equation}
In particular,
\[
 \Delta_n\subseteq H_n\subseteq P_n^Q.
\]
For odd $n$, the polytope $H_n$ is a cube, and $H_2$ is an interval.
For even $n\ge4$, it is the product of a cube with the triangle
\[
 \set{(a,b):0\le a\le b\le1}
\]
in the coordinates $(x_n,x_{n/2})$.
\end{theorem}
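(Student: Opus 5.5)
The plan is to show that both sides of \eqref{eq:boolean-cube-intersection} coincide with an explicit polytope
\[
 \Pi_n\coloneqq\set{x\in\mathbb R^{\D(n)}:x_1=1,\ 0\le x_d\le1\ (d>1),\ x_n\le x_{n/2}\text{ if }n\text{ is even and }n\ge4},
\]
and then to read off the product structure and the inclusions from $\Pi_n$.

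\emph{Vertices of $\Pi_n$.} First I would describe $\Pi_n$ as a product. The coordinates $x_d$ with $d\notin\{1,n,n/2\}$ range freely over $[0,1]$. For even $n\ge4$ the three divisors $1$, $n/2$, $n$ are distinct, so $(x_n,x_{n/2})$ ranges over the triangle $\{0\le a\le b\le1\}$, independently of the other coordinates. This triangle is a lattice polytope with vertices $(0,0),(0,1),(1,1)$. For odd $n$ there is no extra constraint, so $\Pi_n$ is a cube. For $n=2$ we have $\D(2)=\{1,2\}$ and $n/2=1$, so the only remaining condition is $0\le x_2\le1$, and $\Pi_2$ is an interval. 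In every case $\Pi_n$ is a product of lattice polytopes, so it is a lattice polytope whose vertices are exactly its $0/1$ points. For $n\ge4$ even, a $0/1$ point lies in $\Pi_n$ exactly when $x_n\le x_{n/2}$, i.e.\ exactly when $n\in J\Rightarrow n/2\in J$, where $J$ is its support.

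\emph{$H_n=\Pi_n$.} By \Cref{thm:boolean-classification}, the $0/1$ vectors $\one_J$ with $J\ne\varnothing$ and $Q_{n,J}$ Schur-positive are precisely those with $1\in J$ and (for even $n$) $n\in J\Rightarrow n/2\in J$. For $n=2$ this condition is vacuous since $n/2=1\in J$. These are exactly the vertices of $\Pi_n$ found above, so taking convex hulls gives $H_n=\Pi_n$, together with the stated shape (cube, interval, or cube times triangle).

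\emph{$P_n^Q\cap[0,1]$-box $=\Pi_n$.} For "$\supseteq$": $P_n^Q$ is convex, being cut out by linear inequalities (\Cref{prop:weight-cone}), and it contains every vertex of $H_n$. Hence $\Pi_n=H_n\subseteq P_n^Q$, and $\Pi_n$ lies in the box by definition. For "$\subseteq$": let $x\in P_n^Q$ with $0\le x_d\le1$ for $d>1$. When $n$ is even and $n\ge4$, \eqref{eq:endpoint-even-Q} gives $[s_{(1^n)}]\mathcal Q_n(x)=x_{n/2}-x_n\ge0$, so $x\in\Pi_n$. Only the distinguished-coefficient formula is needed here, which holds for all $n\ge4$; for odd $n$ and for $n=2$ there is nothing to check.

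\emph{Remaining inclusions.} $H_n\subseteq P_n^Q$ was shown in the previous step. For $\Delta_n\subseteq H_n$, it suffices that each vertex $\one_{\D(r)}$, $r\mid n$, lies in $H_n$. This holds because $\D(r)$ contains $1$, and if $n$ is even and $n\in\D(r)$ then $r=n$ and $n/2\in\D(n)$. Alternatively, $Q_{n,\D(r)}=\ell_n^{(r)}$ is a genuine character by \eqref{eq:zeta-Q-Foulkes}.

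There is no serious obstacle once \Cref{thm:boolean-classification} is available for every $n$. The only point needing care is the degenerate bookkeeping: the divisors $1$, $n/2$, $n$ coincide for $n=2$, and the triangle factor is independent of the cube factor only when these three divisors are distinct, i.e.\ for even $n\ge4$.
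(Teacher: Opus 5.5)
Your proof is correct and follows essentially the same route as the paper. The classification theorem places every admissible Boolean vertex in $P_n^Q$, and convexity gives the whole cube, or cube-times-triangle. In even degree $n\ge4$, the sign coefficient $x_{n/2}-x_n\ge0$ from \eqref{eq:endpoint-even-Q} cuts the coordinate cube down to exactly that polytope. The only cosmetic difference is that you organize the argument around the explicit polytope $\Pi_n$ and treat $n=2$ via the classification, where the halving condition is vacuous. The paper instead computes $Q_{2,1}=s_{(1^2)}$ and $Q_{2,2}=s_{(2)}-s_{(1^2)}$ directly.
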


\begin{proof}
The vertices of the cube
$\set{x\in\mathbb R^{\D(n)}:x_1=1,\ 0\le x_d\le1\text{ for }d>1}$
are the vectors $\one_J$ with $1\in J\subseteq\D(n)$.  If $n$ is odd,
every such vertex belongs to
$P_n^Q$ by \Cref{thm:boolean-classification}; convexity therefore gives the
whole cube.

For $n=2$, we have $Q_{2,1}=s_{(1^2)}$ and
$Q_{2,2}=s_{(2)}-s_{(1^2)}$.  Hence
\[
 P_2^Q=H_2=\set{(1,x_2):0\le x_2\le1}.
\]

Now suppose that $n\ge4$ is even.  Formula \eqref{eq:endpoint-even-Q}
shows that every point of $P_n^Q$ satisfies $x_n\le x_{n/2}$.  The vertices
of the coordinate cube cut out by this inequality are precisely the Boolean
vectors satisfying $n\in J\Rightarrow n/2\in J$.  They all belong to
$P_n^Q$ by \Cref{thm:boolean-classification}, so their convex hull is the
full intersection in \eqref{eq:boolean-cube-intersection}.  Finally, the
indicator vector of every principal order ideal satisfies the same
condition, which gives $\Delta_n\subseteq H_n$.
\end{proof}

For a lattice polytope $P\subseteq\mathbb R^N$, its \emph{Ehrhart polynomial}
is the counting function
\[
 L_P(m)\coloneqq\#\bigl(mP\cap\mathbb Z^N\bigr)
 \qquad(m\in\mathbb Z_{\ge0})
\]
which is a polynomial in $m$ by Ehrhart's theorem
\cite[Chapter~3]{BeckRobins}.  The polytope $P$ has the
\emph{integer decomposition property} if every point of
$mP\cap\mathbb Z^N$, $m\in\mathbb Z_{\ge1}$, is a sum of $m$ points of
$P\cap\mathbb Z^N$.

\begin{corollary}\label{cor:boolean-Ehrhart}
For every $n\ge1$, the polytope $H_n$ has the integer decomposition
property.  Its Ehrhart
polynomial is
\[
 L_{H_n}(m)=(m+1)^{|\D(n)|-1}\qquad(n\text{ odd}),
\]
and
\[
 L_{H_n}(m)=(m+1)^{|\D(n)|-2}\frac{m+2}{2}
 \qquad(n\ge4\text{ even}).
\]
For $n=2$, $L_{H_2}(m)=m+1$.  Moreover,
\[
 \vol(H_n)=
 \begin{cases}
  1,&n\text{ odd or }n=2,\\
  1/2,&n\ge4\text{ even}
 \end{cases}.
\]
The Hilbert basis of $\operatorname{cone}(H_n)$ with respect to
the standard $Q$-coordinate lattice $\mathbb Z^{\D(n)}$ is
\[
 \set{\one_J:\varnothing\ne J\subseteq\D(n),\
 Q_{n,J}\text{ is Schur-positive}}.
\]
\end{corollary}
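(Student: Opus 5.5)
We would prove the statement directly from the product structure in \Cref{thm:boolean-core}, working throughout in the affine hyperplane $x_1=1$ identified with $\mathbb R^{N}$, $N\coloneqq|\D(n)|-1$, via the coordinates $x_d$, $d>1$. The integer points of $mH_n$ are then the integer vectors in the dilated slice $x_1=m$, which is the convention under which $\vol$ and $L_{H_n}$ are defined. For odd $n$, $H_n=[0,1]^N$. For $n=2$ it is $[0,1]$. For even $n\ge4$ it is $[0,1]^{N-1}\times T$, where $T=\{(a,b):0\le a\le b\le1\}$ is a unimodular lattice triangle with vertices $(0,0),(0,1),(1,1)$.

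The counting proceeds factor by factor, since $m(P\times P')=mP\times mP'$. For the cube factor, $\#(m[0,1]\cap\mathbb Z)=m+1$. For the triangle, $mT\cap\mathbb Z^2=\{(a,b):0\le a\le b\le m\}$, which has $\binom{m+2}{2}=(m+1)(m+2)/2$ points. Multiplying gives the two stated formulas, and $L_{H_2}(m)=m+1$ follows as well. The volume is the leading coefficient of the Ehrhart polynomial, or directly the product of the factor volumes: $1$ for cubes and intervals, and $1/2$ for $T$.

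For the integer decomposition property, we note that IDP is preserved under products. If $z=(z',z'')\in m(P\times P')$ is integral, we decompose $z'$ and $z''$ separately into $m$ lattice points each and pair the summands. It therefore suffices to check the factors.
\begin{itemize}
\item For $[0,1]$, an integer $k\in[0,m]$ is a sum of $k$ ones and $m-k$ zeros.
\item For $T$, an integral point $(a,b)$ with $0\le a\le b\le m$ is $a(1,1)+(b-a)(0,1)+(m-b)(0,0)$.
\end{itemize}
Hence $H_n$ has IDP for every $n$.

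For the Hilbert basis, let $C=\operatorname{cone}(H_n)$. Any nonzero lattice point $y\in C$ has $x_1(y)=h\in\mathbb Z_{\ge1}$, because $x_1>0$ on $C\setminus\{0\}$. Then $y\in hH_n\cap\mathbb Z^{\D(n)}$, and IDP writes $y$ as a sum of $h$ lattice points of $H_n$. These lattice points are exactly the $0/1$ vertices, i.e.\ the vectors $\one_J$ with $Q_{n,J}$ Schur-positive and $J\neq\varnothing$: the lattice points of a cube times $T$ are its vertices, and \Cref{thm:boolean-core} identifies these with the admissible supports. So this set generates $C\cap\mathbb Z^{\D(n)}$ as a monoid. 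Minimality holds because each $\one_J$ has $x_1=1$, so it cannot be a sum of two nonzero lattice points of $C$, each of which has $x_1\ge1$. The Hilbert basis is therefore exactly this set.

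The only step needing care is the bookkeeping of the ambient lattice: the decomposition must use $\mathbb Z^{\D(n)}$, with integrality of $x_1$ built in. This is immediate here, but it is where the argument differs from \Cref{rem:saturation-versus-idp} for $P_n^Q$.
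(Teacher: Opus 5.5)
Your proposal is correct and is essentially the paper's proof: the paper counts lattice points coordinatewise as you do, and its threshold decomposition $x=\sum_{j=1}^m\one_{J_j}$ with $J_j=\{d\mid n: x_d\ge j\}$ is your factorwise decomposition (on the triangle, exactly $a(1,1)+(b-a)(0,1)+(m-b)(0,0)$) with one canonical pairing of the summands, and the Hilbert-basis argument via $x_1$ is the same. One slip to fix: for even $n\ge4$ the cube factor is $[0,1]^{N-2}$, not $[0,1]^{N-1}$, because the triangle already uses the two coordinates $x_n,x_{n/2}$; with $N-2$, your product count gives the stated $(m+1)^{|\D(n)|-2}(m+2)/2$.
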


\begin{proof}
In the cube cases, each free coordinate of a lattice point of $mH_n$
has $m+1$ choices.  For even $n\ge4$, the ordered pair
$0\le x_n\le x_{n/2}\le m$ has
$\sum_{b=0}^m(b+1)=(m+1)(m+2)/2$ choices, and each of the remaining
$|\D(n)|-3$ free coordinates has $m+1$ choices.  This proves the Ehrhart
formulas.  The volume formulas follow from the unit cube and the
triangle of area $1/2$.
For $x\in mH_n\cap\mathbb Z^{\D(n)}$, one
has $x_1=m$ and $0\le x_d\le m$.  For $1\le j\le m$, set
\[
 J_j\coloneqq\set{d\mid n:x_d\ge j}.
\]
Then $x=\sum_{j=1}^m\one_{J_j}$.  Each $J_j$ contains $1$, and in even
degree $x_n\le x_{n/2}$ implies $n\in J_j\Rightarrow n/2\in J_j$.
Thus every $\one_{J_j}$ belongs to $H_n$, proving the
integer decomposition property.

Now let $y\in\operatorname{cone}(H_n)\cap\mathbb Z^{\D(n)}$ be nonzero and
put $m\coloneqq y_1$.  Since every point of $H_n$ has first coordinate $1$,
one has $m\in\mathbb Z_{\ge1}$ and $y\in mH_n$.
The same decomposition shows that the displayed Boolean vectors generate
all lattice points of the cone.  None is a sum of two nonzero lattice points
of the cone: each has first coordinate $1$, whereas every nonzero lattice
point of the cone has a positive integer first coordinate.  This proves the
Hilbert-basis assertion.
\end{proof}

Combining \Cref{prop:Delta-unimodular,cor:boolean-Ehrhart} and writing
$r_n\coloneqq|\D(n)|-1$, we obtain
\begin{equation}\label{eq:volume-ratio}
 \frac{\vol(H_n)}{\vol(\Delta_n)}=
 \begin{cases}
  r_n!,&n\text{ odd or }n=2,\\
  r_n!/2,&n\ge4\text{ even}.
 \end{cases}
\end{equation}

Since the lattice points of $H_n$ are its Boolean vertices, setting
$m=1$ in \Cref{cor:boolean-Ehrhart} shows that the number of
Schur-positive Boolean $Q$-sums, including the zero sum, is
\[
 1+L_{H_n}(1)=
 \begin{cases}
  1+2^{|\D(n)|-1},&n\text{ odd or }n=2,\\
  1+3\cdot2^{|\D(n)|-3},&n\ge4\text{ even}.
 \end{cases}
\]

Together with \Cref{cor:boolean-strictness}, the integer decomposition
property yields the following bounds for major-index residue sums.

\begin{corollary}
\label{cor:weighted-boolean-residue}
Let $h\in\mathbb Z_{\ge1}$ and let
$x\in hH_n\cap\mathbb Z^{\D(n)}$.  Then, for every
$\lambda\vdash n$,
\begin{equation}\label{eq:weighted-boolean-nonnegative}
 \sum_{e\mid n}\beta_x(e)a_{\lambda,e}
 =[s_\lambda]\mathcal Q_n(x)\ge0.
\end{equation}
Equivalently, \eqref{eq:weighted-residue-inequality} holds with
$\beta_x^+$ and $\beta_x^-$ as defined in
\eqref{eq:weighted-beta-parts}.  If $n\ge18$, then the left-hand side of
\eqref{eq:weighted-boolean-nonnegative} is at least $h$ for every
$\lambda\vdash n$ outside the four distinguished partitions.  For odd
$n\ge4$, the distinguished coefficients are
\begin{align*}
 [s_{(n)}]\mathcal Q_n(x)&=x_n,&
 [s_{(n-1,1)}]\mathcal Q_n(x)&=h-x_n,\\
 [s_{(2,1^{n-2})}]\mathcal Q_n(x)&=h-x_n,&
 [s_{(1^n)}]\mathcal Q_n(x)&=x_n.
\end{align*}
For even $n\ge4$, they are
\begin{align*}
 [s_{(n)}]\mathcal Q_n(x)&=x_n,&
 [s_{(n-1,1)}]\mathcal Q_n(x)&=h-x_n,\\
 [s_{(2,1^{n-2})}]\mathcal Q_n(x)&=h-x_{n/2}+x_n,&
 [s_{(1^n)}]\mathcal Q_n(x)&=x_{n/2}-x_n.
\end{align*}
\end{corollary}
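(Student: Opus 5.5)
The plan is to reduce everything to Boolean sums via the integer decomposition property in \Cref{cor:boolean-Ehrhart}. Let $x\in hH_n\cap\mathbb Z^{\D(n)}$. The decomposition constructed there takes
\[
 J_j\coloneqq\set{d\mid n:x_d\ge j}\qquad(1\le j\le h).
\]
This gives $x=\sum_{j=1}^h\one_{J_j}$. Here $x_1=h$, so each $J_j$ contains $1$, and in even degree each $J_j$ satisfies $n\in J_j\Rightarrow n/2\in J_j$. By linearity of $\mathcal Q_n$, we get $\mathcal Q_n(x)=\sum_{j=1}^hQ_{n,J_j}$. By \Cref{thm:boolean-classification}, each summand is Schur-positive, so $[s_\lambda]\mathcal Q_n(x)\ge0$. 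The identity $[s_\lambda]\mathcal Q_n(x)=\sum_{e\mid n}\beta_x(e)a_{\lambda,e}$ is \eqref{eq:weighted-major-index-coefficient}. Splitting $\beta_x=\beta_x^+-\beta_x^-$ gives the equivalent form \eqref{eq:weighted-residue-inequality}, exactly as in \Cref{prop:weighted-major-index-dictionary}.

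For the quantitative bound with $n\ge18$, we invoke \Cref{cor:boolean-strictness} for each $J_j$. Every Schur coefficient of $Q_{n,J_j}$ outside the four distinguished partitions is positive. By \Cref{prop:Q-basis} (each $Q_{n,d}$ has integral Schur coefficients), these coefficients are integers, so each is at least $1$. Summing over the $h$ summands then gives $[s_\lambda]\mathcal Q_n(x)\ge h$ for all nondistinguished $\lambda$.

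For the distinguished coefficients with $n\ge4$, we do not need the decomposition. By \Cref{prop:Q-basis} and the integral change of coordinates, $\mathcal Q_n(x)=F_n(w)$ with $w(1)=x_1=h$ by \eqref{eq:weight-Q-transform}. We apply \Cref{lem:endpoint-coefficients} to the normalized weight $w/h$ and multiply by $h$; alternatively, we sum the formulas of \Cref{cor:boolean-strictness} over $j$. Using \eqref{eq:endpoint-Q}, the trivial and sign coefficients are $x_n$ in odd degree, and $x_n$ and $x_{n/2}-x_n$ in even degree. The standard and sign-twisted standard coefficients are $h$ minus these, since the standard character contributes $(n-1)w(1)-\sum_{d>1}w(d)$, giving $h\cdot1-\tau_n(w)$ after dividing by $n$. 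The summing route works because $\sum_j\one_{n\in J_j}=x_n$ and $\sum_j\one_{n/2\in J_j}=x_{n/2}$.

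The only step requiring care is the integrality observation that turns strict positivity into a lower bound of $1$ per summand. For $n=18$ and larger this is automatic from \Cref{cor:boolean-strictness}, so I expect no real obstacle; the main point is simply to apply the decomposition rather than the criterion directly to $x$.
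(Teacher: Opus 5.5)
Your proposal is correct and follows the paper's proof: the same decomposition $x=\sum_{j=1}^h\one_{J_j}$ from \Cref{cor:boolean-Ehrhart}, integrality together with \Cref{cor:boolean-strictness} for the lower bound $h$, and \Cref{lem:endpoint-coefficients} with \eqref{eq:endpoint-Q} applied to $\mathcal Q_n(x)/h$ for the distinguished coefficients. Your alternative of summing the formulas of \Cref{cor:boolean-strictness} over $j$ only covers $n\ge18$, since that corollary assumes $n\ge18$; the normalization route is the one that handles every $n\ge4$.
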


\begin{proof}
Use the decomposition
$x=\sum_{j=1}^h\one_{J_j}$ from the proof of
\Cref{cor:boolean-Ehrhart}; every $Q_{n,J_j}$ is Schur-positive.
Equation~\eqref{eq:weighted-major-index-coefficient} therefore proves
\eqref{eq:weighted-boolean-nonnegative}.  When $n\ge18$, each summand has
a positive integer coefficient at every other partition by
\Cref{cor:boolean-strictness}, so their sum is at least $h$.
Since $x_1=h$, the four stated values follow by applying
\Cref{lem:endpoint-coefficients} and \eqref{eq:endpoint-Q}
to $\mathcal Q_n(x)/h$ and multiplying by $h$.
\end{proof}

\begin{remark}
\label{rem:weighted-one-k}
Let $k>1$ divide $n$, let $h\in\mathbb Z_{\ge1}$, and let
$0\le m\le h$ be integral.  Assume that the subset
$\set{1,k}\subseteq\D(n)$ is an admissible $Q$-support; equivalently, either
$k<n$, or $n=k$ is odd, or $n=k=2$.  Then
\[
 x\coloneqq h\one_{\set{1}}+m\one_{\set{k}}
 =(h-m)\one_{\set{1}}+m\one_{\set{1,k}}
 \in hH_n\cap\mathbb Z^{\D(n)},
\]
and \Cref{cor:weighted-boolean-residue} gives
\begin{equation}\label{eq:weighted-one-k-residue}
 h a_{\lambda,1}
 +m\sum_{e\mid k}\mu(k/e)a_{\lambda,e}\ge0
 \qquad(\lambda\vdash n).
\end{equation}
The case $h=m=1$ is the inequality in
\Cref{key:corollary-major-index-inequalities}.
For this case, when $k$ is composite and $n>k$,
\Cref{cor:composite-equality} determines the equality cases and,
for $n\ge18$, gives quantitative bounds for balanced partitions.
\end{remark}

\section{The Ramanujan-square family}\label{sec:ramanujan}

The character estimates used for the Boolean classification also
resolve the Ramanujan-square conjecture of Shareshian and Sundaram.
Define
\begin{equation}\label{eq:Ramanujan-square}
 R_n^{\sq}\coloneqq\sum_{d\mid n}c_d(n/d)^2p_d^{n/d},
 \qquad
 w_n:\D(n)\longrightarrow\mathbb Z_{\ge0},
 \quad w_n(d)\coloneqq c_d(n/d)^2.
\end{equation}
Then
\begin{equation}\label{eq:rho-Fnw}
 \widetilde R_n^{\sq}\coloneqq\frac1nR_n^{\sq}=F_n(w_n).
\end{equation}
Shareshian and Sundaram conjectured that $R_n^{\sq}$ is Schur-positive
for every $n$ \cite[Conjecture~38]{ShareshianSundaram}.  They proved this
when $n$ is squarefree or four times an odd squarefree integer
\cite[Theorem~36]{ShareshianSundaram} and reported further evidence
in \cite[Table~1]{ShareshianSundaram}.
\Cref{thm:ramanujan-square} proves the conjecture and determines all zero
coefficients.  The examples in degrees $8$ and $9$ show that the resulting
normalized coordinate vectors need not belong to $H_n$.

\subsection{Arithmetic properties of the divisor weights}\label{subsec:ramanujan-arithmetic}

Put
\begin{equation}\label{eq:tn}
 t(n)\coloneqq\sum_{d\mid n}w_n(d)=\sum_{d\mid n}c_d(n/d)^2.
\end{equation}
For $n\ge2$, the formulas in
\cite[equations~(24)--(25)]{ShareshianSundaram} identify
$t(n)$ and $n-t(n)$ as the trivial and standard Schur coefficients,
respectively, of $R_n^{\sq}$.
We write $p^A\parallel n$ when $p^A\mid n$ and $p^{A+1}\nmid n$.

For a prime $p$ and an integer $A\ge1$, define
\[
 T_p(A)\coloneqq\sum_{a=0}^A c_{p^a}(p^{A-a})^2.
\]

\begin{lemma}\label{lem:local-tn}
For every prime $p$ and every integer $A\ge1$, one has
\begin{equation}\label{eq:TpA}
 T_p(A)=
 \begin{cases}
  \displaystyle\frac{(p-1)p^A+2}{p+1},&A\text{ even},\\[2mm]
  \displaystyle\frac{2(p^A+1)}{p+1},&A\text{ odd}.
 \end{cases}
\end{equation}
Moreover, for every $n\ge1$,
\begin{equation}\label{eq:tn-product}
 t(n)=\prod_{p^A\parallel n}T_p(A),
\end{equation}
and
\begin{equation}\label{eq:tn-bound}
 0<t(n)\le n.
\end{equation}
The upper bound in \eqref{eq:tn-bound} is attained if and only if
$n\in\{1,2\}$.
\end{lemma}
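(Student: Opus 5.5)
The plan is to compute the local factor $T_p(A)$ directly from Kluyver's formula, then obtain multiplicativity of $t(n)$ from the multiplicativity of Ramanujan sums, and finally compare each local factor with $p^A$.

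\emph{Local factor.} For $0\le a\le A$, set $g=\gcd(p^a,p^{A-a})=p^{\min(a,A-a)}$. By \eqref{eq:ramanujan-standard},
\[
 c_{p^a}(p^{A-a})=\frac{\phi(p^a)}{\phi(p^a/g)}\mu(p^a/g).
\]
This gives the following cases.
\begin{itemize}
\item[] If $a=0$, the value is $1$.
\item[] If $a\le A-a$, then $g=p^a$ and the value is $\phi(p^a)$.
\item[] If $a=A-a+1$, then $p^a/g=p$ and the value is $-p^{a-1}$, using $\phi(p^a)/\phi(p)=p^{a-1}$.
\item[] If $a\ge A-a+2$, then $p^a/g$ is divisible by $p^2$, so $\mu$ vanishes.
\end{itemize}
Hence
\[
 T_p(A)=1+\sum_{1\le a\le A/2}(p-1)^2p^{2a-2}+p^{2(a^*-1)}\one_{a^*\le A},
 \qquad a^*=\lfloor A/2\rfloor+1 .
\]
Summing the geometric series gives the following. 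For $A=2k$, we get $1+(p-1)(p^{2k}-1)/(p+1)+p^{2k}$. This needs care: for $A$ even, $a^*=k+1$ and $2a^*-1=2k+1\ne A$. Let me recheck the boundary: the term $a=A-a+1$ requires $A$ odd. So for $A$ even the extra term is absent, and $T=1+(p-1)(p^{A}-1)/(p+1)=((p-1)p^A+2)/(p+1)$, as claimed. For $A=2k+1$, the value is $1+(p-1)(p^{2k}-1)/(p+1)+p^{2k}$, which simplifies to $2(p^A+1)/(p+1)$. I will just verify these algebraic simplifications.

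\emph{Multiplicativity.} Write $n=\prod p^{A_p}$. Divisors $d\mid n$ factor as $d=\prod p^{a_p}$, and $c_d(r)$ is multiplicative in $d$ for fixed $r$. Moreover, $c_{p^a}(n/d)$ depends only on the $p$-part of $n/d$, namely $p^{A_p-a_p}$, because the divisor-sum formula only involves $\gcd(p^a,r)$. Thus $c_d(n/d)^2=\prod_p c_{p^{a_p}}(p^{A_p-a_p})^2$, and summing over independent choices of the $a_p$ gives \eqref{eq:tn-product}.

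\emph{Bounds.} Positivity is immediate, since each $T_p(A)>0$ by the formula. For the upper bound, it suffices to show $T_p(A)\le p^A$ with equality iff $p^A=2$; this gives $t(n)\le n$, with equality iff every local factor is tight, i.e.\ $n\in\{1,2\}$.
\begin{itemize}
\item[] For $A$ even, $p^A-T_p(A)=\bigl(2p^A-2\bigr)/(p+1)>0$.
\item[] For $A$ odd, $p^A-T_p(A)=\bigl((p-1)p^A-2\bigr)/(p+1)$. This vanishes iff $(p-1)p^A=2$, i.e.\ $p=2$, $A=1$, and is positive otherwise.
\end{itemize}

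The only delicate step is the case analysis of which $a$ give nonzero Ramanujan sums at the boundary $a\approx A/2$. I would double-check it against small cases, for example $T_2(1)=2$ and $T_3(2)=1+4=5$, which the formula gives as $(2\cdot9+2)/4=5$.
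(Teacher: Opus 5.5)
Your proof is correct and follows essentially the same route as the paper. It uses the same three-case evaluation of $c_{p^a}(p^{A-a})$ from Kluyver's formula, the same geometric sums, and the same local comparison $p^A-T_p(A)\ge0$ with equality only at $(p,A)=(2,1)$. In your first display the indicator $\one_{a^*\le A}$ should be $\one_{2\nmid A}$, which you notice and correct yourself. The only real difference is that you derive the product formula for $t(n)$ directly, from the multiplicativity of $d\mapsto c_d(r)$ and the fact that $c_{p^a}(r)$ depends only on $\gcd(p^a,r)$, whereas the paper cites it as a specialization of Shareshian--Sundaram's Lemma~32.
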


\begin{proof}
By \eqref{eq:ramanujan-standard},
\[
 c_{p^a}(p^{A-a})=
 \begin{cases}
  \phi(p^a),&2a\le A,\\
  -p^{a-1},&2a=A+1,\\
  0,&2a>A+1.
\end{cases}
\]
If $A=2m$, then
\[
 T_p(A)=1+(p-1)^2\sum_{j=0}^{m-1}p^{2j}
 =\frac{(p-1)p^A+2}{p+1}.
\]
If $A=2m+1$, then
\[
 T_p(A)=1+(p-1)^2\sum_{j=0}^{m-1}p^{2j}+p^{2m}
 =\frac{2(p^A+1)}{p+1}.
\]
Equation~\eqref{eq:tn-product} is the specialization of
\cite[Lemma~32]{ShareshianSundaram} to the Ramanujan-square weights.

For even $A$,
\[
 p^A-T_p(A)=\frac{2(p^A-1)}{p+1}\ge0,
\]
while for odd $A$,
\[
 p^A-T_p(A)=\frac{(p-1)p^A-2}{p+1}\ge0.
\]
Equality in the latter inequality holds only for $(p,A)=(2,1)$.  Multiplication of the
local inequalities proves the assertion.
\end{proof}

The general formulas for the four distinguished coefficients appear in
\cite[equations~(24)--(25) and the following paragraph]{ShareshianSundaram}.
The next lemma evaluates $\tau_n(w_n)$ and $\eps_n(w_n)$.

\begin{lemma}\label{lem:ramanujan-endpoints}
For the weight $w_n$ in \eqref{eq:Ramanujan-square},
\begin{equation}\label{eq:ramanujan-endpoints}
 \tau_n(w_n)=\frac{t(n)}n,
 \qquad
 \eps_n(w_n)=
 \begin{cases}
  t(n)/n,&n\not\equiv2\pmod4,\\
  0,&n\equiv2\pmod4.
 \end{cases}
\end{equation}
\end{lemma}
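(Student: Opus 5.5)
The trivial coefficient needs no new work: by \eqref{eq:tau-epsilon}, $\tau_n(w_n)=\frac1n\sum_{d\mid n}w_n(d)=t(n)/n$. The sign coefficient will be handled by reducing $\eps_n(w_n)$ to the same multiplicative structure used in \Cref{lem:local-tn}.

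By definition,
\[
 n\,\eps_n(w_n)=\sum_{d\mid n}(-1)^{n-n/d}c_d(n/d)^2 .
\]
If $n$ is odd, every sign is $+1$, so $\eps_n(w_n)=t(n)/n$. Now let $n$ be even and write $n=2^Am$ with $A\ge1$ and $m$ odd. The sign $(-1)^{n-n/d}$ equals $(-1)^{n/d}$, which is $-1$ exactly when $n/d$ is odd, that is, when $2^A\mid d$.

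Next use multiplicativity. Write $d=2^a d'$ with $d'\mid m$. Ramanujan sums are multiplicative in the modulus when the moduli are coprime, and $c_q(r)$ depends only on $\gcd(q,r)$. Hence
\[
 c_d(n/d)=c_{2^a}(2^{A-a}m/d')\,c_{d'}(2^a n'/d'),
\]
where $n'$ denotes $m$. The odd part $m/d'$ does not affect the $2$-part gcd, and the power of $2$ does not affect the $d'$-part gcd. So the sum factors as
\[
 n\,\eps_n(w_n)=\Bigl(\sum_{a=0}^{A}(-1)^{\one_{a=A}}c_{2^a}(2^{A-a})^2\Bigr)\prod_{p^B\parallel m}T_p(B).
\]
This is the same factorization that gives \eqref{eq:tn-product}, but with the single term $a=A$ of the $2$-local factor negated.

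The $2$-local factor is $T_2(A)-2c_{2^A}(1)^2=T_2(A)-2\mu(2^A)^2$. Using the case formula for $c_{p^a}(p^{A-a})$ from the proof of \Cref{lem:local-tn}:

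\begin{itemize}
\item For $A=1$, the term $c_2(1)=-1$ gives $T_2(1)-2=2-2=0$. So $\eps_n(w_n)=0$ when $n\equiv2\pmod4$.
\item For $A\ge2$, $c_{2^A}(1)=\mu(2^A)=0$, so the negated term vanishes. The $2$-local factor is then just $T_2(A)$, and $\eps_n(w_n)=t(n)/n$.
\end{itemize}

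The only delicate step is justifying this factorization, namely the multiplicativity of $d\mapsto c_d(n/d)$ over coprime parts together with the observation that the sign depends only on the $2$-adic part of $d$. This is the same mechanism behind \cite[Lemma~32]{ShareshianSundaram}, so it can be cited or reproved from \eqref{eq:ramanujan-standard}.
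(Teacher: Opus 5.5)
Your proof is correct and is essentially the paper's argument. Both rest on the multiplicativity of $d\mapsto c_d(n/d)$: the negatively signed terms carry the factor $c_{2^A}(1)^2=\mu(2^A)^2$, which vanishes when $4\mid n$, and when $n\equiv2\pmod4$ they cancel against the unsigned terms exactly as in the paper's pairing of $e$ with $2e$. You simply package this as one factorization $\bigl(T_2(A)-2\mu(2^A)^2\bigr)\,t(m)$. One harmless slip: the second factor should read $c_{d'}(2^{A-a}m/d')$, not $c_{d'}(2^{a}m/d')$, though both equal $c_{d'}(m/d')$ because $d'$ is odd.
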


\begin{proof}
The first formula is immediate.  If $n$ is odd, every sign in
\eqref{eq:tau-epsilon} is positive.  Suppose that $4\mid n$.
Write $n=2^Am$ with $m$ odd and $A\ge2$.
A negative sign can occur only when $n/d$ is odd, which implies
$2^A\mid d$.  By multiplicativity, the factor
$c_{2^A}(n/d)=\mu(2^A)$ occurs in $c_d(n/d)$ and is zero.  Hence every
negatively signed term has weight zero, and the signed and unsigned sums
agree.

Finally, let $n=2m$ with $m$ odd.  Pair the divisors $e$ and $2e$ for
$e\mid m$.  Since $e$ and $m/e$ are odd,
\[
 c_{2e}(m/e)=c_2(m/e)c_e(m/e)=-c_e(m/e),
 \qquad
 c_e(2m/e)=c_e(m/e).
\]
The two squared values are equal, whereas their signs in
\eqref{eq:tau-epsilon} are opposite.  Thus the signed sum is zero.
\end{proof}

\begin{lemma}\label{lem:ramanujan-weight-bounds}
For every $d\mid n$,
\begin{equation}\label{eq:ramanujan-pointwise}
 0\le w_n(d)\le d^2.
\end{equation}
Moreover,
\[
 \sum_{d\mid n}w_n(d)=t(n)\le n.
\]
If $2\mid n$, then $w_n(2)=1$.
\end{lemma}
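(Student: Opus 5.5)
The three assertions follow from Kluyver's formula in \eqref{eq:ramanujan-standard} and from \Cref{lem:local-tn}. I will treat them in order.

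\emph{Pointwise bound.} Nonnegativity holds because $w_n(d)$ is the square of the integer $c_d(n/d)$. For the upper bound, set $g\coloneqq\gcd(d,n/d)$. Kluyver's formula gives
\[
 |c_d(n/d)|=\frac{\phi(d)}{\phi(d/g)}\,|\mu(d/g)|.
\]
The ratio $\phi(d)/\phi(d/g)$ is at most $g$. One way to see this is that $\phi(ab)\le a\,\phi(b)$ for all positive integers $a,b$, which follows from $\phi(m)=m\prod_{p\mid m}(1-1/p)$ because $ab$ has at least the prime factors of $b$. A second way is the divisor-sum form $|c_d(r)|\le\sum_{e\mid\gcd(d,r)}e$, but that does not directly give a bound by $d$, so I will use the first. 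Since $g\mid d$, we get $|c_d(n/d)|\le g\le d$, and squaring gives $w_n(d)\le d^2$.

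\emph{Sum bound.} This is exactly \eqref{eq:tn-bound} in \Cref{lem:local-tn}, since $\sum_{d\mid n}w_n(d)=t(n)$ by \eqref{eq:tn}.

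\emph{Value at $2$.} If $2\mid n$, then $w_n(2)=c_2(n/2)^2$. Since $c_2(r)=(-1)^r$, this square equals $1$. Equivalently, the divisor-sum formula gives $c_2(r)=-1+2\cdot\one_{2\mid r}$, which is $\pm1$.

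None of the steps is hard. The only point needing care is the inequality $\phi(d)\le g\,\phi(d/g)$, which is the step $\phi(ab)\le a\,\phi(b)$ with $a=g$ and $b=d/g$.
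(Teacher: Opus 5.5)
Your proof is correct. For the sum bound and for $w_n(2)=1$ it follows the paper's argument exactly. The only difference is in the pointwise bound. The paper quotes $|c_d(r)|\le\phi(d)$ from Shareshian--Sundaram and then uses $\phi(d)\le d$. That estimate is also immediate from the triangle inequality, since $c_d(r)$ is a sum of $\phi(d)$ roots of unity, and it holds for every $r$. You instead combine Kluyver's formula with $\phi(ab)\le a\,\phi(b)$ to obtain the sharper bound $|c_d(n/d)|\le\gcd(d,n/d)$, hence $w_n(d)\le\gcd(d,n/d)^2\le\min\{d,n/d\}^2$. Your route is self-contained and proves more than the lemma asks. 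The paper's route is shorter, and only $w_n(d)\le d^2$, $w_n(2)=1$, and $t(n)\le n$ are used later, in the long-row and balanced estimates.
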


\begin{proof}
The estimate $|c_d(r)|\le\phi(d)$ is proved in
\cite[Proposition~6]{ShareshianSundaram}; together with
$\phi(d)\le d$ it gives the pointwise bound.  The identity for the sum and
its upper bound follow from \Cref{lem:local-tn}.  If $2\mid n$, then
$c_2(n/2)=\pm1$, so $w_n(2)=1$.
\end{proof}

\begin{remark}
\label{rem:ramanujan-not-linear}
No estimate of the form $w_n(d)\le\kappa d$, with $\kappa$ independent of
$n$ and $d$, holds uniformly for the Ramanujan-square weights.
For $n=p^2$ and $d=p$,
\[
 w_{p^2}(p)=c_p(p)^2=(p-1)^2,
\]
so $w_{p^2}(p)/p$ is unbounded as $p$ varies.
\end{remark}

\subsection{Balanced-partition estimate}
\label{subsec:ramanujan-estimates}

To verify the balanced-partition hypothesis in \Cref{thm:criterion},
we use the total weight bound $t(n)\le n$.

For $n\ge18$, define
\begin{equation}\label{eq:Esq}
 \mathcal E_n^{\sq}\coloneqq e^{1/24}\left(
 \frac{X_n^{-1/2}}{\sqrt2}+nX_n^{-2/3}\right).
\end{equation}

\begin{proposition}\label{prop:ramanujan-balanced}
For every $n\ge18$,
\[
 E_n(w_n)\le\mathcal E_n^{\sq}<1.
\]
\end{proposition}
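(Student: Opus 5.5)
The plan has two parts: first bound $E_n(w_n)$ by $\mathcal E_n^{\sq}$ using the weight estimates in \Cref{lem:ramanujan-weight-bounds}, then show $\mathcal E_n^{\sq}<1$ for $n\ge18$ by a monotonicity argument with finitely many base cases.

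For the first part, recall the definition \eqref{eq:Enw} of $E_n(w)$. If $n$ is even, \Cref{lem:ramanujan-weight-bounds} gives $w_n(2)=1$, so the $d=2$ term is exactly $X_n^{-1/2}/\sqrt2$; if $n$ is odd that term is absent, which only helps. For the terms with $3\le d<n$, the pointwise bound $d^2$ is too weak to sum, as \Cref{rem:ramanujan-not-linear} shows. Instead I will use nonnegativity together with the total bound. Since $w_n(d)\ge0$ and $1/\sqrt d\le1$, we get
\[
\sum_{\substack{d\mid n\\3\le d<n}}\frac{w_n(d)}{\sqrt d}\le\sum_{d\mid n}w_n(d)=t(n)\le n,
\]
by \Cref{lem:local-tn}. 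Substituting these two bounds into \eqref{eq:Enw} gives $E_n(w_n)\le\mathcal E_n^{\sq}$ directly. One could sharpen this with $1/\sqrt3$ or by discarding $w_n(1)=1$, but that is not needed.

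For the second part, I first show that $\mathcal E_n^{\sq}$ is decreasing in $n$ from some small threshold on. By \eqref{eq:X-ratio}, $X_{n+1}/X_n\ge\frac95\sqrt{n/(n+1)}$ for $n\ge18$. Hence $X_{n+1}^{-1/2}\le X_n^{-1/2}$. For the main term we need
\[
\frac{(n+1)X_{n+1}^{-2/3}}{nX_n^{-2/3}}\le\frac{n+1}{n}\Bigl(\frac95\Bigr)^{-2/3}\Bigl(\frac{n+1}{n}\Bigr)^{1/3}<1.
\]
This holds because $(9/5)^{2/3}\approx1.48$ exceeds $(19/18)^{4/3}\approx1.075$.

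So both summands decrease for $n\ge18$, and it remains to evaluate $\mathcal E_{18}^{\sq}$. With $m_{18}=8$, we get $\Ldim_{18}=\binom{18}{8}/9=43758/9=4862$ and $\sqrt{2\pi\cdot18}\approx10.63$, so $X_{18}\approx457$. Then $X_{18}^{-2/3}\approx1/59.3$, giving $18X_{18}^{-2/3}\approx0.30$, and $X_{18}^{-1/2}/\sqrt2\approx0.033$. The total, multiplied by $e^{1/24}\approx1.04$, is about $0.35<1$.

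The only real obstacle is making the numerical base case rigorous. I would cite the appendix computations, in the style of \Cref{lem:balanced-base-cases}, or state explicit rational bounds $X_{18}>450$ and $e^{1/24}<1.05$. The monotonicity step is routine given \eqref{eq:X-ratio}.
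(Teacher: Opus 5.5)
Your proposal is correct and follows the paper's proof essentially step for step. You use the same bound $\sum_{3\le d<n}w_n(d)/\sqrt d\le t(n)\le n$ from nonnegativity, the same monotonicity argument via \eqref{eq:X-ratio} (this is \Cref{lem:balanced-factor-decrease} with $\alpha=1$), and the same base case $n=18$, which the paper makes rigorous in \Cref{lem:balanced-base-cases}(c) using $X_{18}>457$ and $e^{1/24}<24/23$.
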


\begin{proof}
By \Cref{lem:ramanujan-weight-bounds},
\[
 \sum_{\substack{d\mid n\\3\le d<n}}
 \frac{w_n(d)}{\sqrt d}
 \le\sum_{d\mid n}w_n(d)\le n,
\]
which gives $E_n(w_n)\le\mathcal E_n^{\sq}$.  By
\Cref{lem:balanced-factor-decrease}, the quantity
$\mathcal E_n^{\sq}$ is decreasing for $n\ge18$.  Finally,
\Cref{lem:balanced-base-cases} gives $\mathcal E_{18}^{\sq}<1$.
\end{proof}

\subsection{Schur positivity}\label{subsec:ramanujan-positivity}

\begin{theorem}\label{thm:ramanujan-square}
For every $n\ge1$ and every $\lambda\vdash n$ with
$\lambda\ne(1^n)$, one has
\[
 [s_\lambda]R_n^{\sq}>0.
\]
The coefficient of $s_{(1^n)}$ satisfies
\[
 [s_{(1^n)}]R_n^{\sq}=0
 \quad\Longleftrightarrow\quad n\equiv2\pmod4,
\]
and is positive otherwise.  In particular, $R_n^{\sq}$ is Schur-positive
for every $n\ge1$.
\end{theorem}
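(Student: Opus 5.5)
The plan is to apply \Cref{thm:criterion} to $w=w_n$ for $n\ge18$, since $\widetilde R_n^{\sq}=F_n(w_n)$ and $w_n(1)=c_1(n)^2=1$. We then read off the four distinguished coefficients, and treat $n<18$ separately.

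For $n\ge18$, hypothesis (i) follows from \Cref{lem:ramanujan-endpoints} and \Cref{lem:local-tn}. The first gives $\tau_n(w_n)=t(n)/n$, and the second gives $0<t(n)\le n$; also $\eps_n(w_n)$ is either $t(n)/n$ or $0$. Hypothesis (ii) follows from \Cref{prop:ramanujan-short-cycle}. Indeed, \Cref{lem:ramanujan-weight-bounds} gives $w_n(2)=1$ when $2\mid n$ and $0\le w_n(d)\le d^2$, so $A_{n,r}(w_n)<g_r(n)/3<3g_r(n)/5$. Hypothesis (iii) is \Cref{prop:ramanujan-balanced}. Hence every coefficient outside the four distinguished partitions is strictly positive. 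The distinguished coefficients, multiplied by $n$, are as follows:
\[
 [s_{(n)}]R_n^{\sq}=t(n)>0,\qquad [s_{(n-1,1)}]R_n^{\sq}=n-t(n)>0,
\]
where the second is positive because $t(n)=n$ only for $n\in\{1,2\}$. Next,
\[
 [s_{(2,1^{n-2})}]R_n^{\sq}=n-n\eps_n(w_n),
\]
which equals $n-t(n)>0$ or $n>0$. Finally,
\[
 [s_{(1^n)}]R_n^{\sq}=n\eps_n(w_n),
\]
which equals $t(n)>0$ when $n\not\equiv2\pmod4$ and $0$ when $n\equiv2\pmod4$. This gives the theorem for $n\ge18$.

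For $n<18$ the uniform estimates are unavailable, and this is where most of the work lies. The trivial, standard, sign-twisted and sign coefficients are still given by \Cref{lem:endpoint-coefficients} for $n\ge4$, so the same argument settles the four distinguished partitions. For $n\le3$ we compute directly. For example, $R_2^{\sq}=p_1^2+p_2=2s_{(2)}$, whose sign coefficient vanishes as predicted since $2\equiv2\pmod4$.

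For the remaining partitions with $4\le n\le17$, we first reduce the work. When $n$ is squarefree or four times an odd squarefree number, \cite[Theorem~36]{ShareshianSundaram} expresses $R_n^{\sq}$ as a nonnegative Foulkes combination. This gives nonnegativity, and strict positivity follows because some $\ell_n^{(e)}$ appearing with positive coefficient has every $a_{\lambda,e}>0$ for nondistinguished $\lambda$; for instance, $\ell_n^{(1)}$ contains every $s_\lambda$ except the known exceptions. That leaves $n\in\{8,9,16\}$ (note that $12$ is four times $3$). For these we would either compute the Schur expansion explicitly from $n[s_\lambda]F_n(w)=f^\lambda+\sum_{d>1}w(d)\chi^\lambda_{(d^{n/d})}$, using the character tables, or bound the sum over $d>1$ by individual Murnaghan--Nakayama character estimates against $f^\lambda$. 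This finite verification, which the paper defers to \Cref{app:computations}, is the main obstacle. In particular, checking \emph{strict} positivity at every nondistinguished $\lambda$ for $n=16$ requires care, since $w_{16}(4)=c_4(4)^2=4$ and $w_{16}(8)=c_8(2)^2=0$ must be combined with the character values on $(4^4)$ and $(2^8)$.
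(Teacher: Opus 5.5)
Your argument for $n\ge18$ is the paper's. Your case division for $n<18$ is also the one the paper uses in \Cref{prop:ramanujan-low}: direct computation for $n\le3$, nonnegative Foulkes expansions from \cite[Theorem~36]{ShareshianSundaram} plus a support theorem, and separate work in degrees $8,9,16$. Two points in the small-degree part are not yet a proof.

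First, your strictness step in the Shareshian--Sundaram degrees uses the wrong witness. The coefficient of $\ell_n^{(1)}$ can vanish: $R_4^{\sq}=2\ell_4^{(2)}+2\ell_4^{(4)}$ and $R_6^{\sq}=2\ell_6^{(2)}+4\ell_6^{(6)}$. So Klyachko's theorem for $\ell_n^{(1)}$ does not help there. Nor does an arbitrary $e$ with $b_e>0$ work at the exceptional shapes $(2,2)$, $(3,3)$, $(2^3)$: in degree $6$ the summand $\ell_6^{(2)}$ occurs, yet $a_{(3,3),2}=0$. The paper applies Swanson's support theorem \cite[Theorem~1.5]{Swanson} at every residue. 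This gives $a_{\lambda,e}\ge1$ for all $e\mid n$ whenever $\lambda$ avoids the four distinguished partitions and these three shapes. Hence $[s_\lambda]R_n^{\sq}\ge\sum_e b_e=n$, and the three shapes are computed directly, giving $4$, $4$, $10$. Your formulation can also be repaired by taking $e=n$. Its Foulkes coefficient is $B_n(n)=\xi^{\sq}_{n,n}=t(n)\ge1$, and Swanson's theorem has no residue-$0$ exception outside the distinguished partitions.

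Second, degrees $8,9,16$ are only announced; you stop at ``we would either compute $\ldots$ or bound $\ldots$'', so the proof is incomplete there. The paper finishes them as follows.
\begin{itemize}
\item For $n=9$, the coefficient $[s_\lambda]R_9^{\sq}=f^\lambda+4\chi^\lambda_{(3^3)}$ is read off \eqref{eq:degree-nine-coefficients}. All values are positive, with minimum $4$.
\item For $n=8,16$, one has $R_n^{\sq}=p_1^n+p_2^{n/2}+4p_4^{n/4}$, whose weights lie within the Boolean bounds \eqref{eq:small-boolean-weight-bounds}. The long-row table \eqref{eq:small-long-row-bounds} then gives $[s_\lambda]R_n^{\sq}\ge\frac45f^\lambda-(t(n)-1)=\frac45f^\lambda-5$ for nondistinguished $\lambda$ with a long first row or column. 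Since $f^\lambda\ge g_2(n)$, this is positive.
\item Balanced partitions of $16$ are handled by the $\theta_{16,d}$ bounds in \eqref{eq:small-balanced-bounds} together with $f^\lambda\ge1430$.
\item Balanced partitions of $8$ are handled by the explicit tuples in \eqref{eq:small-boolean-coefficients}, via $[s_\lambda]R_8^{\sq}=8\gamma_{\lambda,1}+8\gamma_{\lambda,2}+12\gamma_{\lambda,4}$.
\end{itemize}
Until these checks are done, your argument covers $n\ge18$, $n\le3$, and (after the repair above) the Shareshian--Sundaram degrees, but not $n\in\{8,9,16\}$.
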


\begin{proof}
Suppose first that $n\ge18$.
By \Cref{lem:ramanujan-weight-bounds,prop:ramanujan-short-cycle},
$A_{n,r}(w_n)<g_r(n)/3$ for $2\le r<n/2$, and
\Cref{prop:ramanujan-balanced} gives $E_n(w_n)<1$.
Together with $w_n(1)=1$ and
$0\le\tau_n(w_n),\eps_n(w_n)\le1$ from
\Cref{lem:local-tn,lem:ramanujan-endpoints},
these verify all the hypotheses of \Cref{thm:criterion}.
Every coefficient outside the four distinguished partitions is positive.  At the distinguished partitions,
\Cref{lem:endpoint-coefficients,lem:ramanujan-endpoints} give, after
multiplication by $n$,
\[
 t(n),\quad n-t(n),\quad n-n\eps_n(w_n),\quad n\eps_n(w_n).
\]
By \Cref{lem:local-tn}, the first two are positive.  If
$n\equiv2\pmod4$, the last coefficient is zero and the third is $n$;
otherwise the last two coefficients are $n-t(n)$ and $t(n)$, respectively.

For $n<18$, \Cref{prop:ramanujan-low} proves the assertion using
the known nonnegative Foulkes expansions, the estimates of
\Cref{sec:criterion}, and explicit character formulas in the
remaining degrees $8$ and $9$.
\end{proof}

\subsection{Location in the normalized slice}
\label{subsec:ramanujan-consequences}

We now locate $\widetilde R_n^{\sq}$ in the normalized slice $P_n$
of \eqref{eq:Pn-definition} and compute the integral $Q$-coordinates
of $R_n^{\sq}$.

When $n\equiv2\pmod4$, define
\[
 P_n^{\mathrm{sgn}}
 \coloneqq P_n\cap\set{F:[s_{(1^n)}]F=0}.
\]

\begin{corollary}\label{cor:rho-location}
For every $n\ge1$, if $n\not\equiv2\pmod4$, then
$\widetilde R_n^{\sq}\in\operatorname{relint}(P_n)$.  If $n\equiv2\pmod4$, then
$P_n^{\mathrm{sgn}}$ is an exposed face of $P_n$, and
$\widetilde R_n^{\sq}\in\operatorname{relint}(P_n^{\mathrm{sgn}})$.
\end{corollary}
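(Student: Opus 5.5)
The plan is to read off the facial structure of $P_n$ from the pattern of zero and positive Schur coefficients established in \Cref{thm:ramanujan-square}. By \Cref{prop:weight-cone}, $P_n$ is the compact polytope cut out of the affine hyperplane $\{F:[p_1^n]F=1/n\}$ (equivalently $w(1)=1$) by the finitely many inequalities $[s_\lambda]F\ge0$, $\lambda\vdash n$, and it has full dimension $|\D(n)|-1$ in that hyperplane. For such a polytope, a point at which every defining inequality is strict lies in the relative interior. The first step is therefore the case $n\not\equiv2\pmod4$: there \Cref{thm:ramanujan-square} gives $[s_\lambda]R_n^{\sq}>0$ for every $\lambda\vdash n$. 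Since $\widetilde R_n^{\sq}=F_n(w_n)$ with $w_n(1)=c_1(n)^2=1$, the point lies in $P_n$ with all inequalities strict. A small neighbourhood of it inside the affine hull of $P_n$ therefore stays in $P_n$, which proves $\widetilde R_n^{\sq}\in\operatorname{relint}(P_n)$.

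For $n\equiv2\pmod4$, I consider the functional $\varphi(F)=[s_{(1^n)}]F$. It is nonnegative on $P_n\subseteq\Ccone_n$, it is nonzero, and by \Cref{thm:ramanujan-square} it vanishes at $\widetilde R_n^{\sq}\in P_n$. Thus $\{\varphi=0\}$ is a supporting hyperplane, and $P_n^{\mathrm{sgn}}=P_n\cap\{\varphi=0\}$ is a nonempty exposed face by the definition in \Cref{subsec:normalized-cone}.

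The remaining claim is relative interiority in this face. $P_n^{\mathrm{sgn}}$ is the polytope in the affine subspace $W\coloneqq\{[p_1^n]F=1/n,\ \varphi(F)=0\}$ defined by the inequalities $[s_\lambda]F\ge0$ for $\lambda\ne(1^n)$. Note that $W$ has codimension one in $\operatorname{aff}(P_n)$, since $\varphi$ is not constant there: $\varphi(p_1^n/n)=1/n\ne0$. At $\widetilde R_n^{\sq}$ each of these remaining inequalities is strict by \Cref{thm:ramanujan-square}. Hence a neighbourhood of $\widetilde R_n^{\sq}$ in $W$ lies in $P_n^{\mathrm{sgn}}$. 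It follows that $\operatorname{aff}(P_n^{\mathrm{sgn}})=W$ and that the point lies in $\operatorname{relint}(P_n^{\mathrm{sgn}})$. This also shows the face is a facet.

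The only point that needs care, and what I expect to be the main obstacle, is this identification of the affine hull of the face with the full subspace $W$. The strict-inequality neighbourhood argument handles it, so no separate dimension count is required. The small cases are already covered, because \Cref{thm:ramanujan-square} holds for all $n\ge1$, including $n=1$ and $n=2$ (for $n=2$, $P_n$ is an interval and the face is an endpoint).
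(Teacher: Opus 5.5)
Your proposal is correct and follows essentially the same route as the paper. Strict positivity from \Cref{thm:ramanujan-square} gives the relative-interior claim, the sign coefficient $\varphi$ exposes $P_n^{\mathrm{sgn}}$ when $n\equiv2\pmod4$, and strictness of the remaining inequalities places $\widetilde R_n^{\sq}$ in its relative interior. Your explicit check that $\varphi$ is nonconstant on $\operatorname{aff}(P_n)$, so that $\operatorname{aff}(P_n^{\mathrm{sgn}})=W$, supplies a step the paper leaves implicit. As you note, it also shows that $P_n^{\mathrm{sgn}}$ is a facet for every $n\equiv2\pmod4$, not only for $n\ge18$.
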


\begin{proof}
At $\widetilde R_n^{\sq}$, \Cref{thm:ramanujan-square} gives strictness of
all Schur inequalities unless $n\equiv2\pmod4$, when only the sign
coefficient vanishes.  In the latter case, the sign coefficient is
nonnegative on $P_n$ and exposes $P_n^{\mathrm{sgn}}$.  Strictness of the
remaining inequalities gives the relative-interior assertions.
\end{proof}

The $Q$-coordinate of $\widetilde R_n^{\sq}$ at a divisor $d$ is, by
\eqref{eq:weight-Q-transform},
\begin{equation}\label{eq:rho-Q-coordinate}
 x_d(\widetilde R_n^{\sq})=\frac1d\sum_{e\mid d}c_e(n/e)^2.
\end{equation}

For $d\mid n$, define
\[
 \xi_{n,d}^{\sq}\coloneqq\frac nd\sum_{e\mid d}c_e(n/e)^2,
 \qquad
 \xi_n^{\sq}\coloneqq(\xi_{n,d}^{\sq})_{d\mid n}.
\]

\begin{corollary}\label{cor:ramanujan-Q-lattice}
For every $n\ge1$, the $Q$-coordinate expansion of $R_n^{\sq}$ is
\[
 R_n^{\sq}=\sum_{d\mid n}\xi_{n,d}^{\sq}Q_{n,d}.
\]
Moreover,
\[
 \xi_n^{\sq}\in nP_n^Q\cap\mathbb Z_{\ge0}^{\D(n)},
 \qquad \xi_{n,1}^{\sq}=n.
\]
\end{corollary}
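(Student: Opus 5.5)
The plan is to scale the normalized coordinates by $n$ and then read off the membership conditions. By \eqref{eq:rho-Fnw}, $R_n^{\sq}=n\widetilde R_n^{\sq}=nF_n(w_n)$. Equation \eqref{eq:rho-Q-coordinate}, which comes from the M\"obius-inverted transform \eqref{eq:weight-Q-transform}, gives
\[
 x_d(\widetilde R_n^{\sq})=\frac1d\sum_{e\mid d}c_e(n/e)^2 .
\]
Multiplying by $n$ gives exactly $\xi^{\sq}_{n,d}$. Linearity of the $Q$-coordinates then yields the displayed expansion $R_n^{\sq}=\sum_{d\mid n}\xi^{\sq}_{n,d}Q_{n,d}$. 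For $d=1$ the only term is $c_1(n)^2=1$, so $\xi^{\sq}_{n,1}=n$.

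Next, membership in $nP_n^Q$. By \eqref{eq:PnQ}, $P_n^Q$ is defined by $x_1=1$ together with the Schur inequalities. We have $x_1(\widetilde R_n^{\sq})=w_n(1)=1$, and $\widetilde R_n^{\sq}$ is Schur-positive by \Cref{thm:ramanujan-square}. Hence the coordinate vector of $\widetilde R_n^{\sq}$ lies in $P_n^Q$, and scaling by $n$ puts $\xi_n^{\sq}$ in $nP_n^Q$. Nonnegativity of each entry is immediate, since $\xi^{\sq}_{n,d}$ is $n/d>0$ times a sum of squares of integers.

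The only point that needs an argument is integrality of the $\xi^{\sq}_{n,d}$, since $n/d$ times an integer is clearly an integer but the expression is written with $1/d$ hidden in the normalization. Here $n/d$ is an integer because $d\mid n$, and each $c_e(n/e)$ is an integer by \eqref{eq:ramanujan-standard}, so each $\xi^{\sq}_{n,d}$ is a nonnegative integer. As a cross-check, \Cref{thm:Q-lattice-saturation} gives the same conclusion from the other direction. $R_n^{\sq}$ has integer Schur coefficients because it is a nonnegative integer combination of Foulkes characteristics: by Foulkes' formula \eqref{eq:foulkes}, $\sum_{d\mid n}c_d(n/d)^2p_d^{n/d}$ is $n$ times a class function with integer character values. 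Saturation then forces integral $Q$-coordinates.

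So no step is a real obstacle. The only care needed is to confirm the indexing in \eqref{eq:weight-Q-transform}, namely that $x_d=\frac1d\sum_{e\mid d}w(e)$ with $w(e)=c_e(n/e)^2$ rather than $c_e(n/d)^2$. That follows directly from the definition of $w_n$.
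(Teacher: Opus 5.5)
Your argument is correct and is essentially the paper's proof: multiply \eqref{eq:rho-Q-coordinate} by $n$, get integrality from $d\mid n$ and the integrality of Ramanujan sums, and get $\xi_n^{\sq}\in nP_n^Q$ from \Cref{thm:ramanujan-square} together with $x_1=w_n(1)=1$. Drop the ``cross-check'' paragraph, though, because its reasoning is wrong: $R_n^{\sq}$ is not in general a nonnegative combination of Foulkes characteristics (\Cref{ex:ramanujan-eight} has $B_8(2)=-4$), and an integer-valued class function need not have integral Schur coefficients; your main proof does not depend on that paragraph.
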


\begin{proof}
The coordinate formula is $n$ times \eqref{eq:rho-Q-coordinate}.  Each
$\xi_{n,d}^{\sq}$ is integral because $d\mid n$ and every Ramanujan sum is
an integer.  By \Cref{thm:ramanujan-square}, the $Q$-coordinate vector of
$\widetilde R_n^{\sq}$ belongs to $P_n^Q$, so $\xi_n^{\sq}\in nP_n^Q$.
\end{proof}

For $e\mid n$, define
\begin{equation}\label{eq:ramanujan-B-coordinate}
 B_n(e)\coloneqq
 \sum_{\substack{d\mid n\\e\mid d}}\mu(d/e)\xi_{n,d}^{\sq}.
\end{equation}

\begin{corollary}
\label{cor:ramanujan-weighted-residue}
For every $n\ge1$ and every $e\mid n$, one has $B_n(e)\in\mathbb Z$, and
\begin{equation}\label{eq:ramanujan-Foulkes-expansion}
 R_n^{\sq}=\sum_{e\mid n}B_n(e)\ell_n^{(e)}.
\end{equation}
Moreover, for every $\lambda\vdash n$,
\begin{equation}\label{eq:ramanujan-weighted-inequality}
 \sum_{e\mid n}B_n(e)a_{\lambda,e}\ge0.
\end{equation}
For a given partition $\lambda\vdash n$, equality in
\eqref{eq:ramanujan-weighted-inequality} holds if and only if
$n\equiv2\pmod4$ and $\lambda=(1^n)$.
\end{corollary}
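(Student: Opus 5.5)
This corollary follows by transporting \Cref{thm:ramanujan-square} through the coordinate dictionary of \Cref{prop:weighted-major-index-dictionary}, applied to the vector $x=\xi_n^{\sq}$. The plan has three short steps.

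First, integrality. By \Cref{cor:ramanujan-Q-lattice}, $\xi_n^{\sq}\in\mathbb Z^{\D(n)}$ and
\[
 R_n^{\sq}=\mathcal Q_n(\xi_n^{\sq}).
\]
Comparing the definition \eqref{eq:ramanujan-B-coordinate} with \eqref{eq:weighted-Q-and-beta} gives $B_n(e)=\beta_{\xi_n^{\sq}}(e)$. The integrality clause of \Cref{prop:weighted-major-index-dictionary} then gives $B_n(e)\in\mathbb Z$. Equally directly, $B_n(e)$ is an integral combination of the integers $\xi_{n,d}^{\sq}$ with Möbius coefficients.

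Second, the Foulkes expansion. Applying \eqref{eq:weighted-Foulkes-expansion} with $x=\xi_n^{\sq}$ yields \eqref{eq:ramanujan-Foulkes-expansion}. Applying \eqref{eq:weighted-major-index-coefficient} then gives
\[
 [s_\lambda]R_n^{\sq}=\sum_{e\mid n}B_n(e)a_{\lambda,e}
\]
for every $\lambda\vdash n$. This identity uses the Kraśkiewicz--Weyman identification \eqref{eq:KW}, which is already built into the proposition.

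Third, the inequality and its equality cases. Since the left side of \eqref{eq:ramanujan-weighted-inequality} is exactly $[s_\lambda]R_n^{\sq}$, \Cref{thm:ramanujan-square} gives nonnegativity. The same theorem shows that this coefficient vanishes precisely when $n\equiv2\pmod4$ and $\lambda=(1^n)$, and is positive for every other pair $(n,\lambda)$, which is the stated equality characterization.

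I expect no real obstacle. All the analytic content sits in \Cref{thm:ramanujan-square}, including the small-degree cases handled in the appendix. The only point needing care is the bookkeeping in the first step: the $\beta$-transform in \eqref{eq:weighted-Q-and-beta} sums over multiples $d$ of $e$ with $d\mid n$, weighted by $\mu(d/e)$, and this matches \eqref{eq:ramanujan-B-coordinate} term by term. The scaling by $n$ is harmless because \Cref{prop:weighted-major-index-dictionary} holds for every $x\in\mathbb R^{\D(n)}$, not only normalized ones.
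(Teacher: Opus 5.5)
Your proposal is correct and follows essentially the same route as the paper. The paper also observes that $\xi_n^{\sq}$ is integral by \Cref{cor:ramanujan-Q-lattice} and applies \Cref{prop:weighted-major-index-dictionary} with $x=\xi_n^{\sq}$ to obtain integrality of $B_n(e)$, the Foulkes expansion, and the identification of the left-hand side with $[s_\lambda]R_n^{\sq}$; it then reads off nonnegativity and the equality cases from \Cref{thm:ramanujan-square}.
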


\begin{proof}
The vector $\xi_n^{\sq}$ is integral by
\Cref{cor:ramanujan-Q-lattice}.  Applying
\Cref{prop:weighted-major-index-dictionary} with $x=\xi_n^{\sq}$ shows that
the numbers in \eqref{eq:ramanujan-B-coordinate} are integers and proves
\eqref{eq:ramanujan-Foulkes-expansion}.  It also identifies the left-hand
side of \eqref{eq:ramanujan-weighted-inequality} with
$[s_\lambda]R_n^{\sq}$.  The positivity and equality statements now follow
from \Cref{thm:ramanujan-square}.
\end{proof}

Equations~\eqref{eq:canonical-lift} and
\eqref{eq:ramanujan-Foulkes-expansion} give
\[
 \sigma_n(R_n^{\sq})=\sum_{e\mid n}B_n(e)[V_e].
\]
These lift coordinates need not be nonnegative.

\begin{example}\label{ex:ramanujan-eight}
In degree $8$, \eqref{eq:rho-Q-coordinate} gives
$x_4(\widetilde R_8^{\sq})=(1+1+4)/4=3/2$, so the $Q$-coordinate
vector of $\widetilde R_8^{\sq}$ lies outside $H_8$.
Substitution in \eqref{eq:ramanujan-B-coordinate} gives the expansion
\[
 R_8^{\sq}=-4\ell_8^{(2)}+6\ell_8^{(4)}+6\ell_8^{(8)}
\]
already recorded in \cite[Example~37]{ShareshianSundaram}.
Thus $B_8(2)=-4$, and
\Cref{thm:effective-lifts} shows that $\sigma_8(R_8^{\sq})$ is not
effective.  Nevertheless, every Schur coefficient of $R_8^{\sq}$ is
positive by \Cref{thm:ramanujan-square}.  Taking Schur coefficients
and using \eqref{eq:KW} gives
\[
 2a_{\lambda,2}<3a_{\lambda,4}+3a_{\lambda,0}
 \qquad(\lambda\vdash8),
\]
where $a_{\lambda,0}=a_{\lambda,8}$.
\end{example}

\begin{example}\label{ex:degree-nine-regions}
\label{key:example-n9-beyond-foulkes-cone}
In degree $9$, suppress the fixed coordinate $x_1=1$ and use
$(u,v)=(x_3,x_9)$.  Then
\begin{equation}\label{eq:degree-nine-regions}
 \begin{aligned}
 \Delta_9&=\operatorname{conv}\{(0,0),(1,0),(1,1)\},\\
 H_9&=[0,1]^2,\\
 P_9^Q&=[-2,5]\times[0,1].
 \end{aligned}
\end{equation}
The first two descriptions follow from the definition of $\Delta_9$
and \Cref{thm:boolean-core}.  For the third, write
$F_9(u,v)=Q_{9,1}+uQ_{9,3}+vQ_{9,9}$, as in
\eqref{eq:degree-nine-function}.  The coefficient formulas in
\eqref{eq:degree-nine-coefficients} are obtained independently from
the character formulas in \Cref{subsec:small-character-formulas}.
The coefficients indexed by $(9)$, $(8,1)$, $(5,4)$, and $(3^3)$
force $0\le v\le1$ and $-2\le u\le5$.  Conversely, evaluating each
affine expression at the four vertices of this rectangle verifies its
nonnegativity throughout the rectangle.  This proves
the last equality in \eqref{eq:degree-nine-regions}.

The following functions illustrate the two strict inclusions.
Let $\Xi_9\coloneqq\sum_{d\mid9}p_d^{9/d}$.  The expansions
\[
 \begin{aligned}
 \Xi_9&=3\ell_9^{(1)}+3\ell_9^{(3)}+3\ell_9^{(9)},\\
 f_9^{\{1,9\}}&=\ell_9^{(1)}-\ell_9^{(3)}+\ell_9^{(9)},\\
 R_9^{\sq}&=-6\ell_9^{(1)}+10\ell_9^{(3)}+5\ell_9^{(9)}
 \end{aligned}
\]
follow, respectively, from \cite[Theorem~18]{ShareshianSundaram},
\eqref{eq:mobius-Q-Foulkes}, and \eqref{eq:ramanujan-B-coordinate}.
The third was recorded in \cite[Example~37]{ShareshianSundaram}.
Their normalized
$Q$-coordinates are
\[
 \begin{aligned}
 \Xi_9/9&=F_9(2/3,1/3),&&(2/3,1/3)\in\Delta_9,\\
 f_9^{\{1,9\}}&=F_9(0,1),&&(0,1)\in H_9\setminus\Delta_9,\\
 \widetilde R_9^{\sq}&=F_9(5/3,5/9),
 &&(5/3,5/9)\in P_9^Q\setminus H_9.
 \end{aligned}
\]
Thus both inclusions $\Delta_9\subsetneq H_9\subsetneq P_9^Q$ are
strict.  The negative Foulkes coefficient $B_9(1)=-6$ also shows that
$\sigma_9(R_9^{\sq})$ is not effective, whereas every Schur
coefficient of $R_9^{\sq}$ is positive.  By \eqref{eq:KW}, the last
expansion yields
\[
 6a_{\lambda,1}<10a_{\lambda,3}+5a_{\lambda,0}
 \qquad(\lambda\vdash9),
\]
where $a_{\lambda,0}=a_{\lambda,9}$.
\end{example}

\section{Facets defined by the trivial and sign coefficients}\label{sec:boundary}

The Boolean region and the Ramanujan-square points lie in the same
normalized slice.  We compare $\Delta_n,H_n,P_n^Q$ under the coefficient
maps $\pi_n$ and $\pi_n^Q$ defined in \eqref{eq:intro-endpoint-map}.
These maps record the two coefficients that already determine the
exceptional cases in both positivity results.
On the normalized slice, the four distinguished coefficients are
$\tau_n$, $1-\tau_n$, $1-\eps_n$, and $\eps_n$.  Their nonnegativity is
equivalent to $0\le\tau_n\le1$ and $0\le\eps_n\le1$.
We prove the facet assertions by applying \Cref{thm:criterion}
to weights supported on two or three divisors.

Put
\[
 \mathcal S\coloneqq\set{(t,t):0\le t\le1},
 \qquad
 \mathcal T\coloneqq\set{(u,v):u\ge0,\ v\ge0,\ u+v\le1}.
\]

\begin{theorem}\label{thm:endpoint-facets}
Let $n\ge18$.  If $n$ is odd, then
\[
 \pi_n^Q(\Delta_n)=\pi_n^Q(H_n)=\pi_n^Q(P_n^Q)=\mathcal S.
\]
If $n$ is even, then
\[
 \pi_n^Q(\Delta_n)=\pi_n^Q(H_n)=\mathcal T,
 \qquad
 \pi_n^Q(P_n^Q)=[0,1]^2.
\]
For even $n$, each of the four hyperplanes
\[
 \tau_n=0,\qquad\tau_n=1,\qquad\eps_n=0,\qquad\eps_n=1
\]
defines a facet of $P_n$.  Equivalently, under the $Q$-coordinate
identification, the corresponding hyperplane defines a facet of $P_n^Q$.
For odd $n$, the two distinct hyperplanes $\tau_n=0$ and $\tau_n=1$
define facets; the corresponding $\eps_n$-hyperplanes coincide with them.
\end{theorem}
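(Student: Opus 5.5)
The argument splits into three parts: computing the images under $\pi_n^Q$, showing containment in the claimed sets, and constructing enough affinely independent points on each hyperplane to obtain facets. By \eqref{eq:endpoint-Q}, on the slice $x_1=1$ we have $\pi_n^Q(x)=(x_n,x_n)$ for odd $n$ and $\pi_n^Q(x)=(x_n,x_{n/2}-x_n)$ for even $n$. \Cref{lem:endpoint-coefficients} shows that on $P_n^Q$ both $\tau_n$ and $\eps_n$ lie in $[0,1]$. For odd $n$ this gives $\pi_n^Q(P_n^Q)\subseteq\mathcal S$. For even $n$ it gives $\pi_n^Q(P_n^Q)\subseteq[0,1]^2$ and, on $H_n$ (where $0\le x_n\le x_{n/2}\le1$ by \Cref{thm:boolean-core}), $\pi_n^Q(H_n)\subseteq\mathcal T$.

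For the reverse inclusions I use images of vertices together with convexity. The vertices $\one_{\D(1)},\one_{\D(n)}$ of $\Delta_n$ map to $(0,0),(1,1)$, which gives $\mathcal S$ in odd degree. In even degree, $\one_{\D(1)},\one_{\D(n/2)},\one_{\D(n)}$ map to $(0,0),(0,1),(1,0)$, so $\mathcal T\subseteq\pi_n^Q(\Delta_n)\subseteq\pi_n^Q(H_n)\subseteq\mathcal T$. The remaining point is $(1,1)\in\pi_n^Q(P_n^Q)$ for even $n$, i.e.\ $\tau_n=\eps_n=1$. Consider the weight $w$ with $w(1)=1$, $w(n/2)=c$, $w(n)=n-1-c$, and all other values $0$. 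Then $n\tau_n=n$, and $n\eps_n=1+(-1)^{n-2}c\cdot(\pm)$ plus $(-1)^{n-1}(n-1-c)$, where the sign at $d=n/2$ is $(-1)^{n-2}=+1$ and at $d=n$ it is $-1$. This gives $n\eps_n=1+c-(n-1-c)=2c+2-n$, so $c=n-1$ and $w(n)=0$. Hence the weight is $w=\delta_1+(n-1)\delta_{n/2}$. To apply \Cref{thm:criterion} we need hypotheses (ii) and (iii) for this weight. Since $|w(n/2)|=n-1\le (n/2)^2$ and $n/2\ge9\ge3$, hypothesis (ii) follows from \Cref{prop:ramanujan-short-cycle}, and (iii) follows from $E_n(w)\le e^{1/24}(n-1)X_n^{-2/3}/\sqrt{n/2}$, which is below $\mathcal E_n^{\sq}<1$ by \Cref{prop:ramanujan-balanced}. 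With $\tau_n=\eps_n=1$, \Cref{thm:criterion} then puts this point in $P_n$, and convexity gives the full square.

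For the facets, $P_n$ has dimension $|\D(n)|-1$, and each hyperplane supports $P_n$ by the inequalities above, so it suffices to exhibit $|\D(n)|-1$ affinely independent points of $P_n$ on it. Take a point $w^0$ on the hyperplane at which hypotheses (ii) and (iii) of \Cref{thm:criterion} hold strictly with room to spare: for example the vertex weights above, or small perturbations $w^0+\epsilon\delta_d$ for $d\ne1,n,n/2$, corrected at $n$ and $n/2$ so that $\tau_n,\eps_n$ retain their values on the hyperplane while the other functional stays in $[0,1]$. These perturbations are compatible with \Cref{thm:criterion} because all estimates are strict and continuous in $w$: the bounds $A_{n,r}<\tfrac35 g_r$ and $E_n<1$ persist for small $\epsilon$. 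The perturbations in the $|\D(n)|-3$ free directions, together with a move along the hyperplane in the remaining $\tau$ or $\eps$ direction (available because the image is $[0,1]^2$), give a relatively open subset of the hyperplane inside $P_n$, hence a face of codimension one. For example, on $\tau_n=0$ take the base point $\eps_n=1/2$. In odd degree $\eps_n=\tau_n$, so the hyperplanes coincide and the same argument applies with one fewer constraint.

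The main obstacle is choosing base points that are strictly interior for all the non-distinguished inequalities and that can move within the hyperplane in every direction. The key observation is that \Cref{thm:criterion} gives strict positivity away from the four distinguished partitions, so the only active constraints at such a point are the hyperplane itself and possibly the paired constraint. Choosing the other functional strictly inside $(0,1)$ avoids the latter. I would check explicitly that the base weights, such as $\delta_1+c\,\delta_{n/2}$ or $\delta_1+c\,\delta_n$ with suitable $c$, meet $|w(d)|\le d^2$ and have total weight at most $n$, so that the estimates of \Cref{prop:ramanujan-short-cycle,prop:ramanujan-balanced} apply verbatim.
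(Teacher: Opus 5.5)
Your proposal is correct and follows essentially the paper's route. You get containment from \Cref{lem:endpoint-coefficients} and \Cref{thm:boolean-core}, use the vertices of $\Delta_n$ for $\mathcal S$ and $\mathcal T$, apply \Cref{thm:criterion} to weights supported on $\{1,n/2,n\}$, and obtain facets because only the corresponding distinguished coefficient vanishes at a relative-interior point of each side. For the image you need only the corner $(1,1)$ plus convexity, whereas the paper uses the whole family $w_{u,v}$. Your perturbation phrasing of the facet step is an equivalent variant. Two minor points: hypothesis (ii) is trivial here since $A_{n,r}(w)=0$, and the bound $E_n(w)\le\mathcal E_n^{\sq}$ comes from the same one-line computation as in \Cref{prop:ramanujan-balanced}, not from that proposition verbatim, which is stated only for $w_n$.
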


\begin{proof}
Every point of $P_n$ satisfies $0\le\tau_n,\eps_n\le1$ by
\Cref{lem:endpoint-coefficients}.  If $n$ is odd, then $n-n/d$ is even for
every $d\mid n$, so $\eps_n=\tau_n$ on $\Rspace_{n,\mathbb R}$.  For
$0\le t\le1$, define a weight supported on $\set{1,n}$ by
\[
 w_t(1)\coloneqq 1,
 \qquad
 w_t(n)\coloneqq nt-1.
\]
Then $\tau_n(w_t)=\eps_n(w_t)=t$.  Since the support of $w_t$ among divisors
$d>1$ is contained in $\set{n}$, one has
$A_{n,r}(w_t)=0$ for every $2\le r<n/2$ and $E_n(w_t)=0$.  Hence
\Cref{thm:criterion} gives $F_n(w_t)\in P_n$, and
$\pi_n^Q(P_n^Q)=\mathcal S$.  The vertices
$\one_{\D(1)}$ and $\one_{\D(n)}$ of $\Delta_n$ map to the endpoints of
$\mathcal S$, so the two remaining equalities follow from
\eqref{eq:intro-nested-regions}.

Suppose that $n$ is even.  Formula \eqref{eq:endpoint-even-Q} gives
\[
 \pi_n^Q(x)=(x_n,x_{n/2}-x_n).
\]
The description of $H_n$ in \Cref{thm:boolean-core} yields
$\pi_n^Q(H_n)=\mathcal T$.  The vertices
$\one_{\D(1)}$, $\one_{\D(n/2)}$, and $\one_{\D(n)}$ of $\Delta_n$ map
to $(0,0)$, $(0,1)$, and $(1,0)$, respectively, while every vertex of
$\Delta_n$ maps to one of these three points.  Thus
$\pi_n^Q(\Delta_n)=\mathcal T$.

For $(u,v)\in[0,1]^2$, define a weight supported on
$\set{1,n/2,n}$ by
\begin{equation}\label{eq:wuv}
 w_{u,v}(1)\coloneqq 1,
 \qquad
 w_{u,v}(n)\coloneqq\frac n2(u-v),
 \qquad
 w_{u,v}(n/2)\coloneqq\frac n2(u+v)-1.
\end{equation}
By \eqref{eq:tau-epsilon},
$\tau_n(w_{u,v})=u$ and $\eps_n(w_{u,v})=v$.
Since $n/2>r$, one has $A_{n,r}(w_{u,v})=0$ for $2\le r<n/2$.
The only possible nonzero term in $E_n(w_{u,v})$ has $d=n/2$.
Using $|\frac n2(u+v)-1|\le n-1<n$, we obtain
\[
 \begin{aligned}
 E_n(w_{u,v})
 &=e^{1/24}X_n^{-2/3}
   \frac{|\frac n2(u+v)-1|}{\sqrt{n/2}}\\
 &\le e^{1/24}\sqrt{2n}\,X_n^{-2/3}
 \le\mathcal E_n^{\sq}<1.
 \end{aligned}
\]
Thus \Cref{thm:criterion} gives $F_n(w_{u,v})\in P_n$, proving
$\pi_n^Q(P_n^Q)=[0,1]^2$.

For $(u,v)$ in the relative interior of any side of the square,
the Schur coefficient corresponding to that side vanishes at
$F_n(w_{u,v})$, and every other Schur coefficient is positive.
Since only finitely many Schur inequalities define $P_n$, all the remaining
inequalities stay strict in a relative neighborhood of this point on
the hyperplane defined by the vanishing coefficient within the normalized
affine space $\set{F\in\Rspace_{n,\mathbb R}:[p_1^n]F=1/n}$ of
\eqref{eq:normalized-base}.  Since $P_n$ is full-dimensional in this affine
space, the resulting face has codimension one and is therefore a facet.
In odd degree, the two distinguished coefficients that
vanish at either $F_n(w_0)$ or $F_n(w_1)$ are equal as linear functionals
on $\Rspace_{n,\mathbb R}$ and hence impose a single supporting hyperplane.
All remaining Schur coefficients are positive, so the same argument proves
the odd-degree statement.
\end{proof}

\begin{remark}\label{rem:endpoint-degree-threshold}
The assertions about the image of $P_n^Q$ and the facets of $P_n$ and
$P_n^Q$ are restricted to $n\ge18$ because their proofs use the uniform
estimates in \Cref{thm:criterion}.  The sharpness of this threshold is not
addressed here.  We give no general description of the facets in smaller
degrees, although \Cref{ex:degree-nine-regions} gives a complete
description of $P_9^Q$.
\end{remark}

By \Cref{thm:boolean-core} and \eqref{eq:intro-endpoint-map},
$\pi_8^Q(H_8)=\mathcal T$.  Since
\[
 \pi_8(\widetilde R_8^{\sq})=\left(\frac34,\frac34\right)\notin\mathcal T,
\]
the $Q$-coordinate vector of $\widetilde R_8^{\sq}$ does not belong to $H_8$.
In degree $9$, \Cref{ex:degree-nine-regions} gives
\[
 \pi_9(\widetilde R_9^{\sq})=\left(\frac59,\frac59\right)\in\mathcal S=\pi_9^Q(H_9).
\]
Nevertheless, the $Q$-coordinate vector of $\widetilde R_9^{\sq}$
does not belong to $H_9$.  Thus membership in the image of $H_9$
under $\pi_9^Q$ does not determine membership in $H_9$.  If
$n\equiv2\pmod4$, then \Cref{cor:rho-location,thm:endpoint-facets} place
$\widetilde R_n^{\sq}$ in the relative interior of the facet $\eps_n=0$ for every
$n\ge18$.

\section{Module realizations and major-index residue inequalities}
\label{sec:lifts-residue}

Integral Schur-positive functions in $\Rspace_n$ are Frobenius
characteristics of $S_n$-modules.  Their Foulkes coordinates determine whether they arise
by induction from $C_n$, while the Kra\'skiewicz--Weyman formula
expresses their Schur coefficients in terms of major-index residues.
The inequalities in
\Cref{cor:weighted-boolean-residue,cor:ramanujan-weighted-residue}
are consequences of this correspondence.  We now obtain module
realizations and more explicit residue inequalities for the Boolean
sums classified in \Cref{sec:boolean}.

By semisimplicity, finite-dimensional complex $S_n$-modules $A$ and $B$
admit an embedding $A\hookrightarrow B$ if and only if
$\ch B-\ch A$ is Schur-positive.

For $J\subseteq\D(n)$, define
\begin{equation}\label{eq:betaJ}
 \beta_J(e)\coloneqq\sum_{\substack{d\in J\\e\mid d}}\mu(d/e)
 \qquad(e\mid n).
\end{equation}
By \eqref{eq:mobius-Q-Foulkes},
\begin{equation}\label{eq:boolean-Foulkes-expansion}
 Q_{n,J}=\sum_{e\mid n}\beta_J(e)\ell_n^{(e)}.
\end{equation}
Together with \eqref{eq:canonical-lift}, this gives the lift determined by
the representatives $\{V_e:e\mid n\}$:
\[
 \sigma_n(Q_{n,J})
 =\sum_{e\mid n}\beta_J(e)[V_e]\in\Vlat_n.
\]
By \Cref{thm:effective-lifts,prop:fixed-foulkes}, this lift is effective if
and only if $J=\varnothing$ or $J=\D(r)$ for
some $r\mid n$; in the latter case, $\sigma_n(Q_{n,J})=[V_r]$.

\begin{corollary}\label{cor:boolean-module-realization}
For every nonempty admissible $Q$-support $J$, there exists an $S_n$-module
$M_J$ such that $\ch M_J=Q_{n,J}$ and $\dim M_J=(n-1)!$.  If $n\ge2$,
then $\operatorname{Res}_{S_{n-1}}^{S_n}M_J$ is the regular
representation.  Moreover, $Q_{n,J}$ is the Frobenius characteristic of
a representation induced from a finite-dimensional $C_n$-module if and
only if $J$ is a principal order ideal.
\end{corollary}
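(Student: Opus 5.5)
The plan is to assemble three earlier results: the Boolean classification, the branching proposition, and the characterization of effective lifts.

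\textbf{Existence of $M_J$.} Let $J$ be a nonempty admissible $Q$-support. By \Cref{thm:boolean-classification}, $Q_{n,J}$ is Schur-positive. By \Cref{prop:Q-basis}, each $Q_{n,d}$ has integral Schur coefficients, so $Q_{n,J}=\sum_\lambda m_\lambda s_\lambda$ with $m_\lambda\in\mathbb Z_{\ge0}$. Put $M_J\coloneqq\bigoplus_\lambda (S^\lambda)^{\oplus m_\lambda}$; then $\ch M_J=Q_{n,J}$.

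\textbf{Dimension and restriction.} Since $J$ is admissible and nonempty, $1\in J$, so the $Q$-coordinate $x_1$ equals $1$. By \eqref{eq:weight-Q-transform}, $w(1)=x_1=1$ for the weight $w=\psi^J$ with $Q_{n,J}=F_n(\psi^J)$. Then \eqref{eq:weight-dimension} gives $\dim M_J=(n-1)!\,w(1)=(n-1)!$. For $n\ge2$, \Cref{prop:branching} gives $\operatorname{Res}^{S_n}_{S_{n-1}}\chi_{M_J}=\reg_{S_{n-1}}$. Characters determine modules up to isomorphism, so the restriction is the regular representation. For $n=1$ the dimension claim reduces to $Q_{1,1}=s_{(1)}$.

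\textbf{Induced modules.} $Q_{n,J}$ lies in $\Flattice_n$ with Foulkes coordinates $\beta_J(e)$ by \eqref{eq:boolean-Foulkes-expansion}. By \Cref{thm:effective-lifts}, $Q_{n,J}=\ch\operatorname{Ind}_{C_n}^{S_n}U$ for some $C_n$-module $U$ if and only if all $\beta_J(e)\ge0$, i.e.\ $Q_{n,J}\in\mathcal F_n^+$. Applying \Cref{prop:fixed-foulkes} with $T=J$, and using that $J$ is nonempty, this holds if and only if $J=\D(r)$ for some $r\mid n$, which is exactly the condition that $J$ be a principal order ideal. Principal order ideals contain $1$ and satisfy the halving condition, so no admissibility issue arises in this direction.

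None of these steps is a real obstacle. The only point needing care is translating the normalization $x_1=1$ into the dimension $(n-1)!$, and this is supplied by \eqref{eq:weight-Q-transform} and \eqref{eq:weight-dimension}.
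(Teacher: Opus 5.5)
Your proof is correct and follows the paper's argument step for step. Existence comes from integrality plus \Cref{thm:boolean-classification}. The normalization $x_1=1$, together with \eqref{eq:weight-dimension} and \Cref{prop:branching}, gives the dimension and the restriction. The induced-module criterion comes from \Cref{thm:effective-lifts} combined with \Cref{prop:fixed-foulkes}, which is exactly the lift characterization the paper states just before the corollary.
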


\begin{proof}
Integrality and \Cref{thm:boolean-classification} give an $S_n$-module
whose Frobenius characteristic is $Q_{n,J}$.  Since $Q_{n,J}$ has first
$Q$-coordinate $1$,
\eqref{eq:weight-dimension} gives its dimension, and
\Cref{prop:branching} gives the restriction formula when $n\ge2$.
The final assertion follows from the preceding lift characterization and
\Cref{thm:effective-lifts}.
\end{proof}

For example, the admissible $Q$-support $J=\{1,2,3\}\subseteq\D(6)$ gives
\[
 Q_{6,J}=-\ell_6^{(1)}+\ell_6^{(2)}+\ell_6^{(3)}.
\]
This function is Schur-positive, but its negative Foulkes coefficient
precludes induction from a finite-dimensional $C_6$-module.
Thus its $Q$-coordinate vector belongs to $H_6\setminus\Delta_6$.

For an order ideal, inclusion--exclusion gives an alternating sum of
Foulkes characteristics.

\begin{proposition}\label{prop:order-ideal}
Let $J$ be a nonempty order ideal in $\D(n)$ with maximal elements
$d_1,\ldots,d_m$.  For $\varnothing\ne A\subseteq[m]$, put
$g_A\coloneqq\gcd\{d_i:i\in A\}$.  Then
\begin{equation}\label{eq:order-ideal-IE}
 Q_{n,J}=\sum_{\varnothing\ne A\subseteq[m]}
 (-1)^{|A|+1}\ell_n^{(g_A)}.
\end{equation}
Consequently, for every $\lambda\vdash n$,
\begin{equation}\label{eq:order-ideal-major-index}
 \sum_{\substack{\varnothing\ne A\subseteq[m]\\|A|\text{ even}}}
 a_{\lambda,g_A}
 \le
 \sum_{\substack{\varnothing\ne A\subseteq[m]\\|A|\text{ odd}}}
 a_{\lambda,g_A},
\end{equation}
and there exists an $S_n$-module embedding
\begin{equation}\label{eq:order-ideal-embedding}
 \bigoplus_{\substack{\varnothing\ne A\subseteq[m]\\|A|\text{ even}}}
 \Lie_n^{(g_A)}
 \lhook\joinrel\longrightarrow
 \bigoplus_{\substack{\varnothing\ne A\subseteq[m]\\|A|\text{ odd}}}
 \Lie_n^{(g_A)}.
\end{equation}
\end{proposition}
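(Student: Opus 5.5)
The plan is to reduce everything to indicator-vector inclusion–exclusion in $Q$-coordinates, and then read off the two consequences from Schur positivity.

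\textbf{Step 1: the identity \eqref{eq:order-ideal-IE}.} Since $J$ is an order ideal with maximal elements $d_1,\ldots,d_m$, we have $J=\bigcup_{i=1}^m\D(d_i)$. For nonempty $A\subseteq[m]$, a divisor $e$ divides every $d_i$ with $i\in A$ exactly when $e\mid g_A$, so $\bigcap_{i\in A}\D(d_i)=\D(g_A)$. Inclusion–exclusion for indicator functions then gives
\[
 \one_J=\sum_{\varnothing\ne A\subseteq[m]}(-1)^{|A|+1}\one_{\D(g_A)}.
\]
By \eqref{eq:zeta-Q-Foulkes}, $\one_{\D(r)}$ is the $Q$-coordinate vector of $\ell_n^{(r)}$. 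Applying the linear map $x\mapsto\mathcal Q_n(x)$ to the displayed identity therefore yields \eqref{eq:order-ideal-IE}.

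\textbf{Step 2: the residue inequality \eqref{eq:order-ideal-major-index}.} A nonempty order ideal contains $1$. It also satisfies $n\in J\Rightarrow n/2\in J$, because $n/2\mid n$. Hence $J$ is an admissible $Q$-support, and $Q_{n,J}$ is Schur-positive by \Cref{thm:boolean-classification}. Take the coefficient of $s_\lambda$ in \eqref{eq:order-ideal-IE} and use \eqref{eq:KW}: the odd-$|A|$ terms minus the even-$|A|$ terms equal $[s_\lambda]Q_{n,J}\ge0$. This is exactly the inequality.

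\textbf{Step 3: the embedding \eqref{eq:order-ideal-embedding}.} Let $B$ be the odd-indexed direct sum and $A$ the even-indexed direct sum. Then $\ch B-\ch A=Q_{n,J}$, which is Schur-positive by Step 2. The semisimplicity remark at the start of \Cref{sec:lifts-residue} therefore gives an embedding $A\hookrightarrow B$.

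\textbf{Main obstacle.} There is essentially none. The only point needing care is the intersection identity $\bigcap_{i\in A}\D(d_i)=\D(g_A)$, which holds because divisibility of every $d_i$ is equivalent to divisibility of their gcd. One should also note that repeated values of $g_A$ cause no problem, since the sums in \eqref{eq:order-ideal-IE} are indexed by $A$ rather than by the values $g_A$.
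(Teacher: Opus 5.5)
Your proposal is correct and follows the paper's proof essentially step for step. Both write $J$ as $\bigcup_i\D(d_i)$ with $\bigcap_{i\in A}\D(d_i)=\D(g_A)$, apply inclusion--exclusion together with \eqref{eq:zeta-Q-Foulkes}, use admissibility of nonempty order ideals and \Cref{thm:boolean-classification} to get the residue inequality via \eqref{eq:KW}, and then get the embedding from semisimplicity. The only blemish is cosmetic: you reuse the letter $A$ for a module in Step 3, although it already denotes the subsets of $[m]$.
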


\begin{proof}
One has
\[
 J=\D(d_1)\cup\cdots\cup\D(d_m),
 \qquad
 \bigcap_{i\in A}\D(d_i)=\D(g_A).
\]
Inclusion--exclusion for the indicator functions, followed by
\eqref{eq:zeta-Q-Foulkes}, proves \eqref{eq:order-ideal-IE}.
Every nonempty order ideal is admissible: it contains $1$, and in even degree
it contains $n/2$ whenever it contains $n$.  Hence $Q_{n,J}$ is
Schur-positive by \Cref{thm:boolean-classification}.  Taking Schur
coefficients in \eqref{eq:order-ideal-IE} and using \eqref{eq:KW} gives
\eqref{eq:order-ideal-major-index}.  The difference between the Frobenius
characteristics of the target and source in \eqref{eq:order-ideal-embedding}
is $Q_{n,J}$, so semisimplicity gives the embedding.
\end{proof}

\begin{corollary}\label{cor:two-generated}
If $a,b\mid n$ and $g\coloneqq\gcd(a,b)$, then
\[
 \ell_n^{(a)}+\ell_n^{(b)}-\ell_n^{(g)}
\]
is Schur-positive.  Equivalently,
\begin{equation}\label{eq:two-generated-major-index}
 a_{\lambda,g}\le a_{\lambda,a}+a_{\lambda,b}
 \qquad(\lambda\vdash n),
\end{equation}
and there exists an $S_n$-module embedding
\[
 \Lie_n^{(g)}\lhook\joinrel\longrightarrow
 \Lie_n^{(a)}\oplus\Lie_n^{(b)}.
\]
\end{corollary}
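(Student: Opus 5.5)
This is the special case $m=2$ of \Cref{prop:order-ideal}, applied to the order ideal $J\coloneqq\D(a)\cup\D(b)\subseteq\D(n)$. The plan is to check that $J$ is an order ideal, apply the inclusion--exclusion formula, and read off all three assertions.

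First, $J$ is nonempty, since it contains $1$. It is closed under taking divisors, because each of $\D(a)$ and $\D(b)$ is. We also have $\D(a)\cap\D(b)=\D(g)$.

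Next, suppose $a\mid b$. Then $g=a$, the expression equals $\ell_n^{(b)}$, and everything is immediate from \eqref{eq:KW}. The embedding is then $\Lie_n^{(a)}\hookrightarrow\Lie_n^{(a)}\oplus\Lie_n^{(b)}$. The case $b\mid a$ is symmetric. Otherwise $a$ and $b$ are the two maximal elements of $J$, with $g_{\{1\}}=a$, $g_{\{2\}}=b$ and $g_{\{1,2\}}=g$. Formula \eqref{eq:order-ideal-IE} then gives
\[
 Q_{n,J}=\ell_n^{(a)}+\ell_n^{(b)}-\ell_n^{(g)}.
\]
Alternatively, one can avoid this case split: inclusion--exclusion on indicator vectors gives $\one_J=\one_{\D(a)}+\one_{\D(b)}-\one_{\D(g)}$ in all cases, and \eqref{eq:zeta-Q-Foulkes} converts this into the displayed identity directly.

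Schur positivity then follows from \Cref{thm:boolean-classification}, because every nonempty order ideal is admissible. It contains $1$, and if $n\in J$ then $n/2\in J$ by divisor-closure. Taking Schur coefficients with \eqref{eq:KW} gives \eqref{eq:two-generated-major-index}. Finally, the embedding follows from semisimplicity, as noted at the start of \Cref{sec:lifts-residue}: the difference of the Frobenius characteristics of target and source is exactly $Q_{n,J}$, which is Schur-positive.

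There is no real obstacle here. The only point needing care is the degenerate comparable case, and it is handled either directly or by the indicator-vector form of inclusion--exclusion.
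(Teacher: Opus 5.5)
Your proposal is correct and matches the paper's proof. In the comparable case the expression collapses to a single Foulkes characteristic, with the source a direct summand of the target; otherwise $a$ and $b$ are the two maximal elements of $\D(a)\cup\D(b)$, so \Cref{prop:order-ideal} applies. Your indicator-vector alternative, which avoids the case split, is exactly the inclusion--exclusion step inside that proposition's proof.
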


\begin{proof}
If $a$ and $b$ are comparable, then $g$ is the smaller divisor, the
symmetric function is the Foulkes characteristic indexed by the larger, and
the source is a direct summand of the target.  Otherwise $a$ and
$b$ are the maximal elements of $J=\D(a)\cup\D(b)$, so
\Cref{prop:order-ideal} applies.
\end{proof}

\begin{remark}
The proofs establish existence through choices of irreducible
decompositions; they do not specify embeddings or injections between the
tableau sets counted in
\eqref{eq:order-ideal-major-index} and \eqref{eq:two-generated-major-index}.
We return to this issue in \Cref{sec:further-questions}.
\end{remark}

\subsection{Reduced fake degrees and two-term \texorpdfstring{$Q$}{Q}-sums}
\label{subsec:fake-degrees}
For $\lambda\vdash n$, its fake-degree polynomial is
\[
 \operatorname{FD}_\lambda(z)
 \coloneqq\sum_{U\in\operatorname{SYT}(\lambda)}
   z^{\maj(U)}
 =\sum_{j\ge0}b_{\lambda,j}z^j.
\]
Reduction modulo $z^n-1$ gives the unique representative of degree less than
$n$,
\begin{equation}\label{key:fake-degree-residue-sums}
 \overline{\operatorname{FD}}_{\lambda,n}(z)\coloneqq
 \sum_{r=0}^{n-1}a_{\lambda,r}z^r,
 \qquad
 a_{\lambda,r}
 =\sum_{\substack{j\ge0\\j\equiv r\pmod n}}b_{\lambda,j}.
\end{equation}
We call this the \emph{reduced fake-degree polynomial of $\lambda$ modulo
$z^n-1$}.  As in \eqref{eq:KW}, the second subscript of $a_{\lambda,r}$ is
read modulo $n$; in particular, $a_{\lambda,n}=a_{\lambda,0}$.

\begin{remark}
For $j\in\mathbb Z$, reduction modulo $z^n-1$ gives
\[
 \operatorname{FD}_\lambda(\zeta_n^j)
 =\sum_{r=0}^{n-1}a_{\lambda,r}\zeta_n^{jr}.
\]
By the Kra\'skiewicz--Weyman formula~\eqref{eq:KW} and Frobenius reciprocity,
$a_{\lambda,r}$ is the multiplicity of $V_r$ in
$\operatorname{Res}_{C_n}^{S_n}S^\lambda$.  Hence the displayed sum equals
$\chi^\lambda(\mathsf c^j)$.  Setting $j=n/d$, we obtain
\[
 \operatorname{FD}_\lambda(\zeta_d)
 =\chi^\lambda_{(d^{n/d})}
 \qquad(d\mid n).
\]
Consequently, for every $T\subseteq\mathbb N$,
\[
 \sum_{d\mid n}\psi^T(d)\operatorname{FD}_\lambda(\zeta_d)
 =n[s_\lambda]f_n^T\in n\mathbb Z.
\]
The divisibility by $n$ follows from \Cref{prop:Q-basis}.
\end{remark}

Recall from \eqref{eq:gamma-major-index-definition} that
$\gamma_{\lambda,d}=[s_\lambda]Q_{n,d}$ is obtained by M\"obius inversion
from the residue multiplicities $a_{\lambda,e}$.
Using the $Q$-basis expansion \eqref{eq:fT-Q-sum} and the
Foulkes expansion \eqref{eq:weighted-Foulkes-expansion}, we also obtain
\begin{equation}\label{key:equation-major-index-mobius-transform}
 [s_\lambda]f_n^T
 =\sum_{\substack{d\mid n\\d\in T}}\gamma_{\lambda,d}
 =\sum_{e\mid n}\beta_{n,T}(e)a_{\lambda,e}.
\end{equation}
For $T=\{1,k\}$ or $T=\{k^j:j\ge0\}$ with $k\ge2$,
\Cref{cor:one-k-classification,cor:powers-classification}
determine exactly when the sums in
\eqref{key:equation-major-index-mobius-transform}
are nonnegative for every $\lambda\vdash n$.

We next make the residue inequalities obtained from
\Cref{cor:one-k-classification} explicit.
Let $k=q_1^{a_1}\cdots q_r^{a_r}>1$, where the $q_i$ are distinct
primes, and put $k_I\coloneqq k/\prod_{i\in I}q_i$ for $I\subseteq[r]$.
If $k\mid n$, then
\begin{equation}\label{key:alternating-lie-expansion-one-k}
 \ell_{n/k}^{(1)}[p_k]
 =\sum_{I\subseteq[r]}(-1)^{|I|}\ell_n^{(k_I)}.
\end{equation}
This follows from \eqref{eq:mobius-Q-Foulkes} and
$Q_{n,k}=\ell_{n/k}^{(1)}[p_k]$, since, among the divisors
$e\mid k$, the nonzero values of $\mu(k/e)$ occur precisely
at $e=k_I$ and equal $(-1)^{|I|}$.  Since
$f_n^{\{1,k\}}=Q_{n,1}+Q_{n,k}$ and $Q_{n,1}=\ell_n^{(1)}$, it also gives
\[
 f_n^{\{1,k\}}=\ell_n^{(1)}+
 \sum_{I\subseteq[r]}(-1)^{|I|}\ell_n^{(k_I)}.
\]

\begin{corollary}\label{key:corollary-major-index-inequalities}
With the preceding notation, suppose that $k\mid n$ and that either
$n\ne k$, $k=2$, or $k$ is odd.
Then, for every $\lambda\vdash n$,
\[
 \sum_{\substack{I\subseteq[r]\\|I|\text{ odd}}}a_{\lambda,k_I}
 \le
 a_{\lambda,1}+
 \sum_{\substack{I\subseteq[r]\\|I|\text{ even}}}a_{\lambda,k_I}.
\]
\end{corollary}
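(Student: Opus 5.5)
The plan is to read the inequality off the Schur positivity of $f_n^{\{1,k\}}$ and the displayed Foulkes expansion preceding the statement.

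First, we check that $J\coloneqq\{1,k\}\cap\D(n)=\{1,k\}$ is an admissible $Q$-support under the stated hypotheses. Since $k\mid n$ and $k>1$, the set $J$ contains $1$. The only way the even-degree condition of \Cref{thm:boolean-classification} can fail is $n$ even with $n\in J$ and $n/2\notin J$. Here $n\in J$ forces $n=k$. If $k$ is odd, then $n$ is odd and there is nothing to check. If $k=2=n$, then $n/2=1\in J$. If $n\ne k$, then $n\notin J$. Hence \Cref{cor:one-k-classification} (equivalently \Cref{thm:boolean-classification}) shows that $f_n^{\{1,k\}}=Q_{n,1}+Q_{n,k}$ is Schur-positive.

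Next, we apply $[s_\lambda]$ to the displayed expansion
\[
 f_n^{\{1,k\}}=\ell_n^{(1)}+\sum_{I\subseteq[r]}(-1)^{|I|}\ell_n^{(k_I)},
\]
which follows from \eqref{key:alternating-lie-expansion-one-k}. The Kra\'skiewicz--Weyman formula \eqref{eq:KW} turns each $[s_\lambda]\ell_n^{(k_I)}$ into $a_{\lambda,k_I}$ and $[s_\lambda]\ell_n^{(1)}$ into $a_{\lambda,1}$. Nonnegativity of the resulting alternating sum, after moving the terms with $|I|$ odd to the other side, is exactly the asserted inequality.

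The only care needed concerns coincident indices. When $k$ is prime and $I=[r]$, we have $k_I=1$, so the term $a_{\lambda,1}$ occurs twice. This is harmless, because the expansion is a formal sum in which repeated Foulkes characteristics are simply added. Likewise $k_\varnothing=k$ may equal $n$, and then $a_{\lambda,n}=a_{\lambda,0}$ by the convention in \eqref{eq:KW}. There is no real obstacle: the substantive input is \Cref{thm:boolean-classification}, and the step most worth double-checking is the admissibility case analysis above.
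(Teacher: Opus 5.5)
Your proposal is correct and follows the paper's proof. You get Schur positivity of $f_n^{\{1,k\}}$ from \Cref{cor:one-k-classification}, take $[s_\lambda]$ of the alternating Foulkes expansion, apply \eqref{eq:KW}, and separate the terms by the parity of $|I|$; your explicit admissibility check only unpacks what the paper obtains by citing that corollary directly.
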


\begin{proof}
By \Cref{cor:one-k-classification}, $f_n^{\{1,k\}}$ is
Schur-positive.  Taking Schur coefficients in its preceding
expansion, applying \eqref{eq:KW}, and separating the terms
according to the parity of $|I|$ gives the inequality.
\end{proof}

Under the same hypotheses, when $k=q^s$ with $q$ prime and $s\ge2$,
the inequality becomes
\[
 a_{\lambda,q^{s-1}}\le a_{\lambda,1}+a_{\lambda,q^s}.
\]
Equivalently,
\[
 \sum_{\substack{j\ge0\\j\equiv q^{s-1}\pmod n}}b_{\lambda,j}
 \le
 \sum_{\substack{j\ge0\\j\equiv1\pmod n}}b_{\lambda,j}
 +
 \sum_{\substack{j\ge0\\j\equiv q^s\pmod n}}b_{\lambda,j}.
\]
\subsection{Strictness and equality for composite divisors}
For a divisor $k>1$ of $n$ and $\lambda\vdash n$, define the residue difference
\begin{equation}\label{eq:residue-difference}
 D_\lambda(k)\coloneqq a_{\lambda,1}
       +\sum_{d\mid k}\mu(k/d)a_{\lambda,d}.
\end{equation}
By \eqref{eq:mobius-Q-Foulkes} and \eqref{eq:KW},
$D_\lambda(k)=[s_\lambda]f_n^{\{1,k\}}$.
For $k=q^s$, with $q$ prime and $s\ge2$, this is
$D_\lambda(k)=a_{\lambda,1}+a_{\lambda,q^s}-a_{\lambda,q^{s-1}}$.

We use the following weight formulas to evaluate $D_\lambda(k)$ at
the four distinguished partitions and estimate it for balanced partitions.
Put
\[
 \psi_k\coloneqq\psi^{\{1,k\}},\qquad
 \psi_k(d)=
 \begin{cases}
  \mu(d),&k\nmid d,\\
  \mu(d)+k\mu(d/k),&k\mid d
 \end{cases}
 \quad(d\ge1).
\]
\begin{lemma}\label{key:lemma-weights-for-one-k}
Let $k>1$ and $n\ge1$.
\begin{enumerate}[label=\textup{(\arabic*)}]
\item If $k\mid n$ and $n>k$, then
\[
 \tau_n(\psi_k)=0,
 \qquad
 \eps_n(\psi_k)=\one_{n=2k}.
\]
\item $|\psi_k(2)|=1$ and $|\psi_k(d)|\le2d$ for $d\ge3$.  If
$k$ is not squarefree, then $|\psi_k(d)|\le d$ for every $d$.
\end{enumerate}
\end{lemma}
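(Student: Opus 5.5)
For part (1) I would use \Cref{lem:boolean-endpoints} with $J=\{1,k\}\subseteq\D(n)$, since $\psi_k=\psi^J$ on the divisors of $n$. That lemma gives $\tau_n(\psi_k)=\one_{n\in J}$. Because $n>k$, we have $n\notin J$, so $\tau_n(\psi_k)=0$. The same lemma gives $\eps_n(\psi_k)=(-1)^{n-1}\one_{n\in J}+\one_{2\mid n,\,n/2\in J}$. The first term vanishes because $n\notin J$. For the second, $n/2\in\{1,k\}$ forces either $n=2$, which is impossible since $n>k\ge2$, or $n=2k$. Hence $\eps_n(\psi_k)=\one_{n=2k}$.

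For part (2), I would start from the displayed piecewise formula for $\psi_k$, which is just \eqref{eq:psi-fT} with $T=\{1,k\}$.

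\emph{The value at $2$.} If $k\nmid 2$, then $\psi_k(2)=\mu(2)=-1$. If $k=2$, then $\psi_k(2)=-1+2\mu(1)=1$. In either case $|\psi_k(2)|=1$.

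\emph{The general bound.} For any $d$,
\[
 |\psi_k(d)|\le|\mu(d)|+k|\mu(d/k)|\,\one_{k\mid d}\le 1+k\le 1+d\le 2d,
\]
using $k\le d$ whenever $k\mid d$. This holds in particular for $d\ge3$. One could instead cite \Cref{lem:boolean-weight-bound}, but the direct estimate already gives the constant $2$.

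\emph{The non-squarefree case.} Suppose $k$ is not squarefree. If $k\nmid d$, then $|\psi_k(d)|=|\mu(d)|\le1\le d$. If $k\mid d$, then $d$ is divisible by the square factor of $k$, so $\mu(d)=0$. Therefore $|\psi_k(d)|=k|\mu(d/k)|\le k\le d$.

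No step here is a real obstacle. The only points needing care are the small cases in part (1): ruling out $n/2=1$, and noting that $n\in J$ fails because $n>k$.
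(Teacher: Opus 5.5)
Your proof is correct and is essentially the paper's argument: you get part (1) from \Cref{lem:boolean-endpoints} with $J=\{1,k\}$, and part (2) by splitting on whether $k\mid d$ in the piecewise formula for $\psi_k$. One cosmetic fix: in your displayed chain the step $1+k\le 1+d$ holds only when $k\mid d$, so the case $k\nmid d$ should be closed separately by $|\psi_k(d)|=|\mu(d)|\le 1\le 2d$, as the paper does.
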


\begin{proof}
Since $n>k$, one has $n\notin\{1,k\}$, whereas
$n/2\in\{1,k\}$ exactly when $n=2k$.  Part~\textup{(1)} follows from
\cref{lem:boolean-endpoints}.

For part~\textup{(2)}, the formula for $\psi_k$ gives
$|\psi_k(2)|=1$.  If $k\nmid d$, then
$|\psi_k(d)|\le1$; if $k\mid d$, then
$|\psi_k(d)|\le k+1\le2d$.  For the final assertion, suppose that $k$ is not squarefree.
If $k\nmid d$, then $|\psi_k(d)|=|\mu(d)|\le1\le d$.
If $k\mid d$, some prime square divides $d$, so $\mu(d)=0$ and
$|\psi_k(d)|=k|\mu(d/k)|\le d$.
\end{proof}

\begin{corollary}\label{cor:composite-equality}\label{key:corollary-strict-prime-power-inequality}
Let $k$ be composite, $k\mid n$, and $n>k$.  For every $\lambda\vdash n$
outside $(n),(n-1,1),(2,1^{n-2}),(1^n)$, one has
$D_\lambda(k)\ge1$.  At these four partitions its values are,
respectively,
\[
 0,\qquad1,\qquad1-\one_{n=2k},\qquad\one_{n=2k}.
\]
Consequently, $D_\lambda(k)=0$ precisely for $\lambda=(n)$, together with
$\lambda=(2,1^{n-2})$ when $n=2k$ and $\lambda=(1^n)$ when $n\ne2k$.
If $n\ge18$ and $\lambda\in\Bpart_n$, then
\begin{equation}\label{key:quantitative-prime-power-residue}
 \left|D_\lambda(k)-\frac{f^\lambda}{n}\right|
 \le\mathcal G_n(2)\frac{f^\lambda}{n}.
\end{equation}
When $k$ is not squarefree, $\mathcal G_n(2)$ may be replaced by
$\mathcal G_n(1)$.
\end{corollary}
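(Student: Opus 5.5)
We identify $D_\lambda(k)$ with a Schur coefficient of a Boolean $Q$-sum and read everything off earlier results. By \eqref{eq:mobius-Q-Foulkes} and \eqref{eq:KW}, $D_\lambda(k)=[s_\lambda]f_n^{\{1,k\}}=[s_\lambda]Q_{n,J}$ with $J=\{1,k\}$. Since $k\mid n$ and $n>k$, we have $n\notin J$, so $J$ is admissible. \Cref{thm:boolean-classification} then gives $D_\lambda(k)\ge0$ for all $\lambda$, and integrality gives $D_\lambda(k)\in\mathbb Z_{\ge0}$.

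\textbf{The four distinguished partitions.} Since $k$ is composite and $k<n$, we have $n\ge8$, so $n\ge4$ and \Cref{lem:endpoint-coefficients} applies. Combined with part~(1) of \Cref{key:lemma-weights-for-one-k}, it gives the values
\[
 0,\qquad 1,\qquad 1-\one_{n=2k},\qquad \one_{n=2k}.
\]
The stated zero set then follows once we prove $D_\lambda(k)\ge1$ for every other $\lambda$.

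\textbf{Strict positivity, $n\ge18$.} For $n\ge18$ this is \Cref{cor:boolean-strictness}: the coefficients outside the four distinguished partitions are positive integers. Alternatively, \Cref{cor:linear-criterion} applies with $\kappa=2$, using part~(2) of \Cref{key:lemma-weights-for-one-k} and $|\tau_n|,|\eps_n|\le1$.

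\textbf{The quantitative bound.} For balanced $\lambda$ and $n\ge18$, \Cref{key:corollary-quantitative-balanced} with $w=\psi_k$ and $\kappa=2$ gives \eqref{key:quantitative-prime-power-residue}, since $D_\lambda(k)=[s_\lambda]F_n(\psi_k)$. When $k$ is not squarefree, the sharper bound $|\psi_k(d)|\le d$ from part~(2) allows $\kappa=1$.

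\textbf{Main obstacle: strict positivity for $n<18$.} The classification only gives nonnegativity there. The pairs with $k$ composite, $k\mid n$, $k<n<18$ are
\[
 (n,k)\in\{(8,4),(12,4),(12,6),(16,4),(16,8)\}.
\]
The appendix statement \Cref{prop:boolean-low} covers composite $n<18$, but the paper has only cited it for nonnegativity. I would first check whether its proof actually establishes strict positivity outside the four distinguished partitions, as the analogous $n\ge18$ statement does. If it does not, I would verify these five cases directly. The plan is to compute $[s_\lambda]Q_{n,1}+[s_\lambda]Q_{n,k}$ from \eqref{eq:general-schur-coefficient} using the character values $\chi^\lambda_{(d^{n/d})}$. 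Because $Q_{n,1}$ has coefficient $\ge1$ at almost every $\lambda$, it suffices to show that $-[s_\lambda]Q_{n,k}$ never cancels it completely. Any such cancellation would show up as a zero in a finite table, which I would check directly.
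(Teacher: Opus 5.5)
Your proposal is correct and follows the paper's proof essentially step for step: the identification $D_\lambda(k)=[s_\lambda]Q_{n,\{1,k\}}$, \Cref{cor:boolean-strictness} for $n\ge18$, \Cref{lem:endpoint-coefficients} together with \Cref{key:lemma-weights-for-one-k} for the four distinguished values, and \Cref{key:corollary-quantitative-balanced} with $\kappa=2$ (or $\kappa=1$ for nonsquarefree $k$) for the balanced bound. The obstacle you flag for $n<18$ does not arise: the second sentence of \Cref{prop:boolean-low} itself asserts strict positivity outside the four distinguished partitions whenever $n\ge8$, and here $n\ge2k\ge8$, so the direct check of your five pairs $(n,k)$ is unnecessary.
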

\begin{proof}
Since $k\mid n$, $n>k$, and $k$ is composite, $n$ is composite
and $n\ge2k\ge8$.  Applying \Cref{cor:boolean-strictness}
for $n\ge18$ and \Cref{prop:boolean-low} for $n<18$ to
$J=\{1,k\}$ gives strict positivity outside the four
distinguished partitions.  Integrality then gives
$D_\lambda(k)\ge1$.
The four values follow from
\Cref{lem:endpoint-coefficients,key:lemma-weights-for-one-k}.
Finally apply
Corollary~\ref{key:corollary-quantitative-balanced}, first with
$\kappa=2$ and then with $\kappa=1$ for nonsquarefree $k$.
\end{proof}
\subsection{Ungraded module reformulations}
The residue inequalities also admit equivalent ungraded
representation-theoretic formulations.
Let
\[
\operatorname{Coinv}_n\coloneqq
 \mathbb C[x_1,\ldots,x_n]\big/
 \left\langle\mathbb C[x_1,\ldots,x_n]^{S_n}_+\right\rangle
\]
be the type-$A$ coinvariant algebra, with $S_n$ acting by permuting the
variables.  Here $\mathbb C[x_1,\ldots,x_n]^{S_n}_+$ denotes the
positive-degree part of the invariant ring, and
$\operatorname{Coinv}_n[j]$ denotes the homogeneous component of degree $j$.
For $a\in\mathbb Z$, define the component supported in degrees congruent to
$a$ modulo $n$ by
\[
 \operatorname{Coinv}_n^{[a]}
 \coloneqq\bigoplus_{\substack{j\ge0\\j\equiv a\pmod n}}
 \operatorname{Coinv}_n[j].
\]
Combining the Lusztig--Stanley graded multiplicity formula for the coinvariant
algebra \cite[Proposition~4.11]{StanleyInvariants}, in the form stated in
\cite[Theorem~2.4]{Swanson}, with the Kra\'skiewicz--Weyman
cyclic-induction formula \cite[Theorem~1]{KW} gives
\begin{equation}\label{key:coinvariant-residue-character}
 \ch\operatorname{Coinv}_n^{[a]}
 =\sum_{\lambda\vdash n}a_{\lambda,a}s_\lambda
 =\ell_n^{(a)}.
\end{equation}

\begin{corollary}\label{key:corollary-coinvariant-embedding}
Let $k=q_1^{a_1}\cdots q_r^{a_r}>1$, where the $q_i$ are distinct primes,
and let $n$ be a multiple of $k$.
Assume that either $n\ne k$, $k=2$, or $k$ is odd.  For $I\subseteq[r]$,
let $k_I\coloneqq k/\prod_{i\in I}q_i$.  There exists an
ungraded $S_n$-module embedding
\[
 \bigoplus_{\substack{I\subseteq[r]\\|I|\text{ odd}}}
 \operatorname{Coinv}_n^{[k_I]}
 \hookrightarrow
 \operatorname{Coinv}_n^{[1]}\oplus
 \bigoplus_{\substack{I\subseteq[r]\\|I|\text{ even}}}
 \operatorname{Coinv}_n^{[k_I]}.
\]
\end{corollary}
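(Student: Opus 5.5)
The plan is to translate the Schur-positivity statement of \Cref{key:corollary-major-index-inequalities} into modules via \eqref{key:coinvariant-residue-character}, and then use semisimplicity. By \eqref{key:coinvariant-residue-character}, for every residue $a$ one has $\ch\operatorname{Coinv}_n^{[a]}=\ell_n^{(a)}$. The Frobenius characteristic of the target module minus that of the source module is therefore
\[
 \ell_n^{(1)}+\sum_{\substack{I\subseteq[r]\\|I|\text{ even}}}\ell_n^{(k_I)}
 -\sum_{\substack{I\subseteq[r]\\|I|\text{ odd}}}\ell_n^{(k_I)}
 =\ell_n^{(1)}+\sum_{I\subseteq[r]}(-1)^{|I|}\ell_n^{(k_I)}
 =f_n^{\{1,k\}},
\]
by the expansion of $f_n^{\{1,k\}}$ displayed after \eqref{key:alternating-lie-expansion-one-k}.

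The key steps, in order, are as follows.

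First, I check that the hypothesis ``$n\ne k$, $k=2$, or $k$ odd'' is exactly the complement of the exceptional case in \Cref{cor:one-k-classification}. That corollary then shows $f_n^{\{1,k\}}$ is Schur-positive.

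Second, I use the equivalent form of this positivity given in \Cref{key:corollary-major-index-inequalities}. It states that for each $\lambda\vdash n$, the multiplicity of $S^\lambda$ in the source is at most its multiplicity in the target. Both multiplicities are sums of residue counts $a_{\lambda,k_I}$, together with $a_{\lambda,1}$ on the target side.

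Third, I decompose both finite-dimensional modules into irreducibles. By complete reducibility, I map the isotypic component of $S^\lambda$ in the source injectively into the isotypic component of $S^\lambda$ in the target, one $\lambda$ at a time. The direct sum of these maps is an $S_n$-equivariant injection. This is the semisimplicity criterion stated at the start of \Cref{sec:lifts-residue}.

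I expect the only subtle point to be bookkeeping. The indices $k_I$ are distinct divisors, so the source and target are genuine direct sums of the stated graded pieces, each counted once. The residue $1$ never collides with any $k_I$ except when $k_I=1$, which happens only if $k$ is squarefree and $I=[r]$. A collision would not affect the argument in any case, since only characters are compared.

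The embedding is ungraded because it is built from isotypic decompositions and need not respect degree. With that noted, the corollary follows directly from \Cref{cor:one-k-classification} and \eqref{key:coinvariant-residue-character}.
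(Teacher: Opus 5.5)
Your proposal is correct and follows the paper's proof: you identify the difference of Frobenius characteristics as $f_n^{\{1,k\}}$ via \eqref{key:coinvariant-residue-character} and \eqref{key:alternating-lie-expansion-one-k}, obtain its Schur positivity from \Cref{cor:one-k-classification}, and conclude by semisimplicity. The detour through \Cref{key:corollary-major-index-inequalities} and the bookkeeping about possible index collisions are harmless but not needed.
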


\begin{proof}
By \eqref{key:coinvariant-residue-character} and
\eqref{key:alternating-lie-expansion-one-k}, the Frobenius
characteristic of the target minus that of the source is
$f_n^{\{1,k\}}$, which is Schur-positive by
\Cref{cor:one-k-classification}.  Semisimplicity gives the embedding.
\end{proof}

\begin{remark}\label{key:remark-induced-embedding}
The proof compares multiplicities after forgetting the grading.  It does
not specify an embedding or prove that one can be chosen to preserve degrees.
Equation \eqref{key:coinvariant-residue-character} shows that, for every
$a\in\mathbb Z$,
$\operatorname{Coinv}_n^{[a]}\cong\Lie_n^{(a)}$ as ungraded
$S_n$-modules.  Replacing the residue components in
\cref{key:corollary-coinvariant-embedding} by these isomorphic modules gives
the equivalent embedding
\[
 \bigoplus_{\substack{I\subseteq[r]\\|I|\text{ odd}}}
 \Lie_n^{(k_I)}
 \hookrightarrow
 \Lie_n^{(1)}\oplus
 \bigoplus_{\substack{I\subseteq[r]\\|I|\text{ even}}}
 \Lie_n^{(k_I)}.
\]
In the prime-power case $k=q^s$, with $q$ prime and $s\ge2$, this becomes
\[
 \Lie_n^{(q^{s-1})}
 \hookrightarrow
 \Lie_n^{(q^s)}\oplus\Lie_n^{(1)}.
\]
\end{remark}

\section{Further questions}\label{sec:further-questions}

By \Cref{thm:ramanujan-square,cor:ramanujan-Q-lattice}, $R_n^{\sq}$ has
nonnegative integral Schur coefficients and hence is the Frobenius
characteristic of an $S_n$-module.

\begin{question}\label{ques:explicit-module}
For each $n\ge1$, can one construct such a module
with an explicitly described basis and formulas for the action of
the adjacent transpositions $(i\,\,i+1)$, $1\le i<n$, without first
computing the Schur coefficients of $R_n^{\sq}$?
\end{question}

Recall that $\operatorname{SYT}_r(\lambda)$ denotes the set of standard
Young tableaux of shape $\lambda$ whose major index is congruent to $r$
modulo $n$.
\begin{question}\label{ques:tableau-injection}
Does the major-index inequality \eqref{eq:two-generated-major-index} in
\Cref{cor:two-generated} admit a direct tableau-theoretic proof?  More
precisely, for every $n$, every $\lambda\vdash n$, and every pair of
incomparable divisors $a,b\mid n$, can one define, by explicit operations on
the input tableau, a map
\[
 \operatorname{SYT}_{\gcd(a,b)}(\lambda)
 \lhook\joinrel\longrightarrow
 \operatorname{SYT}_a(\lambda)\sqcup\operatorname{SYT}_b(\lambda)?
\]
Can one then verify directly that each output has the required
major-index residue and that the map is injective, without using
the established cardinality inequality or an irreducible decomposition?
\end{question}

\begin{question}\label{ques:polytope-geometry}
The following aspects of the full geometry of $P_n^Q$ remain open.
\begin{enumerate}[label=\textup{(\alph*)}]
\item Which distinct supporting hyperplanes arising from the Schur
inequalities in \eqref{eq:PnQ} define facets of $P_n^Q$?  Here coincident
supporting hyperplanes are to be identified; \Cref{thm:endpoint-facets}
gives such facets for $n\ge18$.
\item For which $n$ is $P_n^Q$ a lattice polytope?
\item In those degrees, does $P_n^Q$ have the integer decomposition
property with respect to $\mathbb Z^{\D(n)}$?
\end{enumerate}
\end{question}

\appendix
\section{Numerical estimates for divisor weights}\label{app:estimates}

We verify conditions \textup{(ii)} and \textup{(iii)} of
\Cref{thm:criterion} for $n\ge18$ by reducing the required inequalities
to finitely many exact bounds.  We first recall three estimates from
\cite[Sections~5 and~6]{Hou}, then apply them to the Boolean,
Ramanujan-square, and linear weights.  All tabulated values and rational
inequalities are exact.

\subsection{Monotonicity estimates}\label{subsec:appendix-monotonicity}

\begin{lemma}[{\cite[Lemma~5.3]{Hou}}]
\label{lem:short-cycle-monotonicity}
Let $r\ge2$, $2\le d\le r$, and
$1\le q\le\lfloor r/d\rfloor$.  Then
\[
 \frac{\binom{\lfloor n/d\rfloor}{q}}{g_r(n)}
\]
is nonincreasing as a function of the integer $n\ge2r+1$.
\end{lemma}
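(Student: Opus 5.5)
The plan is to prove the ratio is nonincreasing by comparing consecutive values of $n$. Put $m\coloneqq\lfloor n/d\rfloor$ and $m'\coloneqq\lfloor (n+1)/d\rfloor$. It suffices to show
\[
 \binom{m'}{q}g_r(n)\le\binom{m}{q}g_r(n+1)\qquad(n\ge2r+1).
\]
If $d\nmid n+1$, then $m'=m$, and the inequality reduces to $g_r(n)\le g_r(n+1)$. I would establish this from
\[
 g_r(n)=\binom nr\frac{n+1-2r}{n+1-r},
\]
which is increasing in $n$ for $n\ge2r-1$, since both factors are positive and increasing there. Alternatively, Pascal's rule gives $g_r(n+1)-g_r(n)=g_{r-1}(n)\ge0$ once $n\ge2r-2$.

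The main case is $d\mid n+1$, where $m'=m+1$. Since $\binom{m+1}{q}/\binom mq=(m+1)/(m+1-q)$, we need
\[
 \frac{g_r(n+1)}{g_r(n)}\ge\frac{m+1}{m+1-q}.
\]
The left side equals
\[
 \frac{n+1}{n+1-r}\cdot\frac{(n+2-2r)(n+1-r)}{(n+2-r)(n+1-2r)}.
\]
I would simplify this to $\frac{(n+1)(n+2-2r)}{(n+2-r)(n+1-2r)}$. This is at least $\frac{n+1}{n+1-r}\cdot\frac{n+1-r}{n+2-r}$, and more usefully at least $1+\frac{r-1}{n+2-r}+\cdots$; concretely, it is $\ge\frac{n+1}{n+1-r}\cdot c$ with $c$ close to $1$.

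For the right side, $m+1=(n+1)/d$ and $q\le r/d$. Hence
\[
 \frac{m+1}{m+1-q}\le\frac{n+1}{n+1-dq}\le\frac{n+1}{n+1-r}.
\]
The comparison therefore comes down to checking
\[
 \frac{(n+2-2r)(n+1-r)}{(n+2-r)(n+1-2r)}\ge1.
\]
Expanding, this is equivalent to $(n+2-2r)(n+1-r)-(n+2-r)(n+1-2r)=r\ge0$, which holds.

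The only real obstacle is this case analysis on whether $d$ divides $n+1$, together with the bookkeeping of the floor. Positivity of $g_r(n)$ for $n\ge2r+1$ ensures that no division by zero occurs. If the direct algebra for the $d\mid n+1$ case proves messier than anticipated, the fallback is the cruder bound $\frac{m+1}{m+1-q}\le\frac{n+1}{n+1-r}$ combined with the exact ratio formula, as sketched above. That route already reduces the needed inequality to the single polynomial identity with difference $r$.
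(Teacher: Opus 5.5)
Your argument is correct and complete. The paper gives no argument of its own for this lemma; it imports the statement from Hou's Lemma~5.3. Your consecutive-step comparison therefore supplies a self-contained elementary proof of what the paper only cites.

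The split on whether $d\mid n+1$ is the right one.
\begin{itemize}
\item When $d\nmid n+1$, the floor does not move, and $g_r(n+1)-g_r(n)=g_{r-1}(n)\ge0$ finishes the case.
\item When $d\mid n+1$, you have $m+1=(n+1)/d$. Hence $\frac{m+1}{m+1-q}=\frac{n+1}{n+1-dq}\le\frac{n+1}{n+1-r}$; the first relation is actually an equality. The exact ratio of consecutive $g_r$ values then reduces the claim to $(n+2-2r)(n+1-r)-(n+2-r)(n+1-2r)=r\ge0$, which is correct.
\end{itemize}

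Two points should be tidied in a write-up.

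First, replacing $\binom{m+1}{q}/\binom{m}{q}$ by $(m+1)/(m+1-q)$ requires $\binom{m}{q}\ne0$. This is not cosmetic. If $m=q-1$, the ratio would jump from $0$ to a positive value and monotonicity would fail. The hypothesis $n\ge2r+1$ is exactly what rules this out, since $m=\lfloor n/d\rfloor\ge\lfloor 2r/d\rfloor\ge2\lfloor r/d\rfloor\ge2q>q$. Your remark about avoiding division by zero covers only $g_r$, so add this check.

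Second, the sentence with ``$1+\frac{r-1}{n+2-r}+\cdots$'' and ``$c$ close to $1$'' is vague and adds nothing. Cut it and go straight from the exact ratio formula to the polynomial comparison.
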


\begin{lemma}[{\cite[equation~(32)]{Hou}}]
\label{lem:central-binomial}
For every $r\ge5$,
\[
 \binom{2r+1}{r}\ge\frac{4^r}{\sqrt r}.
\]
\end{lemma}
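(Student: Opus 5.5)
We prove the bound by taking logarithms and checking that $\log\binom{2r+1}{r}+\tfrac12\log r-r\log4$ is nonnegative at $r=5$ and nondecreasing in $r$.

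\textbf{Base case.} At $r=5$ we have $\binom{11}{5}=462$. We need $462\ge 4^5/\sqrt5=1024/\sqrt5$. Squaring both sides, this is $462^2=213444\ge 1024^2/5=209715.2$, which holds.

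\textbf{Induction step.} Suppose $\binom{2r+1}{r}\ge4^r/\sqrt r$ for some $r\ge5$. The ratio of consecutive central-type binomials is
\[
 \frac{\binom{2r+3}{r+1}}{\binom{2r+1}{r}}=\frac{(2r+3)(2r+2)}{(r+2)(r+1)}=\frac{2(2r+3)}{r+2}.
\]
So it suffices to show
\[
 \frac{2(2r+3)}{r+2}\ge 4\sqrt{\frac r{r+1}},
 \qquad\text{that is,}\qquad
 \frac{2r+3}{2(r+2)}\ge\sqrt{\frac r{r+1}}.
\]
Both sides are positive, so we may square. The inequality becomes
\[
 (2r+3)^2(r+1)\ge4r(r+2)^2.
\]
Expanding, the left side is $(4r^2+12r+9)(r+1)=4r^3+16r^2+21r+9$. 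The right side is $4r^3+16r^2+16r$. Their difference is $5r+9>0$, so the inequality holds for every $r\ge1$.

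Multiplying the induction hypothesis by the ratio therefore gives
\[
 \binom{2r+3}{r+1}\ge\frac{4^r}{\sqrt r}\cdot 4\sqrt{\frac r{r+1}}=\frac{4^{r+1}}{\sqrt{r+1}},
\]
which completes the induction.

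The only genuine check is the base case. The ratio inequality holds for all $r$, so the threshold $r\ge5$ is forced by the base case alone.
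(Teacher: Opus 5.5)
Your proof is correct. The base case ($462^2=213444>209715.2=1024^2/5$), the ratio $\binom{2r+3}{r+1}/\binom{2r+1}{r}=2(2r+3)/(r+2)$, and the reduction to $(2r+3)^2(r+1)-4r(r+2)^2=5r+9>0$ all check out.

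The paper gives no argument for this lemma. It imports the inequality from Hou (equation~(32) there) and uses it only once, in the proof of \Cref{lem:common-long-row-tail}, to bound $g_r(2r+1)=\frac{2}{r+2}\binom{2r+1}{r}$ from below. Your induction makes that step self-contained in a few lines, which the citation does not.

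It also yields a little more. Your ratio inequality holds for every $r\ge1$, so $\sqrt r\,\binom{2r+1}{r}4^{-r}$ is strictly increasing in $r$. Since $\binom94=126<128=4^4/\sqrt4$, the inequality fails at $r=4$. Hence the threshold $r\ge5$ is exactly where the bound starts to hold, not an artifact of the method. This sharpens your closing remark that the threshold comes from the base case alone.
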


\begin{lemma}[{\cite[Lemma~6.2]{Hou}}]
\label{lem:X-ratio-appendix}
For $n\ge18$,
\[
 \frac{\Ldim_{n+1}}{\Ldim_n}\ge\frac95
 \qquad\text{and consequently}\qquad
 \frac{X_{n+1}}{X_n}\ge\frac95\sqrt{\frac n{n+1}}.
\]
\end{lemma}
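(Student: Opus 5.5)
The plan is a direct computation: because $m_n=\lfloor(n-1)/2\rfloor$ has a simple form in each parity, the ratio $\Ldim_{n+1}/\Ldim_n$ reduces to a rational function of one variable. I will split into the two parities of $n$, write $\Ldim_n$ and $\Ldim_{n+1}$ as explicit binomial expressions, simplify the ratio by factorial cancellation, and check that it is at least $9/5$ for every $n\ge18$. The statement about $X_n$ then follows at once from the definition $X_n=\Ldim_n/\sqrt{2\pi n}$.

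\emph{Even case.} Let $n=2k$, so $k\ge9$. Then $m_n=k-1$ and $m_{n+1}=k$. Hence
\[
 \Ldim_n=\frac1k\binom{2k}{k-1},
 \qquad
 \Ldim_{n+1}=\frac1{k+1}\binom{2k+1}{k}.
\]
Cancelling factorials gives
\[
 \binom{2k+1}{k}\Big/\binom{2k}{k-1}=\frac{2k+1}{k}.
\]
Therefore
\[
 \frac{\Ldim_{n+1}}{\Ldim_n}=\frac{2k+1}{k+1}=2-\frac1{k+1}.
\]
This increases with $k$, and at $k=9$ it equals $19/10\ge9/5$.

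\emph{Odd case.} Let $n=2k+1$, so again $k\ge9$. Then $m_n=m_{n+1}=k$. The two normalizing factors $1/(k+1)$ cancel, and
\[
 \frac{\Ldim_{n+1}}{\Ldim_n}
 =\binom{2k+2}{k}\Big/\binom{2k+1}{k}
 =\frac{2k+2}{k+2}
 =2-\frac2{k+2}.
\]
This also increases with $k$. At $k=9$ (that is, $n=19$) it equals $20/11>9/5$.

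\emph{Consequence for $X_n$.} From $X_n=\Ldim_n/\sqrt{2\pi n}$ we get
\[
 \frac{X_{n+1}}{X_n}=\frac{\Ldim_{n+1}}{\Ldim_n}\sqrt{\frac n{n+1}},
\]
so the first bound gives the second directly.

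The only point requiring care is applying $m_n=\lfloor(n-1)/2\rfloor$ correctly in each parity; the even/odd split above handles that. The hypothesis $n\ge18$ is needed only to give $k\ge9$. The odd case $n=19$ is the tightest, with ratio $20/11$, which still exceeds $9/5$.
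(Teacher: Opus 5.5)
Your proof is correct. The parity split gives the exact ratios $\Ldim_{n+1}/\Ldim_n=(2k+1)/(k+1)$ for $n=2k$ and $(2k+2)/(k+2)$ for $n=2k+1$. Both increase in $k$, so the minimum over $n\ge18$ is $20/11>9/5$, attained at $n=19$. The bound for $X_n$ then follows immediately from $X_n=\Ldim_n/\sqrt{2\pi n}$.

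The paper gives no argument of its own here; it simply cites Hou's Lemma~6.2. Your computation is therefore a complete, self-contained verification of that cited fact.
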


\begin{lemma}\label{lem:common-long-row-tail}
Let $r\ge5$, and let $M(2),\ldots,M(r)$ be nonnegative real numbers with
$M(2)=1$.  Put
\[
 A^M_r\coloneqq\sum_{d=2}^r M(d)
 \sum_{q=1}^{\lfloor r/d\rfloor}
 \binom{\lfloor(2r+1)/d\rfloor}{q}.
\]
Then
\begin{equation}\label{eq:common-long-row-tail}
 \frac{A^M_r}{g_r(2r+1)}
 <\frac{(r+2)\sqrt r}{2}
 \left(
 2^{-r}+\left(\sum_{d=3}^rM(d)\right)2^{(1-4r)/3}
 \right).
\end{equation}
\end{lemma}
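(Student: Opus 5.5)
We bound each term of $A^M_r$ relative to $g_r(2r+1)$, separating $d=2$ from $d\ge3$. The denominator is handled first. With $n=2r+1$,
\[
 g_r(2r+1)=\binom{2r+1}{r}-\binom{2r+1}{r-1}
 =\binom{2r+1}{r}\Bigl(1-\frac{r}{r+2}\Bigr)
 =\frac{2}{r+2}\binom{2r+1}{r},
\]
so \Cref{lem:central-binomial} gives $g_r(2r+1)\ge\dfrac{2\cdot4^r}{(r+2)\sqrt r}$ for $r\ge5$. It therefore suffices to show that
\[
 \sum_{q=1}^{\lfloor r/2\rfloor}\binom{r}{q}<2^r
 \qquad\text{and}\qquad
 \sum_{q=1}^{\lfloor r/d\rfloor}\binom{\lfloor(2r+1)/d\rfloor}{q}\le 2^{(1+2r)/3}\quad(3\le d\le r).
\]
Multiplying these by $(r+2)\sqrt r/(2\cdot4^r)$ gives exactly $2^{-r}$ and $2^{(1-4r)/3}$, which are the two terms in \eqref{eq:common-long-row-tail}.

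For $d=2$ we have $\lfloor(2r+1)/2\rfloor=r$. The sum $\sum_{q=1}^{\lfloor r/2\rfloor}\binom rq$ omits at least the $q=0$ and $q=r$ terms of the full binomial sum, so it is strictly less than $2^r$. Since $M(2)=1$, this term contributes strictly less than $\frac{(r+2)\sqrt r}{2}2^{-r}$, and this strictness gives the strict inequality overall.

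For $3\le d\le r$, set $N=\lfloor(2r+1)/d\rfloor$. The sum over $q\le\lfloor r/d\rfloor$ is at most $\sum_{q=0}^{N}\binom Nq=2^N$, and since $d\ge3$ we get $N\le(2r+1)/3$. Hence each inner sum is at most $2^{(2r+1)/3}$. Multiplying by $M(d)\ge0$ and summing over $d$ gives the second term.

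The only points requiring care are the exact identity for $g_r(2r+1)$ and the exponent bookkeeping: $4^{-r}2^{(2r+1)/3}=2^{(1-4r)/3}$. Both are routine, so I do not expect a genuine obstacle. The strictness of the final inequality is supplied entirely by the $d=2$ term, where the omitted $q=0$ term already yields a positive deficit.
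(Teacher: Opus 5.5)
Your proposal is correct and follows the paper's proof essentially step for step. Both use the identity $g_r(2r+1)=\frac{2}{r+2}\binom{2r+1}{r}$ together with \Cref{lem:central-binomial}, the bound $\sum_{q=1}^{\lfloor r/2\rfloor}\binom rq<2^r$ for $d=2$, and the bound $2^{\lfloor(2r+1)/d\rfloor}\le2^{(2r+1)/3}$ for $d\ge3$; your exponent bookkeeping and your remark that strictness comes from the omitted $q=0$ term in the $d=2$ sum just make explicit what the paper cites from Hou.
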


\begin{proof}
As in \cite[proof of Proposition~5.4, equations~(31)--(33)]{Hou},
the inner binomial sum is less than $2^r$ for $d=2$ and at most
$2^{\lfloor(2r+1)/d\rfloor}\le2^{(2r+1)/3}$ for $d\ge3$.  Thus
\[
 A^M_r
 <2^r+\left(\sum_{d=3}^rM(d)\right)2^{(2r+1)/3}.
\]
Divide by
$g_r(2r+1)=\frac2{r+2}\binom{2r+1}{r}$ and apply
\Cref{lem:central-binomial}.
\end{proof}

\begin{lemma}\label{lem:ramanujan-tail-decrease}
For $r\ge12$, put
\[
 \Sigma_r\coloneqq\sum_{d=3}^rd^2,
 \qquad
 W_r\coloneqq\frac{(r+2)\sqrt r}{2}
 \left(2^{-r}+\Sigma_r2^{(1-4r)/3}\right).
\]
Then
\[
 W_r\le W_{12}<\frac{10731}{32768}<\frac13.
\]
\end{lemma}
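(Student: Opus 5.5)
The claim is $W_r\le W_{12}<10731/32768<1/3$ for $r\ge12$. I would split it into two pieces: a monotonicity statement, that $W_{r+1}\le W_r$ for every $r\ge12$, and a single exact evaluation at $r=12$. The last inequality, $10731/32768<1/3$, is immediate, since $3\cdot10731=32193<32768$.

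For the evaluation at $r=12$, we have $\Sigma_{12}=\sum_{d=1}^{12}d^2-5=650-5=645$. Also $2^{(1-48)/3}=2^{-47/3}$, so
\[
 W_{12}=7\sqrt{12}\left(2^{-12}+645\cdot2^{-47/3}\right).
\]
To compare this with $10731/32768=10731\cdot2^{-15}$, I would bound the irrational factors from above by rationals. Take $\sqrt{12}<3.4642$. For the other factor, write $2^{-47/3}=2^{-16}\cdot2^{1/3}$ and use $2^{1/3}<1.26$, which holds because $1.26^3>2$. This gives
\[
 645\cdot2^{-47/3}<645\cdot1.26\cdot2^{-16}\approx813\cdot2^{-16}.
\]
Adding $2^{-12}=16\cdot2^{-16}$ gives about $829\cdot2^{-16}$. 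Multiplying by $7\sqrt{12}\approx24.25$ gives about $20100\cdot2^{-16}\approx10050\cdot2^{-15}$, which is below $10731\cdot2^{-15}$. The margin is about $6\%$, so these rational upper bounds are sharp enough. I would record the comparison exactly by cubing or squaring where needed, so that it matches the appendix's convention of exact inequalities.

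For monotonicity, I would bound the two summands of $W_r$ separately. Use $\Sigma_r\le r^3/3+r^2/2+r/6$, or simply $\Sigma_r\le r^3/3+r^2$ for $r\ge12$. Then
\[
 W_r\le\tfrac12(r+2)r^{1/2}2^{-r}+\tfrac12(r+2)r^{1/2}(r^3/3+r^2)2^{(1-4r)/3}.
\]
Each term has the form (polynomial)$\times$(exponential decay). For the first term, the ratio of consecutive values is at most $\frac{r+3}{r+2}\sqrt{\frac{r+1}{r}}\cdot\frac12<1$. For the second, going from $r$ to $r+1$ changes the polynomial factor by at most $\bigl(\frac{r+3}{r+2}\bigr)\bigl(\frac{r+1}{r}\bigr)^{1/2}\frac{\Sigma_{r+1}}{\Sigma_r}$ and the exponential by $2^{-4/3}\approx0.397$.

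The main obstacle is this ratio for the second term, because $\Sigma_{r+1}/\Sigma_r$ is not that close to $1$ for moderate $r$. Working directly with $\Sigma_{r+1}/\Sigma_r=1+(r+1)^2/\Sigma_r$ and using $\Sigma_r\ge r^3/3-5$, at $r=12$ we get $\Sigma_{13}/\Sigma_{12}=814/645<1.27$. The full ratio at $r=12$ is then at most $1.072\cdot1.041\cdot1.27\cdot0.397<0.57$, and every factor decreases in $r$, so the ratio stays below $1$ for all $r\ge12$. Hence both terms of $W_r$ are nonincreasing, so $W_r\le W_{12}$. This is really slightly stronger than monotonicity of the sum, but it suffices because $W_r$ is the sum of the two terms.
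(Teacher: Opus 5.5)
Your proposal is correct and follows essentially the paper's proof: a rational upper bound for $W_{12}$ (the paper uses the cruder bounds $\sqrt{12}<7/2$ and $2^{2/3}>3/2$, which give exactly $10731/32768$), together with termwise decrease of both summands via successive ratios. The only difference is how you control $\Sigma_{r+1}/\Sigma_r$. The paper obtains the uniform bound $\Sigma_{r+1}/\Sigma_r\le4/3$ from the inductive inequality $\Sigma_r\ge3(r+1)^2$, whereas you use the exact value $814/645$ at $r=12$ and assert that this ratio decreases in $r$; that assertion is true (it reduces to $(2r+3)\Sigma_r\le(r+1)^4$), but you should state and check it explicitly.
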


\begin{proof}
The bounds $\Sigma_{12}=645$, $\sqrt{12}<7/2$, and $2^{2/3}>3/2$
give
\[
 W_{12}<\frac{49}{2}\left(\frac1{4096}+\frac{645}{49152}\right)
 =\frac{10731}{32768}.
\]
For $r\ge12$, the successive ratio of the first summand is
\[
 \frac{r+3}{2(r+2)}\sqrt{\frac{r+1}{r}}
 \le\frac12\cdot\frac{15}{14}\cdot\frac{25}{24}<1.
\]
The inequality $\Sigma_r\ge3(r+1)^2$ holds at $r=12$ and is
preserved when $r$ increases by one, since
$(r+1)^2-3(2r+3)>0$ for $r\ge12$.
Thus $\Sigma_{r+1}/\Sigma_r=1+(r+1)^2/\Sigma_r\le4/3$.
The successive ratio of the second summand is
\[
 2^{-4/3}\frac{r+3}{r+2}\sqrt{\frac{r+1}{r}}
 \frac{\Sigma_{r+1}}{\Sigma_r}
 <\frac25\cdot\frac{15}{14}\cdot\frac{25}{24}\cdot\frac43
 =\frac{25}{42}<1.
\]
\end{proof}

\begin{lemma}\label{lem:balanced-factor-decrease}
For every $0\le\alpha\le1$, the sequence
\[
 n^\alpha X_n^{-2/3}
\]
is strictly decreasing for $n\ge18$.  The sequence $X_n^{-1/2}$ is also
strictly decreasing in this range.
\end{lemma}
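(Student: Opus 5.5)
The plan is to prove both monotonicity claims through successive ratios, using the growth bound \eqref{eq:X-ratio} from \Cref{lem:X-ratio-appendix}. Fix $0\le\alpha\le1$ and $n\ge18$. Then
\[
 \frac{(n+1)^\alpha X_{n+1}^{-2/3}}{n^\alpha X_n^{-2/3}}
 =\Bigl(\frac{n+1}{n}\Bigr)^{\alpha}\Bigl(\frac{X_n}{X_{n+1}}\Bigr)^{2/3},
\]
so it suffices to show that this ratio is less than $1$. Because $(n+1)/n>1$, the first factor increases with $\alpha$, so the case $\alpha=1$ is the worst. I will therefore prove the inequality only for $\alpha=1$.

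For $\alpha=1$, insert $X_{n+1}/X_n\ge\frac95\sqrt{n/(n+1)}$. This bounds the ratio by
\[
 \frac{n+1}{n}\Bigl(\frac59\Bigr)^{2/3}\Bigl(\frac{n+1}{n}\Bigr)^{1/3}
 =\Bigl(\frac59\Bigr)^{2/3}\Bigl(\frac{n+1}{n}\Bigr)^{4/3}.
\]
The right-hand side decreases in $n$, so it is enough to check $n=18$. There we need $(5/9)^{2/3}(19/18)^{4/3}<1$, which is equivalent to $(5/9)(19/18)^2<1$. Since $5\cdot361=1805$ and $9\cdot324=2916$, this holds. The argument gives strict decrease for every $\alpha\in[0,1]$.

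For $X_n^{-1/2}$, it suffices to show $X_{n+1}>X_n$. This follows from $X_{n+1}/X_n\ge\frac95\sqrt{18/19}$, whose square is $\frac{81}{25}\cdot\frac{18}{19}>1$; the paper already uses this in \Cref{key:corollary-quantitative-balanced}. The only step needing care is the reduction to $\alpha=1$ via monotonicity in $\alpha$. The rest is a single numerical check, which is exact and involves no rounding.
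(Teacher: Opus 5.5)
Your proposal is correct and follows essentially the same route as the paper: you bound the successive ratio via \Cref{lem:X-ratio-appendix} by $(5/9)^{2/3}(1+1/n)^{\alpha+1/3}\le(5/9)^{2/3}(19/18)^{4/3}<1$, and you deduce $X_{n+1}>X_n$ from the same lemma. Your explicit check $(5/9)(19/18)^2=1805/2916<1$ verifies exactly the numerical inequality the paper uses.
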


\begin{proof}
Following \cite[Lemmas~6.2 and~6.3]{Hou}, apply
\Cref{lem:X-ratio-appendix} to obtain
\[
 \frac{(n+1)^\alpha X_{n+1}^{-2/3}}
 {n^\alpha X_n^{-2/3}}
 \le
 \left(\frac59\right)^{2/3}
 \left(1+\frac1n\right)^{\alpha+1/3}
 \le
 \left(\frac59\right)^{2/3}
 \left(\frac{19}{18}\right)^{4/3}<1.
\]
It also gives $X_{n+1}>X_n$, proving the second assertion.
\end{proof}

\begin{lemma}\label{lem:balanced-base-cases}
The following estimates hold.
\begin{enumerate}[label=\textup{(\alph*)}]
\item
\[
 e^{1/24}\left(
 \frac{X_{20}^{-1/2}}{\sqrt2}
 +7\cdot20^{11/12}X_{20}^{-2/3}\right)
 <\frac{12356}{13455}<1.
\]
\item For every $J\subseteq\D(18)$ with $1\in J$,
\[
 E_{18}(\psi^J)<\frac{52}{295}<1,
\]
and for every $J\subseteq\D(19)$ with $1\in J$, one has
$E_{19}(\psi^J)=0$.
\item
\[
 \mathcal E_{18}^{\sq}<\frac{3196}{9085}<1.
\]
\end{enumerate}
\end{lemma}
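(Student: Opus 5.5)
The three estimates in \Cref{lem:balanced-base-cases} are finite numerical verifications, so the plan is to reduce each one to exact rational bounds on $X_{18}$, $X_{20}$ and a few elementary constants. First compute $\Ldim_n$ exactly from \eqref{eq:Lambda-X}. For $n=18$, $m_{18}=8$ and $\Ldim_{18}=\binom{18}{8}/9=43758/9=4862$. For $n=20$, $m_{20}=9$ and $\Ldim_{20}=\binom{20}{9}/10=16796$. Then $X_n=\Ldim_n/\sqrt{2\pi n}$ can be bounded from below using a rational upper bound for $\pi$ (say $\pi<22/7$) and rational bounds for the square roots. From these I get rational upper bounds for $X_n^{-1/2}$ and $X_n^{-2/3}$. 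Writing $X_n^{-2/3}=(X_n^{-2})^{1/3}$, I bound the cube root by exhibiting a rational $c$ with $c^3\ge X_n^{-2}$. Similarly I use $e^{1/24}<1+1/23$ and a rational upper bound for $20^{11/12}$, certified by raising a candidate bound to the $12$th power. Substituting these into the expressions in (a) and (c) gives explicit rational numbers, which should land below $12356/13455$ and $3196/9085$; the final comparison is exact rational arithmetic.

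For (b) with $n=19$: since $19$ is prime, the only divisor $d$ with $3\le d<n$ would have to lie strictly between $1$ and $19$, and there is none. There is also no $d=2$ term, so $E_{19}(\psi^J)=0$ directly from \eqref{eq:Enw}.

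For $n=18$, the relevant divisors are $d=2$ (weight $|\psi^J(2)|=1$ by \Cref{lem:boolean-weight-bound}) and $d\in\{3,6,9\}$. Rather than use the crude bound $d^{4/3}$, I will bound $|\psi^J(d)|$ exactly over all $J$ containing $1$, using $\psi^J(d)=\sum_{e\mid d,\,e\in J}e\mu(d/e)$:
\begin{itemize}
\item $\psi^J(3)\in\{-1,2\}$, so $|\psi^J(3)|\le2$;
\item $\psi^J(9)=9\one_{9\in J}-3\one_{3\in J}$, so $|\psi^J(9)|\le9$;
\item $\psi^J(6)=1-2\one_{2\in J}-3\one_{3\in J}+6\one_{6\in J}$, whose absolute value is at most $7$ (for instance, $1+6=7$ or $1-2-3=-4$).
\end{itemize}
Then $E_{18}(\psi^J)\le e^{1/24}\bigl(X_{18}^{-1/2}/\sqrt2+X_{18}^{-2/3}(2/\sqrt3+7/\sqrt6+9/3)\bigr)$, and rational bounds again finish the argument.

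The main obstacle is bookkeeping rather than ideas. The rational bounds for fractional powers of $X_n$ must be tight enough to reach the stated constants, and the exact maxima of $|\psi^J(d)|$ at $n=18$ must be right. If the stated thresholds turn out tighter than my crude bounds allow, I will sharpen the estimate for $\pi$ and the root approximations, for example by carrying more decimal digits as rationals.
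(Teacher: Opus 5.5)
Your proposal is correct and follows essentially the same route as the paper. Both use the exact values $\Ldim_{18}=4862$ and $\Ldim_{20}=16796$, the bounds $\pi<22/7$ and $e^{1/24}<24/23$, and power comparisons certified by exact integer inequalities; your exact bound $|\psi^J(3)|\le2$ is slightly sharper than the paper's $3$, which is harmless. One caution: the stated constants in (a) and (c) are exactly what the bounds $20^{11/12}<16$, $X_{20}^{2/3}>130$, $\sqrt{2X_{20}}>54$, $X_{18}^{2/3}>237/4$ and $\sqrt{2X_{18}}>30$ produce, so your rational approximations must be at least this sharp (for instance, $X_{18}^{2/3}>59$, which suffices for (b), does not reach $3196/9085$ in (c)).
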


\begin{proof}
Equation~\eqref{eq:Lambda-X} gives
$\Ldim_{18}=4862$ and $\Ldim_{20}=16796$.
We use $e^{1/24}<24/23$, which follows from
$\log(1+t)>t/(1+t)$ at $t=1/23$.
Since $\pi<22/7$, we have
\[
 X_n^2>\frac{7(\Ldim_n)^2}{44n}.
\]
The integer comparisons
$7(\Ldim_n)^2>44nL^2$ for $(n,L)=(18,457),(20,1490)$ therefore give
$X_{18}>457$ and $X_{20}>1490$.
For part~\textup{(a)}, use
$20^{11}<16^{12}$, $1490^2>130^3$, and $2\cdot1490>54^2$ to obtain
\[
 20^{11/12}<16,\qquad
 X_{20}^{2/3}>130,\qquad
 \frac1{\sqrt{2X_{20}}}<\frac1{54}.
\]
Thus the expression in part~\textup{(a)} is less than
\[
 \frac{24}{23}\left(\frac1{54}+\frac{56}{65}\right)
 =\frac{12356}{13455}.
\]

For part~\textup{(b)}, the absolute values of the Boolean weights at the proper
nontrivial divisors $2,3,6,9$ of $18$ are at most $1,3,7,9$,
respectively.
Since $64\cdot457^2>237^3$ and $2\cdot457>30^2$, the bounds
\[
 X_{18}^{2/3}>\frac{237}{4}>59,
 \qquad\frac1{\sqrt{2X_{18}}}<\frac1{30},
\]
give
\[
 E_{18}(\psi^J)
 <\frac{24}{23}\left(
 \frac1{30}+\frac{\sqrt3+7/\sqrt6+3}{59}\right)
 <\frac{24}{23}\left(\frac1{30}+\frac8{59}\right)
 =\frac{52}{295}.
\]
There is no proper nontrivial divisor of $19$, so $E_{19}(\psi^J)=0$.

The same bounds prove part~\textup{(c)}:
\[
 \mathcal E_{18}^{\sq}
 <\frac{24}{23}\left(\frac1{30}+18\frac4{237}\right)
 =\frac{3196}{9085}.
\]
\end{proof}

\subsection{Exact values used in the long-row estimates}
\label{subsec:finite-long-row-values}

\Cref{tab:quadratic-short} gives the values used in
\Cref{prop:ramanujan-short-cycle}.  The majorant
$A^{\sq}_{n_0,r}$ in \eqref{eq:Asqnr} sums over all integers
$3\le d\le r$, not just divisors of $n_0$.

\begin{table}[htbp]
\centering
\small
\begin{tabular}{ccrrc}
\toprule
$r$&$n_0$&$A^{\sq}_{n_0,r}$&$g_r(n_0)$&$A^{\sq}_{n_0,r}/g_r(n_0)$\\
\midrule
2&18&9&135&$1/15$\\
3&18&63&663&$21/221$\\
4&18&163&2244&$163/2244$\\
5&18&238&5508&$7/162$\\
6&18&565&9996&$565/9996$\\
7&18&663&13260&$1/20$\\
8&18&1013&11934&$1013/11934$\\
9&19&1355&16796&$1355/16796$\\
10&21&2439&58786&$2439/58786$\\
11&23&3067&208012&$3067/208012$\\
\bottomrule
\end{tabular}
\caption{Exact values entering the estimate
$A^{\sq}_{n_0,r}/g_r(n_0)$ in \Cref{prop:ramanujan-short-cycle}.}
\label{tab:quadratic-short}
\end{table}

\FloatBarrier
For example, the row with $(n_0,r)=(18,6)$ is obtained from
\[
 \begin{aligned}
 A^{\sq}_{18,6}
 &=\left(\binom91+\binom92+\binom93\right)
   +9\left(\binom61+\binom62\right)\\
 &\quad+16\binom41+25\binom31+36\binom31\\
 &=129+189+64+75+108=565,\\
 g_6(18)&=\binom{18}{6}-\binom{18}{5}=9996.
 \end{aligned}
\]

\subsection{Uniform bounds for the linear-weight error}
The following estimate completes the proof of
\Cref{cor:linear-criterion}.
\begin{proposition}
\label{key:proposition-G-numerics}
For each fixed $\kappa>0$, the sequence
$(\mathcal G_n(\kappa))_{n\ge18}$ is strictly decreasing.  Moreover,
\[
 \mathcal G_{18}(2)<1.
\]
Consequently,
\[
 \mathcal G_n(\kappa)<1\qquad(n\ge18,\ 0<\kappa\le2).
\]
\end{proposition}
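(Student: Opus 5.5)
The plan has two parts: monotonicity in $n$, which will be shown termwise, and the base value $\mathcal G_{18}(2)<1$, which will be checked with the same exact rational bounds used in \Cref{lem:balanced-base-cases}.

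\emph{Monotonicity.} By definition,
\[
 \mathcal G_n(\kappa)=e^{1/24}\Bigl(\tfrac1{\sqrt2}X_n^{-1/2}+3\kappa\,n^{3/4}X_n^{-2/3}\Bigr)
\]
with $\kappa>0$ fixed. \Cref{lem:balanced-factor-decrease} with $\alpha=3/4$ shows that $n^{3/4}X_n^{-2/3}$ is strictly decreasing for $n\ge18$. The same lemma shows that $X_n^{-1/2}$ is strictly decreasing. Both coefficients $e^{1/24}/\sqrt2$ and $3\kappa e^{1/24}$ are positive, so $\mathcal G_n(\kappa)$ is strictly decreasing in $n$. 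This part is immediate.

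\emph{Base value.} Reuse the bounds from the proof of \Cref{lem:balanced-base-cases}: $e^{1/24}<24/23$, $X_{18}>457$, $X_{18}^{2/3}>237/4$, and $1/\sqrt{2X_{18}}<1/30$. The remaining quantity to control is $18^{3/4}$. Since $18^3=5832<6561=9^4$, we get $18^{3/4}<9$. This gives
\[
 \mathcal G_{18}(2)<\frac{24}{23}\Bigl(\frac1{30}+6\cdot 9\cdot\frac{4}{237}\Bigr)
 =\frac{24}{23}\Bigl(\frac1{30}+\frac{216}{237}\Bigr).
\]
The bracket is roughly $0.0333+0.9114=0.9447$. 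Multiplying by $24/23\approx1.0435$ gives about $0.986<1$. I would confirm this as an exact rational comparison: compute $\frac{24}{23}\bigl(\frac{1}{30}+\frac{72}{79}\bigr)$ and check that the numerator is less than the denominator.

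\emph{Main obstacle.} The margin is thin, so the integer bounds may need to be sharpened if rounding loses too much. If the estimate above fails, the first fix is to tighten $X_{18}^{2/3}$. From $X_{18}>457$ one gets $X_{18}^{2/3}>59.3$. Alternatively, compute $X_{18}$ itself more precisely from $\Ldim_{18}=4862$ and $\sqrt{2\pi\cdot18}<10.64$, which gives $X_{18}>456.9$. The bound $18^{3/4}<8.82$ can be verified from $8.82^4>5832$. Either refinement gives ample room.

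\emph{Conclusion.} For $n\ge18$ and $0<\kappa\le2$, monotonicity in $n$ gives $\mathcal G_n(\kappa)\le\mathcal G_{18}(\kappa)$. Monotonicity in $\kappa$ gives $\mathcal G_{18}(\kappa)\le\mathcal G_{18}(2)$, and the base value gives $\mathcal G_{18}(2)<1$.
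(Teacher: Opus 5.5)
Your proof is correct and follows the paper's argument: termwise strict monotonicity from \Cref{lem:balanced-factor-decrease} (with $\alpha=3/4$), the same rational bounds $e^{1/24}<24/23$, $X_{18}^{2/3}>237/4$, $1/\sqrt{2X_{18}}<1/30$ at $n=18$, and then monotonicity in $\kappa$. The only difference is that you use $18^{3/4}<9$ where the paper uses $18^{3/4}<35/4$. Your bound still suffices, since $\frac{24}{23}\bigl(\frac1{30}+\frac{72}{79}\bigr)=\frac{8956}{9085}<1$ (the paper gets $\frac{8716}{9085}$), so none of the fallback refinements you list are needed.
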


\begin{proof}
By \Cref{lem:balanced-factor-decrease}, both terms defining
$\mathcal G_n(\kappa)$ are strictly decreasing in $n$ for $n\ge18$.
The proof of \Cref{lem:balanced-base-cases} gives
\[
 e^{1/24}<\frac{24}{23},\qquad
 X_{18}^{2/3}>\frac{237}{4},\qquad
 \frac1{\sqrt{2X_{18}}}<\frac1{30}.
\]
Since $256\cdot18^3<35^4$, one has $18^{3/4}<35/4$.  Hence
\[
 \mathcal G_{18}(2)
 <\frac{24}{23}\left(\frac1{30}+6\frac{35}{237}\right)
 =\frac{8716}{9085}<1.
\]
The last assertion follows because $\mathcal G_n(\kappa)$ increases
with $\kappa$.
\end{proof}
\section{The cases \texorpdfstring{$n<18$}{n<18}}\label{app:computations}

We prove the small-degree results needed for
\Cref{thm:boolean-classification,thm:ramanujan-square}.
We use the individual character estimates of \Cref{sec:criterion},
known Foulkes expansions, and explicit character values.
The formulas below also justify the coefficient calculations in
\Cref{ex:degree-nine-regions}.

\subsection{Character formulas}\label{subsec:small-character-formulas}

For $d\mid n$ and $j\in\mathbb Z$, put
\[
 H_d(j)\coloneqq
 \begin{cases}
  1/(j/d)!,&j\ge0\text{ and }d\mid j,\\
  0,&\text{otherwise}.
 \end{cases}
\]
If $\lambda$ has $\ell$ parts, the Jacobi--Trudi identity
(see \cite[Chapter~7]{StanleyEC2}) gives
\begin{equation}\label{eq:rectangular-determinant}
 \chi^\lambda_{(d^{n/d})}
 =(n/d)!\det\bigl(H_d(\lambda_i-i+j)\bigr)_{1\le i,j\le\ell}.
\end{equation}
Indeed, under the specialization $p_d=d$ and $p_j=0$ for $j\ne d$,
the identity
$\sum_{j\ge0}h_jz^j=\exp(\sum_{j\ge1}p_jz^j/j)$
shows that $h_j$ becomes $H_d(j)$.  The Frobenius formula shows that
$s_\lambda$ becomes $\chi^\lambda_{(d^{n/d})}/(n/d)!$.
In particular, $d=1$ in \eqref{eq:rectangular-determinant} gives
$f^\lambda$.  Formula \eqref{eq:Q-rectangular-expansion} then yields
\begin{equation}\label{eq:small-gamma-character}
 \gamma_{\lambda,d}
 =\frac dn\sum_{u\mid n/d}\mu(u)\chi^\lambda_{((du)^{n/(du)})}.
\end{equation}

We also use the Murnaghan--Nakayama rule
\cite[Theorem~4.10.2]{Sagan}.  For a rim hook $\lambda/\nu$, let
$\operatorname{ht}(\lambda/\nu)$ be one less than its number of rows.
For $\alpha=(\alpha_1,\ldots,\alpha_t)\vdash n$,
\begin{equation}\label{eq:MN-border-strip-appendix}
 \chi^\lambda_\alpha=
 \sum_{\substack{\nu\subseteq\lambda\\
       \lambda/\nu\text{ a rim hook of length }\alpha_1}}
 (-1)^{\operatorname{ht}(\lambda/\nu)}
 \chi^\nu_{(\alpha_2,\ldots,\alpha_t)},
 \qquad \chi^\varnothing_\varnothing=1.
\end{equation}

For $u,v\in\mathbb R$, define the degree-$9$ function
\begin{equation}\label{eq:degree-nine-function}
 \begin{aligned}
 F_9(u,v)&\coloneqq Q_{9,1}+uQ_{9,3}+vQ_{9,9}\\
 &=\frac19p_1^9+\frac{3u-1}{9}p_3^3
   +\left(v-\frac u3\right)p_9.
 \end{aligned}
\end{equation}
Thus
\[
 [s_\lambda]F_9(u,v)
 =\frac{f^\lambda-\chi^\lambda_{(3^3)}}9
 +\frac u3\bigl(\chi^\lambda_{(3^3)}-\chi^\lambda_{(9)}\bigr)
 +v\chi^\lambda_{(9)}.
\]
\begin{samepage}
Formula~\eqref{eq:rectangular-determinant} gives the following table.
Since the cycle types have only odd cycles, conjugate partitions give
identical entries; we list one representative of each pair
$\{\lambda,\lambda'\}$.
\begin{equation}\label{eq:degree-nine-coefficients}
\begin{array}{c|rrr|l}
\lambda&f^\lambda&\chi^\lambda_{(3^3)}&\chi^\lambda_{(9)}
 &[s_\lambda]F_9(u,v)\\ \hline
(9)&1&1&1&v\\
(8,1)&8&-1&-1&1-v\\
(7,2)&27&0&0&3\\
(7,1^2)&28&1&1&3+v\\
(6,3)&48&3&0&5+u\\
(6,2,1)&105&-3&0&12-u\\
(6,1^3)&56&2&-1&6+u-v\\
(5,4)&42&-3&0&5-u\\
(5,3,1)&162&0&0&18\\
(5,2,2)&120&3&0&13+u\\
(5,2,1^2)&189&0&0&21\\
(5,1^4)&70&-2&1&8-u+v\\
(4^2,1)&84&3&0&9+u\\
(4,3,2)&168&-3&0&19-u\\
(4,3,1^2)&216&0&0&24\\
 (3^3)&42&6&0&4+2u
\end{array}
\end{equation}
\end{samepage}

For example, the character value for $(5,4)$ can also be obtained from
\eqref{eq:MN-border-strip-appendix}.
The removable rim hooks of length $3$ leave $(5,1)$ and $(3,3)$,
with heights $0$ and $1$, respectively, and
\[
 \chi^{(5,1)}_{(3^2)}=\chi^{(2,1)}_{(3)}=-1,\qquad
 \chi^{(3,3)}_{(3^2)}
 =\chi^{(3)}_{(3)}-\chi^{(2,1)}_{(3)}=2.
\]
Hence $\chi^{(5,4)}_{(3^3)}=-3$.  Since
$f^{(5,4)}=\binom94-\binom93=42$ and $\chi^{(5,4)}_{(9)}=0$,
the corresponding coefficient is $5-u$.
In particular, the same calculation gives
$[s_{(5,4)}](p_1^9+4p_3^3)=30$.

\subsection{Boolean \texorpdfstring{$Q$}{Q}-sums}\label{subsec:boolean-low}

\begin{proposition}\label{prop:boolean-low}
Let $n<18$ be composite and $1\in J\subseteq\D(n)$.
Every Schur coefficient of $Q_{n,J}$ outside
\[
 (n),\quad(n-1,1),\quad(2,1^{n-2}),\quad(1^n)
\]
is nonnegative.  If $n\ge8$, every such coefficient is positive.
\end{proposition}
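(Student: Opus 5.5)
The composite degrees below $18$ are $n\in\{4,6,8,9,10,12,14,15,16\}$. I would treat them one degree at a time, using the coefficient formula \eqref{eq:general-schur-coefficient} with the weight $\psi^J$, or equivalently the $Q$-coordinate formula \eqref{eq:small-gamma-character}. Since $Q_{n,J}=Q_{n,1}+\sum_{d\in J,\,d>1}Q_{n,d}$, every Schur coefficient is an affine function of the Boolean vector $(\one_{d\in J})_{1<d\mid n}$, with coefficients $\gamma_{\lambda,d}$. Hence $\lambda$ is safe for every $J$ exactly when
\[
 \gamma_{\lambda,1}+\sum_{1<d\mid n}\min\{\gamma_{\lambda,d},0\}\ge0,
\]
and strictly positive when this quantity is at least $1$. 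My plan reduces the proposition to checking this single inequality for each nondistinguished $\lambda\vdash n$.

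For most partitions I would avoid computing the $\gamma_{\lambda,d}$ individually. Instead I would reuse the estimates of \Cref{sec:criterion} partition by partition. For $\lambda=(n-r,\mu)$ with a long first row, \eqref{eq:long-row-Fnw} together with \eqref{eq:eta-bound} and \Cref{lem:boolean-weight-bound} give
\[
 n[s_\lambda]Q_{n,J}\ge f^\lambda-\delta_\mu(n+1)-f^\mu A_{n,r}(\psi^J).
\]
Long columns follow by conjugation, as in the proof of \Cref{thm:criterion}. For balanced $\lambda$, I would use \Cref{lem:normalized-FL} with the actual value of $f^\lambda$ rather than the uniform bound $\Ldim_n$, and the actual weight bounds $|\psi^J(d)|\le dU_+(d)$. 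Whenever the resulting lower bound exceeds $0$, integrality forces the coefficient to be at least $1$, which gives the strict statement for $n\ge8$.

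Some partitions will survive these estimates, mostly in the small degrees $4,6,8,9$ and at small-$f^\lambda$ shapes such as $(n-2,2)$, $(n-2,1^2)$ and near-rectangles. For these I would compute the character values $\chi^\lambda_{(d^{n/d})}$ exactly, using the determinant formula \eqref{eq:rectangular-determinant} or the Murnaghan--Nakayama rule, and then check the displayed min-inequality directly, as in the degree-$9$ table \eqref{eq:degree-nine-coefficients}. Two cases I would dispose of quickly. For $n=9$ the table already shows that, for $u,v\in\{0,1\}$, every nondistinguished coefficient is at least $3$. For prime-squared or $pq$ degrees with few divisors, the expansion \eqref{eq:zeta-Q-Foulkes} writes many $Q_{n,J}$ as a single $\ell_n^{(r)}$, and then \eqref{eq:KW} gives nonnegativity at once. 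In every remaining case, positivity also reduces to showing $a_{\lambda,r}\ge1$ for the relevant residues.

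The main obstacle is bookkeeping in the highly composite degrees $12$ and $16$. There the number of supports is large and the character bounds are weakest, so many partitions may need exact values. My first attempt there would be the affine/min reduction above combined with the partial-cancellation structure $\psi^J(d)=d\sum_u\mu(u)u^{-1}\one_{d/u\in J}$. If that is still too coarse, I would fall back on direct tabulation of $\gamma_{\lambda,d}$ via \eqref{eq:small-gamma-character} for all $\lambda$. The degrees $4$ and $6$, where zero coefficients may occur, need the same tabulation, and there only nonnegativity has to be established.
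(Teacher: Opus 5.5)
Your plan is correct and matches the paper's proof: the long-row Bernstein estimate with conjugation for long columns, the Fomin--Lulov bound for balanced partitions, and exact tabulation of $\gamma_{\lambda,d}$ checked against $\gamma_{\lambda,1}+\sum_{d>1}\min\{0,\gamma_{\lambda,d}\}\ge0$ in degrees $4$ and $6$ and for the balanced partitions of $8$. One point to watch is that the generic bound $|\psi^J(d)|\le dU_+(d)$ is too coarse to close the balanced estimate in degree $12$: at $(6,6)$ the resulting error bound exceeds $1$ even with the exact value $f^\lambda=132$. The paper avoids this by using the exact values such as $\psi^J(3)=3x_3-1$ and bounding $|\psi^J(3)|$ and $|\psi^J(6)|$ jointly in terms of $x_3$; your tabulation fallback would also cover this case.
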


\begin{proof}
Write $x_d=\one_{d\in J}$, so that $x_1=1$ and
\[
 \psi^J(d)=\sum_{e\mid d}e\mu(d/e)x_e.
\]
We first treat long first rows and columns in degrees
$8,9,10,12,14,15,16$, using the following bounds for $|\psi^J(d)|$
at proper divisors $1<d<n$:
\begin{equation}\label{eq:small-boolean-weight-bounds}
\begin{array}{c|c|c}
 n&(d:1<d<n,\ d\mid n)&(|\psi^J(d)|\text{ bounded by})\\ \hline
8&(2,4)&(1,4)\\
9&(3)&(2)\\
10&(2,5)&(1,4)\\
12&(2,3,4,6)&(1,2,4,7)\\
14&(2,7)&(1,6)\\
15&(3,5)&(2,4)\\
16&(2,4,8)&(1,4,8)
\end{array}
\end{equation}
These bounds follow by substituting $x_e\in\{0,1\}$ in the preceding
divisor sum.  For example,
$\psi^J(4)=4x_4-2x_2$ and
$\psi^J(6)=1-2x_2-3x_3+6x_6$.
Substitution in \eqref{eq:Anr} gives the following upper bounds for
$A_{n,r}(\psi^J)/g_r(n)$; a dash means $r\ge n/2$.
\begin{equation}\label{eq:small-long-row-bounds}
\begin{array}{c|cccccc}
n&r=2&r=3&r=4&r=5&r=6&r=7\\ \hline
8&1/5&1/7&\text{--}&\text{--}&\text{--}&\text{--}\\
9&0&1/8&1/7&\text{--}&\text{--}&\text{--}\\
10&1/7&1/15&1/6&\text{--}&\text{--}&\text{--}\\
12&1/9&1/11&41/275&41/297&\text{--}&\text{--}\\
14&1/11&1/39&4/91&4/143&9/143&\text{--}\\
15&0&1/35&1/91&11/819&3/143&21/715\\
16&1/13&1/55&13/315&1/49&27/910&9/286
\end{array}
\end{equation}
For instance,
\[
 \begin{aligned}
 A_{12,4}(\psi^J)
 &\le\binom61+\binom62+2\binom41+4\binom31=41,\\
 g_4(12)&=\binom{12}4-\binom{12}3=275.
 \end{aligned}
\]
All entries are at most $1/5$.

For $\lambda=(n-r,\mu)$ with $2\le r<n/2$,
\eqref{eq:long-row-Fnw} and \Cref{lem:long-row-dimension} therefore give
\begin{equation}\label{eq:small-long-row-coefficient}
 n[s_\lambda]F_n(w)
 \ge\frac45f^\lambda-\delta_\mu|n\tau_n(w)-1|,
 \qquad
 w(d)=\psi^J(d)\ \text{or}\ (-1)^{n-n/d}\psi^J(d).
\end{equation}
Both weights have the same absolute values, and their trivial
coefficients are $\tau_n(\psi^J)$ and $\eps_n(\psi^J)$.
By \Cref{lem:boolean-endpoints}, their absolute values are at most $1$.
If $\mu\ne(1^r)$, the right-hand side is positive.  Otherwise it is at least
\[
 \frac45\binom{n-1}{2}-(n+1)
 =\frac{2n^2-11n-1}{5}>0
 \qquad(n\ge8).
\]
Conjugation thus proves the assertion for both long rows and long columns.

Next let $\lambda\in\Bpart_n$, with
$n\in\{9,10,12,14,15,16\}$.  Put
\begin{equation}\label{eq:small-theta}
 \theta_{n,d}\coloneqq
 d^{-1/2}e^{d/(12n)}X_n^{-(1-1/d)},\qquad
 \widehat E_n(J)\coloneqq
 \sum_{\substack{d\mid n\\1<d<n}}|\psi^J(d)|\theta_{n,d}.
\end{equation}
A balanced partition is not a hook, so $\chi^\lambda_{(n)}=0$.
By \Cref{lem:balanced-dimension,lem:normalized-FL},
\[
 n[s_\lambda]Q_{n,J}\ge f^\lambda(1-\widehat E_n(J)).
\]
The following bounds are sufficient:
\begin{equation}\label{eq:small-balanced-bounds}
\begin{array}{c|r|r|c|c}
n&\Ldim_n&X_n>&(\theta_{n,d}\text{ bounded above by})
 &\widehat E_n(J)<\\ \hline
9&126/5&3&(1/3)&2/3\\
10&42&5&(1/3,2/15)&13/15\\
12&132&15&(1/5,1/9,18/245,1/20)&8423/8820\\
14&429&40&(1/8,1/20)&17/40\\
15&6435/8&64&(1/20,1/20)&3/10\\
16&1430&140&(1/15,3/235,3/575)&12929/81075
\end{array}
\end{equation}
The divisor order in each tuple is that of
\eqref{eq:small-boolean-weight-bounds}.  To verify the bounds without
decimal approximations, use
\[
 X_n^2>\frac{7(\Ldim_n)^2}{44n},
 \qquad e^x<(1-x)^{-1}\quad(0<x<1).
\]
Thus $X_n>L$ and $\theta_{n,d}<a/b$ follow from the rational comparisons
\begin{equation}\label{eq:small-theta-integer-test}
 7(\Ldim_n)^2>44nL^2,\qquad
 b^{2d}(12n)^{2d}
 <a^{2d}d^d(12n-d)^{2d}L^{2d-2},
\end{equation}
respectively, for the values $L$ and $a/b$ in the table.
For example, $15^3>7^4$ gives
\[
 \theta_{12,4}
 <\frac12\frac{36}{35}\,15^{-3/4}
 <\frac12\frac{36}{35}\frac17=\frac{18}{245}.
\]
For the entry with $(n,d)=(10,5)$, the inequalities
$5^9>18^5$ and $1/\sqrt5<9/20$ give
\[
 \theta_{10,5}
 <\frac{24}{23}\frac9{20}\frac5{18}
 =\frac3{23}<\frac2{15}.
\]

The last column of \eqref{eq:small-balanced-bounds} follows from the
weight bounds and the bounds on $\theta_{n,d}$.  In degree $12$, retain
the dependence on $x_3$: put $b=x_3$, so that
\[
 |\psi^J(3)|=1+b,\qquad |\psi^J(6)|\le7-3b.
\]
Indeed, the possible values of $\psi^J(6)$ are $1,-1,7,5$ when $b=0$
and $-2,-4,4,2$ when $b=1$.  Consequently,
\[
 \widehat E_{12}(J)
 <\frac15+\frac{1+b}{9}
   +4\frac{18}{245}+\frac{7-3b}{20}
 =\frac{8423}{8820}-\frac{7b}{180}
 \le\frac{8423}{8820}<1.
\]
Every entry in the last column is less than $1$, proving positivity
for these balanced partitions.

It remains to consider the partitions outside the four distinguished
ones in degrees $4$ and $6$, and the balanced partitions in degree $8$.
Equations \eqref{eq:rectangular-determinant} and
\eqref{eq:small-gamma-character} give the following complete list,
up to the conjugations specified below.  The divisors in each tuple
are in increasing order.
\begin{equation}\label{eq:small-boolean-coefficients}
\begin{array}{c|c|c}
n&\lambda&(\gamma_{\lambda,d})_{d\mid n}\\ \hline
4&(2,2)&(0,1,0)\\ \hline
6&(4,2)&(1,1,0,0)\\
 &(4,1^2)&(2,-1,0,1)\\
 &(3,3)&(1,-1,1,0)\\
 &(3,2,1)&(3,0,-1,0)\\
 &(3,1^3)&(1,1,1,-1)\\
 &(2^3)&(0,1,1,0)\\
 &(2^2,1^2)&(2,-1,0,0)\\ \hline
8&(4,4)&(1,1,1,0)\\
 &(4,3,1)&(9,0,-1,0)\\
 &(4,2,2)&(6,2,0,0)\\
 &(4,2,1^2)&(12,-2,1,0)\\
 &(3,3,2)&(6,-2,1,0)
\end{array}
\end{equation}
Only determinants of order at most four are needed.  For example,
for $\lambda=(4,4)$ they give
\[
 \begin{aligned}
 f^\lambda
 &=8!\left(\frac1{(4!)^2}-\frac1{3!5!}\right)=14,
 &\chi^\lambda_{(2^4)}&=\frac{4!}{(2!)^2}=6,\\
 \chi^\lambda_{(4^2)}&=2!=2,
 &\chi^\lambda_{(8)}&=0.
 \end{aligned}
\]
Hence the corresponding tuple is
\[
 \left(\frac{14-6}{8},\frac{6-2}{4},\frac22,0\right)=(1,1,1,0).
\]
The balanced partitions of $8$ not displayed are the conjugates of
$(4,4)$, $(4,3,1)$, and $(4,2,2)$.  Conjugation leaves their tuples
unchanged: the sign character is $1$ on $(2^4)$ and $(4^2)$, and
$\chi^\lambda_{(8)}=0$.

Every row of \eqref{eq:small-boolean-coefficients} satisfies
\[
 \gamma_{\lambda,1}
 +\sum_{\substack{d\mid n\\d>1}}\min\{0,\gamma_{\lambda,d}\}\ge0,
\]
with strict inequality in degree $8$.  Since $1\in J$, this expression
is a lower bound for $[s_\lambda]Q_{n,J}$.
This proves all the remaining assertions.
\end{proof}

For composite $n<18$, the proposition and the distinguished coefficients
show that the minimum Schur coefficient over nonempty admissible
supports is $0$, attained for $J=\{1\}$ at $(n)$.
For exceptional supports in even degree, the minimum outside $(1^n)$
is $0$, attained at $(n-1,1)$.

\Needspace{10\baselineskip}
\subsection{Ramanujan squares}\label{subsec:ramanujan-low}

\begin{proposition}\label{prop:ramanujan-low}
For $1\le n<18$, all Schur coefficients of $R_n^{\sq}$ are positive,
except that the coefficient of $s_{(1^n)}$ is zero when
$n\equiv2\pmod4$.
The minimum Schur coefficient is given by
\[
\begin{array}{c|ccccc}
n&1,3&4,5,7,8,11,13,17&9,12,15&16&2,6,10,14\\ \hline
\min_{\lambda\vdash n}[s_\lambda]R_n^{\sq}&1&2&4&6&0.
\end{array}
\]
\end{proposition}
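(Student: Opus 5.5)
The plan is to split the degrees $1\le n<18$ into three groups and use, for each, the cheapest tool the paper already provides.

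The first group is where $R_n^{\sq}$ has a known nonnegative Foulkes expansion. These are $n$ squarefree ($1,2,3,5,6,7,10,11,13,14,15,17$) and $n=4m$ with $m$ odd squarefree ($4,12$). For them, \cite[Theorem~36]{ShareshianSundaram} writes $R_n^{\sq}=\sum_{e\mid n}B_n(e)\ell_n^{(e)}$ with $B_n(e)\ge0$. I would compute the integers $B_n(e)$ from \eqref{eq:ramanujan-B-coordinate}. By \eqref{eq:KW}, $[s_\lambda]R_n^{\sq}=\sum_e B_n(e)a_{\lambda,e}$. To get strict positivity away from $(1^n)$, I would locate a divisor $e$ with $B_n(e)>0$ and $a_{\lambda,e}\ge1$.

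That search will not settle the distinguished partitions, so I would read those off \Cref{lem:endpoint-coefficients,lem:ramanujan-endpoints} instead. After multiplying by $n$, their coefficients are $t(n)$, $n-t(n)$, $n-n\eps_n$ and $n\eps_n$. \Cref{lem:local-tn} shows the first two are positive for $n\ge3$. The sign coefficient vanishes exactly when $n\equiv2\pmod4$. The cases $n\le3$ I would check by hand.

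For the remaining $\lambda$, I would skip tableau counts and bound characters directly, as in \Cref{prop:boolean-low}. Long rows and columns are handled by \eqref{eq:long-row-Fnw} together with the weight bounds $0\le w_n(d)\le d^2$ and $\sum_d w_n(d)\le n$. Balanced partitions are handled by the quantity $\widehat E_n$ of \eqref{eq:small-theta}, using the $\theta_{n,d}$ bounds already tabulated in \eqref{eq:small-balanced-bounds}. The minimum values in the table then come from this same computation: the smallest coefficient is either a distinguished one, typically $t(n)$ or $n-t(n)$, or a small explicit value.

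The second group consists of the nonsquarefree degrees $8,9,16$. For $16$ I would use the criterion route, with $w_{16}(2)=1$, $w_{16}(4)=c_4(4)^2=4$ and $w_{16}(8)=c_8(2)^2=0$. Plugging these into the bounds of \eqref{eq:small-long-row-bounds} and \eqref{eq:small-balanced-bounds} in place of the Boolean weights should keep $A/g\le1/5$ and $\widehat E<1$. Degrees $8$ and $9$ I would treat exactly. For $9$, \Cref{ex:degree-nine-regions} gives $\widetilde R_9^{\sq}=F_9(5/3,5/9)$, and substituting into table \eqref{eq:degree-nine-coefficients} then gives all coefficients. Multiplied by $9$, the minimum is $9(1-5/9)=4$, as claimed. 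For $8$, I would compute $\chi^\lambda_{(2^4)},\chi^\lambda_{(4^2)},\chi^\lambda_{(8)}$ by \eqref{eq:rectangular-determinant} for all $22$ partitions and evaluate \eqref{eq:general-schur-coefficient} with $w=(1,1,4,0)$ on divisors $(1,2,4,8)$.

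The main obstacle is the bookkeeping needed to certify the exact minima in the table, especially for $12,15,16$. Positivity alone follows from the estimates, but the minimum requires knowing that no non-distinguished coefficient falls below the claimed value. For those degrees I would compute the Foulkes expansion explicitly and bound $\sum_e B_n(e)a_{\lambda,e}$ from below by $B_n(e_0)$ for a suitable $e_0$ with $a_{\lambda,e_0}\ge1$. If that fails for particular $\lambda$, I would fall back on a full character table computation.
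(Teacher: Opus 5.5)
Your treatment of degrees $8$, $9$, $16$ and of the four distinguished partitions matches the paper. For prime $n$ and for $n\in\{10,12,14,15\}$, your estimate route also goes through once you insert the exact weights: there $w_n(d)\le1$ for $1<d<n$, which lies inside the bounds of \eqref{eq:small-boolean-weight-bounds}.

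The step that fails is the balanced case in degrees $4$ and $6$. These degrees are not among your hand checks ($n\le3$), and \eqref{eq:small-balanced-bounds} has no rows for them. The Fomin--Lulov route cannot supply any usable bound there. Indeed, $\Ldim_4=2$ and $\Ldim_6=5$ give $X_4\approx0.40$ and $X_6\approx0.81$. Hence $\theta_{4,2}\approx1.17$ and $\theta_{6,2}+\theta_{6,3}\approx1.5$, while $w_4(2)=w_6(2)=w_6(3)=1$, so the analogue of $\widehat E_n$ exceeds $1$. Therefore $(2,2)\vdash4$ and $(3,3),(3,2,1),(2^3)\vdash6$ must be computed exactly. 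Their coefficients are $4$, and $4,14,10$, respectively.

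The missing idea is what would make your first suggestion work: Swanson's support theorem \cite[Theorem~1.5]{Swanson}. Together with \eqref{eq:KW}, it gives $[s_\lambda]\ell_n^{(e)}\ge1$ for \emph{every} $e\mid n$, whenever $\lambda$ is neither distinguished nor one of $(2,2),(3,3),(2^3)$. The paper combines this with the nonnegative expansion of \cite[Theorem~36]{ShareshianSundaram} and with $\sum_e b_e=n$, read off from the $p_1^n$-coefficient. This yields $[s_\lambda]R_n^{\sq}\ge n$ for all $4\le n<18$ with $n\notin\{8,9,16\}$ at once. Only three shapes are then left for the Murnaghan--Nakayama rule, no character estimates are needed in these degrees, and the minima follow directly from the distinguished values.

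Without that theorem, your step ``locate $e$ with $B_n(e)>0$ and $a_{\lambda,e}\ge1$'' is an unproved tableau check for each $\lambda$. A single $e_0$ can also fall short of the claimed minimum. For $n=12$ one has $R_{12}^{\sq}=2\ell_{12}^{(2)}+2\ell_{12}^{(4)}+4\ell_{12}^{(6)}+4\ell_{12}^{(12)}$, so $e_0\in\{2,4\}$ gives only $2<4$.

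Conversely, the ``main obstacle'' you name is not one. Wherever your estimates apply, they give lower bounds such as $\frac45f^\lambda-(t(n)-1)$ and $f^\lambda(1-\widehat E_n)$, far above the table values. So once degrees $4$ and $6$ are computed explicitly, the minima are just the distinguished coefficients.
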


\begin{proof}
The cases $n=1,2,3$ follow from
\[
 R_1^{\sq}=s_{(1)},\qquad
 R_2^{\sq}=2s_{(2)},\qquad
 R_3^{\sq}=2s_{(3)}+s_{(2,1)}+2s_{(1^3)}.
\]
If $4\le n<18$ and $n\notin\{8,9,16\}$, then $n$ is squarefree
or four times an odd squarefree integer.
By \cite[Theorem~36]{ShareshianSundaram}, there is an expansion
\[
 R_n^{\sq}=\sum_{e\mid n}b_e\ell_n^{(e)},\qquad b_e\ge0.
\]
Comparison of the coefficients of $p_1^n$ gives $\sum_{e\mid n}b_e=n$.
Swanson's support theorem \cite[Theorem~1.5]{Swanson} and \eqref{eq:KW}
give $[s_\lambda]\ell_n^{(e)}\ge1$ for every $e\mid n$ whenever
$\lambda$ lies outside the four distinguished partitions, with the
additional exceptions
$(2,2)$, $(3,3)$, and $(2,2,2)$.
Thus $[s_\lambda]R_n^{\sq}\ge n$ away from these exceptions.
For the three exceptional shapes, \eqref{eq:MN-border-strip-appendix}
gives
\[
 \begin{aligned}
 [s_{(2,2)}]R_4^{\sq}&=2+2=4,\\
 [s_{(3,3)}]R_6^{\sq}&=5-3+2=4,\\
 [s_{(2^3)}]R_6^{\sq}&=5+3+2=10.
 \end{aligned}
\]
The endpoint formulas in
\Cref{lem:endpoint-coefficients,lem:local-tn,lem:ramanujan-endpoints}
now give the stated minima and all zeros in these degrees.

For $n=8,16$, the local Ramanujan formula gives
\[
 R_n^{\sq}=p_1^n+p_2^{n/2}+4p_4^{n/4}.
\]
The long-row weight bounds in \eqref{eq:small-long-row-bounds}
therefore apply.  For the weight and its sign twist,
$n\tau_n(w_n)-1=t(n)-1=5$.
If $\lambda$ has a long first row or column and is not one of the four
distinguished partitions, the argument leading to
\eqref{eq:small-long-row-coefficient} gives the following lower bound
for $[s_\lambda]R_n^{\sq}$:
\[
 \frac45f^\lambda-(t(n)-1).
\]
Here $f^\lambda\ge20$ for $n=8$ and $f^\lambda\ge104$ for $n=16$,
by \Cref{lem:long-row-dimension} and \eqref{eq:grn}.
The resulting lower bounds are $11>2$ and $391/5>6$, respectively.

For balanced partitions of $16$,
\eqref{eq:small-balanced-bounds} gives
\[
 \frac{|\chi^\lambda_{(2^8)}|+4|\chi^\lambda_{(4^4)}|}
      {f^\lambda}
 <\frac1{15}+\frac{12}{235}=\frac{83}{705}<\frac18.
\]
Since $f^\lambda\ge1430$, the coefficient of $R_{16}^{\sq}$ is
greater than $7\cdot1430/8>6$.
For balanced partitions of $8$, \eqref{eq:small-gamma-character} gives
\[
 [s_\lambda]R_8^{\sq}
 =8\gamma_{\lambda,1}+8\gamma_{\lambda,2}+12\gamma_{\lambda,4}.
\]
The five degree-$8$ rows of \eqref{eq:small-boolean-coefficients}
therefore give, in order, $28,60,64,92,44$; conjugate partitions give
the same values.  Finally, the four distinguished coefficients are
$6,2,2,6$ in degree $8$ and $6,10,10,6$ in degree $16$.
This proves the assertions in these two degrees.

It remains to consider $n=9$.  Since $R_9^{\sq}=p_1^9+4p_3^3$,
\[
 [s_\lambda]R_9^{\sq}=f^\lambda+4\chi^\lambda_{(3^3)}.
\]
The character values in \eqref{eq:degree-nine-coefficients} give a
minimum of $4$, attained at $(8,1)$ and its conjugate, and show that
every coefficient is positive.
\end{proof}

\section*{Acknowledgments}
The author was supported by the National Research Foundation of Korea
(NRF), funded by the Korean government (Ministry of Science and ICT;
grant RS-2024-00342349).

\end{document}